\newcommand{\N}{\mathbb{N}}
\newcommand{\C}{\mathbb{C}}
\newcommand{\Z}{\mathbb{Z}}
\newcommand{\E}{\mathbb{E}}
\newcommand{\R}{\mathbb{R}}
\newcommand{\p}{\mathbb{P}}
\newcommand{\Partial}[2]{\frac{\partial #1}{\partial #2}}
\newcommand{\scndmxPartial}[3]{\frac{\partial^2 #1}{\partial #2 \partial #3}}
\newcommand{\Weylo}[1]{\mathbb{W}^{#1}}
\newcommand{\Weyl}[1]{\overline{\mathbb{W}^{#1}}}
\newcommand{\delWeyl}[2]{\mathbb{W}^{#1}_{#2}}
\newtheorem{proposition}{Proposition}[section]
\newtheorem{lemma}[proposition]{Lemma}
\newtheorem{conjecture}[proposition]{Conjecture}
\newtheorem{theorem}[proposition]{Theorem}
\newtheorem{corollary}[proposition]{Corollary}
\newtheorem{remark}[proposition]{Remark}
\newtheorem{definition}[proposition]{Definition}
\DeclareMathOperator{\sign}{sign}
\renewcommand{\epsilon}{\varepsilon}
\renewcommand{\Re}{\text{Re}}
\title{The Bethe Ansatz for Sticky Brownian Motions}
\author{Dom Brockington, Jon Warren}
\begin{document}
\maketitle
\begin{abstract}
    We consider a diffusion in $\R^n$ whose coordinates each behave as one-dimensional Brownian motions, that behave independently when apart, but have a sticky interaction when they meet. The diffusions in $\R^n$ can be viewed as the $n$-point motions of a stochastic flow of kernels. We derive the Kolmogorov backwards equation and show that for a specific choice of interaction it can be solved exactly with the Bethe ansatz. We then use our formulae to study the behaviour of the flow of kernels for the exactly solvable choice of interaction.
\end{abstract}
\section{Introduction}
In this paper we study a diffusion in $\R^n$, the coordinates of which evolve as independent one-dimensional Brownian motions when they are distinct, and have an attractive, so called sticky, interaction when they are equal. The diffusion can be interpreted as the evolving positions of $n$ particles on the real line, which interact when they meet. In particular the difference between two coordinates is described by a one dimensional sticky Brownian motion, recently studied as the weak solution to an SDE in $\cite{bass2014},$ and $\cite{EngelbertPeskir}$. Sticky Brownian motion, with parameter $\theta>0$, is a diffusion in $\R$ on natural scale with speed measure $m(dx)= 2 dx + \frac{2}{\theta}\delta_0(dx)$. The diffusion in $\R^n$ can visit the diagonal $\{x\in \R^n| \ x_1=...=x_n\}$ for a set times with positive Lebesgue measure, quite unlike a standard Brownian motion in $\R^n$. The interaction between coordinates at such times is not determined solely by specifying the parameter $\theta$ describing the stickiness. It was shown in \cite{HowittWarren} that the possible interactions can be specified by a finite measure on $[0,1]$ called the characteristic, or splitting, measure. The diffusions are consistent, in that for any $k<n$, any $k$ coordinates of the sticky Brownian motions in $\R^n$ with characteristic measure $\nu$, are sticky Brownian motions in $\R^k$ with the same characteristic measure, $\nu$. An example of such a diffusion was originally investigated by Le Jan and Raimond \cite{StickyFlowsCircle} using Dirichlet forms (on the torus rather than Euclidean space), and then the more general case was studied by Howitt and Warren \cite{HowittWarren}, via a martingale problem which we describe later.  \par{}

The consistency property means that we can also consider such systems of sticky Brownian motions to be the $n$-point motions of a stochastic flow of kernels. A flow of kernels $(K_{s,t}(x,dy))_{s\leq t}$ is essentially a random family of transition probability measures for a Markov process. Flows of kernels were introduced by Le Jan and Raimond in \cite{lejan2004} as a generalisation of flows of maps, to study stationary evolutions of turbulent fluids. The $n$-point motions can then be thought of as describing the behaviour of $n$ particles thrown into the fluid. Stochastic flows of kernels whose $n$-point motions are described by sticky Brownian motions are called Howitt-Warren flows in \cite{FlowsinWebandNet}, where their properties are studied in detail. Gawedzki and Horvai, \cite{Gawedzki2004}, discovered that, for two particles, sticky behaviour arose in certain limits of the Kraichnan model for turbulent advection. Warren then proved the convergence for $n$ particles and found the characteristic measure for the resulting sticky Brownian motions \cite{Warren2015}. Sun, Swart and Schertzer studied Howitt-Warren flows, constructing them directly as flows of mass in the Brownian web \cite{FlowsinWebandNet} by marking special separation points and attaching extra random variables to them that tells the mass following a path in the web how to split. The law of these additional random variables is described by the characteristic measure. Amongst other results they showed that the Howitt-Warren flows are almost surely purely atomic, at deterministic times. \par{}

The Howitt-Warren flow can be thought of as the continuum analogue to the random transition probabilities of the random walk in a random environment. Consistent with this sticky Brownian motions arise as scaling limits of the $n$-point motions of random walks in space-time i.i.d. random environments. A special case of the random walk in a random environment (RWRE) models, where the environment is Beta distributed, was shown by Barraquand and Corwin \cite{Barraquand2017} to be exactly solvable, they found exact solutions for the point to half line probabilities. This was shown using the Bethe ansatz and a non-commutative binomial formula from \cite{Povolotsky}. These exact solutions were then used to establish that there are GUE Tracey-Widom fluctuations in the large deviations of the random walk in a beta random environment. A straightforward calculation, see Section \ref{Section:RWRE}, shows that the scaling limit of random walks in a Beta random environment corresponds to the sticky Brownian motions with a uniform characteristic measure. Barraquand and Rychnovsky \cite{barraquand2019large}, working independently of us, derived exact solutions for the point to half-line probabilities of sticky Brownian motions with uniform characteristic measure by taking an appropriate scaling limit of the RWRE case. An asymptotic analysis then led to the discovery of GUE Tracey-Widom fluctuations in the large deviations of sticky Brownian motions as well. In this paper we will derive the Kolmogorov backwards equation from the martingale problem characterisation for the sticky Brownian motions with a uniform characteristic measure. Then we shall apply the Bethe ansatz to find an exact formula for the transition density of this process. The choice of uniform characteristic measure seems to be essential, only in this case is the diffusion exactly solvable by the Bethe ansatz. Further this seems to be the only case the diffusion is reversible, at least with respect to a measure we can write down explicitly. Our method is similar to that used by Tracey and Widom for the delta Bose gas \cite{DynamicsOfBoseGas}, however the importance of interactions between more than two particles adds significant complexity.

Before we introduce our main result we must define some terms. We use the notations $\Weylo{n}:= \{x\in \R^n| \ x_1 > x_2 > ...> x_n\}$ and $\Weyl{n}:= \{x\in \R^n| \ x_1\geq ...\geq x_n\}$ for the principal Weyl chamber, the images of this set under a permutation are called simply Weyl chambers, however we may sometimes refer to the principal Weyl chamber as just the Weyl chamber. By $C^2_0(\Weyl{n})$ we mean the set of functions $f:\Weyl{n} \to \R$ that have a $C^2$ extension to some open set containing $\Weyl{n}$, such that $f$ and all of its first and second partial derivatives vanish at infinity. Let $\Pi_n$ denote the collection of ordered partitions, $(\pi_1,...,\pi_k)$, of $\{1,..,n\}$ such that if $a\in \pi_j$, $b\in \pi_k$ and $j<k$ then $a<b$. That is the elements of the partition each consist of intervals intersected with $\Z$ and are indexed according to the size of their elements. \par{}

To each partition $\pi\in \Pi_n$ we associate a subset of $\Weyl{n}$ defined by 
\begin{equation*}
	\delWeyl{n}{\pi}:= \{x\in \Weyl{n}| \ x_\alpha= x_\beta \text{ if and only if there is a } \pi_i\in \pi \text{ such that } \alpha,\beta \in \pi_i \}.
\end{equation*}
In other words all the points in $\Weyl{n}$ whose coordinates are equal if and only if their indices are in the same element of $\pi$. Notice for $\pi= \{ \{1\},...,\{n\} \}$, $\delWeyl{n}{\pi}= \Weylo{n}$, in addition $\Weyl{n}= \cup_{\pi \in \Pi_n} \delWeyl{n}{\pi}$, and the sets $\delWeyl{n}{\pi}$ are disjoint. It's clear that there is a natural continuous bijection $I^\pi:\delWeyl{n}{\pi} \to \Weylo{|\pi|}$, given by $I^\pi(x)= (x_{p_1},...,x_{p_{|\pi|}})$ for some choice of $p_i \in \pi_i$. We can now define a Borel measure on $\delWeyl{n}{\pi}$ as the pushforward of the Lebesgue measure $\lambda$ on $\Weylo{|\pi|}$, $\lambda^\pi:= I^\pi_* \lambda$. This extends to a Borel measure on $\Weyl{n}$ via the formula $\lambda^\pi(A):=\lambda^\pi(A\cap \delWeyl{n}{\pi})$. \par{}

\begin{definition}\label{ReferenceMeasureOfTheTdensity}
	For $\theta>0$ the Borel measure $m^{(n)}_\theta$ on $\Weyl{n}$ is defined as
	\begin{equation*}
		m^{(n)}_\theta := \sum_{\pi \in \Pi_n} \theta^{|\pi|-n} \prod_{\pi_\iota \in\pi} \frac{1}{ |\pi_\iota|} \lambda^{\pi}.
	\end{equation*}
\end{definition}
Suppose $\theta>0$ and $X=(X(t))_{t\geq 0}$ is a solution to the Howitt-Warren martingale problem (sticky Brownian motions) in $\R^n$ with characteristic measure $\frac{\theta}{2} \mathbbm{1}_{[0,1]}dx$ and zero drift. Then we define $Y=(Y(t))_{t\geq 0}$ as the process obtained by ordering the coordinates of $X$, i.e. for each $t\geq 0$ $Y(t)= (Y^1(t),...,Y^n(t))=(X^{\sigma(1)}(t),...,X^{\sigma(n)}(t))$ for some $\sigma\in S_n$ such that $Y^1(t)\geq...\geq Y^n(t)$.
\begin{theorem}\label{Intro:MainResult}
	For every bounded Lipschitz continuous function $f:\Weyl{n}\to \R$, $x \in \Weyl{n}$ and $t>0$
	\begin{equation*}
		\E_x[f(Y_t)]= \int u_t(x,y) f(y) m^{(n)}_\theta(dy).
	\end{equation*}
	Where $u_t:\R^{n}\times \R^n \to \R$ is defined for each $t>0$ by
	\begin{equation*}
		u_t(x,y) := \frac{1}{(2\pi)^n}\int_{\R^n} e^{-\frac{1}{2} t |k|^2} \sum_{\sigma\in S_n} e^{ik_\sigma \cdot (x - y_\sigma)} \prod_{\substack{\alpha <\beta : \\ \sigma(\beta) < \sigma(\alpha)}}\tfrac{i\theta(k_{\sigma(\alpha)}- k_{\sigma(\beta)}) + k_{\sigma(\beta)} k_{\sigma(\alpha)}}{i\theta(k_{\sigma(\alpha)}- k_{\sigma(\beta)}) - k_{\sigma(\beta)} k_{\sigma(\alpha)}} dk,
	\end{equation*}
	where $S_n$ denotes the group of permutations on $\{1,...,n\}$ and $k_\sigma = (k_{\sigma(1)},...,k_{\sigma(n)})$.
\end{theorem}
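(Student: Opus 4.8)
The plan is to proceed in three stages: first establish the backwards (Kolmogorov) equation for $Y$ from the Howitt--Warren martingale problem, then verify that the candidate kernel $u_t$ solves that equation together with the correct boundary (gluing) conditions, and finally pin down the normalisation/reference measure $m^{(n)}_\theta$ so that $\E_x[f(Y_t)] = \int u_t(x,y) f(y)\, m^{(n)}_\theta(dy)$ holds. For the first stage I would start from the characterisation of $X$ via test functions and the associated generator, and push it forward under the ordering map to obtain a generator for $Y$ acting on $C^2_0(\Weyl{n})$: in the interior $\Weylo{n}$ it is simply $\tfrac12\Delta$, and on each face $\delWeyl{n}{\pi}$ one gets a Wentzell-type boundary condition relating the normal derivatives across the face to a second-order operator along the face, with the coefficients dictated by the characteristic measure $\tfrac{\theta}{2}\mathbbm{1}_{[0,1]}dx$. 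The key point, which I expect the paper to have set up earlier, is that the uniform characteristic measure produces boundary conditions whose ``splitting'' structure is exactly reproduced by the factorised Bethe scattering kernel.

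For the second stage, I would treat $u_t$ as a sum over $S_n$ of free-heat-kernel plane waves $e^{-\frac12 t|k|^2} e^{ik_\sigma\cdot(x-y_\sigma)}$ weighted by the product of two-body scattering factors $S_{\alpha\beta}(k) = \frac{i\theta(k_{\sigma(\alpha)}-k_{\sigma(\beta)}) + k_{\sigma(\beta)}k_{\sigma(\alpha)}}{i\theta(k_{\sigma(\alpha)}-k_{\sigma(\beta)}) - k_{\sigma(\beta)}k_{\sigma(\alpha)}}$. Since every summand satisfies $\partial_t u = \tfrac12 \Delta_x u$, the function $u_t$ solves the heat equation in the interior in the $x$ variable; the real work is checking that the particular choice of scattering amplitudes makes $u_t$, viewed as a function of $x$, satisfy the Wentzell boundary conditions on each face $\delWeyl{n}{\pi}$. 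I would do this by the standard Bethe-ansatz reduction: it suffices to verify the two-body boundary condition on the codimension-one faces $\{x_\alpha = x_{\alpha+1}\}$ (using the nearest-neighbour structure of $\Weyl{n}$), since the consistency of the $S$-matrix (the Yang--Baxter-type relations, which here are trivial because $S$ is scalar) then guarantees the conditions on all higher-codimension faces; on such a face the relation to check is an identity between $u_t$, its first $x$-derivatives normal to the face, and its Laplacian along the face, and substituting the plane-wave ansatz reduces this to the algebraic identity that $S_{\alpha,\alpha+1}(k)$ is precisely the ratio forcing the boundary relation. One must also check the initial condition: as $t\downarrow 0$, $\int u_t(x,y) f(y)\, m^{(n)}_\theta(dy) \to f(x)$; the off-diagonal ($\sigma \neq \mathrm{id}$) terms should vanish in the limit and the $\sigma=\mathrm{id}$ term should give a delta mass at $x$ against the reference measure, which is where the explicit constants $\theta^{|\pi|-n}\prod_{\pi_\iota}\frac{1}{|\pi_\iota|}$ in $m^{(n)}_\theta$ get forced — the factor $\theta^{|\pi|-n}$ tracking how many coordinates have collided and the $1/|\pi_\iota|!$-type combinatorial weights tracking the multiplicity of the ordering.

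The third stage is to convert ``$u_t$ solves the right PDE with the right boundary and initial conditions'' into the probabilistic identity. Here I would appeal to uniqueness for the martingale problem (the Howitt--Warren martingale problem is well-posed, as cited), so that the semigroup of $Y$ is uniquely determined by the generator and its domain; then it is enough to show that $(t,x)\mapsto \int u_t(x,y) f(y)\, m^{(n)}_\theta(dy)$ defines a bounded solution to the backwards equation with the correct boundary conditions and initial value $f$, plus enough regularity and decay to apply a verification (Itô/Dynkin) argument. This requires justifying the interchange of the $dk$ integral with differentiation and with the expectation — a contour/Fubini estimate that I would handle by noting $|S_{\alpha\beta}(k)| = 1$ on the real line so the integrand is bounded by the Gaussian $e^{-\frac12 t |k|^2}$, giving absolute convergence and smoothness in $(t,x)$ for $t>0$.

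I expect the main obstacle to be the boundary-condition verification on the higher-codimension faces, i.e. faces where three or more coordinates coincide. Unlike the delta Bose gas, where all interactions are pairwise and the Yang--Baxter consistency of the two-body $S$-matrix immediately propagates to all faces, here the Howitt--Warren generator has genuine multi-particle boundary terms on $\delWeyl{n}{\pi}$ when some $|\pi_\iota| \ge 3$, and one has to check that the sum over the $S_{|\pi_\iota|}$-orbit of plane waves, weighted by the scalar scattering products, collapses to satisfy the correct multi-body Wentzell condition. The fact that this works is, as the introduction emphasises, special to the \emph{uniform} characteristic measure; I would isolate the relevant algebraic identity (likely a partial-fraction or telescoping identity among the ratios $\frac{i\theta(k_a-k_b)+k_ak_b}{i\theta(k_a-k_b)-k_ak_b}$ summed over a symmetric group) and treat its verification as the technical heart of the proof, reducing the $n$-particle case to this identity by an induction on the codimension of the face.
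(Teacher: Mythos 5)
Your three-stage skeleton (backwards equation from the martingale problem, verify the Bethe ansatz solves the PDE with the right boundary and initial data, martingale/Dynkin verification) is the same structure as the paper. But stage two contains a claim that would sink the argument if followed: you first assert that "it suffices to verify the two-body boundary condition on the codimension-one faces $\{x_\alpha = x_{\alpha+1}\}$\dots since the consistency of the $S$-matrix (the Yang--Baxter-type relations, which here are trivial because $S$ is scalar) then guarantees the conditions on all higher-codimension faces." This is false here, and the paper is explicit about it: the $b-a=1$ boundary conditions are identical for every characteristic measure $\nu$ with $\nu([0,1])=\theta/2$, so they cannot single out the uniform case, and the conditions on the higher-codimension faces $\delWeyl{n}{\pi}$ with $|\pi_\iota|\geq 3$ are genuinely new constraints --- unlike the delta Bose gas, the process spends positive time on those strata, so there is no Yang--Baxter propagation to fall back on. Your final paragraph does correctly walk this back and identifies the multi-body boundary conditions as the "technical heart," but a correct proof must \emph{lead} with that view: the nontrivial content of the theorem is precisely that the pairwise-designed inversion factors happen, surprisingly, to satisfy \emph{all} the conditions defining $\mathcal{D}_\theta$ in the uniform case. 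Moreover your proposed cure --- "partial fractions or telescoping" plus "induction on the codimension of the face" --- is too vague to tell if it closes; what the paper actually does is first rewrite the $\mathcal{D}_\theta$ boundary conditions into the two-index form $\theta(\partial_b f - \partial_a f)=(b-a)\,\partial_a\partial_b f$ (by an induction on $b-a$, but done once and for all at the level of the domain), then reduces everything to a single algebraic identity over $S_m$, which it proves by clearing denominators, recognizing an alternating polynomial, factoring out the Vandermonde determinant, showing via a degree bound that the symmetric cofactor is multilinear with only three independent coefficients, and closing via a recursion in $m$ (set $k_m=0$) down to the $m=2$ base case.

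Separately, your handling of the $t\downarrow 0$ limit is far too optimistic. The observation $|S_{\alpha\beta}|=1$ only gives $|u_t(x,y)|\leq (2\pi t)^{-n/2}$, which is useless as $t\to 0$. The paper has to split the sum over $S_n$ according to $\sigma(\pi)$, resum the "internal" permutations with a combinatorial determinant-type identity, locate the poles of the resulting scattering product, and then make carefully chosen contour shifts (constrained by those poles and by whether $x$ and $y$ lie on various sides of each face) to extract a Gaussian bound; the end result is an estimate of the form $\sup_x\big|\int u_t(x,\cdot)f\,dm^{(n)}_\theta - f(x)\big| = O(\sqrt{t}\,|\log t|^n)$. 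Your intuition that the $\sigma\neq\mathrm{id}$ terms "should vanish" and that the constants in $m^{(n)}_\theta$ "get forced" is directionally right, but the actual argument is a substantial piece of analysis that depends sensitively on the stratum of $\Weyl{n}$ that $x$ sits near, not a routine dominated-convergence remark.
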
\par{}
\begin{remark}
	Note that the function $u$ is well defined (the integral always converges), because for every $x,y, k \in \R^n$, and every permutation $\sigma \in S_n$
	\begin{equation*}
		\left|e^{ik_\sigma \cdot (x - y_\sigma)} \prod_{\substack{\alpha <\beta : \\ \sigma(\beta) < \sigma(\alpha)}}\tfrac{i\theta(k_{\sigma(\alpha)}- k_{\sigma(\beta)}) + k_{\sigma(\beta)} k_{\sigma(\alpha)}}{i\theta(k_{\sigma(\alpha)}- k_{\sigma(\beta)}) - k_{\sigma(\beta)} k_{\sigma(\alpha)}} \right| = 1.
	\end{equation*}
	The function above is not continuous at points where there are distinct $\alpha, \beta$ such that $k_\alpha=k_\beta=0$ (where the denominator vanishes), but since the modulus is constant the value can simply chosen to be $1$ here, and it does not affect the integral because such points have measure zero. It is easily seen that we can pass derivatives under the integral, and thus we have $u_t(\cdot, y) \in C^2_0(\R^{n})$ for all $t>0$ and $y\in \R^n$. In particular  $u_t(\cdot, y) \in C^2_0(\Weyl{n})$ for all $t>0$ and $y\in \Weyl{n}$, when restricted to $\Weyl{n}$. It is also the case, as we will show later, that $u_t(x,y)=u_t(y,x)$ for all $t>0$ and $x,y\in \R^n$.
\end{remark}

\begin{remark}
    Another representation $u_t(x,y)$ is in terms of a product of eigenfunctions of the generator of the process $Y$. For each $k\in \R^n$ we have an eigenfunction given by
    \begin{align*}
        E_k( x) := \sum_{\sigma\in S_n} e^{k_\sigma\cdot x} \prod_{\substack{\alpha <\beta : \\ \sigma(\beta) < \sigma(\alpha)}}\tfrac{i\theta(k_{\sigma(\alpha)}- k_{\sigma(\beta)}) + k_{\sigma(\beta)} k_{\sigma(\alpha)}}{i\theta(k_{\sigma(\alpha)}- k_{\sigma(\beta)}) - k_{\sigma(\beta)} k_{\sigma(\alpha)}}.
    \end{align*}
    The transition density is given by
    \begin{align*}
        u_t(x,y) = \frac{1}{(2\pi)^n}\int_{\Weylo{n}} e^{-\frac{1}{2}t|k|^2} E_k(x) \overline{E_k(y)} dk.
    \end{align*}
    The proof the above expression for $u_t(x,y)$ agrees with the one previously given is straightforward, and so omitted.
\end{remark}

We further prove that $m^{(n)}_\theta$ is in fact the stationary measure of the process $Y$, and reversibility of the process $Y$ with respect to $m^{(n)}_\theta$. \par{}

The Howitt-Warren flows are almost surely purely atomic; it is possible to interpret the values of the transition densities of the ordered $n$-point motions, the process $Y$ above, as the moments of the size of the atom at a given location. Using this interpretation we consider the fluctuations of the sizes of the atoms as $t\to \infty$ and find them to be exponentially distributed when taken to be $\sim \sqrt{t}$ away from the origin, with parameter determined by $\theta$. This is similar to the Gamma fluctuations found in the same regime for the point to point probabilities of the Beta random walk in a random environment by Thierry and Le Doussal \cite{ExactSolutionRWRE1dThieryLeDoussal}. The same authors also found 
that in the large deviation regime the fluctuations have Tracey-Widom GUE fluctuations, just as for the point to half-line probabilities. It thus seems reasonable to conjecture the same fluctuations appear in the size of atoms of the Howitt-Warren flows, but we do not pursue the necessary asymptotic analysis here.

The outline of the paper is as follows: In Section \ref{A Consistent Family of Sticky Brownian Motions} we define the diffusion via a martingale problem, in Section \ref{Section: The Backwards Equation} we derive the Kolmogorov Backwards equation for the ordered $n$-point motions, and show that the generator of the process is symmetric with respect to the measure $m^{(n)}_\theta$ when restricted to a certain class of $C^2$ functions. In Section \ref{Bethe ansatz for Sticky Brownian motions} we show that the backwards equation is solvable by the Bethe ansatz, and as a consequence we show that the ordered $n$ point motions are reversible with respect to $m^{(n)}_\theta$. Finally in Section \ref{Stochastic flows of kernels} we introduce stochastic flows of kernels, and apply our results to Howitt-Warren flows .

\section{A Consistent Family of Sticky Brownian Motions}\label{A Consistent Family of Sticky Brownian Motions}
We introduce the \textit{Howitt-Warren martingale problem} in $\R^n$ with drift $\beta\in \R$ and characteristic measure $\nu$ (a finite measure
on $[0,1]$), as formulated in \cite{HowittWarren}.
Solutions are processes in $\R^n$, representing the positions of $n$ particles, each moving as one dimensional Brownian motions with drift $\beta$, such that two or more particles undergo sticky interactions, determined by $\nu$, when they meet. The solutions are consistent, in the sense that if $X$ is the solution to martingale problem in $\R^n$ with characteristic measure $\nu$ and drift $\beta$, then for any choice of distinct $i_1,...,i_k \in \{1,...,n\}$ with $k<n$, $(X^{i_j})_{j=1}^k$ is a solution to the martingale problem in $\R^k$ with characteristic measure $\nu$ and drift $\beta$.\par{}
To each point $x\in \R^n$ we associate a partition of the set $\{1,...,n \}$, $\pi(x)$, where $i,j \in \{1,...,n\}$ are in the same component of $\pi(x)$ if and only if $x_i = x_j$.
 Next we define, for each pair of disjoint subsets $I,J \subset \{1,..., n\}$, the vectors $v_{I,J} \in \R^n$ given by
\begin{equation*}
(v_{I,J})_i = \begin{cases}
1, \quad \text{if } i \in I; \\
-1, \quad \text{if } i \in J;\\
0, \quad \text{otherwise.}
\end{cases}
\end{equation*}
Note that $I$ and $J$ are allowed to be empty. Then we define the set of vectors $\mathcal{V}(x)$ as
\begin{equation*}
	\mathcal{V}(x):= \{ v_{I,J}: \quad I\cup J \in \pi(x), \ I\cap J = \emptyset. \}.
\end{equation*}
\par
$\mathcal{V}(x)$ keeps track of the directions in which the process can infinitesimally move. We'll use this to describe the interactions. Define the parameters $\theta(k, l)$ for $k,l\geq 1$ by
\begin{equation}\label{thetaDefinition}
\theta(k,l) := \int_0^1 x^{k-1} (1-x)^{l-1}\nu(dx),
\end{equation}
and for $k,l\geq 0$ by $\theta(1,0) - \theta(0,1) = \beta$, $\theta(0,0)=0$, requiring the consistency property $\theta(k,l) = \theta(k+1, l) + \theta(k,l+1)$ for all $k,l\geq 0$ gives definition to all $k,l\geq 0$. \par{}
\begin{definition}
	Let $D_n$ be the collection of functions $f:\R^n\to \R$  which are continuous and are such that for all Weyl chambers $A\subset \R^n$ the restriction of $f$ to $A$ is linear, so that  if $A\subset \R$ is a Weyl chamber and $x,y \in A$ then $f(x+y)= f(x)+ f(y)$.
\end{definition}
For functions $f\in D_n$ we define the operator $\mathcal{A}^\theta_n$ by
\begin{equation*}
\mathcal{A}^\theta_n f(x) := \sum_{v_{I,J}\in \mathcal{V}(x)} \theta(|I|,|J|) \nabla_{\substack{v_{I,J}}} f(x).
\end{equation*}
Where $\nabla_{v_{I,J}}$ denotes the one sided derivative in direction $v_{I,J}$.
\begin{definition}\label{Def:HWMartProb}
	Let $(X(t))_{t\geq 0} = \big(\big( X^1(t) ,... , X^n(t) \big)\big)_{t\geq 0} \subset \R^n$ be a continuous square integrable semi-martingale with initial condition $X(0) = x \in \R^n$, defined on a filtered probability space $(\Omega, \mathcal{F}, (\mathcal{F}_t)_{t\geq 0}, \p)$. Then $(X(t))_{t\geq 0}$ is a solution to the \textbf{Howitt-Warren martingale problem} with drift $\beta$ and characteristic measure $\nu$ if for any $i,j \in \{1,...,n\}$:
\begin{equation*}
	\langle X^i, X^j \rangle(t) = \int_0^t \mathbbm{1}_{\{X^i(s)=X^j(s)\}} ds,
\end{equation*}
and the following process is a martingale, for every function $F \in D_n$,
\begin{equation*}
	F(X(t)) - \int_{0}^{t} \mathcal{A}^\theta_n F(X(s)) ds.
\end{equation*}	
\end{definition}
Note that the first condition implies that $\langle X^i, X^i \rangle(t) = t$, and it follows from the second condition and the definition of $\mathcal{A}^\theta_n$ that $X^i(t)- \beta t$ is a martingale for each $i$. Hence each coordinate must be a Brownian motion with drift $\beta$. Well posedness of this martingale problem, and that solutions do indeed form a consistent family of Feller processes is shown in \cite{HowittWarren}. \par

\section{The Backwards Equation} \label{Section: The Backwards Equation}
\subsection{The Generator of Ordered Sticky Brownian Motions }
Define the functions $F^{(i)}:\R^n\to \R$ by $F^{(i)}(x)=x_j$ where $x_j$ is the $i^{th}$ largest coordinate of $x$, and $F:\R^n \to \Weyl{n}$ by $F(x) := (F^{(1)}(x),..., F^{(n)}(x))$. Note that these functions are in $D_n$. Further, suppose $X=(X(t))_{t\geq 0}$ is a solution to the Howitt-Warren martingale problem in $\R^n$ with characteristic measure $\nu$, and drift $\beta = 0$, with initial condition $x\in \Weyl{n}$. Then define the process $Y=(Y(t))_{t\geq 0}$ by $Y(t) :=F(X(t))$, with initial condition $x\in \Weyl{n}$. Note that we defined $Y$ from $x$ started inside the Weyl chamber. This process lies entirely in the Weyl chamber $\Weyl{n}$, making it admissible to the Bethe ansatz.  The object of this section is to identify the Kolmogorov Backwards equation for $Y$, and from it the invariant measure for $Y$. \par{}

\begin{remark}
	Before talking about its Kolmogorov Backward equation we need to know $Y$ is a Markov process. For this we refer to Dynkin's criterion \cite{rogers1981}. In particular, we only need to show that $\E_{x}\left[ f\circ F(X_t) \right] = \E_{F(x)}[f(Y_t)]$ for every $x\in \R^n$. This holds by definition for $x\in \Weyl{n}$, for $x\in \R^n \setminus \Weyl{n}$ we need to show that for any permutation $\sigma \in S_n$ $\sigma(X(t)):= (X^{\sigma(1)}(t),..., X^{\sigma(n)}(t))$ remains a solution to the same Howitt-Warren martingale problem, but with initial condition $\sigma(x)$. It's clear $\sigma(X)$ remains a continuous square integrable semi-martingale, and has initial condition $\sigma(x)$. Further it's clear that $\sigma(X)$ has the correct quadratic variations. Finally, because the function $\sigma$ is a continuous, linear, and maps Weyl chambers to Weyl chambers, $\{ F\circ \sigma: F\in D_n\}= D_n$, so that the martingale problem is still satisfied by $\sigma(X)$. It's clear there exists a permutation $\sigma \in S_n$ such that $\sigma(x) \in \Weyl{n}$, and by definition $\sigma(x) = F(x)$. By uniqueness of solutions to the martingale problem we have $\E_x[ f\circ F(X_t)] = \E_x[ f \circ F \circ \sigma^{-1} \circ \sigma(X_t)] = \E_{\sigma(x)} [f\circ F \circ \sigma^{-1} (X_t)]$ but clearly $F \circ \sigma^{-1} = F$. Hence $\E_x[ f\circ F(X_t)] = \E_{\sigma(x)} [f\circ F(X_t)] = \E_{F(x)}[f(Y_t)]$ as required. Thus $Y= F(X)$ is a Markov process.
\end{remark}

We proceed by considering the action of the generator of $Y$ on certain $C^2$ functions.
\begin{definition}\label{Domain of the Generator}
	Let $\mathcal{D}_\theta$ denote the set of functions $f\in C^2_0(\Weyl{n})$ such that for any $a,b \in \{1,...,n\}$ with $a<b$, $x_a=x_b$ implies
	\begin{align}
		& \frac{1}{2}\sum_{\substack{a\leq i,j \leq b: \\ i\neq j}} \frac{\partial^2 f}{ \partial x_i \partial x_j} (x) \nonumber \\
		& \quad =  \sum_{i =a}^b \frac{\partial f}{\partial x_i}(x) \sum_{k=0}^{b-a+1} \binom{b-a+1}{k} \theta(k, b-a+1-k) \sign(k-i+a-1). \label{Complicated BCs-1}
	\end{align}
	Where $\sign(0)$ is taken to be $1$ here.
\end{definition} 
\begin{proposition}\label{Prop:TheGenerator}
	Suppose $f\in \mathcal{D}_\theta$ then, denoting the generator of the process $Y$ by $\mathcal{G}_\theta$ (in the sense of \cite{Revuz&Yor}), we have
	\begin{equation*}
		\mathcal{G}_\theta f = \frac{1}{2}\Delta f.
	\end{equation*}
\end{proposition}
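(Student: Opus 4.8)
The plan is to show that, for every $f\in\mathcal{D}_\theta$, the process
\[
M^f_t := f(Y_t)-f(Y_0)-\tfrac12\int_0^t\Delta f(Y_s)\,ds
\]
is a martingale for the natural filtration of $Y$. Since $Y$ is a Feller Markov process (by the remark preceding the statement) and $f,\Delta f$ are continuous and vanish at infinity, this identifies $f$ as an element of the domain of $\mathcal{G}_\theta$ with $\mathcal{G}_\theta f=\tfrac12\Delta f$. As $f$ admits a $C^2$ extension to a neighbourhood of $\Weyl{n}$ and $Y$ never leaves $\Weyl{n}$, I would produce $M^f$ by applying the ordinary It\^{o} formula (no local-time terms, $f$ being genuinely $C^2$) to $f(Y_t)$; the real work is to pin down the semimartingale decomposition of $Y$.

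So the first step would be to establish that
\[
Y^i_t=M^i_t+\int_0^t b^i(Y_s)\,ds,\qquad \langle M^i,M^j\rangle_t=\int_0^t\mathbbm{1}_{\{Y^i_s=Y^j_s\}}\,ds,
\]
with each $M^i$ a continuous martingale and $b^i(y):=\mathcal{A}^\theta_nF^{(i)}(x)$ for any $x$ with $F(x)=y$. The martingale parts come straight from the Howitt--Warren martingale problem (Definition \ref{Def:HWMartProb}) applied to the functions $F^{(i)}\in D_n$; that $\mathcal{A}^\theta_nF^{(i)}$ is invariant under permuting coordinates---so $b^i$ is a well-defined function on $\Weyl{n}$---follows because $F^{(i)}$ is permutation invariant and $\mathcal{A}^\theta_n$ commutes with coordinate permutations. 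The bracket identity I would obtain by comparing $Y$ with $X$ on the time intervals where the relative order of the coordinates of $X$ does not change (on which each $Y^i$ coincides with a fixed coordinate of $X$), using the prescribed brackets $\langle X^i,X^j\rangle_t=\int_0^t\mathbbm{1}_{\{X^i_s=X^j_s\}}\,ds$; in particular $\langle M^i\rangle_t=t$. Next I would compute $b^i$ explicitly: writing the coordinates of $y\in\Weyl{n}$ in maximal blocks of equal entries, $b^i(y)=\theta(1,0)-\theta(0,1)=0$ when $i$ lies in a singleton block, while if $i$ lies in a block $\{a,a+1,\dots,b\}$ then the one-sided derivatives $\nabla_{v_{I,J}}F^{(i)}(y)$ vanish when $v_{I,J}$ is supported on a block not containing $i$ and equal $\sign(|I|-i+a-1)$ when $v_{I,J}$ is supported on the block of $i$, whence
\[
b^i(y)=\sum_{k=0}^{b-a+1}\binom{b-a+1}{k}\theta(k,b-a+1-k)\,\sign(k-i+a-1),
\]
that is, exactly the $i$-th summand on the right-hand side of \eqref{Complicated BCs-1}.

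Feeding this decomposition into It\^{o}'s formula and using $\langle M^i\rangle_t=t$ gives
\begin{equation*}
M^f_t=\sum_i\int_0^t\partial_if(Y_s)\,dM^i_s+\int_0^t\Big(\sum_i b^i(Y_s)\,\partial_if(Y_s)+\tfrac12\sum_{i\neq j}\mathbbm{1}_{\{Y^i_s=Y^j_s\}}\,\partial_i\partial_jf(Y_s)\Big)\,ds.
\end{equation*}
The stochastic integral is a true martingale because the $\partial_if$ are bounded. For the drift integral I would fix $s$ and split $\{1,\dots,n\}$ into the maximal blocks of $Y_s$: a cross-block pair $(i,j)$ contributes nothing because the indicator vanishes, a singleton block contributes nothing because $b^i=0$ there, and the contribution of a non-singleton block $\{a,\dots,b\}$ of $Y_s$ is $\sum_{i=a}^b b^i(Y_s)\,\partial_if(Y_s)+\tfrac12\sum_{a\le i,j\le b,\,i\neq j}\partial_i\partial_jf(Y_s)$, which is precisely the combination that the defining relation \eqref{Complicated BCs-1} of $\mathcal{D}_\theta$ forces to vanish (applied with these $a,b$, legitimate since $Y^a_s=Y^b_s$). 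Hence the drift integral is identically zero, $M^f$ is a martingale, and the proposition follows.

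The main obstacle is the second step---the semimartingale decomposition of $Y$---and in particular the bracket identity $\langle M^i,M^j\rangle_t=\int_0^t\mathbbm{1}_{\{Y^i_s=Y^j_s\}}\,ds$. Since $Y$ is a time-dependent sorting of $X$ and, unlike ordinary Brownian motion, $X$ spends a positive Lebesgue amount of time on the sets where three or more coordinates coincide, assembling the covariation of $Y$ from that of $X$ requires some care (the sorting permutation must be chosen measurably, and one must check that the order-statistic maps do not introduce local-time contributions on the walls of the Weyl chambers), as does verifying that $Y$ is a continuous semimartingale with exactly the stated drift. The companion computation of $\mathcal{A}^\theta_nF^{(i)}$ and its identification with the right-hand side of \eqref{Complicated BCs-1} is the other place where attention is needed, though it reduces to finite combinatorics once the one-sided derivatives of the order statistics are in hand.
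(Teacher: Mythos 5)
Your plan follows essentially the same architecture as the paper's proof: apply It\^{o}'s formula to $f(Y)$, extract the drift $b^i(Y_s)=\mathcal{A}^\theta_n F^{(i)}(X_s)$ from the Howitt--Warren martingale problem, identify the bracket $\langle Y^i,Y^j\rangle_t=\int_0^t\mathbbm{1}_{\{Y^i_s=Y^j_s\}}ds$, compute $\mathcal{A}^\theta_n F^{(i)}$ in the form $\sum_{k}\binom{m}{k}\theta(k,m-k)\sign(k-i+a-1)$ for $i$ in a block $\{a,\dots,a+m-1\}$, and observe that the resulting drift integrand is exactly the combination that the defining relation (\ref{Complicated BCs-1}) of $\mathcal{D}_\theta$ forces to vanish. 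That computation, and the cancellation argument, are carried out correctly and agree with the paper.

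The genuine gap is the bracket identity, and you identify it yourself as the main obstacle but do not resolve it. The sketch you offer --- ``comparing $Y$ with $X$ on the time intervals where the relative order of the coordinates of $X$ does not change'' --- does not go through: since $X$ spends a positive Lebesgue measure of time on the diagonals, there is no partition of time into intervals of fixed relative order, and while $X$ is on a wall the assignment $Y^i\leftrightarrow X^{\sigma(i)}$ is genuinely ambiguous. The paper closes this gap by a different device: it writes $F^{(i)}(x)=g_i\bigl((f_A(x))_{A\in P_i}\bigr)$ with $f_A(x)=\max_{a\in A}x_a$ convex and $g_i=\min$ concave, and invokes the semimartingale decomposition of a convex function of a continuous semimartingale (Proposition~8 of the cited reference \cite{SemiMartingaleDecompForConvexFunctions}). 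This yields an explicit representation of the local-martingale part of each $Y^i$ as a sum of stochastic integrals $\int_0^t\mathbbm{1}_{B^A_a}(X_s)\mathbbm{1}_{B_A}((f_C(X_s))_C)\,dX^a_s$, from which the stated bracket follows directly by multiplying out the indicators and using $\langle X^a,X^b\rangle_t=\int_0^t\mathbbm{1}_{\{X^a_s=X^b_s\}}ds$; in particular this construction also answers your concern about local-time contributions (the finite-variation terms produced by the convex decomposition are absorbed into the drift, which the martingale problem already controls via $\mathcal{A}^\theta_n F^{(i)}$). Without some such argument your proof is incomplete at precisely the step you flag. A secondary remark: you deduce $\mathcal{G}_\theta f=\tfrac12\Delta f$ from $M^f$ being a martingale plus Feller continuity; the paper instead verifies uniform convergence of $t^{-1}(\E_x[f(Y_t)]-f(x))$ directly by composing $\Delta f$ with $F$ and using the Feller property of $X$. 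Both routes are sound once the bracket identity is in hand, but you would need to spell out the Feller-theoretic step you gesture at.
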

The same calculations will also give us a backwards equation for the process.
\begin{proposition}\label{Prop:TheBackwardsEquation}
    Suppose $g\in C^2(\R_{> 0}\times \ \Weyl{n})$, and $g(t,\cdot)\in \mathcal{D}_\theta$ for all $t> 0$. Further suppose that $g$ satisfies the PDE
    \begin{equation}\label{HeatEq}
        \Partial{g}{t} = \frac{1}{2}\Delta g, \ \text{for all } t> 0, \ x\in \Weyl{n}.
    \end{equation}
    With initial condition $g(0, x)=f(x)$ with $f\in C_b(\Weyl{n})$, i.e. $g(t,\cdot)\to f$ uniformly as $t\to 0$. Then for each $t>0$  $\left(g(t-s, Y(s))\right)_{s\in[0,t]}$ is a continuous local martingale.
\end{proposition}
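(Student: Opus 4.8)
The plan is to apply Itô's formula to $s \mapsto g(t-s, Y(s))$ and show the bounded-variation part vanishes, which amounts to verifying that $\mathcal{G}_\theta$ acts on $g(t-s,\cdot) \in \mathcal{D}_\theta$ as $\tfrac12\Delta$. The subtlety is that $Y$ is not a diffusion with a smooth generator on all of $\R^n$: it spends positive Lebesgue-measure time on the lower-dimensional strata $\delWeyl{n}{\pi}$, so a naive application of Itô's formula will produce extra local-time-type terms on those strata, and the boundary conditions \eqref{Complicated BCs-1} in the definition of $\mathcal{D}_\theta$ are precisely what kill them. So the real content is an Itô/Tanaka-type expansion adapted to the martingale-problem structure rather than a bare application of the generator.

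First I would recall that $Y = F(X)$ where $X$ solves the Howitt--Warren martingale problem and $F \in D_n$ is the ordering map. Since $g(t-s,\cdot)$ is $C^2$ on a neighbourhood of $\Weyl{n}$, the composition $x \mapsto g(t-s, F(x))$ is continuous on $\R^n$ and $C^2$ on the interior of each Weyl chamber; I would apply the classical Itô formula on each Weyl chamber separately (using that $X$ is a semimartingale with $\langle X^i, X^j\rangle(s) = \int_0^s \mathbbm{1}_{\{X^i = X^j\}}\,ds$), obtaining
\begin{equation*}
 g(t-s, Y(s)) = g(t, Y(0)) + M(s) - \int_0^s \Partial{g}{t}(t-r, Y(r))\,dr + \int_0^s \tfrac12 \Delta g(t-r, Y(r))\,dr + R(s),
\end{equation*}
where $M$ is a continuous local martingale and $R$ collects the contributions from the times $r$ at which $Y(r)$ lies on one of the strata $\delWeyl{n}{\pi}$ with $|\pi| < n$. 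The first two integral terms cancel by the PDE \eqref{HeatEq}, so it remains to show $R \equiv 0$.

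To handle $R$, I would argue stratum by stratum: on the event that coordinates $a, a+1, \dots, b$ of $Y$ coincide (i.e.\ $Y$ is in a face where $x_a = \dots = x_b$), the process behaves, on that time set, like a sticky Brownian motion for the block $\{a,\dots,b\}$, and the martingale property in Definition \ref{Def:HWMartProb} applied to suitable test functions $F \in D_n$ that are linear on Weyl chambers lets me compute the occupation densities of these faces in terms of the $\theta(k,l)$. Concretely, the one-sided second-derivative jumps of $x \mapsto g(t-r, F(x))$ across such a face, integrated against the corresponding local time of $X$, produce exactly the left-hand side of \eqref{Complicated BCs-1}; the drift picked up from the $\nabla_{v_{I,J}}$ terms in $\mathcal{A}^\theta_n$ when $X$ sits on the diagonal of that block produces exactly the right-hand side. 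Thus \eqref{Complicated BCs-1}, which holds because $g(t-r,\cdot) \in \mathcal{D}_\theta$, forces the net contribution of each face to vanish, giving $R \equiv 0$ and hence that $g(t-s, Y(s))$ is a continuous local martingale on $[0,t]$.

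The main obstacle is the bookkeeping for $R$: one must carefully identify the occupation measure of each stratum $\delWeyl{n}{\pi}$ and match the combinatorial coefficients $\binom{b-a+1}{k}\theta(k,b-a+1-k)\sign(k-i+a-1)$ appearing in \eqref{Complicated BCs-1} against the directional-derivative weights $\theta(|I|,|J|)$ in $\mathcal{A}^\theta_n$ summed over the ways to split the block $\{a,\dots,b\}$ into an ``up'' part $I$ and a ``down'' part $J$. This is essentially the same calculation that proves Proposition \ref{Prop:TheGenerator}, so I would set it up once in a form that simultaneously yields both $\mathcal{G}_\theta f = \tfrac12\Delta f$ for $f \in \mathcal{D}_\theta$ and the present statement; the local-martingale (rather than martingale) conclusion is then automatic since no integrability beyond the local semimartingale structure is used.
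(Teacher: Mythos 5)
Your proposal diverges from the paper in the route to the semimartingale decomposition, and omits a necessary second step.

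On the first point: the paper does not apply a Tanaka/Meyer-type expansion to the piecewise-$C^2$ composition $g(t-s,F(\cdot))$ and the unordered process $X$. It applies the plain It\^o formula to the $C^2$ function $g(t-s,\cdot)$ and the $\Weyl{n}$-valued semimartingale $Y$, whose semimartingale property, quadratic covariations $\langle Y^i,Y^j\rangle(s)=\int_0^s\mathbbm{1}_{\{Y^i(u)=Y^j(u)\}}\,du$, and drift $\int_0^\cdot\mathcal{A}^\theta_n F^{(i)}(X(u))\,du$ were all established in the proof of Proposition \ref{Prop:TheGenerator}. In that set-up no local-time terms arise at all: the sticky behaviour of the diagonals enters through the occupation-time-valued quadratic covariation, whose $i\ne j$ part contributes $\frac{1}{2}\sum_{i\ne j}\frac{\partial^2 g}{\partial x_i\partial x_j}\,\mathbbm{1}_{\{Y^i=Y^j\}}\,du$ (the left side of (\ref{Complicated BCs-1})), and through the drift (the right side). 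Your chamber-by-chamber Tanaka argument could in principle be made to work, but it is strictly harder: there is no off-the-shelf change-of-variable formula for functions that are $C^2$ only away from an arrangement of intersecting hyperplanes, and for sticky Brownian motion the occupation time of a diagonal is positive, so the relation between semimartingale local time and occupation time is itself a $\theta$-dependent fact you would have to establish. Moreover, Tanaka-type corrections involve the jump of the \emph{first} derivative integrated against local time, not ``second-derivative jumps''; the second-derivative sum in (\ref{Complicated BCs-1}) comes from the quadratic covariation of $Y$, not from a jump across a face.

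On the second point, which is a genuine gap: you never address why $M$ is a local martingale on the closed interval $[0,t]$. The It\^o decomposition only gives a continuous local martingale on $[0,s]$ for each $s<t$, since $g\in C^2(\R_{>0}\times\Weyl{n})$ and the regularity degenerates at $s=t$, where $g(0,\cdot)=f$ is merely bounded continuous. Roughly half of the paper's proof is devoted to exactly this: using the uniform convergence $\|g(t-s,\cdot)-f\|_\infty\to 0$ as $s\to t$ together with $\|f\|_\infty<\infty$ to show $M$ is bounded on $[t-\epsilon,t]$ (hence a true martingale there), concluding that $M$ is a local martingale on all of $[0,t]$ and that $M(t)=f(Y(t))$. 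Without this the proposition as stated is not proved, and the downstream identification $\E_x[f(Y_t)]=\int u_t(x,y)f(y)\,m^{(n)}_\theta(dy)$ would not follow.
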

\begin{proof}[Proof of Proposition \ref{Prop:TheGenerator}]
	 Since $X$ solves the martingale problem, and $F^{(i)}\in D_n$, $Y$ is a semi-martingale. For $f\in C^2_0(\Weyl{n})$, It\^o's formula gives 
	\begin{align*}
		&\E_x[f(Y(t))]=\\
		&\quad f(x) + \sum_{i=1}^n\E_x\left[\int_0^t \frac{\partial f}{\partial x_i} (Y(s)) dY^i(s)\right] +\frac{1}{2} \sum_{i,j=1}^n \E_x\left[\int_{0}^{t} \frac{\partial^2 f}{\partial x_i \partial x_j}(Y(s)) d\langle Y^i,Y^j \rangle(s)\right].
	\end{align*}
	We need to calculate the quadratic covariations for $Y$. Denoting $P_i = \{A \subset \{1,..., n\} | \ |A|= n-i+1\}$, we  can define $f_A: \R^n \to \R$ as $f_A(x)= \max_{a \in A} x_a$ and $g_i: \R^{P_i}\to \R$ as $g((y_A)_{A\in P_i})= \min_{A \in P_i} y_A$. Then $F^{(i)}(x) = g_i \left( (f_A(x))_{A \in P_i} \right)$, in $f_A$ is a convex function and $g_i$ is a concave function. Referring to \cite[Proposition 8]{SemiMartingaleDecompForConvexFunctions} we can write the local martingale part of $F^{(i)}(X)$ in terms of a linear combination of stochastic integrals with respect to the $X^i$. In particular We can write
	\begin{align*}
		f_A(X_t) = f_A(x) + \sum_{a \in A} \int_0^t \mathbbm{1}_{B_a^A} (X_s) dX^a_s + C_t.
	\end{align*}
	Where $C_t$ has finite variation, and $B_a^A = \{ x: \ \min_{k\in A}\{ k: \ \max_{j\in A} x_j = x_k \} = a \}$. Notice that for a fixed $x$ and $A$ there is only one $a$ such that $\mathbbm{1}_{B_a^A}(x)$ is non zero.\par{}

	Now we put an ordering on the set $P_i$, the specific ordering does not matter we just need to be able to minimise over the indices of elements in $\R^{P_i}$. Suppose $A, B \in P_i$ are distinct, we first define $(a_j)_{j=1}^{n-i+1}$ and $(b_j)_{j=1}^{n-i+1}$ to be the elements of $A$ and $B$ respectively in increasing order. We say $A<B$ if for $l:= \min \{k \in\N: \ b_k\neq a_k, \ 1\leq k \leq n-i+1\}$, we have $a_l< b_l$, if instead $b_l< a_l$ then $B<A$ so this is a total ordering for $P_i$. Supposing $Z$ is a semi-martingale taking values in $\R^{P_i}$, with decomposition $Z_t = Z_0 + M_t + K_t$, where $M$ is a local martingale, and $K$ a process with finite variation. Then using that for $y\in \R^{P_i}$ $-g_i(-y)= -\max_{A\in P_i} (- y_A)$, we have
	\begin{align*}
		-g_i (-Z_t) = -g_i(-Z_0) + \sum_{A\in P_i}\int_0^t \mathbbm{1}_{B_A} (Z_s) dZ^A_s + D_t.
	\end{align*}
	Where $D$ has finite variation, $B_A:= \{ z \in \R^{P_i}: \ \min\{B\in P_i: \ \inf_{C\in P_i} z_C = z_B \} = A  \}$ with the minimum understood in terms of the ordering we just defined on $P_i$. Notice that for a fixed $z$ there is only one $A$ such that $\mathbbm{1}_{B_A}(z)$ is non zero. The local martingale part of $Y^i = g_i((f_A(X))_{A\in P_i})$ is given by
	\begin{align*}
		\sum_{A \in P_i} \sum_{a \in A} \int_0^t \mathbbm{1}_{B^A_a}(X_s) \mathbbm{1}_{B_A}((f_C(X_s))_{C\in P_i}) dX^a_s. 
	\end{align*}
	Giving that the quadratic covariations are
	\begin{align*}
		\langle Y^i, Y^j \rangle_t = \sum_{\substack{A \in P_i,\\ B\in P_j}} \sum_{\substack{a \in A,\\ b\in B}} \int_0^t  \mathbbm{1}_{B^A_a}(X_s) \mathbbm{1}_{B_A}((f_C(X_s))_{C\in P_i}) \mathbbm{1}_{B^B_b}(X_s) \mathbbm{1}_{B_B}((f_C(X_s))_{C\in P_i}) \mathbbm{1}_{\{X^a_s = X^b_s\}}ds.
	\end{align*}
	Recall $f_C(x) = \max_{c \in C} x_c$ so that $\mathbbm{1}_{B_A}((f_C(x))_{C\in P_i})$ is non zero precisely when $A$ is the subset of $\{1,...,n\}$ with indices corresponding to the first $i-1$ largest coordinates of $X_s$ removed, call this set $A_i(X_s)$. Then $\mathbbm{1}_{B_a^{A_i(X_s)}}(X_s)$ is non zero if and only if $a$ is the smallest element of $\{1,..., n\}$ such that $X^a_s$ is equal to the $i$th largest coordinate of $X_s$, i.e. $Y^i_s$. Hence we have
	\begin{align*}
		\langle Y^i, Y^j \rangle_t =& \int_0^t \mathbbm{1}_{\{Y^i_s = Y^j_s\}} ds.
	\end{align*}

	The martingale problem also tell us that for each $i$
	\begin{equation*}
		Y^i(t) - \int_0^t \mathcal{A}^\theta_n F^{(i)}(X(s))ds
	\end{equation*}
	is a martingale. Recall $f\in C^2_0(\Weyl{n})$, thus $\Partial{f}{x_i}$ is bounded on $\Weyl{n}$ so that the stochastic integral with respect to the martingale part of $Y$ is a true martingale. Thus we can rewrite the expectation as
	\begin{align}
		\E_x[f(Y(t))]= f(x) &+ \sum_{i=1}^n\E_x\left[\int_0^t \frac{\partial f}{\partial x_i}(Y(s))\mathcal{A}^\theta_n F^{(i)}(X(s))ds\right] \nonumber \\
		&+ \frac{1}{2} \sum_{i,j=1}^n \E_x\bigg[\int_{0}^{t} \frac{\partial^2 f}{\partial x_i \partial x_j}(Y(s)) \mathbbm{1}_{\{ Y^i(s)=Y^j(s) \}}ds\bigg]. \label{Mean of f(X)}
	\end{align}
	By evaluating $\mathcal{A}^\theta_n F^{(i)}$, and then differentiating equation (\ref{Mean of f(X)}) in time, we can determine the generator of $Y$.\par{}
	Let $x\in \R^n$ and denote $y= F(x)\in \Weyl{n}$. We have
	\begin{align}
		\mathcal{A}^\theta_n F^{(i)}(x)&=\sum_{v\in\mathcal{V}(x)} \theta(v) \nabla_v F^{(i)}(x). \label{MartingaleProblemGenerator}
	\end{align}
	Where $\nabla_v$ is the directional derivative in direction $v$. Recall $v\in \mathcal{V}(x)$ is defined by the disjoint subsets $I,J \subset \{1,..., n\}$ such that $I\cup J \in \pi(x)$. With $v_i = 1$ if $i\in I$, $-1$ if $i \in J$, and $0$ otherwise. For each element, $B$, of the partition $\pi(x)$ there is a corresponding element, $C$, of the partition $\pi(y)$ such that for each $i \in B$ there is a $j_i \in C$ with $x_i=y_{j_i}$, and the $j_i$ can be chosen so that the mapping $i\mapsto i_j$ is injective. Letting $C$ denote the element of $\pi(y)$ corresponding to $I\cup J\in \pi(x)$, it's clear that if $i \notin C$ then $\nabla_v F^{(i)}(x) =0$, and for $i\in C$ the derivative is either $1$ or $-1$ depending only on the sizes of $I$ and $J$. Since $y\in \Weyl{n}$ there is an $a \in \{1,..., n\}$ and $m>0$ such that $C=\{a,..., a+ m-1\}$. Hence line (\ref{MartingaleProblemGenerator}) is equal to
	\begin{align*}
		&\sum_{k=0}^{m} \binom{m}{k} \theta(k, m-k) \sign(k-i+a-1).
	\end{align*}
	Where $\sign(0)$ is taken to be $1$ here. In particular this means that when $y_i$ is distinct from all other coordinates, the above equals $\theta(1, 0) - \theta(0, 1) = \beta =0$.
	\begin{align}\label{The MP Generator}
		\sum_{i=1}^n& \frac{\partial f}{\partial y_i} (y) \mathcal{A}^\theta_n F^{(i)}(x) \nonumber\\
		&=\sum_{C\in \pi(y)} \sum_{i \in C} \frac{\partial f}{\partial y_i}(y) \sum_{k=0}^{|C|} \binom{|C|}{k} \theta(k, |C|-k) \sign(k-i+\inf C-1).
	\end{align}
	Where each of the partial derivatives are evaluated at $y$. Putting (\ref{The MP Generator}) into (\ref{Mean of f(X)}) we can compute the limit
	\begin{align*}
		&\lim\limits_{t\to 0}\frac{1}{t} \left( \E_x \left[f(Y(t))\right] - f(x) \right)\\
		=& \lim\limits_{t\to 0}\frac{1}{2t}\int_0^t \E_x[ \Delta f(Y(s))]ds \\
		&+ \frac{1}{t}\int_0^t\E_x \bigg[\sum_{C\in \pi(y)} \sum_{i \in C} \frac{\partial f}{\partial y_i}(y) \sum_{k=0}^{|C|} \binom{|C|}{k} \theta(k, |C|-k) \sign(k-i+\inf C-1) \bigg] \\
		& \quad \quad + \frac{1}{2} \sum_{i\neq j} \E_x \left[ \frac{\partial^2 f}{\partial y_i \partial y_j} (Y(s)) \mathbbm{1}_{\{Y^i(s)=Y^j(s)\}}\right] ds.
	\end{align*}

	In particular if we have $f\in \mathcal{D}_\theta$ then the last two lines cancel, leaving $\frac{1}{2t}\int_0^t \E_x[\Delta f(Y(s))] ds$. Recalling that $F:\R^n \to \Weyl{n}$ is continuous and $Y(t)=F(X(t))$, we can use the Feller property of $X$. Since $\Delta f\in C_0(\Weyl{n})$, $\Delta f\circ F \in C_0(\R^n)$ (since $F(x) \to \infty$ as $|x| \to \infty$). Hence $\frac{1}{2t}\int_0^t \E_x[\Delta f(Y(s))] ds$ converges uniformly to $\frac{1}{2}\Delta f(x)$ as $t\to 0$ and thus for $f\in \mathcal{D}_\theta$
	\begin{align*}
		\lim\limits_{t\to 0}\frac{1}{t} \left( \E_x \left[f(Y(t))\right] - f(x) \right)= \frac{1}{2} \Delta f(x), \ \text{with respect to the uniform norm.}
	\end{align*}
	Hence if $f\in \mathcal{D}_\theta$ then it is in the domain of the generator of $Y$ and $\mathcal{G}_\theta f = \frac{1}{2}\Delta f$.
\end{proof}
We now use the above calculations to prove Proposition \ref{Prop:TheBackwardsEquation}.
\begin{proof}{Proof of Proposition \ref{Prop:TheBackwardsEquation}} 
	By applying It\^o's formula as we did in the preceding proof, we see that for any function $g$ satisfying the assumptions of the proposition, there is an adapted process $(M(u))_{u\in [0,t]}$ that is a continuous local martingale on $[0,s]$ for each $s<t$ such that,
	\begin{align*}
		g(t-s, Y(s)) &= -\int_0^s \Partial{g}{t} (t-u, Y(u)) du + \int_0^s \Delta g(t-u, Y(u)) du + M(s),\\
		&= M(s).
	\end{align*}
	Now we just need to show that $M(s)$ is a local martingale on $[0, t]$. Since $g(t, \cdot) \to f$ uniformly as $t\to 0$ we have
	\begin{align*}
		|M(s)| = | g(t-s, Y(s)) | &\leq \underbrace{\| g(t-s, \cdot) - f \|_{\infty}}_{\to 0 \text{ as } s\to t} + \|f\|_{\infty}.
	\end{align*}
	Thus there is an $\epsilon >0$ such that $M(s)$ is bounded on $[t-\epsilon, t]$, giving that $M(s)- M(t-\epsilon)$ is a martingale on $[t-\epsilon, t]$. Hence $M(s)$ is a local martingale on $[0, t]$. Clearly $M(0) = g(t, x)$, and $M(t)= f(Y(t))$ since
	\begin{align*}
		|M(s) - f(Y(t))| = |g(t-s, Y(s)) - f(Y(t))| \leq \|g(t-s, \cdot) - f \|_{\infty} + |f(Y(s))- f(Y(t))|.
	\end{align*}
	The first term vanishes as $s\to t$ due to the uniform convergence of $g$ to $f$, and the second almost surely due to the continuity of $f$ and $Y$.
\end{proof}
Hence we can find the transition probabilities of $Y$ by looking for the Green's function for (\ref{HeatEq}), providing solutions are sufficiently regular to make $g(t-s, Y(s))$ a true martingale. In general it's not clear that there should be solutions to (\ref{HeatEq}), it's not even clear whether $\mathcal{D}_\theta$ is non-trivial. In the rest of the paper we focus on the case of a uniform characteristic measure: $\nu = \frac{1}{2} \theta \mathbbm{1}_{[0,1]}dx$.  Since we know $\nu$ we can calculate the constants $\theta(k,l)$, by definition we have
\begin{align}
\theta(k,l)&= \frac{\theta}{2} \int_0^1 x^{k-1}(1-x)^{l-1} dx,\nonumber \\
&= \frac{\theta}{2} \frac{(l-1)!(k-1)!}{(k+l-1)!}. \label{ThetaFormula}
\end{align}
In this case we also have $\theta(k,0)= \theta(0,k)$ for all $k \in \N$. Hence, for the characteristic measure $\nu = \frac{1}{2} \theta \mathbbm{1}_{[0,1]}dx$, (\ref{Complicated BCs-1}) can be rewritten as
\begin{align}
	&\frac{1}{2}\sum_{\substack{a\leq i,j \leq b: \\ i\neq j}} \frac{\partial^2 f}{ \partial x_i \partial x_j} (x) = -\frac{\theta}{2}\sum_{i = a}^{b} \frac{\partial f}{\partial x_i}(x) a(b-a+1, i), \quad \text{whenever }x_a=x_b. \label{Complicated BCs}
\end{align}
Where the coefficients are defined
\begin{align}
		a(b-a+1, i-a+1) := \sum_{k=1}^{b-a} \frac{b-a+1}{k(b-a+1-k)} \sign(k-i+a-1). \label{BCs Coefficients}
\end{align}
In the following section this particular form of the constants $\theta(k,l)$ will allow us to replace the conditions in line (\ref{Complicated BCs}) with much simpler conditions. In particular find each of the second derivatives in terms of the first derivatives.
\begin{remark}
	If we try to derive the Kolmogorov Backwards equation for the original process $X$, we run into problems. Namely that the action of the generator of $X$ within the set of $C^2_0$ functions does not determine the process. We can see this by considering a pair of sticky Brownian motions with parameter $\theta>0$ $X^1, X^2$. We have by It\^o's formula for all $f\in C^2_0(\R^2)$
	\begin{align*}
		\E_x[f(X^1(t), X^2(t))] =& f(x_1, x_2) + \frac{1}{2} \int_0^t \E_x[\Delta f(X^1(s), X^2(s))]ds\\
		&+ \int_0^t \E_x[\mathbbm{1}_{\{X^1(s)= X^2(s)\}} \scndmxPartial{f}{x_1}{x_2}(X^1(s), X^2(s)) ] ds.
	\end{align*}
	So that $f$ is in the domain of the generator if $\scndmxPartial{f}{x_1}{x_2}(x_1, x_2) = 0$ whenever $x_1 =x_2$. But this does not depend on the parameter $\theta$, and thus the generator restricted to this set cannot determine the law of the sticky Brownian motions.
\end{remark}
\subsection{Rearranging the Boundary Conditions}

Henceforth we consider the case where the characteristic measure is uniform, i.e. $\nu(dx)= \frac{\theta}{2}\mathbbm{1}_{[0,1]}dx$. Let's first note that if we set $|C|=2$ in (\ref{Complicated BCs}) we see $f\in \mathcal{D}_\theta$ satisfies
\begin{equation*}
	\frac{\partial^2 f}{\partial x_a \partial x_{a+1}} = \theta \left( \frac{\partial f}{\partial x_{a+1}}-\frac{\partial f}{\partial x_{a}} \right), \quad \text{whenever } x_a=x_{a+1}.
\end{equation*}
We will show that we can replace the full boundary conditions with equivalent ones of the above form, that is
\begin{lemma}
	\begin{equation*}
	\mathcal{D}_\theta = \left\{ f\in C^2_0(\Weyl{n})| \ \forall \ 1\leq a < b \leq n, \ \text{if } x_a=x_b \text{ then}\ \frac{\theta}{b-a}\left( \Partial{f}{x_b}- \Partial{f}{x_a}\right) = \scndmxPartial{f}{x_a}{x_b} \right\}.		
	\end{equation*}
\end{lemma}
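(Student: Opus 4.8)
The plan is to show the two sets coincide by proving each contains the other. The inclusion $\mathcal{D}_\theta \subseteq \{\dots\}$ is the substantive direction and will require an induction on $b-a$; the reverse inclusion should be almost immediate, since if $f$ satisfies the pairwise conditions $\scndmxPartial{f}{x_a}{x_b} = \tfrac{\theta}{b-a}(\Partial{f}{x_b}-\Partial{f}{x_a})$ whenever $x_a=x_b$, then on the face $\{x_a=\dots=x_b\}$ (where a fortiori $x_i=x_j$ for all $a\le i<j\le b$) we can substitute each mixed second derivative in the left-hand side of (\ref{Complicated BCs}) and check the resulting identity in the first derivatives matches the right-hand side, i.e. that
\begin{equation*}
	\sum_{a\le i<j\le b} \tfrac{\theta}{j-i}\left(\Partial{f}{x_j}-\Partial{f}{x_i}\right) = -\tfrac{\theta}{2}\sum_{i=a}^b \Partial{f}{x_i}\, a(b-a+1,i).
\end{equation*}
This is a purely combinatorial identity about the coefficients $a(b-a+1,i)$ in (\ref{BCs Coefficients}): collecting the coefficient of $\Partial{f}{x_i}$ on the left one gets $\sum_{j>i}\tfrac{1}{j-i} - \sum_{j<i}\tfrac{1}{i-j} = \sum_{k=1}^{b-i}\tfrac1k - \sum_{k=1}^{i-a}\tfrac1k$, and one checks this equals $-\tfrac12 a(b-a+1,i)$ using the partial-fraction identity $\tfrac{m}{k(m-k)} = \tfrac1k + \tfrac1{m-k}$ with $m=b-a+1$, together with the definition of $\sign$. (The case $i=a$ or $i=b$, where $\sign(0)=1$ enters, needs a small separate check.)

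For the forward inclusion, the idea is to extract the pairwise conditions from the family (\ref{Complicated BCs}) one size at a time. Fix $f\in\mathcal{D}_\theta$ and argue by induction on $m=b-a$. The base case $m=1$ is exactly the $|C|=2$ instance of (\ref{Complicated BCs}) already noted in the text. For the inductive step, consider a point $x$ with $x_a=x_b$ but impose nothing on the coordinates strictly between: we cannot directly apply (\ref{Complicated BCs}) for the block $\{a,\dots,b\}$ unless $x_a=\dots=x_b$. So instead I would work on the face $\{x_a=\dots=x_b\}$, where (\ref{Complicated BCs}) holds for the full block and also, by the induction hypothesis applied to all sub-blocks, all the pairwise relations for pairs $(i,j)$ with $j-i<m$ hold there too. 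Subtracting the sum of those known pairwise relations from (\ref{Complicated BCs}) for the block $\{a,\dots,b\}$ should isolate a single new equation; the combinatorial identity from the previous paragraph guarantees that after this subtraction all the $\scndmxPartial{f}{x_i}{x_j}$ with $j-i<m$ cancel against the right-hand side, leaving precisely a multiple of $\scndmxPartial{f}{x_a}{x_b} - \tfrac{\theta}{b-a}(\Partial{f}{x_b}-\Partial{f}{x_a}) = 0$ on the face $\{x_a=\dots=x_b\}$. Finally one promotes this from the small face to the larger face $\{x_a=x_b\}$: by continuity of the second derivatives it suffices to note that $\{x_a=\dots=x_b\}$ is dense in $\{x_a=x_b\}\cap\overline{\mathbb{W}^n}$... which is false, so instead one repeats the whole argument with $a,b$ replaced by the endpoints of each maximal run of equal coordinates inside a general point of $\{x_a=x_b\}$, i.e. one observes that any $x\in\overline{\mathbb{W}^n}$ with $x_a=x_b$ automatically has $x_a=x_{a+1}=\dots=x_b$ because the coordinates are ordered. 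That last observation is in fact the crucial simplification: in the Weyl chamber $x_a=x_b$ with $a<b$ forces $x_a=\dots=x_b$, so there is no gap between the two faces at all.

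The main obstacle is the inductive disentangling of the coefficient identities: one must verify that the linear system expressing the mixed second derivatives $\{\scndmxPartial{f}{x_i}{x_j}: a\le i<j\le b\}$ on the face in terms of the first derivatives, as encoded across all the block conditions (\ref{Complicated BCs}) for sub-blocks of $\{a,\dots,b\}$, is triangular with respect to the ordering by $j-i$ and has the pairwise relations as its "diagonal". Concretely, the step that needs care is checking that when one writes (\ref{Complicated BCs}) for the block $\{a,\dots,b\}$ and substitutes the induction hypothesis for every pair with $j-i<m$, the first-derivative terms reorganise exactly into $-\tfrac{\theta}{b-a}(\Partial{f}{x_b}-\Partial{f}{x_a}) + \tfrac12\scndmxPartial{f}{x_a}{x_b}\cdot(\text{something nonzero})$; this is the telescoping of the harmonic-type sums $\sum 1/k$ hidden in $a(m,i)$, and getting the endpoint terms (where $\sign(0)=1$) right is the fiddly part. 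Everything else — passing derivatives around, using $C^2_0$ regularity, and the density/ordering remark — is routine.
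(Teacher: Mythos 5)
Your proposal follows essentially the same route as the paper's proof: both rely on the crucial observation that in $\Weyl{n}$ the condition $x_a=x_b$ forces $x_a=\dots=x_b$ (so there is no ``gap between the two faces''), both proceed by induction on the block size $m=b-a$, and both reduce the matter to the telescoping partial-fraction identity $\frac{m}{k(m-k)}=\frac1k+\frac1{m-k}$ for the coefficients $a(m,i)$. The only slip is a sign in your coefficient extraction: the coefficient of $\Partial{f}{x_i}$ in $\sum_{p<q}\frac{\theta}{q-p}\bigl(\Partial{f}{x_q}-\Partial{f}{x_p}\bigr)$ is $\theta\bigl(\sum_{j<i}\frac1{i-j}-\sum_{j>i}\frac1{j-i}\bigr)=\theta\bigl(\sum_{k=1}^{i-a}\frac1k-\sum_{k=1}^{b-i}\frac1k\bigr)$, the negative of what you wrote; with the correct sign this does equal $-\tfrac12\,a(b-a+1,\,i-a+1)$ exactly as required.
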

\begin{remark}
	Essentially we are solving for the second derivatives of functions in $\mathcal{D}_\theta$, given their first derivatives. Whilst this should be possible for any characteristic measure, our method relies on the special form of the parameters $\theta(k, l)$ in the case of the uniform characteristic measure.
\end{remark}
\begin{proof}
Note that because we are in the Weyl chamber, $x_a=x_b$ implies $x_a=x_{a+1}=\dots=x_b$. Thus the condition for $x_a=...=x_{b-1}$ must also hold when $x_a=...=x_{b}$ etc. We prove the original conditions (\ref{Complicated BCs}) are equivalent to the new conditions, using an inductive argument. That is we prove that the new condition for $x_a=x_{b}$ is equivalent to the old conditions, assuming the new conditions for $x_c= x_d$ are satisfied for all $a\leq c< d \leq b$ such that $d-c < b-a$. \par{}

Hence we assume that the boundary conditions (\ref{Complicated BCs}) for $x_c = x_d$ are satisfied for all $a\leq c< d\leq b$, and that for all $a\leq c< d\leq b$ with $d-c< b-a$
\begin{equation}\label{BCs:assumption}
		\frac{\partial^2 f}{\partial x_c \partial x_{d}} (x) = \frac{\theta}{d-c} \left( \frac{\partial f}{\partial x_{d}}(x)-\frac{\partial f}{\partial x_{c}}(x) \right), \quad \text{if } x_c=...=x_{d}.
\end{equation}
Without loss of generality we can relabel $(x_a,...,x_{b})$ as $(x_1,...,x_{m})$, for $m=b-a+1$. Then for $u\in \mathcal{D}_\theta$ we can rewrite the sum over mixed derivatives
\begin{equation*}
	\frac{1}{2}\sum_{i\neq j} \scndmxPartial{f}{x_i}{x_j} = \frac{1}{2}\sum_{\substack{i\neq j \\ i,j \neq m}}\scndmxPartial{f}{x_i}{x_j} + \sum_{k=2}^{m-1}\scndmxPartial{f}{x_k}{x_m} +\scndmxPartial{f}{x_1}{x_m}.
\end{equation*}

Using equations (\ref{Complicated BCs}) and (\ref{BCs:assumption}) we have the equality, when $x_1=...=x_{m}$,
\begin{align}
 \frac{\partial^2 f}{\partial x_1 \partial x_m} =& -\frac{\theta}{2}\sum_{j=1}^m \frac{\partial f}{\partial y_j}(y) \sum_{k=1}^{m-1} \frac{m}{k(m-k)} \sign(k-j) -\sum_{i<j} \frac{\theta}{j-i}\left( \Partial{f}{x_j} - \Partial{f}{x_i} \right)\nonumber \\
 & + \frac{\theta}{m-1}\left( \Partial{f}{x_m} - \Partial{f}{x_1} \right). \label{Boundary Condition Induction}
\end{align}
We have the following equalities
\begin{align*}
	\sum_{i<j} \frac{\theta}{j-i}\left( \Partial{f}{x_j} - \Partial{f}{x_i} \right) =& \sum_{j=2}^m \sum_{i=1}^{j-1} \frac{\theta}{j-i} \Partial{f}{x_j} - \sum_{j=2}^{m-1} \sum_{i=1}^{j-1} \frac{\theta}{j-i} \Partial{f}{x_i} \\
	=& \sum_{j=2}^m \sum_{i=1}^{j-1} \frac{\theta}{j-i} \Partial{f}{x_j} + \sum_{j=1}^{m-1} \sum_{i=j+1}^{m} \frac{\theta}{j-i} \Partial{f}{x_j}\\
	=& \theta \sum_{j=1}^m \Partial{f}{x_j} \sum_{i \neq j} \frac{1}{j-i}.
\end{align*}
So that we are finished if for each $j\in \{1,..., n\}$
\begin{align*}
	\frac{1}{2}\sum_{k=1}^{m-1} \frac{m}{k(m-k)} \sign(k-j) + \sum_{i \neq j} \frac{1}{j-i} = 0.
\end{align*}
Noting that we have $\frac{m}{k(m-k)} = \frac{1}{k} + \frac{1}{m-k}$, we get
\begin{align}
	\frac{1}{2}\sum_{k=1}^{m-1} \frac{m}{k(m-k)} \sign(k-j) =& \frac{1}{2} \sum_{k=j}^{m-j} \frac{m}{k(m-k)} \nonumber\\
	=& \frac{1}{2} \sum_{k=j}^{m-j} \left(\frac{1}{k} + \frac{1}{m-k} \right) \nonumber\\
	=& \sum_{k=j}^{m-j} \frac{1}{k}. \label{Equation: Complicated BCs Coefficients}
\end{align}
In addition
\begin{align*}
	\sum_{i\neq j} \frac{1}{j-i} =& \sum_{i=1}^{j-1} \frac{1}{j-i} - \sum_{i=j+1}^{m} \frac{1}{i-j} \\
	=& \sum_{k=1}^{j-1} \frac{1}{k} - \sum_{k=1}^{m-j} \frac{1}{k} = -\sum_{k=j}^{m-j}\frac{1}{k}.
\end{align*}
With the convention that, when $a<b$, $\sum_{k=b}^a c_k = - \sum_{k=a}^b c_k$. Putting this into line (\ref{Boundary Condition Induction}) we see
\begin{align*}
	\scndmxPartial{f}{x_1}{x_m} = \frac{\theta}{m-1}\left( \Partial{f}{x_m} - \Partial{f}{x_1} \right).
\end{align*}
As noted previously for $m=2$ both conditions are equivalent, so by induction the old conditions imply the new conditions. Finally it's easy to see that assuming the new conditions hold on $x_c= x_d$ for all $a\leq c<d\leq b$ and the old conditions on $x_c=x_d$ for all $a\leq c< d\leq b$ such that $d-c< b-a$, we can follow the above argument in reverse to prove the new conditions imply the old ones.  Hence the equivalence of the two sets of conditions is proven.
\end{proof}
As a consequence we can reframe proposition \ref{Prop:TheBackwardsEquation}. For $g\in C^2_0(\R_{> 0}\times \Weyl{n})$ satisfying the PDE
\begin{equation*}
	\begin{cases}
		\Partial{g}{t} = \frac{1}{2}\Delta g, \ \text{for } x\in \Weyl{n};\\
		\scndmxPartial{u}{x_a}{x_b}=\frac{\theta}{b-a} \left(\Partial{g}{x_b}- \Partial{g}{x_a}\right), \ \text{if } b>a \ \text{and } x_a=x_b.
	\end{cases}
\end{equation*}
with initial condition $g(t,\cdot)\to f$ uniformly as $t\to0$, where $f\in C_b(\Weyl{n})$, we have $g(t,x)= \E_x\left[f(Y(t))\right].$ This rearrangement will simplify the combinatorics required to show that we can solve the PDE with the Bethe ansatz.

\subsection{Invariant Measure}\label{Section:StationaryMeasure}
In this section we prove an integration by parts formula for the generator of the ordered $n$-point motion of the Howitt-Warren flow with uniform characteristic measure. First we introduce some useful notation. \par{}

Recall that each $\pi \in \Pi_n$ there is a natural bijection between $\Weylo{|\pi|}$ and $\delWeyl{n}{\pi}$, $I^{\pi}:\delWeyl{n}{\pi} \to\Weylo{|\pi|}$. For a function $u: \Weyl{n} \to \R$ denote by $u_\pi : \Weylo{|\pi|}\to \R$ the function defined by $u_{\pi}(x):=u\circ (I^\pi)^{-1}(x)$ for all $x \in \Weylo{|\pi|}$. For $u,v\in C^1(\Weyl{n})$ such that the below integrals converge we define
\begin{equation}\label{StickyInnerProduct}
	\left(u, v \right)_\theta := \sum_{\pi \in \Pi_n} \theta^{|\pi|-n} \left(\prod_{\pi_\iota \in \pi} \frac{1}{|\pi_\iota|} \right) \int_{\Weyl{|\pi|}} \nabla u_\pi \cdot \nabla v_\pi dx.
\end{equation}
Now we can state the integration by parts formula, recalling $m^{(n)}_\theta$ from definition \ref{ReferenceMeasureOfTheTdensity}
\begin{proposition}\label{Lemma:Stationary}
Suppose $u\in\mathcal{D}_\theta$ and $v\in C^1_b(\Weyl{n})$, such that there exists $a, c>0$ such that $|\nabla u(x)| \leq a e^{-c|x|}$. We have
\begin{equation}\label{Energy form}
	\int_{\Weyl{n}} \Delta u(x) v(x) m^{(n)}_\theta(dx)	=-\left(u , v\right)_\theta,
\end{equation}
whenever the above integrals are finite.
\end{proposition}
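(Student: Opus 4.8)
The plan is to expand both sides of~(\ref{Energy form}) as sums over $\pi\in\Pi_n$ of integrals over the cones $\Weylo{|\pi|}$, to integrate by parts on each cone, and to check that everything not already appearing in $(u,v)_\theta$ cancels across the strata $\delWeyl{n}{\pi}$. Writing $c_\pi:=\theta^{|\pi|-n}\prod_{\pi_\iota\in\pi}\frac{1}{|\pi_\iota|}$, Definition~\ref{ReferenceMeasureOfTheTdensity} gives $\int_{\Weyl{n}}\Delta u\, v\, m^{(n)}_\theta(dx)=\sum_{\pi}c_\pi\int_{\Weylo{|\pi|}}(\Delta u)_\pi v_\pi\,dz$, where $(\Delta u)_\pi=(\Delta u)\circ(I^\pi)^{-1}$ and $(I^\pi)^{-1}$ is the linear map duplicating the coordinate $z_j$ over the block $\pi_j$. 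The first point is that $(\Delta u)_\pi\neq\Delta(u_\pi)$: the chain rule gives $\Delta(u_\pi)=\sum_{j}\sum_{\alpha,\beta\in\pi_j}(\partial^2_{x_\alpha x_\beta}u)_\pi=(\Delta u)_\pi+\sum_j\sum_{\alpha\neq\beta\in\pi_j}(\partial^2_{x_\alpha x_\beta}u)_\pi$. Since each block $\pi_j$ is an interval of integers, any two distinct $\alpha,\beta\in\pi_j$ satisfy $x_\alpha=x_\beta$ on $\delWeyl{n}{\pi}$, so the equivalent (rearranged) boundary conditions proved above apply and convert the correction into the first-order quantity $R_\pi:=2\theta\sum_j\sum_{\alpha<\beta\in\pi_j}\frac{1}{\beta-\alpha}(\partial_{x_\beta}u-\partial_{x_\alpha}u)_\pi$.

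Next I would apply Green's first identity on each $\Weylo{|\pi|}$. Each is a convex polyhedral cone (in particular a Lipschitz domain) whose boundary is, outside a set of codimension at least two, the union of the faces $\{z_j=z_{j+1}\}$; integrating over $\Weylo{|\pi|}\cap B_R$ and letting $R\to\infty$ --- the spherical contribution vanishing because $\nabla u_\pi$ decays exponentially while $v_\pi$ is bounded --- yields
\[
\int_{\Weylo{|\pi|}}\Delta(u_\pi)\,v_\pi\,dz=-\int_{\Weylo{|\pi|}}\nabla u_\pi\cdot\nabla v_\pi\,dz+\sum_{j=1}^{|\pi|-1}\int_{\{z_j=z_{j+1}\}}\big(\partial_{z_{j+1}}u_\pi-\partial_{z_j}u_\pi\big)v_\pi\,dw,
\]
the surface-measure Jacobian $\sqrt2$ cancelling the $1/\sqrt2$ in the unit outward normal $\tfrac{1}{\sqrt2}(e_{j+1}-e_j)$. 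Multiplying by $c_\pi$ and summing over $\pi$, the energy integrals reassemble (up to Lebesgue-null sets) into exactly $-(u,v)_\theta$ as in~(\ref{StickyInnerProduct}), so it remains to prove
\[
\sum_{\pi\in\Pi_n}c_\pi\Big[\sum_{j=1}^{|\pi|-1}\int_{\{z_j=z_{j+1}\}}\big(\partial_{z_{j+1}}u_\pi-\partial_{z_j}u_\pi\big)v_\pi\,dw-\int_{\Weylo{|\pi|}}R_\pi v_\pi\,dz\Big]=0.
\]

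The last step is the bookkeeping producing this cancellation. Via $(I^\pi)^{-1}$ the face $\{z_j=z_{j+1}\}$ of $\Weylo{|\pi|}$ is identified with the stratum $\delWeyl{n}{\pi'}$ for the partition $\pi'$ obtained by merging $\pi_j$ and $\pi_{j+1}$; conversely the partitions $\pi$ contributing a face integral to a given $\pi'$ are exactly those splitting some block $\pi'_l=\{a,\dots,b\}$ of $\pi'$ into adjacent sub-blocks $\{a,\dots,c\}$ and $\{c+1,\dots,b\}$ with $c\in\{a,\dots,b-1\}$, and for such a $\pi$ one has $c_\pi=c_{\pi'}\,\theta\,\frac{m}{(c-a+1)(b-c)}$ with $m=b-a+1$, $v_\pi=v_{\pi'}$ on the face, and $\partial_{z_{j+1}}u_\pi-\partial_{z_j}u_\pi=\sum_{\gamma=c+1}^{b}\partial_{x_\gamma}u-\sum_{\gamma=a}^{c}\partial_{x_\gamma}u$ there. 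Regrouping the previous display stratum by stratum and then block by block, the claim reduces to the scalar identity
\[
\sum_{c=a}^{b-1}\frac{m}{(c-a+1)(b-c)}\Big(\sum_{\gamma=c+1}^{b}\partial_{x_\gamma}u-\sum_{\gamma=a}^{c}\partial_{x_\gamma}u\Big)=2\sum_{a\le\gamma<\delta\le b}\frac{1}{\delta-\gamma}\big(\partial_{x_\delta}u-\partial_{x_\gamma}u\big),
\]
which, using $\frac{m}{p(m-p)}=\frac1p+\frac1{m-p}$ and $\sum_{p=1}^{m-1}\big(\frac1p+\frac1{m-p}\big)=2H_{m-1}$ (with $H_r:=\sum_{s=1}^{r}\frac1s$), reduces to checking that the coefficient of $\partial_{x_\gamma}u$ on each side equals $2H_g-2H_{m-1-g}$ where $g:=\gamma-a$ --- essentially the computation already carried out in~(\ref{Equation: Complicated BCs Coefficients}). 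The main obstacle is not analytic --- Green's identity on these cones is routine given the decay of $\nabla u$ --- but this combinatorial matching: one must correctly pair the boundary contribution of each higher-dimensional stratum with the self-correction $R_{\pi'}$ on the stratum immediately below it, keeping the weights $c_\pi$ aligned so that the cancellation is exact.
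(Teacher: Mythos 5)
Your proof is correct and follows essentially the same strategy as the paper's: expand the measure $m^{(n)}_\theta$ over the strata $\delWeyl{n}{\pi}$, correct for the difference between $(\Delta u)_\pi$ and $\Delta(u_\pi)$ using the boundary conditions, apply Green's identity on each Weyl chamber $\Weylo{|\pi|}$, and cancel boundary terms against correction terms by re-indexing faces through the block-merging bijection $\pi\mapsto\pi'$.

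The one substantive variation is that you express the correction $R_\pi$ via the \emph{rearranged} boundary conditions $\scndmxPartial{u}{x_a}{x_b}=\tfrac{\theta}{b-a}(\Partial{u}{x_b}-\Partial{u}{x_a})$, whereas the paper plugs the original conditions (\ref{Complicated BCs}) in directly, so that its correction term comes out already written with the coefficients $a(\cdot,\cdot)$ of (\ref{BCs Coefficients}); the boundary sum over merged partitions then reproduces these same coefficients and the cancellation is essentially definitional. Your route defers that combinatorics to the end, reducing it to the harmonic-number identity with coefficient $2H_g-2H_{m-1-g}$ on both sides, which is exactly the computation of (\ref{Equation: Complicated BCs Coefficients}) done once more. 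Both versions are correct; the paper's is slightly tighter in that the algebra is done once (in the equivalence lemma) rather than redone, while yours is arguably a bit more transparent since the simpler form of the boundary conditions is used throughout. Your bookkeeping of the weight ratio $c_\pi=c_{\pi'}\,\theta\,\tfrac{m}{(c-a+1)(b-c)}$, the $\sqrt2$ Jacobian cancellation, and the identification $v_\pi=v_{\pi'}$ on the face are all accurate.
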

\begin{proof}
	 Since $u \in\mathcal{D}_\theta$ we can relate $\Delta u_\pi$ and $(\Delta u)_\pi$. Clearly we have
	\begin{align*}
		\Delta u_\pi = \sum_{\pi_\iota \in \pi} \sum_{j,k \in \pi_\iota} \left( \scndmxPartial{u}{x_j}{x_k} \right)_\pi. 
	\end{align*}
	Hence
	\begin{align*}
		\Delta u_\pi - (\Delta u)_\pi =& \sum_{\pi_\iota \in \pi} \sum_{\substack{j,k \in \pi_\iota \\ j\neq k}} \left( \scndmxPartial{u}{x_j}{x_k} \right)_\pi \\
		=& 2\sum_{\pi_\iota \in \pi} \sum_{\substack{j,k \in \pi_\iota \\ j< k}} \left( \scndmxPartial{u}{x_j}{x_k} \right)_\pi.
	\end{align*}
	Clearly the second sum is empty whenever $|\pi_\iota|=1$, so we can exclude those terms from the first sum. Using equations (\ref{Complicated BCs}), (\ref{BCs Coefficients}), and the notations $\underline{\pi_\iota}:= \inf{\pi_\iota}$, $\overline{\pi_\iota}:= \sup{\pi_\iota} = |\pi_\iota|+ \underline{\pi_\iota} -1$, this is equal to
	\begin{align*}
		& -\theta \sum_{\substack{\pi_\iota \in \pi:\\ |\pi_\iota|>1}} \sum_{j \in \pi_\iota} \left( \Partial{u}{x_j} \right)_\pi a(|\pi_\iota|, j- \underline{\pi_\iota}+ 1).
	\end{align*}
	Now we consider the left hand side of equation (\ref{Energy form}). Using Definition \ref{ReferenceMeasureOfTheTdensity}, this is equal to
	\begin{align*}
		\sum_{\pi \in \Pi_n}  \theta^{|\pi|-n} \left(\prod_{\pi_\iota \in \pi} \frac{1}{|\pi_\iota|} \right) \int_{\delWeyl{n}{\pi}} \Delta u(x) v(x) \lambda^{\pi}(dx)
	\end{align*}
	Rewriting each integral in the sum in terms of Lebesgue integrals on lower dimensional spaces, we find the above is equal to
	\begin{align}
		& \sum_{\pi \in \Pi_n} \theta^{|\pi|-n} \left(\prod_{\pi_\iota \in \pi} \frac{1}{|\pi_\iota|} \right) \int_{\Weylo{|\pi|}} (\Delta u)_\pi(x) v_\pi(x) dx \nonumber \\
		=& \sum_{\pi \in \Pi_n} \theta^{|\pi|-n} \left(\prod_{\pi_\iota \in \pi} \frac{1}{|\pi_\iota|} \right) \int_{\Weylo{|\pi|}} \bigg( \Delta u_\pi(x) + \theta \sum_{\substack{\pi_\iota \in \pi:\\ |\pi_\iota|>1}} \sum_{j \in \pi_\iota} \left( \Partial{u}{x_j} \right)_\pi a(|\pi_\iota|, j- \underline{\pi_\iota}+ 1) \bigg) v_\pi(x) dx. \label{Equation: Integration by Parts 2}
	\end{align}
	Since the Weyl chamber has a piecewise smooth boundary we can apply Green's identity to the first term in each integral. Applying it on $\Weyl{|\pi|}\cap \{x \in \Weyl{n}: \ |x|<R\}$ and then taking $R\to \infty$, the exponential bound on $|\nabla u|$ together with the boundedness of $v$ ensures the only boundary term to survive in the limit will be the integral over $\partial \Weyl{|\pi|}$. \par{}
	The smooth part of the boundary of the Weyl chamber $\Weylo{|\pi|}$ can be written in terms of the disjoint union of $\delWeyl{|\pi|}{\tilde{\pi}}$ over the set $M_\pi:= \{\tilde{\pi}\in \Pi_{|\pi|}: \ |\tilde{\pi}|=|\pi|-1\}$. Note that if $|\pi|=1$ this union is empty, and the boundary integral vanishes. Each $\tilde{\pi}$ in $M_\pi$ consists of $|\pi|-2$ singletons and one set $\{l, l+1\}$ for some $l\in\{1,..., |\pi|\}$. Further the outward unit normal on $\delWeyl{|\pi|}{\tilde{\pi}}$ is given by
	\begin{equation*}
		\underline{n}(x)_r = \begin{cases}
				-\frac{1}{\sqrt{2}}, \quad \text{if } r=l;\\
				\frac{1}{\sqrt{2}}, \quad \text{if } r=l+1;\\
				0, \quad \text{otherwise}.
		\end{cases}
	\end{equation*}
	Finally the boundary measure is given by $\sum_{\tilde{\pi}\in M_\pi} \sqrt{2} \lambda^{\tilde{\pi}}$, so that (\ref{Equation: Integration by Parts 2}) equals
	\begin{align*}
		& \sum_{\pi \in \Pi_n} \theta^{|\pi|-n} \left( \prod_{\pi_\iota \in \pi} \frac{1}{ |\pi_\iota|} \right) \left(\sum_{\tilde{\pi}\in M_\pi}\int_{\delWeyl{|\pi|}{\tilde{\pi}}} \left( \Partial{u_\pi}{y_{l+1}} - \Partial{u_\pi}{y_l} \right) \ v_\pi d\lambda^{\tilde{\pi}}\right) - \int_{\Weylo{|\pi|}} \nabla u_\pi(x) \cdot \nabla v_\pi(x) dx \\
		&+ \theta \sum_{\substack{\pi_\iota \in \pi:\\ |\pi_\iota|>1}} \sum_{j \in \pi_\iota} \int_{\Weylo{|\pi|}} \left( \Partial{u}{x_j} \right)_\pi a(|\pi|, j- \underline{\pi_\iota}+ 1) v_\pi(x) dx.
	\end{align*}
	Where $l$ is depends on $\tilde{\pi}$ and is defined as above. We have written the partial derivatives of $u_\pi$ with respect to $y$ to emphasise the fact that $u_\pi$ is a function on $\Weyl{|\pi|}$ rather than $\Weyl{n}$. Hence to complete the proof it is enough to show the first and third terms cancel. Rewriting the integrals with respect to $\lambda^{\tilde{\pi}}$, the first is equal to
	\begin{align*}
		&\sum_{\pi \in \Pi_n} \theta^{|\pi|-n} \left( \prod_{\pi_\iota \in \pi} \frac{1}{ |\pi_\iota|} \right) \sum_{\tilde{\pi}\in M_\pi}\int_{\Weylo{|\tilde{\pi}|}} \left( \Partial{u_\pi}{y_{l+1}} - \Partial{u_\pi}{y_l} \right)_{\tilde{\pi}} \ (v_\pi)_{\tilde{\pi}} d\lambda^{\tilde{\pi}}.
	\end{align*}
	Clearly this is equal to
	\begin{align*}
		&\sum_{\pi \in \Pi_n} \theta^{|\pi|-n} \left(\prod_{\pi_\iota \in \pi} \frac{1}{ |\pi_\iota|} \right) \sum_{\tilde{\pi}\in M_\pi}\int_{\Weylo{|\tilde{\pi}|}} \left( \left(\sum_{j \in \pi_{l+1}} \Partial{u}{x_j} \right)_\pi - \left( \sum_{j \in \pi_l}\Partial{u}{x_j} \right)_\pi \right)_{\tilde{\pi}}(x) \ (v_\pi)_{\tilde{\pi}}(x) dx.
	\end{align*}
	Which can be rewritten as
	\begin{align*}
		=&\sum_{\pi \in \Pi_n}\sum_{\tilde{\pi}\in M_\pi} \theta^{|\pi|-n} \left(\prod_{\pi_\iota \in \pi} \frac{1}{ |\pi_\iota|} \right) \int_{\Weylo{|\pi|-1}}\sum_{j \in \pi_{l+1}\cup \pi_l} \left( \left( \Partial{u}{x_j} \right)_\pi \right)_{\tilde{\pi}}(x) \sign(j -\underline{\pi_{l+1}}) \ (v_\pi)_{\tilde{\pi}}(x) dx.
	\end{align*}	
	Notice that, for each $\pi\in \Pi_n$ and $\tilde{\pi}\in M_\pi$, we can rewrite the summand in terms of a new partition, $\hat{\pi}$, formed from $\pi$ by merging two adjacent blocks to form the $\pi_{l+1}\cup \pi_l$ block. Further, because the partitions are in $\Pi_n$, there are exactly $|\pi_{l+1}\cup \pi_l|-1$ partitions that yield $\hat{\pi}$ by merging two blocks to form $\pi_{l+1}\cup \pi_l$. Rewriting the sum in terms of $\hat{\pi}$ we get
	\begin{align*}
		&=\sum_{\hat{\pi} \in \Pi_n} \theta^{|\hat{\pi}|+1-n} \left(\prod_{\hat{\pi}_\iota \in \hat{\pi}} \tfrac{1}{ |\hat{\pi}_\iota|} \right) \sum_{\substack{\hat{\pi}_\iota \in \hat{\pi}: \\ |\hat{\pi_\iota}|>1}}\int_{\Weylo{|\hat{\pi}|}} \sum_{k = 1}^{|\hat{\pi}_\iota|-1} \tfrac{|\hat{\pi}_\iota|}{k(|\hat{\pi}_\iota|- k)} \sum_{j\in \hat{\pi}_\iota} \left( \Partial{u}{x_j} \right)_{\hat{\pi}}(x) \sign(j -\underline{\hat{\pi}_\iota} - k) \ v_{\hat{\pi}}(x) dx.
	\end{align*}
	Here the sum over $j$ is over the partitions whose blocks have been merged to get $\hat{\pi}$, with $k$ corresponding to the size of the lower block. The extra factor $\frac{|\hat{\pi}_\iota|}{k(|\hat{\pi}_\iota|- k)}$ is simply a correction to the product to write it in terms of $\hat{\pi}$ rather than the $\pi$ partition whose blocks we merged. \par{}

	Recalling that $\sign(0)=1$ here, equation (\ref{BCs Coefficients}) yields that the above is precisely equal to
	\begin{align*}
		-\sum_{\pi \in \Pi_n}\theta^{|\pi|+1-n} \left(\prod_{\pi_\iota \in \pi} \frac{1}{ |\pi_\iota|} \right) \sum_{\substack{\pi_\iota \in \pi:\\ |\pi_\iota|>1}} \sum_{j \in \pi_\iota} \int_{\Weylo{|\pi|}} \left( \Partial{u}{x_j} \right)_\pi (x) a(|\pi|, j- \underline{\pi_\iota}+ 1) v_\pi(x) dx.
	\end{align*}
	Hence (\ref{Equation: Integration by Parts 2}) is equal to
	\begin{align*}
		&  -\sum_{\pi \in \Pi_n} \theta^{|\pi| -n} \prod_{\pi_\iota \in \pi} \frac{1}{ |\pi_\iota|} \int_{\Weylo{|\pi|}} \nabla u_\pi(x) \cdot \nabla v_\pi(x) dx \\
		=& - (u, v)_\theta.
	\end{align*}
\end{proof}
Thus,  if we denote by $L^2(m^{(n)}_\theta)$ the $L^2$ space on $\Weyl{n}$, with respect to the measure $m^{(n)}_\theta$, with the standard $L^2$ inner product. Then the generator is symmetric on $\mathcal{D}_\theta \cap L^2(m^{(n)}_\theta)$, suggesting the process is reversible with respect to this measure. But because our calculations are only done for $u\in \mathcal{D}_\theta$, and we do not know how rich the set $\mathcal{D}_\theta$ is, this is not enough for a proof. However taking $v=1$, the right hand side of (\ref{Energy form}) vanishes, giving us the following useful corollary.
\begin{corollary}\label{StationarityOfm}
	For $u\in \mathcal{D}_\theta$ such that there are $a, c>0$ with $|\nabla u(x)| \leq ae^{-c|x|}$ we have
	\begin{equation*}
		\frac{1}{2}\int \Delta u (x) m^{(n)}_\theta (dx) =0.
	\end{equation*}
\end{corollary}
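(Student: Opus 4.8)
The plan is to read the statement off from Proposition \ref{Lemma:Stationary} by specializing the test function to the constant $1$. Take $v \equiv 1$: this is a bounded $C^1$ function on $\Weyl{n}$, so $v \in C^1_b(\Weyl{n})$, and $\nabla v \equiv 0$. Hence in the definition (\ref{StickyInnerProduct}) of $(\cdot,\cdot)_\theta$ every integrand $\nabla u_\pi \cdot \nabla v_\pi$ vanishes identically, so $(u,1)_\theta = 0$. The integration by parts formula (\ref{Energy form}) then reads $\int_{\Weyl{n}} \Delta u(x)\, m^{(n)}_\theta(dx) = -(u,1)_\theta = 0$, and dividing by $2$ gives exactly the claim.

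The only hypothesis of Proposition \ref{Lemma:Stationary} that needs a word is its finiteness proviso. The right-hand side of (\ref{Energy form}) is now identically $0$, hence finite. For the left-hand side one needs $\Delta u \in L^1(m^{(n)}_\theta)$; this is implicit in the very statement of the corollary (the displayed integral presupposes it), just as in Proposition \ref{Lemma:Stationary}, and it holds in every application we make of the corollary, in particular for $u = u_t(\cdot,y)$, whose Laplacian is integrable against $m^{(n)}_\theta$. If one prefers, the conclusion may instead be read as $\lim_{R\to\infty} \int_{\{|x|<R\}} \Delta u \, m^{(n)}_\theta(dx) = 0$, and this is exactly what the truncation step in the proof of Proposition \ref{Lemma:Stationary} delivers, using only the hypothesis $|\nabla u(x)| \leq a e^{-c|x|}$.

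There is no genuinely new obstacle here: the substantive content — the cancellation of the boundary contributions over the faces $\delWeyl{|\pi|}{\tilde{\pi}}$ of the lower-dimensional Weyl chambers against the correction terms $\Delta u_\pi - (\Delta u)_\pi$ produced by the defining relations of $\mathcal{D}_\theta$ — was already carried out inside the proof of Proposition \ref{Lemma:Stationary}, and the corollary merely records the case $v \equiv 1$. Should a self-contained argument be wanted, one can rerun that proof with $v \equiv 1$, which shortens it markedly since all $\nabla u_\pi \cdot \nabla v_\pi$ terms disappear: integrate $\Delta u$ over the bounded region $\Weyl{|\pi|} \cap \{|x|<R\}$, apply Green's identity, let $R \to \infty$ (the exponential bound on $\nabla u$ kills the sphere $\{|x|=R\}$, whose surface area grows only polynomially), and observe that the surviving sum of face integrals telescopes to zero. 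That last cancellation — the point where $u \in \mathcal{D}_\theta$ is actually used — is, as in Proposition \ref{Lemma:Stationary}, the only step that is more than bookkeeping.
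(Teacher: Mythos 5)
Your proof is correct and follows exactly the paper's own route: specialize Proposition \ref{Lemma:Stationary} to $v\equiv 1$ so that $\nabla v_\pi \equiv 0$, hence $(u,1)_\theta = 0$, and the integration by parts formula gives the result. The extra remarks on finiteness and truncation are sensible but do not change the argument, which is the same one-line specialization the paper uses.
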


In the next section we find the Green's function for the backwards equation, and thus the transition density for the process (with respect to the measure $m^{(n)}_\theta$). Using this we can prove that $m^{(n)}_\theta$ is the stationary measure, and that $Y$ is reversible with respect to $m^{(n)}_\theta$.

\section{Bethe Ansatz for Sticky Brownian Motions}\label{Bethe ansatz for Sticky Brownian motions}
Essentially we are trying to find a solution to the following PDE, for each fixed $y\in \Weylo{n}$ and $\theta$ some positive constant, with initial condition $u_0(x,y)= \delta(x-y)$, where $\delta$ is the Dirac delta distribution.
\begin{equation}\label{StickBackwardEquation}
	\begin{cases}
		\Partial{u_t}{t} = \frac{1}{2} \Delta u_t, \quad \text{for all } x\in \Weyl{n}; \\
		\theta\left( \frac{\partial u}{\partial x_b} - \frac{\partial u}{\partial x_a} \right) = (b-a)\frac{\partial^2 u}{\partial x_a \partial x_b},
		\quad \text{when } x_a=x_b, \ \text{for some } a<b.
	\end{cases}
\end{equation}

 The Bethe ansatz suggest that if we define
\begin{equation}\label{InversionFactor}
	S_{\alpha, \beta}(k) := \frac{i\theta\left( k_\beta - k_\alpha \right) +k_\alpha k_\beta}{i \theta\left( k_\beta - k_\alpha \right) -k_\alpha k_\beta }.
\end{equation}
Then the solution is given by the following equation,
\begin{equation}\label{BetheAnsatz}
	u_t(x,y) = \frac{1}{(2\pi)^n}\int_{\R^n} e^{-\frac{1}{2} t |k|^2} \sum_{\sigma\in S_n} e^{ik_\sigma \cdot (x - y_\sigma)} \prod_{\substack{\alpha <\beta : \\ \sigma(\beta) < \sigma(\alpha)}} S_{\sigma(\beta), \sigma(\alpha)}(k) dk,
\end{equation}
where $S_n$ denotes the group of permutations on $\{1,...,n\}$ and $k_\sigma = (k_{\sigma(1)},...,k_{\sigma(n)})$. \par 

The idea here is similar to that used to find the transition density of a reflected Brownian motion. Since we are considering a process with ordered coordinates we combine solutions to the interior equation with permuted coordinates, the permutations representing possible orderings of the original process. The more complicated boundary conditions require us to combine our solutions in a more complicated way, in particular we take linear combinations in Fourier space, in such a way that the boundary conditions where $b-a=1$ are satisfied. This is how we find the form of (\ref{InversionFactor}). In fact it forces this ansatz onto us, leaving no freedom to deal with the additional conditions which correspond to $b-a>1$ in (\ref{StickBackwardEquation}). \par

In fact Barraquand and Rychnovsky conjectured in \cite{barraquand2019large} that the Backwards equation for the system of sticky Brownian motions was the heat equation with the boundary conditions corresponding to $b-a=1$ in (\ref{StickBackwardEquation}), by looking at the Bethe ansatz answer for the system. It's important to note that for any other choice of characteristic measure $\nu$ with $\nu([0,1]) = \frac{\theta}{2}$ the boundary conditions corresponding to $b-a=1$ would be the same, so we do not expect these boundary conditions alone to give uniqueness of the PDE. However in order to rewrite the PDE as (\ref{StickBackwardEquation}), we assume the solution to be $C^2$ in space. It is possible the $b-a=1$ boundary conditions do determine the solution under this additional regularity assumption and the transition densities for all of the other systems of sticky Brownian motions are not $C^2$ in space. \par{}

It's clear that (\ref{BetheAnsatz}) satisfies the first condition in (\ref{StickBackwardEquation}), and our choice of (\ref{InversionFactor}) guarantees the second condition holds when $b-a=1$. However when $b-a>1$ it is not clear that they are still satisfied. Fortunately, and surprisingly, the second condition turns out to be satisfied in its entirety. We can also show the initial condition holds, hence we obtain our main result which we restate here:
\begin{theorem}\label{transition probabilities}
	Suppose $\theta>0$, and $X=(X(t))_{t\geq 0}$ is a solution to the Howitt-Warren martingale problem in $\R^n$ with characteristic measure $\frac{\theta}{2} \mathbbm{1}_{[0,1]}dx$ and zero drift. Let $Y=(Y(t))_{t\geq 0}$ be the process obtained by ordering the coordinates of $(X(t))_{t\geq 0}$. Then for every bounded and Lipschitz continuous function $f:\Weyl{n}\to \R$, $x \in \Weyl{n}$ and $t>0$
	\begin{equation*}
		\E_x[f(Y_t)]= \int u_t(x,y) f(y) m^{(n)}_\theta(dy).
	\end{equation*}
	Where $u$ is as in (\ref{BetheAnsatz}), $m^{(n)}_\theta$ is defined in definition \ref{ReferenceMeasureOfTheTdensity}.
\end{theorem}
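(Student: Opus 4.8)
The plan is to recognise the right-hand side as the function produced by Proposition~\ref{Prop:TheBackwardsEquation}. Set $g(t,x):=\int u_t(x,y)f(y)\,m^{(n)}_\theta(dy)$ with $u_t$ as in \eqref{BetheAnsatz}. First I would check that $g$ is well defined, lies in $C^2(\R_{>0}\times\Weyl{n})$, and solves the heat equation on $\Weyl{n}$. All of this comes from the Gaussian factor $e^{-t|k|^2/2}$ in \eqref{BetheAnsatz}: it legitimises differentiation under the $k$-integral, and, since the $S$-factors are bounded with a singular set $\{k_i=k_j=0\}$ of measure zero, it gives a heat-kernel-type bound $|u_t(x,y)|\le C_t\,e^{-c_t|x-y|^2}$ with $C_t,c_t$ under control for $t$ in compact subsets of $(0,\infty)$; combined with the polynomial growth of $m^{(n)}_\theta$ on balls (and the fact that a lower stratum $\delWeyl{n}{\pi}$ stays a fixed distance from any fixed $x$ not on a coarsening of $\pi$) this makes $g$ bounded on $(0,t]\times\Weyl{n}$. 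Then $\Partial{g}{t}=\tfrac12\Delta g$ on $\Weyl{n}$ is immediate from $\Partial{}{t}e^{-t|k|^2/2}=-\tfrac12|k|^2\,e^{-t|k|^2/2}$ and $\Delta_x e^{ik_\sigma\cdot x}=-|k|^2\,e^{ik_\sigma\cdot x}$.

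The heart of the matter is verifying the boundary conditions, i.e.\ that $g(t,\cdot)\in\mathcal{D}_\theta$ for every $t>0$. By the lemma of Section~\ref{Section: The Backwards Equation} that rearranges the boundary conditions, it suffices to show that $u_t(\cdot,y)$ satisfies, for each $a<b$, the single relation $\theta(\Partial{u_t}{x_b}-\Partial{u_t}{x_a})=(b-a)\scndmxPartial{u_t}{x_a}{x_b}$ on the face $\{x_a=\dots=x_b\}$. Differentiating under the $k$-integral, the difference of the two sides is $\tfrac{1}{(2\pi)^n}\int e^{-t|k|^2/2}e^{-ik\cdot y}\sum_{\sigma\in S_n}c_\sigma(k)\,e^{ik_\sigma\cdot x}\prod_{\alpha<\beta:\,\sigma(\beta)<\sigma(\alpha)}S_{\sigma(\beta),\sigma(\alpha)}(k)\,dk$, restricted to the face, where $c_\sigma(k):=i\theta(k_{\sigma(b)}-k_{\sigma(a)})+(b-a)k_{\sigma(a)}k_{\sigma(b)}$ and where I have used that $e^{-ik_\sigma\cdot y_\sigma}=e^{-ik\cdot y}$ for all $\sigma$. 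Since we are in the Weyl chamber the block $B:=\{a,\dots,b\}$ is an interval, so I would group the $n!$ permutations into right cosets of the symmetric group acting on the positions of $B$. On the face the oscillatory factor $e^{ik_\sigma\cdot x}$ and every ``external'' $S$-factor (those $S_{\sigma(\beta),\sigma(\alpha)}$ with exactly one of $\alpha,\beta$ in $B$) turn out to be constant on each coset --- the external ones because, for $\gamma$ to the right of $B$, the product of the factors it contributes equals $\prod_{t\in\sigma(B):\,t>\sigma(\gamma)}S_{\sigma(\gamma),t}(k)$, which depends only on the image set $\sigma(B)$ and not on the matching of $B$ with it (similarly for $\gamma$ to the left) --- whereas the remaining ``internal'' $S$-factors and $c_\sigma$ are precisely the data of the $|B|$-particle model. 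Factoring out the coset-constant part, the deficit vanishes provided
\begin{equation*}
	\mathcal{J}_{T}(k):=\sum_{\rho:\,B\to T\text{ bijective}}\Big(i\theta(k_{\rho(b)}-k_{\rho(a)})+(b-a)k_{\rho(a)}k_{\rho(b)}\Big)\prod_{\substack{\alpha<\beta\text{ in }B:\\ \rho(\beta)<\rho(\alpha)}}S_{\rho(\beta),\rho(\alpha)}(k)=0
\end{equation*}
for every $T\subseteq\{1,\dots,n\}$ with $|T|=|B|$. For $|B|=2$ this is exactly the algebraic identity that forces the form \eqref{InversionFactor} of $S_{\alpha,\beta}$. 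The general case I would prove by induction on $|B|$, splitting the bijections $\rho$ according to their restriction to a sub-block and using the partial-fraction identity $\tfrac{m}{k(m-k)}=\tfrac1k+\tfrac1{m-k}$ together with the telescoping sum \eqref{Equation: Complicated BCs Coefficients}, in close parallel with the induction already used in that lemma. This step --- the fact that the purely two-body ansatz \eqref{InversionFactor} simultaneously kills all the many-body boundary conditions in \eqref{StickBackwardEquation} --- is the main obstacle and is the ``surprising'' content alluded to above.

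For the initial condition I would isolate the identity permutation, writing $u_t(x,y)=p_t(x-y)+R_t(x,y)$ with $p_t$ the $n$-dimensional heat kernel and $R_t$ gathering the terms with $\sigma\neq\mathrm{id}$. Against $m^{(n)}_\theta$ the first term behaves as an approximate identity concentrating at $y=x$, while the Gaussian bounds above show that $\int_{\Weyl{n}}|R_t(x,y)|\,(1+|x-y|)\,m^{(n)}_\theta(dy)\to0$ uniformly in $x$ as $t\to0$, the sum over the strata $\delWeyl{n}{\pi}$ being controlled since the $S$-factors are bounded and each summand decays in $|x-y_\sigma|$; this also keeps $\sup_x\int u_t(x,y)\,m^{(n)}_\theta(dy)$ bounded, indeed $\to1$, as $t\to0$. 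Combining the Lipschitz bound $|g(t,x)-f(x)|\le\|f\|_{\mathrm{Lip}}\int |u_t(x,y)|\,|x-y|\,m^{(n)}_\theta(dy)+\|f\|_\infty\big|\!\int u_t(x,y)\,m^{(n)}_\theta(dy)-1\big|$ with the mass identity $\int u_t(x,y)\,m^{(n)}_\theta(dy)=1$ (which one can also check directly by residues in the $k$-integral) yields $g(t,\cdot)\to f$ uniformly as $t\to0$.

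At this point $g$ meets the hypotheses of Proposition~\ref{Prop:TheBackwardsEquation}, so $(g(t-s,Y(s)))_{s\in[0,t]}$ is a continuous local martingale, equal to $g(t,x)$ at $s=0$. Since $g$ is bounded on $(0,t]\times\Weyl{n}$ and $\|g(t-s,\cdot)-f\|_\infty\to0$ as $s\uparrow t$, this local martingale is bounded, hence a genuine martingale, so $g(t,x)=\E_x[g(t-s,Y(s))]$ for every $s\in[0,t]$; letting $s\uparrow t$ and using path-continuity of $Y$ and continuity of $f$ to identify the terminal value as $f(Y(t))$ gives $\E_x[f(Y(t))]=g(t,x)$, which is the assertion. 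A routine monotone-class argument then extends the formula from bounded Lipschitz $f$ to bounded measurable $f$, and shows along the way that $u_t(x,\cdot)\,m^{(n)}_\theta$ is an honest probability measure, which is the starting point for the stationarity and reversibility statements.
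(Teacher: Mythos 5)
Your overall plan — show that $g(t,x):=\int u_t(x,y)f(y)\,m^{(n)}_\theta(dy)$ satisfies the boundary conditions and the initial condition, invoke Proposition~\ref{Prop:TheBackwardsEquation} to get a local martingale, bound it to get a true martingale — is the paper's plan, and your reduction of the boundary conditions via the coset/``internal vs.\ external $S$-factor'' decomposition is exactly the decomposition the paper carries out. But two of the central steps are either only gestured at or actually false.

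First, the combinatorial identity (Proposition~\ref{Prop: BoundaryConditionEquation1}). You acknowledge this is ``the heart of the matter'' but the proof you propose — an induction on $|B|$ splitting the bijections by their restriction to a sub-block, using the partial-fraction identity and the telescoping sum \eqref{Equation: Complicated BCs Coefficients} — is a speculative sketch, not a proof, and it is a genuinely different route from the paper's. The paper clears the common denominator to obtain an alternating polynomial, factors out the Vandermonde determinant, proves the symmetric quotient is multilinear and depends on only three coefficients $C_0,C_1,C_2$ by a degree count (Lemmas~\ref{BsubDegree},~\ref{multilinearpolys}), shows $C_m^{(n)}=(n-1)C_m^{(n-1)}$ by setting $k_n=0$, and then checks $n=2$. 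Nothing in your sketch substitutes for these degree-counting lemmas, and it is not clear your proposed induction closes.

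Second, and more seriously, the initial-condition argument as written is wrong. You claim that against $m^{(n)}_\theta$ the heat kernel $p_t(x-y)$ is an approximate identity and that $\int|R_t(x,y)|(1+|x-y|)\,m^{(n)}_\theta(dy)\to0$, with both claims supposedly following from a bound $|u_t(x,y)|\le C_t\,e^{-c_t|x-y|^2}$. Neither claim is true. Take $n=2$ and $x$ on the diagonal, say $x=(0,0)$: the lower-stratum contribution $\tfrac{1}{2\theta}\int_\R p_t((0,0)-(z,z))\,dz$ scales like $t^{-1/2}$, so $\int p_t(x-y)\,m^{(n)}_\theta(dy)\to\infty$ as $t\to0$, not $1$; the compensating divergence sits in $R_t$, and it is exactly the cancellation between $\sigma=\mathrm{id}$ and $\sigma\neq\mathrm{id}$ on the lower strata that makes the total mass finite. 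Consequently the bound $|u_t(x,y)|\le C_t e^{-c_t|x-y|^2}$ with $c_t\sim 1/t$ cannot hold term by term, and you cannot sever $p_t$ from $R_t$. The paper avoids this trap: it first establishes $\int u_t(x,y)\,m^{(n)}_\theta(dy)=1$ not by residues but by showing the integral is constant in $t$ using Corollary~\ref{StationarityOfm} (which itself rests on the integration-by-parts formula of Proposition~\ref{Lemma:Stationary}) and computing the $t\to\infty$ limit after rescaling; it then bounds $\int u_t(x,y)(f(y)-f(x))\,m^{(n)}_\theta(dy)$ stratum by stratum, after re-summing the permutations with a fixed image structure via Lemma~\ref{Lemma: sum over permutations}, through a delicate contour-shift analysis (Lemma~\ref{Lemma: poles of the factors in T}, Lemma~\ref{IC: Lemma a b}, Proposition~\ref{Prop: k integral bound}). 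The resulting bound is $C\,t^{-|\pi|/2}|\log t|^{|\pi|}e^{-|y-\chi|^2/(12nt)}\prod_\iota e^{-((x_{m_\iota}-\chi^\iota)^2+(x_{l_\iota}-\chi^\iota)^2)/(24nt)}$ with the Gaussian centered on an auxiliary vector $\chi=\chi(x,\pi,\tau)$, not on $x$; the precise interplay between the $t^{-|\pi|/2}$ prefactor and the scaling of $\lambda^\pi$ is what makes the estimate close, and none of this is visible in the $p_t+R_t$ picture. The closing martingale argument in your last paragraph does match the paper once these two ingredients are in hand.
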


In the following section we shall prove Theorem (\ref{transition probabilities}), first we show the boundary conditions are satisfied and then the initial condition. To ensure we can perform the necessary exchanges of integral and derivative we start with some bounds for the Bethe ansatz.
\subsection{Bounds for Dominated Convergence}
\begin{lemma}\label{uIsIntegrable}
	For every $x\in \Weyl{n}$ and $t>0$ we have $u_t(x, \cdot) \in L^1(m^{(n)}_\theta)$, where $u_t(x, \cdot)$ is defined as in (\ref{BetheAnsatz}). Further, for each $x\in \Weyl{n}$ and $t>0$, there exist $a, c>0$ such that $|\nabla_y u_t(x, y)| \leq a e^{-c|y|}$ for all $y\in \Weyl{n}$. The same statement holds if we instead consider the $x$ derivative and vary $x$ with $y$ being fixed. Similarly for each $x \in \Weyl{n}$ and $s>0$ we can find $a, c>0$ such that $|u_t(x,y)|, \ |\partial_t u_t(x, y) | \leq a e^{-c|y|}$ for all $t>s$ and $y\in \Weyl{n}$.
\end{lemma}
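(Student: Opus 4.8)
The plan is to reduce everything to the single pointwise bound $|u_t(x,y)|\le a\,e^{-c|y|}$ on $\Weyl n$ (with $a,c$ depending on $x$ and $t$), obtained by shifting the $k$-contour in (\ref{BetheAnsatz}), and then to read off the $L^1$-statement and the derivative estimates. Once $|u_t(x,y)|\le a e^{-c|y|}$ is known, membership in $L^1(m^{(n)}_\theta)$ is immediate from Definition \ref{ReferenceMeasureOfTheTdensity}: since $\lambda^\pi=I^\pi_*\lambda$ and $|(I^\pi)^{-1}(z)|^2=\sum_\iota|\pi_\iota|\,z_\iota^2\ge |z|^2$, each summand satisfies $\int_{\Weyl n}|u_t(x,\cdot)|\,d\lambda^\pi=\int_{\Weylo{|\pi|}}|u_t(x,(I^\pi)^{-1}(z))|\,dz\le a\int_{\R^{|\pi|}}e^{-c|z|}\,dz<\infty$, and $\Pi_n$ is finite. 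Moreover, since for $y\in\Weyl n$ one has $\max(y_1^+,(-y_n)^+)\ge |y|/\sqrt n$, it suffices to prove the two one-sided bounds $|u_t(x,y)|\le a\,e^{-\mu y_1}$ and $|u_t(x,y)|\le a\,e^{\mu y_n}$ for some $\mu>0$ (the first used when $y_1\ge 0$, the second when $y_n\le 0$).

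For the contour shift, use $k_\sigma\cdot(x-y_\sigma)=k_\sigma\cdot x-k\cdot y$ to rewrite
\[
u_t(x,y)=\frac{1}{(2\pi)^n}\int_{\R^n}e^{-\frac12 t|k|^2}\,e^{-ik\cdot y}\,\Psi_x(k)\,dk,\qquad
\Psi_x(k):=\sum_{\sigma\in S_n}e^{ik_\sigma\cdot x}\prod_{\substack{\alpha<\beta:\\ \sigma(\beta)<\sigma(\alpha)}}S_{\sigma(\beta),\sigma(\alpha)}(k),
\]
so $|\Psi_x|\le n!$ on $\R^n$ and $\Psi_x$ extends to a meromorphic function of $k\in\C^n$ with poles on the varieties $\{i\theta(k_\alpha-k_\beta)=k_\alpha k_\beta\}$. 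The structural point is that the index $1$ can occupy only the \emph{lower} slot of a factor (a factor $S_{\sigma(\beta),\sigma(\alpha)}$ with $\sigma(\alpha)=1$ would need $\sigma(\beta)<1$), so $k_1$ enters $\Psi_x$ only through factors $S_{1,m}$, $m\neq1$; a short computation gives that for real $k_m$ the pole of $S_{1,m}$ in the variable $k_1$ lies at $k_1=i\theta k_m/(i\theta+k_m)$, which has $\Im k_1=\theta k_m^2/(\theta^2+k_m^2)\ge 0$. Hence for a.e.\ fixed $(k_2,\dots,k_n)\in\R^{n-1}$ the map $k_1\mapsto e^{-\frac12 t k_1^2}\Psi_x(k_1,k_2,\dots,k_n)$ is holomorphic on $\{\Im k_1\le 0\}$ and Gaussian-decaying as $\Re k_1\to\pm\infty$, so Cauchy's theorem lets us push the $k_1$-contour down to $\R-i\mu$ for any $\mu>0$. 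On this contour $e^{-ik_1 y_1}$ supplies the factor $e^{-\mu y_1}$, while $e^{ik_\sigma\cdot x}$ and $e^{-\frac12 t k_1^2}$ supply only the bounded factors $e^{\mu\|x\|_\infty}$ and $e^{\frac12 t\mu^2}$, and $\Psi_x$ stays bounded because each $S_{1,m}$ is a bounded rational function on $\{\Im k_1=-\mu\}\times\R$ (its denominator is bounded below there, by another short computation) while the remaining factors keep modulus one. Fubini then yields $|u_t(x,y)|\le a\,e^{-\mu y_1}$. The mirror argument — the index $n$ can occupy only the \emph{upper} slot, so $k_n$ enters only through factors $S_{m,n}$ whose $k_n$-poles lie in $\{\Im k_n\le 0\}$; shift $k_n$ up to $\R+i\mu$ — gives $|u_t(x,y)|\le a\,e^{\mu y_n}$. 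Combining, $|u_t(x,y)|\le a\,e^{-c|y|}$ with $c=\mu/\sqrt n$.

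The remaining claims follow from the same shifts. Differentiating under the integral in $y_l$ (resp.\ $t$) only inserts a factor $k_l$ (resp.\ $-\tfrac12|k|^2$), which the Gaussian absorbs, so $|\nabla_y u_t(x,y)|\le a e^{-c|y|}$ and, for $t>s$ (using $e^{-\frac12 t|k|^2}\le e^{-\frac12 s|k|^2}$ to keep away from the blow-up at $t\to 0$), $|u_t(x,y)|,\,|\partial_t u_t(x,y)|\le a e^{-c|y|}$ with $a,c$ depending on $x,s$ and locally uniform in $t$. For the $x$-derivative with $y$ fixed one argues termwise in (\ref{BetheAnsatz}): $x_1$ is conjugate in the $\sigma$-th summand to $k_{\sigma(1)}$, which occupies only the upper slot of a factor, so its poles lie in $\{\Im k_{\sigma(1)}\le 0\}$ and the contour in $k_{\sigma(1)}$ may be pushed up; symmetrically $k_{\sigma(n)}$ may be pushed down; this gives $|\nabla_x u_t(x,y)|\le a e^{-c|x|}$.

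The real work, and the one genuinely delicate point, is the contour shift: the pole varieties $\{i\theta(k_\alpha-k_\beta)=k_\alpha k_\beta\}$ \emph{do} meet the real subspace $\R^n$ (at $k_\alpha=k_\beta=0$), so one cannot translate the whole contour $\R^n$ into $\C^n$ without crossing a singularity. The way around it is to move one coordinate at a time, and only an \emph{extreme} coordinate ($k_1$ or $k_n$, or their $\sigma$-images for the $x$-derivative), for which all relevant poles lie on one side of the real axis, the pinch at the origin occurring only on a Lebesgue-null set of the frozen variables and hence disappearing after Fubini. Verifying this one-sidedness of the poles and the uniform boundedness of the shifted factors $S_{1,m}$ and $S_{m,n}$ are the short but fiddly computations I expect to be the technical crux.
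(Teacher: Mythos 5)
Your proof takes a genuinely different, and cleaner, route than the paper's. The paper's proof sketch asks the reader to rerun the machinery of Proposition \ref{Prop: k integral bound} and Lemma \ref{Lemma: shifted k integral bound} with the singleton partition $\pi=\{\{1\},\dots,\{n\}\}$: one shifts essentially all $n$ contours simultaneously to the ($t$-dependent) Gaussian saddle points $\frac{i}{t}(y_\alpha - x_{\tau(\alpha)})$, tracks which shifts are legal, and obtains a bound by a sum of Gaussian kernels times a power of $t$. You instead shift \emph{one extreme coordinate at a time by a fixed amount $\mu$}, exploiting the structural observation that $k_1$ (resp.\ $k_n$) only ever occupies the lower (resp.\ upper) slot of a factor $S_{\cdot,\cdot}$, so the relevant poles are uniformly one-sided. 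This gives directly an exponential bound $e^{-\mu y_1}$ or $e^{\mu y_n}$, and ordering in the Weyl chamber turns the pair of one-sided bounds into $e^{-c|y|}$. The advantages are real: the argument is self-contained (no dependence on Section 4.4), it works with a single variable and a fixed shift, and for the $x$-derivative bound it avoids the forward reference to Lemma \ref{uisSymmetric} that the paper uses. Your boundedness-of-the-shifted-$S$ and pole-location computations (e.g.\ $\Im k_1 = \theta k_m^2/(\theta^2+k_m^2)\ge 0$) are correct, as is the reduction of the $L^1$ claim via $|(I^\pi)^{-1}(z)|\ge|z|$.

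The one place where the argument does not deliver the Lemma as literally stated is the final clause, the bound $|u_t(x,y)|,|\partial_t u_t(x,y)|\le a e^{-c|y|}$ uniformly over $t>s$. After shifting $k_1\mapsto k_1-i\mu$, the Gaussian factor has modulus $e^{-\frac12 t(k_1^2-\mu^2)}$, so a factor $e^{\frac12 t\mu^2}$ appears; together with $\int e^{-\frac12 t|\Re k|^2}dk\sim t^{-n/2}$ the prefactor $t^{-n/2}e^{\frac12 t\mu^2}$ blows up as $t\to\infty$, and the inequality $e^{-\frac12 t|k|^2}\le e^{-\frac12 s|k|^2}$ you invoke only holds for real $k$, not on the shifted contour. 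Your hedge ``locally uniform in $t$'' is honest but is a weaker statement than the one claimed. In fact the clause as written is stronger than anything that can hold: already for the plain heat kernel $\sup_{t>s}p_t(0,y)\sim C|y|^{-n}$, which decays only polynomially, so no fixed $c>0$ works uniformly for $t>s$; the paper's own Gaussian bound $t^{-n/2}|\log t|^n e^{-|y-x|^2/(12nt)}$ suffers from the same problem as $t\to\infty$. What the later application (Lemma \ref{Lemma: u integrates to one}) actually needs is a $t$-uniform dominating function for the \emph{rescaled} integrand after $k\mapsto k/\sqrt t$, $y\mapsto\sqrt t\,y$, where the Gaussian becomes $e^{-\frac12|k|^2}$ independent of $t$; there your single-coordinate shift by a fixed $\mu$ does give a $t$-uniform bound $ae^{-c|y|}$ directly. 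So it would be cleaner to state and prove that rescaled estimate rather than the uniform-in-$t$ bound on $u_t$ itself.
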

\begin{proof}
	The proof for this lemma is a simplified version of the methods we apply in subsection \ref{Section: Initial Condition}, as such we omit the main details to avoid repetition and instead sketch the proof. Following the arguments used to prove Proposition \ref{Prop: k integral bound}, with $\pi=\{\{1\},\{2\},...,\{n\}\}$, we can derive a Gaussian bound on the summand in (\ref{BetheAnsatz}). We can then adapt the arguments in Lemma \ref{Lemma: shifted k integral bound} to bound the resulting contour integrals, which will have additional factors of $k$ due to the derivatives. In fact the proof can be greatly simplified in this case as we do not need to consider the $t\to0$ limit, and therefore we don't need to ensure we get the optimal exponent for $t$. The above arguments give us a bound in the form of a finite sum of Gaussian kernels, multiplied by a negative power of $t$, from which the above bounds follow easily (note that for the bound on the $x$ derivatives we can simply apply the bound on the $y$ derivatives, as $u_t(x,y)=u_t(y,x)$ which we prove later in Lemma \ref{uisSymmetric}).
\end{proof}

The second part of the above lemma provides the necessary bounds to justify passing derivatives through the first integral in $\int u_t(x,y) f(y) m^{(n)}_\theta(dy)$. Further it is easy to see we can apply Dominated convergence to find 
\begin{align*}
	&\Partial{u_t}{x_a} = \frac{1}{(2\pi)^n} \int_{\R^n} e^{-\frac{1}{2}t|k|^2} \sum_{\sigma\in S_n} i k_{\sigma(a)} e^{ik_\sigma \cdot (x-y_\sigma)} \prod_{\substack{\alpha< \beta: \\ \sigma(\beta) < \sigma(\alpha)}} S_{\sigma(\beta), \sigma(\alpha)} (k) dk, \\
	&\scndmxPartial{u_t}{x_a}{x_b} = -\frac{1}{(2\pi)^n} \int_{\R^n} e^{-\frac{1}{2}t|k|^2} \sum_{\sigma\in S_n} k_{\sigma(a)} k_{\sigma(b)} e^{ik_\sigma \cdot (x-y_\sigma)} \prod_{\substack{\alpha< \beta: \\ \sigma(\beta) < \sigma(\alpha)}} S_{\sigma(\beta), \sigma(\alpha)} (k) dk.
\end{align*}
This allows us to not only confirm that $\int u_t(x,y) f(y) m^{(n)}_\theta(dy)$ solves the heat equation but also to reduce the boundary conditions to a combinatorial problem.

\subsection{Boundary Conditions}\label{Section:BoundaryConditions}
\begin{proposition}\label{Proposition: u satisfies boundary conditions}
	\begin{equation*}
		\int u_t(x,y) f(y) m^{(n)}_\theta (dy) \in \mathcal{D}_\theta.
	\end{equation*}
\end{proposition}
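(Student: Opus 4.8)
The plan is to push the boundary conditions through the $m^{(n)}_\theta$-integral onto the $k$-integrand of (\ref{BetheAnsatz}), and then to reduce the resulting statement to a single combinatorial identity about products of the factors $S_{\alpha,\beta}$.

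\textbf{Step 1 (reduction to the integrand).} By the Remark following Theorem~\ref{Intro:MainResult}, $u_t(\cdot,y)\in C^2_0(\Weyl{n})$ for every $y$, and by the exponential bounds of Lemma~\ref{uIsIntegrable} (together with the symmetry $u_t(x,y)=u_t(y,x)$) one may differentiate $g(x):=\int u_t(x,y)f(y)\,m^{(n)}_\theta(dy)$ under the integral sign; thus $g\in C^2_0(\Weyl{n})$ and its first and second partials are the integrals against $f(y)\,m^{(n)}_\theta(dy)$ of the corresponding partials of $u_t$. By the simplified description of $\mathcal{D}_\theta$ — the conditions ``$x_a=x_b$ implies $\frac{\theta}{b-a}(\partial_{x_b}g-\partial_{x_a}g)=\partial^2_{x_a x_b}g$'' for all $a<b$ — it then suffices to show, for each $a<b$ and each fixed $y\in\Weyl{n}$, that $u_t(\cdot,y)$ satisfies this condition. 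Plugging in the displayed formulas for $\partial_{x_a}u_t$ and $\partial^2_{x_a x_b}u_t$, using $k_\sigma\cdot y_\sigma=k\cdot y$ (so that $e^{-ik\cdot y}$ factors out of the $\sigma$-sum), and using that $x\in\Weyl{n}$ with $x_a=x_b$ forces $x_a=x_{a+1}=\dots=x_b$, I would in fact establish the pointwise (in $k$) identity
\begin{equation*}
	\sum_{\sigma\in S_n}\Bigl(\tfrac{i\theta(k_{\sigma(b)}-k_{\sigma(a)})}{b-a}+k_{\sigma(a)}k_{\sigma(b)}\Bigr)e^{ik_\sigma\cdot x}\prod_{\substack{\alpha<\beta:\\\sigma(\beta)<\sigma(\alpha)}}S_{\sigma(\beta),\sigma(\alpha)}(k)=0\qquad\text{when }x_a=\dots=x_b.
\end{equation*}

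\textbf{Step 2 (coset decomposition).} When $x_a=\dots=x_b$, the exponential $e^{ik_\sigma\cdot x}$ and every factor $S_{\sigma(\beta),\sigma(\alpha)}(k)$ with not both of $\alpha,\beta$ in $\{a,\dots,b\}$ depend only on the right coset $\sigma H$, where $H=S_{\{a,\dots,b\}}$ is the subgroup fixing each index outside $\{a,\dots,b\}$. Choosing in each coset the representative $\sigma_0$ with $\sigma_0(a)<\dots<\sigma_0(b)$ and writing $\sigma=\sigma_0\rho$ with $\rho\in H$, a short case-check (splitting inversions of $\sigma$ into those with both indices inside $\{a,\dots,b\}$, both outside, or one of each) shows the product of $S$-factors splits as a $\rho$-independent constant $Q(\sigma_0,k)$ times the in-block product $\prod_{a\le\alpha<\beta\le b,\ \rho(\beta)<\rho(\alpha)}S_{\sigma_0\rho(\beta),\sigma_0\rho(\alpha)}(k)$, while $(k_{\sigma(a)},\dots,k_{\sigma(b)})$ is the fixed vector $\kappa:=(k_{\sigma_0(a)},\dots,k_{\sigma_0(b)})$ permuted by $\rho$. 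Performing the sum over $\rho\in H$ inside each coset and relabelling, the left-hand side above becomes $\sum_{\sigma_0}e^{ik_{\sigma_0}\cdot x}Q(\sigma_0,k)\,I_m(\kappa)$ with $m=b-a+1$, where
\begin{equation*}
	I_m(\kappa):=\sum_{\sigma\in S_m}\Bigl(\tfrac{i\theta(\kappa_{\sigma(m)}-\kappa_{\sigma(1)})}{m-1}+\kappa_{\sigma(1)}\kappa_{\sigma(m)}\Bigr)\prod_{\substack{1\le\alpha<\beta\le m:\\\sigma(\beta)<\sigma(\alpha)}}S_{\sigma(\beta),\sigma(\alpha)}(\kappa).
\end{equation*}
So the whole proposition reduces to the identity $I_m\equiv 0$ on $\R^m$ for every $m\ge 2$.

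\textbf{Step 3 (the combinatorial identity — the hard part).} For $m=2$ the identity $I_2\equiv 0$ is immediate from the definition (\ref{InversionFactor}) of $S$; this is precisely the relation that ``forces'' that choice of $S$, as remarked after (\ref{BetheAnsatz}). For $m\ge 3$ I would prove $I_m\equiv 0$ by induction on $m$ (or by an explicit pairing of permutations), using the unitarity relation $S_{\alpha,\beta}(\kappa)S_{\beta,\alpha}(\kappa)=1$ and the Yang--Baxter relations satisfied by the $S_{\alpha,\beta}$, together with the reversal involution $\sigma\mapsto\bar\sigma$, $\bar\sigma(i)=\sigma(m+1-i)$, under which the $S$-products of $\sigma$ and of $\bar\sigma$ multiply to the $\sigma$-independent quantity $\prod_{u<v}S_{u,v}(\kappa)$. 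The precise coefficient $\tfrac{1}{m-1}$, which comes from the formula (\ref{ThetaFormula}) for $\theta(k,l)$ under the uniform characteristic measure, should be exactly what makes the cancellation close up. I expect this identity to be the main obstacle: for $m\ge 3$, $I_m\equiv 0$ is a genuine statement about products of three or more of the $S$-factors and does not follow from the two-body relation alone — this is the sense in which, as noted in the introduction, the relevance of interactions among more than two particles ``adds significant complexity.''
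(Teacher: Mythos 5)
Your Steps 1 and 2 correctly reproduce the paper's reduction: differentiate under the $m^{(n)}_\theta$-integral, observe that the boundary condition for a pair $a<b$ lands on the integrand of (\ref{BetheAnsatz}), and then use that everything in the summand except the in-block factors $\prod_{a\le\alpha<\beta\le b,\ \sigma(\beta)<\sigma(\alpha)}S_{\sigma(\beta),\sigma(\alpha)}$ depends only on $\sigma(\{a,\dots,b\})$ and the restriction of $\sigma$ to $\{a,\dots,b\}^c$. Your coset language is a clean way of saying exactly what the paper does by splitting the product into four pieces and checking that three of them are constant across the inner sum; both arrive at the same statement that it suffices to prove $I_m\equiv 0$ for all $m$, which is the paper's Proposition~\ref{Prop: BoundaryConditionEquation1} after dividing by $m-1$.

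Step 3 is where the proof actually lives, and there you have a genuine gap: you name plausible tools (unitarity, Yang--Baxter, the reversal involution) and you correctly verify $I_2\equiv 0$, but you do not produce the cancellation for $m\ge 3$, and you explicitly acknowledge this is an obstacle you have not overcome. The identity $I_m\equiv 0$ does not follow from the two-body $S$-matrix relations alone, and there is no indication in your sketch of how the factor $\tfrac{1}{m-1}$ emerges from Yang--Baxter manipulations; the reversal involution shows the $S$-products of $\sigma$ and $\bar\sigma$ multiply to a $\sigma$-independent phase, but the attached scalar prefactors $\tfrac{i\theta(\kappa_{\sigma(m)}-\kappa_{\sigma(1)})}{m-1}+\kappa_{\sigma(1)}\kappa_{\sigma(m)}$ do not pair up in an obviously cancelling way. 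The paper's route is entirely different: it clears the common denominator $\prod_{\sigma(\beta)<\sigma(\alpha)}\bigl(i\theta(k_{\sigma(\alpha)}-k_{\sigma(\beta)})-k_{\sigma(\alpha)}k_{\sigma(\beta)}\bigr)$ to convert (\ref{BoundaryConditionEquation1}) into a polynomial identity, observes the resulting sum is an alternating polynomial and so has a Vandermonde factor (\ref{VandermondeFactor}), bounds the degree of the symmetric quotient via the substitutions $\kappa_i=-1,\kappa_j=1$ (Lemma~\ref{BsubDegree}) to pin down its form as a multilinear symmetric polynomial with only three distinct elementary-symmetric coefficients (Lemma~\ref{multilinearpolys}), and then establishes the recursion $C_m^{(n)}=(n-1)C_m^{(n-1)}$ by setting $k_n=0$, reducing everything to the $n=2$ computation. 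If you want to complete your proof, you either need to actually carry out the Yang--Baxter/involution argument — for which I do not see a clear path — or adopt something like the paper's polynomial-degree strategy for the identity $I_m\equiv 0$.
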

Using the same ideas as in the previous subsection we can derive sufficient bounds to show $\int u_t(x,y) f(y) m^{(n)}_\theta (dy) \in C^2_0(\Weyl{n})$. Hence we just need to show it satisfies the correct boundary conditions, from the PDE (\ref{StickBackwardEquation}). Fix $a,b\in\{1,...,n\}$ with $a<b$, then for $t>0$ we can differentiate under the integral, as noted in the previous subsection, to see that the corresponding boundary condition is satisfied if for all $a<b$, $x_a=x_b$ implies
\begin{equation*}
	\int_{\R^n} e^{-\frac{1}{2} t |k|^2} \sum_{\sigma\in S_n} \left( i\theta (k_{\sigma(b)} - k_{\sigma(a)}) + (b-a)k_{\sigma(b)} k_{\sigma(a)} \right) e^{ik_\sigma \cdot (x - y_\sigma)} \prod_{\substack{\alpha <\beta : \\ \sigma(\beta) < \sigma(\alpha)}} S_{\sigma(\beta), \sigma(\alpha)}(k) dk =0.
\end{equation*}
This can be simplified by splitting the summand into parts dependent on $\sigma(a),...,\sigma(b)$ and on the remaining values $\sigma$ takes. Noting that we have $x_a=...=x_b$
\begin{equation*}
\prod_{c=a}^b e^{ik_{\sigma(c)}(x_c - y_{\sigma(c)})} = \prod_{c=a}^b e^{ i k_{\sigma(c)}(x_a- y_{\sigma(c)})} = \prod_{\tilde{c}\in \{\sigma(a),...,\sigma(b) \}}e^{ ik_{\tilde c}(x_a - y_{\tilde c})}.
\end{equation*}
Notice that $\{\sigma(a),...,\sigma(b)\}= \{\sigma(1),...,\sigma(a-1),\sigma(b+1),...,\sigma(n)\}^c$, and thus the exponential factor of the summand only depends on $\sigma(\{a,...,b\})$ and not $\sigma(a),...,\sigma(b)$ themselves. Now we split the product
\begin{align*}
	\prod_{\substack{\alpha <\beta : \\ \sigma(\beta) < \sigma(\alpha)}} S_{\sigma(\beta), \sigma(\alpha)}(k) =& 	\prod_{\substack{\alpha<a\leq \beta \leq b : \\ \sigma(\beta) < \sigma(\alpha)}} S_{\sigma(\beta), \sigma(\alpha)}(k)
	\prod_{\substack{a\leq \alpha \leq b <\beta : \\ \sigma(\beta) < \sigma(\alpha)}} S_{\sigma(\beta), \sigma(\alpha)}(k) \\
	&\prod_{\substack{\alpha, \beta \in \{a,...,b\}^c :\\ \alpha<\beta, \\ \sigma(\beta) < \sigma(\alpha)}} S_{\sigma(\beta), \sigma(\alpha)}(k) 
	\prod_{\substack{a\leq\alpha <\beta\leq b : \\ \sigma(\beta) < \sigma(\alpha)}} S_{\sigma(\beta), \sigma(\alpha)}(k).
\end{align*}
Note that $S_{\sigma(\beta),\sigma(\alpha)}$ does not depend on $\alpha$ and $\beta$, but on $\sigma(\alpha)$ and $\sigma(\beta)$. Suppose, for a given permutation $\sigma$,  $S_{\sigma(\beta), \sigma(\alpha)}$ appears in the first product. Then for any permutation $\tau$ with $\sigma(c)=\tau(c)$ for every $c\in \{a,...,b\}^c$ we have $\sigma(\beta)\in \{\sigma(a),...,\sigma(b)\}= \{\tau(a),...,\tau(b)\}$. Thus there exists $\gamma\in\{a,...,b\}$ such that $\tau(\gamma)= \sigma(\beta)$, and so we have $\tau(\alpha)=\sigma(\alpha)>\sigma(\beta)=\sigma(\gamma)$ and $\alpha<a\leq\gamma$. Hence $S_{\tau(\gamma), \tau(\alpha)}=S_{\sigma(\beta),\sigma(\alpha)}$ appears in the product for $\tau$. This shows the first product doesn't depend on $\{\sigma(a),...,\sigma(b)\}$, and similarly the second doesn't either. The third product clearly doesn't depend on them, leaving only the fourth product. Finally we note that the fourth product doesn't depend on the values $\sigma$ takes outside $\{a,...,b\}$. Hence we can split the sum into a sum over possibilities for the permutation outside $\{a,..., b\}$ and then a sum over possibilities inside $\{a,.., b\}$. Pulling the parts depending only on the values of $\sigma $ outside $\{a,...,b\}$ out of the second sum we see that the second sum will always vanish, and thus our condition will hold, if
\begin{equation*}
	\sum_{\sigma\in S_{b-a+1}} \left( i\theta (k_{\sigma(b-a+1)} - k_{\sigma(1)}) + (b-a)k_{\sigma(b-a+1)} k_{\sigma(1)} \right) \prod_{\substack{1\leq \alpha<\beta \leq b-a+1: \\ \sigma(\beta) <\sigma(\alpha)}} S_{\sigma(\beta), \sigma(\alpha)}(k) =0.
\end{equation*}
Where we have relabelled $k_a,..., k_b$ to $k_1,..., k_{b-a+1}$. Hence it is enough to prove the following
\begin{proposition}\label{Prop: BoundaryConditionEquation1}
	For every $n\in \N$ we have the identity
	\begin{equation}\label{BoundaryConditionEquation1}
		\sum_{\sigma \in S_n} \left( i\theta \left( k_{\sigma(n)} - k_{\sigma(1)} \right) + (n-1) k_{\sigma(n)} k_{\sigma(1)} \right) \prod_{\substack{\alpha <\beta : \\ \sigma(\beta) < \sigma(\alpha)}} S_{\sigma(\beta), \sigma(\alpha)} = 0, \quad \text{for every } n\in \N.
	\end{equation}
\end{proposition}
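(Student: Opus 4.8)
The natural strategy is induction on $n$. The base cases are immediate: for $n=1$ the bracket is $i\theta(k_1-k_1)+0=0$, and for $n=2$, recalling $S_{1,2}=\tfrac{i\theta(k_2-k_1)+k_1k_2}{i\theta(k_2-k_1)-k_1k_2}$, the two summands are $i\theta(k_2-k_1)+k_1k_2$ and $\big(i\theta(k_1-k_2)+k_1k_2\big)S_{1,2}=-\big(i\theta(k_2-k_1)+k_1k_2\big)$, which cancel. Before the inductive step I would record two elementary facts. First, $S_{\alpha,\beta}S_{\beta,\alpha}=1$. Second, writing $A_\sigma:=\prod_{\alpha<\beta:\,\sigma(\beta)<\sigma(\alpha)}S_{\sigma(\beta),\sigma(\alpha)}$, if $s_i$ transposes positions $i$ and $i+1$ then $A_{\sigma s_i}=A_\sigma\,S_{\sigma(i),\sigma(i+1)}$, since $s_i$ creates or destroys precisely the inversion at $(i,i+1)$ and the remaining factors are unchanged as a product.

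From the second fact one obtains a peeling/factorisation identity: for each value $p$,
\begin{equation*}
\sum_{\sigma\in S_n:\ \sigma(1)=p}A_\sigma=\Big(\prod_{v=1}^{p-1}S_{v,p}\Big)\sum_{\tau}A_\tau,
\end{equation*}
where the inner sum runs over bijections of the remaining $n-1$ positions onto the remaining $n-1$ values (i.e.\ the same sort of amplitude sum with $k_p$ deleted); the point is that $p$ sitting at position $1$ is inverted with every smaller value, and all smaller values lie to its right. There is a mirror identity with $\sigma(n)=q$ and $\prod_{v=q+1}^n S_{q,v}$, and combining the two gives a two--sided version for $\sum_{\sigma:\,\sigma(1)=p,\sigma(n)=q}A_\sigma$, with an extra $S_{q,p}$ appearing when $p>q$ and the inner sum now missing both $k_p$ and $k_q$. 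It is also convenient to note that the bracket in (\ref{BoundaryConditionEquation1}) equals, with $\alpha=\sigma(1)$ and $\beta=\sigma(n)$, the expression $D_{\alpha,\beta}+n\,k_\alpha k_\beta$, where $D_{\alpha,\beta}:=i\theta(k_\beta-k_\alpha)-k_\alpha k_\beta$ is the denominator of $S_{\alpha,\beta}$ and satisfies $D_{\alpha,\beta}=-N_{\beta,\alpha}$ for $N$ the numerator; thus (\ref{BoundaryConditionEquation1}) is equivalent to $\sum_\sigma D_{\sigma(1),\sigma(n)}A_\sigma=-n\sum_\sigma k_{\sigma(1)}k_{\sigma(n)}A_\sigma$.

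I would then run the inductive step by splitting the sum in (\ref{BoundaryConditionEquation1}) over the unordered pair $\{\sigma(1),\sigma(n)\}$, applying the two--sided factorisation, and using $S_{\alpha,\beta}S_{\beta,\alpha}=1$ and $D_{\alpha,\beta}=-N_{\beta,\alpha}$ to combine the two orderings of each pair, the aim being to collapse the residual expression onto the $(n-1)$--variable version of the identity in the surviving momenta (together, most likely, with the analogous recursions for the auxiliary sums $\sum_\sigma A_\sigma$ and $\sum_\sigma k_{\sigma(1)}A_\sigma$). The main obstacle — and the reason the $n=2$ computation does not generalise mechanically — is that the weight $i\theta(k_{\sigma(n)}-k_{\sigma(1)})+(n-1)k_{\sigma(1)}k_{\sigma(n)}$ couples the two extreme coordinates $\sigma(1)$ and $\sigma(n)$, whereas peeling $A_\sigma$ removes only one coordinate at a time and couples it to all the others through $S$--factors. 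Consequently the induction does not close term by term: one must regroup contributions coming from different endpoint pairs and arrange for the spurious $S$--factors to telescope, and that is where the real combinatorial work lies. I would verify the bookkeeping first on $n=3$, where the six amplitudes are $1,\ S_{2,3},\ S_{1,2},\ S_{1,2}S_{1,3},\ S_{1,3}S_{2,3},\ S_{1,2}S_{1,3}S_{2,3}$, before attempting the general argument.
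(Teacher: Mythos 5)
Your proposal is an outline, not a proof: you explicitly concede at the end that ``the induction does not close term by term'' and that ``the real combinatorial work lies'' in arranging the spurious $S$-factors to telescope, and you do not attempt that step. The peeling identity you record is correct (and it is a nice observation), but it is not at all clear that it can be pushed through. The obstruction you name is genuine: the weight $i\theta(k_{\sigma(n)}-k_{\sigma(1)})+(n-1)k_{\sigma(1)}k_{\sigma(n)}$ couples the two extremities, the products $\prod_{v<p}S_{v,p}$ and $\prod_{v>q}S_{q,v}$ produced by peeling depend on the full set of momenta, and the factor $n-1$ changes under the induction, so the endpoint-pair decomposition does not obviously reassemble into the $(n-1)$-variable identity. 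Without the regrouping argument actually being carried out, there is no reason to believe it exists in the form you describe, and I would not accept this as a proof.

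For contrast, the paper avoids working with the rational amplitudes $A_\sigma$ altogether. It multiplies through by the $\sigma$-independent common denominator $\prod_{\alpha<\beta}\bigl(i\theta(k_\beta-k_\alpha)-k_\alpha k_\beta\bigr)\bigl(i\theta(k_\alpha-k_\beta)-k_\alpha k_\beta\bigr)$ (after pairing each $\sigma$ with its reversal), replaces $k_j$ by $i\theta k_j$ to strip the $\theta$'s, and reduces (\ref{BoundaryConditionEquation1}) to a polynomial identity $\sum_\sigma \operatorname{sign}(\sigma)\,f(k_{\sigma(1)},k_{\sigma(n)})\,B(k_\sigma)=0$ with $B(k)=\prod_{\alpha<\beta}(k_\beta-k_\alpha-k_\alpha k_\beta)$. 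The left side is visibly alternating, hence divisible by the Vandermonde determinant; a degree count (the role of Lemma~\ref{BsubDegree}) shows the symmetric cofactor is multilinear, so it is a linear combination of elementary symmetric polynomials; a further evaluation at $k_{n-1}=-1,k_n=1$ forces the coefficients to be periodic with period two (Lemma~\ref{multilinearpolys}); setting $k_n=0$ yields the recursion $C^{(n)}_m=(n-1)C^{(n-1)}_m$; and the $n=2$ base case pins down the constants and gives the cancellation. That route trades the term-by-term bookkeeping you were attempting for soft structural facts about alternating multilinear polynomials, which is why it closes where your induction stalls. If you want to salvage your approach, the peeling recursion for the auxiliary sums $\sum_\sigma A_\sigma$ does turn out to be manageable (it is essentially Lemma~\ref{Lemma: sum over permutations} in the paper, proved again by an alternating-polynomial argument rather than by telescoping), and you might use that as a hint that the clean structure lives at the level of cleared denominators rather than at the level of the $S$-factors themselves.
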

First we simplify the left hand side by pulling out the common denominator. Recalling (\ref{InversionFactor})
\begin{align*}
	&\prod_{\sigma(\beta)<\sigma(\alpha)} \left( i\theta(k_{\sigma(\alpha)}-k_{\sigma(\beta)}) -k_{\sigma(\beta)}k_{\sigma(\alpha)} \right) \prod_{\substack{\alpha <\beta : \\ \sigma(\beta) < \sigma(\alpha)}} S_{\sigma(\beta), \sigma(\alpha)} \\
	= &\prod_{\sigma(\beta)<\sigma(\alpha)} \left( i\theta(k_{\sigma(\alpha)}-k_{\sigma(\beta)}) -k_{\sigma(\beta)}k_{\sigma(\alpha)} \right) \prod_{\substack{\alpha <\beta : \\ \sigma(\beta) < \sigma(\alpha)}} \frac{i \theta (k_{\sigma(\alpha)} - k_{\sigma(\beta)}) + k_{\sigma(\alpha)} k_{\sigma(\beta)}}{i \theta (k_{\sigma(\alpha)} - k_{\sigma(\beta)}) - k_{\sigma(\alpha)} k_{\sigma(\beta)}} \\
	= &\prod_{\substack{\beta<\alpha : \\ \sigma(\beta) < \sigma(\alpha)}} \left( i \theta (k_{\sigma(\beta)} - k_{\sigma(\alpha)}) - k_{\sigma(\alpha)} k_{\sigma(\beta)} \right) \prod_{\substack{\alpha <\beta : \\ \sigma(\beta) < \sigma(\alpha)}} \left(i \theta (k_{\sigma(\alpha)} - k_{\sigma(\beta)}) + k_{\sigma(\alpha)} k_{\sigma(\beta)}\right).
\end{align*}
Since permutations are bijections, this denominator doesn't depend on $\sigma$. Thus multiplying both sides of (\ref{BoundaryConditionEquation1}) by it gives the equivalent equation
\begin{align}
	\sum_{\sigma \in S_n} &\left( i\theta \left( k_{\sigma(n)} - k_{\sigma(1)} \right) + (n-1) k_{\sigma(n)} k_{\sigma(1)} \right) \nonumber \\ & \prod\limits_{\substack{\beta <\alpha : \\ \sigma(\beta) < \sigma(\alpha)}} \left(i \theta (k_{\sigma(\alpha)} - k_{\sigma(\beta)}) - k_{\sigma(\alpha)} k_{\sigma(\beta)}\right) \prod\limits_{\substack{\alpha <\beta : \\ \sigma(\beta) < \sigma(\alpha)}} \left(i \theta (k_{\sigma(\alpha)} - k_{\sigma(\beta)}) + k_{\sigma(\alpha)} k_{\sigma(\beta)}\right) =0. \nonumber
\end{align}
The next simplification we can make is to notice that since $\theta>0$ the transformations $k_j \to i\theta k_j$ are invertible, and so the above equation is equivalent to
\begin{align}
	\sum_{\sigma \in S_n} &\left( \left( k_{\sigma(n)} - k_{\sigma(1)} \right) + (n-1) k_{\sigma(n)} k_{\sigma(1)} \right) \nonumber \\ &\prod\limits_{\substack{\alpha <\beta : \\ \sigma(\alpha) < \sigma(\beta)}} \left( (k_{\sigma(\beta)} - k_{\sigma(\alpha)}) - k_{\sigma(\alpha)} k_{\sigma(\beta)}\right) \prod\limits_{\substack{\alpha <\beta : \\ \sigma(\beta) < \sigma(\alpha)}} \left( (k_{\sigma(\alpha)} - k_{\sigma(\beta)}) + k_{\sigma(\alpha)} k_{\sigma(\beta)}\right) =0. \nonumber
\end{align}
Where we have cancelled off $(i\theta)^{2\left(\binom{n}{2} +1\right)}$. We'll now split the equation into two parts and simplify before showing they cancel. Making the following rearrangements, and defining the polynomial $B$
\begin{align}
	& \quad\prod\limits_{\substack{ \beta< \alpha : \\  \sigma(\beta) < \sigma(\alpha)}} \left( ( k_{\sigma(\alpha)} - k_{\sigma(\beta)}) - k_{\sigma(\alpha)} k_{\sigma(\beta)}\right) \prod\limits_{\substack{\alpha <\beta : \\ \sigma(\beta) < \sigma(\alpha)}} \left( (k_{\sigma(\alpha)} - k_{\sigma(\beta)}) + k_{\sigma(\alpha)} k_{\sigma(\beta)}\right). \label{Bsigma} \\
	&= \prod_{\alpha < \beta}  \sign(\sigma(\beta)-\sigma(\alpha))\left( k_{\sigma(\beta)} - k_{\sigma(\alpha)} - k_{\sigma(\alpha)}  k_{\sigma(\beta)} \right)\nonumber\\ 
	&= \sign(\sigma)\prod_{\alpha < \beta}\left(k_{\sigma(\beta)}-k_{\sigma(\alpha)} - k_{\sigma(\alpha)}k_{\sigma(\beta)}\right) =: \sign(\sigma) B(k_\sigma). \nonumber
\end{align}
We proceed by considering the expressions
\begin{gather}
	\sum_{\sigma \in S_n} \sign(\sigma) (n-1)k_{\sigma(n)} k_{\sigma(1)} B(k_\sigma); \label{BoundaryConditionDiff} \\
	\sum_{\sigma \in S_n} \sign(\sigma) \left( k_{\sigma(n)} - k_{\sigma(1)} \right) B(k_\sigma). \label{BoundaryConditionDrift}
\end{gather}
It's clear that both (\ref{BoundaryConditionDiff}) and (\ref{BoundaryConditionDrift}) are polynomials in the $k_j$, we will now make some more general statements about polynomials of this form. \\
It's clear that if $f:\R^n\to \R$ is a polynomial, then
\begin{equation}\label{polynomial9}
	\sum_{\sigma \in S_n} \sign(\sigma) f(k_\sigma) B(k_\sigma)
\end{equation}
is an alternating polynomial. To see this suppose $a<b$ and we exchange $k_a$ and $k_b$ in the above expression. Then $k_\sigma$ becomes $k_{(a,b)\circ\sigma}$ giving
\begin{align*}
	\sum_{\sigma \in S_n} \sign(\sigma) f(k_{(a,b)\circ\sigma}) B(k_{(a,b)\circ\sigma})&=-\sum_{\sigma \in S_n} \sign((a,b)\circ\sigma) f(k_{(a,b)\circ\sigma}) B(k_{(a,b)\circ\sigma})\\
	&=-\sum_{\sigma\in S_n} \sign(\sigma) f(k_\sigma) B(k_\sigma).
\end{align*}
In particular whenever we have $k_{\alpha} = k_{\beta}$, for $\alpha \neq \beta$, any such polynomial must vanish. Hence we must be able to take out as a factor the Vandermonde determinant, $\prod_{\alpha < \beta}(k_\beta - k_\alpha)$, since this is itself alternating whatever remains must be symmetric. Thus for any polynomial $f:\R^n\to \R^n$ there exists a symmetric polynomial $g:\R^n\to \R$ such that
\begin{equation}\label{VandermondeFactor}
\sum_{\sigma\in S_n}\sign(\sigma)f(k_\sigma) B(k_\sigma) = g(k)\prod_{\alpha < \beta} (k_\beta - k_\alpha).
\end{equation}
In the case of (\ref{BoundaryConditionDiff}) and (\ref{BoundaryConditionDrift}) the polynomial $f$ is also multilinear (no variable appears with exponent higher than one), and depends only on two variables. The following lemma will allow us to make further statements about $g$ based on these assumptions.
\begin{lemma}\label{BsubDegree}
	If $i,j\in\{2,...,n-1\}$ with $i\neq j$, and $\kappa \in \R^n$ such that we fix  $\kappa_i=-1$ and $\kappa_j=1$. Then $B(\kappa)$ has degree at most $n-2$ when considered as a polynomial of $\kappa_1$ or $\kappa_n$.
\end{lemma}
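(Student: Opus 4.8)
The plan is to extract the coefficient of the top power of $\kappa_1$ (and, symmetrically, of $\kappa_n$) in the product $B(\kappa)=\prod_{\alpha<\beta}(\kappa_\beta-\kappa_\alpha-\kappa_\alpha\kappa_\beta)$ directly, and then observe that pinning an interior coordinate to $-1$ (respectively $1$) forces that coefficient to vanish. First I would record how each factor behaves in its two variables: writing $P_{\alpha\beta}(k):=k_\beta-k_\alpha-k_\alpha k_\beta$, one has
\[
P_{\alpha\beta}(k)=(1-k_\alpha)k_\beta-k_\alpha=-(1+k_\beta)k_\alpha+k_\beta,
\]
so $P_{\alpha\beta}$ is affine in $k_\beta$ with leading coefficient $1-k_\alpha$, and affine in $k_\alpha$ with leading coefficient $-(1+k_\beta)$.

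Next, viewing $B$ as a polynomial in $\kappa_1$: since $1$ is the smallest index, $\kappa_1$ occurs only in the $n-1$ factors $P_{1\beta}$, $\beta=2,\dots,n$, each of which is affine in $\kappa_1$. Hence $\deg_{\kappa_1}B\le n-1$, and the coefficient of $\kappa_1^{n-1}$ in $B$ is $\big(\prod_{\beta=2}^{n}(-(1+\kappa_\beta))\big)\prod_{2\le\alpha<\beta\le n}P_{\alpha\beta}(\kappa)$, the second product not involving $\kappa_1$. Because the hypothesis $i\in\{2,\dots,n-1\}$ places $i$ in $\{2,\dots,n\}$ and $\kappa_i=-1$, the factor $-(1+\kappa_i)=0$ occurs in the first product, so this coefficient is zero; therefore $\deg_{\kappa_1}B(\kappa)\le n-2$. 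The argument for $\kappa_n$ is the mirror image: $\kappa_n$ occurs only in the $n-1$ factors $P_{\alpha n}$, $\alpha=1,\dots,n-1$, so $\deg_{\kappa_n}B\le n-1$ with coefficient of $\kappa_n^{n-1}$ equal to $\big(\prod_{\alpha=1}^{n-1}(1-\kappa_\alpha)\big)\prod_{1\le\alpha<\beta\le n-1}P_{\alpha\beta}(\kappa)$; since $j\in\{2,\dots,n-1\}\subset\{1,\dots,n-1\}$ and $\kappa_j=1$, the factor $1-\kappa_j=0$ appears, giving $\deg_{\kappa_n}B(\kappa)\le n-2$.

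I do not expect a genuine obstacle here; the argument is pure bookkeeping with the product form of $B$. The one point I would be careful to state explicitly is that $\kappa_1$ and $\kappa_n$ are precisely the coordinates that appear in only a single family of factors (those indexed $(1,\beta)$, respectively $(\alpha,n)$), which is exactly why the naive degree in each of them is $n-1$ and why the vanishing of a single leading coefficient already drops it to $n-2$. The hypotheses $i,j\in\{2,\dots,n-1\}$ are what guarantee the pinned coordinate lies inside the relevant family, and the requirement $i\ne j$ is needed only so that the two pinnings $\kappa_i=-1$, $\kappa_j=1$ are simultaneously consistent.
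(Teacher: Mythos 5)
Your proof is correct and rests on the same observation as the paper's: $\kappa_1$ (resp.\ $\kappa_n$) appears only in the $n-1$ affine factors $P_{1\beta}$ (resp.\ $P_{\alpha n}$), and setting $\kappa_i=-1$ (resp.\ $\kappa_j=1$) kills the $\kappa_1$-dependence (resp.\ $\kappa_n$-dependence) of exactly one of them. The paper substitutes first and then counts surviving factors, while you extract the leading coefficient and show it vanishes upon substitution --- the same bookkeeping, arranged slightly differently.
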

\begin{proof}
	Recalling the formula for $B(k)$, (\ref{Bsigma}), we have
	\begin{align*}
		B(\kappa) = \prod_{\substack{\alpha < \beta:\\ \alpha,\beta\neq i,j}} \left( \kappa_\beta - \kappa_\alpha - \kappa_\alpha \kappa_\beta \right) &\prod_{\alpha\neq i,j} \left( \sign(j-\alpha)(1 - \kappa_\alpha) - \kappa_\alpha \right) \\
		\times&\prod_{\alpha\neq i,j} \left( \sign(i-\alpha)(-1-\kappa_\alpha) + \kappa_\alpha \right) \ \left( 2\sign(j-i) +1 \right).
	\end{align*}
	The first product contains $(n-3)$ factors with $\kappa_1$ and $\kappa_n$ each. The second and third contribute the factor of the form:
	\begin{equation*}
		(1-2\kappa_1)(-1).
	\end{equation*}
	For $\kappa_1$, and
	\begin{equation}
		(-1)(2\kappa_n+1).
	\end{equation}
	For $\kappa_n$. Leaving a total of $n-2$ factors involving $\kappa_1$ and $\kappa_n$ each, which proves the statement.
\end{proof}
Now we can apply the above lemma to the expressions we are interested in.
\begin{lemma}\label{multilinearpolys}
	If $f:\R^2\to \R$ is a multilinear polynomial, then there exists constants $C_0,C_1$ and $C_2$ such that
	\begin{align*}
	\sum_{\sigma\in S_n}& \sign(\sigma)f(k_{\sigma(1)},k_{\sigma(n)}) B(k_\sigma) = \\
	&\left(C_0 + \sum_{m=1}^{\lfloor n/2\rfloor} \left( C_1 \sum_{\alpha_1<...<\alpha_{2m}} k_{\alpha_1}...k_{\alpha_{2m}} +C_2\sum_{\alpha_1<...<\alpha_{2m+1}}k_{\alpha_1}... k_{\alpha_{2m+1}} \right) \right)\prod_{\alpha < \beta} (k_\beta - k_\alpha).
	\end{align*}
\end{lemma}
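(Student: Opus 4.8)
The plan is to divide out the Vandermonde $V(k):=\prod_{\alpha<\beta}(k_\beta-k_\alpha)$ and then identify the symmetric cofactor explicitly. Write $\Sigma(k):=\sum_{\sigma\in S_n}\sign(\sigma)f(k_{\sigma(1)},k_{\sigma(n)})B(k_\sigma)$, with $B(k_\sigma)=\prod_{\alpha<\beta}\big(k_{\sigma(\beta)}-k_{\sigma(\alpha)}-k_{\sigma(\alpha)}k_{\sigma(\beta)}\big)$. Applying the alternation argument of \eqref{VandermondeFactor} to the polynomial $(k_1,\dots,k_n)\mapsto f(k_1,k_n)$ gives $\Sigma=g(k)V(k)$ for a symmetric polynomial $g$. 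The first step is to show $g$ is \emph{multilinear}: for $n\ge2$ the two arguments of $f$ are distinct coordinates, so $f(k_{\sigma(1)},k_{\sigma(n)})$ has degree $\le1$ in each $k_i$, while each $k_i$ sits in exactly $n-1$ of the $\binom n2$ factors of $B(k_\sigma)$, whence $\deg_{k_i}B(k_\sigma)\le n-1$ and $\deg_{k_i}\Sigma\le n$; dividing by $V$ (degree $n-1$ in $k_i$) leaves $\deg_{k_i}g\le1$. A symmetric multilinear polynomial is a linear combination of the elementary symmetric polynomials, so $g=\sum_{j=0}^n c_je_j$ and it remains to pin down the $c_j$.

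The low coefficients are read off by homogeneity. Since $B(ck)=c^{\binom n2}\prod_{\alpha<\beta}\big((k_\beta-k_\alpha)-c\,k_\alpha k_\beta\big)$ and $e_j(ck)=c^je_j(k)$, the identity $\Sigma(ck)=c^{\binom n2}g(ck)V(k)$ shows that $c_je_j(k)V(k)$ is precisely the degree-$\big(\binom n2+j\big)$ homogeneous part of $\Sigma$, obtained by selecting the quadratic term $-k_{\sigma(\alpha)}k_{\sigma(\beta)}$ in $j$ of the factors of $B(k_\sigma)$ (or in $j-1$, $j-2$ of them, when the linear or bilinear part of $f$ supplies the rest) and the linear term in the others. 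The part with no quadratic factor gives $c_0=n!\,f(0,0)$. For the next one, the contribution of the constant part of $f$ vanishes because $\sum_{a\ne b}\tfrac{k_ak_b}{k_b-k_a}=0$, so the coefficient of $e_1$ is $(n-1)!\big(\partial_1f+\partial_2f\big)(0,0)$; this is $0$ for the multilinear polynomials $f(x,y)=xy$ and $f(x,y)=y-x$ arising in \eqref{BoundaryConditionDiff} and \eqref{BoundaryConditionDrift}, so $c_1=0$.

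The parity relation $c_j=c_{j+2}$ is where Lemma \ref{BsubDegree} enters. Assume $n\ge4$ and specialise $k_2=-1$, $k_3=1$. For each $\sigma$ with $k_1$ in slot $1$ or slot $n$ of $B(k_\sigma)$: if both specialised values occupy interior slots, Lemma \ref{BsubDegree} bounds $\deg_{k_1}B(k_\sigma)$ by $n-2$; if $-1$ or $1$ instead occupies the other end‑slot, then the factor of $B(k_\sigma)$ linking the slot of $k_1$ to the slot of that value becomes the constant $\pm1$ (the mechanism already used in the proof of Lemma \ref{BsubDegree}), again removing a power of $k_1$ — so $\deg_{k_1}(fB(k_\sigma))\le n-1$; for the remaining $\sigma$, $f$ contributes no power of $k_1$ at all, so $\deg_{k_1}(fB(k_\sigma))\le n-1$ automatically. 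Hence $\deg_{k_1}\Sigma|_{k_2=-1,k_3=1}\le n-1$, and dividing by $V|_{k_2=-1,k_3=1}$ (still of degree $n-1$ in $k_1$) shows $g|_{k_2=-1,k_3=1}$ is independent of $k_1$, hence, by symmetry of $g$ in the unspecialised coordinates, a constant. Expanding $\sum_je_j(k_1,-1,1,k_4,\dots,k_n)\,t^j=(1+tk_1)(1-t^2)\prod_{i\ge4}(1+tk_i)$ rewrites $g|_{k_2=-1,k_3=1}=\sum_j(c_j-c_{j+2})e_j(k_4,\dots,k_n)+k_1\sum_j(c_{j+1}-c_{j+3})e_j(k_4,\dots,k_n)$, so the constancy forces $c_j=c_{j+2}$ for every $j\ge1$; with $c_1=0$ this kills all odd coefficients and makes the even coefficients from $e_2$ on equal, i.e. $g=C_0+C_1\sum_{m\ge1}e_{2m}$, which is the claimed form with $C_2=0$. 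The cases $n=2$ (where $g=c_0+c_2e_2$) and $n=3$ (where $g=c_0+c_2e_2+c_3e_3$, now with $C_2=c_3$ possibly nonzero) follow directly from the homogeneous‑component description together with $c_1=0$.

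The main obstacle I anticipate is the uniform degree bound in the third step. Lemma \ref{BsubDegree} only directly controls permutations that keep both specialised values interior, so the permutations sending $-1$ or $1$ into slot $1$ or slot $n$ must be handled separately, and one has to verify that in those cases the factor that degenerates to a constant really is one carrying $k_1$ (respectively $k_n$) — this is the combinatorial point that makes the bound hold over all of $S_n$ and not only generically. The other delicate point is extracting the exact coefficient identities $c_j=c_{j+2}$ from the independence statement, rather than a weaker divisibility or inequality.
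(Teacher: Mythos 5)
Your strategy is the paper's: extract the Vandermonde to get a symmetric multilinear cofactor $g=\sum_j c_je_j$, specialise two coordinates to $\pm1$, use Lemma \ref{BsubDegree} to force the specialised $g$ to be constant, and read off $c_j=c_{j+2}$. (The paper puts $(-1,1)$ in slots $n-1,n$ rather than $2,3$ and argues via $\kappa_{n-1}\kappa_n=-1$ rather than a generating function, but these choices are cosmetic.) Two details are off. First, your handling of the end-slot permutations is wrong in two of the four sub-cases: if $k_1$ is in slot $1$ and $1$ is in slot $n$, the $(1,n)$-factor of $B(\kappa_\sigma)$ is $1-k_1-k_1\cdot 1=1-2k_1$, not a constant; similarly it is $2k_1+1$ when $k_1$ is in slot $n$ and $-1$ is in slot $1$. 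The degree bound does still hold, because a factor $\kappa_{\sigma(\beta)}-\kappa_{\sigma(\alpha)}-\kappa_{\sigma(\alpha)}\kappa_{\sigma(\beta)}$ (with $\alpha<\beta$) collapses to $-1$ precisely when $\kappa_{\sigma(\beta)}=-1$ or when $\kappa_{\sigma(\alpha)}=1$; so with $k_1$ in slot $1$ it is the factor toward the slot carrying $-1$ (wherever $-1$ is) that dies, and with $k_1$ in slot $n$ it is the factor from the slot carrying $1$. Your justification points at the wrong factor and should be replaced by this.

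Second, and more seriously, the ``low coefficients'' paragraph produces a claim that is false for the Lemma as stated. The Lemma is about arbitrary multilinear $f$ and its content is only that $c_j=c_{j+2}$ for $j\ge1$---three constants suffice---not that the odd-degree constant is zero. Your own identity $c_1=(n-1)!(\partial_1f+\partial_2f)(0,0)$ shows $c_1$ need not vanish: with $f(x,y)=x$ and $n=2$, $\sum_{\sigma}\sign(\sigma)k_{\sigma(1)}B(k_\sigma)=(k_2-k_1)(k_1+k_2+k_1k_2)$, so $g=e_1+e_2$. Concluding ``$g=C_0+C_1\sum_{m\ge1}e_{2m}$, i.e.\ $C_2=0$'' therefore does not prove the Lemma. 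The observation that $c_1=0$ for the specific $f(x,y)=xy$ and $f(x,y)=y-x$ belongs, as in the paper, to the subsequent determination of the constants (done there via the $n=2$ base case and the $C_m^{(n)}=(n-1)C_m^{(n-1)}$ recursion), not to the proof of this Lemma.
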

\begin{proof}
	The discussion preceding Lemma \ref{BsubDegree} shows that we at least have equation (\ref{VandermondeFactor}), and that $g$ must be symmetric. To get the form given in the statement we will show that $g$ is also multilinear, which tells us we can write it as a linear combination of elementary symmetric polynomials, and then that the coefficients in this combination are of the form given above. Both of these arguments proceed by considering the exponents of the variables $k_j$. \par{}
	To show multilinearity we note that for each $k_j$, $\prod_{\alpha<\beta}(k_\beta - k_\alpha)$ contains $n-1$ linear factors of $k_j$. Furthermore each $B(k_\sigma)$ also contains exactly $n-1$ linear factors of $k_j$. But $f$ is multilinear so in the summand $\sign(\sigma)f(k_\sigma)B(k_\sigma)$ the largest possible power of $k_j$ is $n$. Hence the largest possible power of $k_j$ in $g(k)$ is $1$. This holds for each $j$ so $g(k)$ is multilinear. \par{}
	Since $g(k)$ is multilinear and symmetric it must be of the form
	\begin{equation*}
		g(k)=C_0 + \sum_{m=1}^n C_m \sum_{\alpha_1<...<\alpha_m} k_{\alpha_1}... k_{\alpha_m}.
	\end{equation*}
	Now we show that the constants $C_m$ satisfy $C_1 = C_{2m+1}$ and $C_2= C_{2m}$ for all $m\leq n/2$. Setting $\kappa = (k_1,...,k_{n-2},-1,1)$, we have the equality
	\begin{equation}\label{kappaEquality}
		\sum_{\sigma\in S_n} \sign(\sigma)f(\kappa_{\sigma(1)},\kappa_{\sigma(n)}) B (\kappa_\sigma) = 2 g(\kappa) \prod_{\alpha < \beta<n-1} (k_\beta - k_\alpha) \prod_{\gamma=1}^{n-2}(1-k_\gamma) (-1-k_\gamma).
	\end{equation}
	Since $g$ is symmetric polynomial, if one of its terms contains $k_{n-1}$ but not $k_n$, there is a term otherwise equal, where $k_{n-1}$ is replaced with $k_n$, and vice versa. Using $\kappa$ as defined in the previous proof, in $g(\kappa)$ these terms cancel, leaving only the terms that contain both or neither. For $\kappa$ we have set $k_{n-1}k_n=-1$ so we have the following.
	\begin{equation*}
		g(\kappa) = C_0 + \sum_{m=1}^{n-2} (C_m- C_{m+2}) \sum_{\alpha_1<...<\alpha_m<n-1} k_{\alpha_1}...k_{\alpha_m}.
	\end{equation*}
	The next step is to consider the exponents on the left hand side of (\ref{kappaEquality}) for each term in the sum, and show $g(\kappa)$ must be constant. First $B(k_\sigma)$ contains $(n-1)$ linear factors of each $k_j$, so the only way a $k_j$ with exponent $n$ can appear is if it also occurs in $f(\kappa_{\sigma(1)}, \kappa_{\sigma(n)})$, hence only if $j=\sigma(n)$ or $\sigma(1)$. But the previous lemma tells us that $B(\kappa_\sigma)$ has degree $n-2$ as a polynomial of $\kappa_{\sigma(1)}$ or $\kappa_{\sigma(n)}$. Thus the highest possible power of any of the $k_j$ on the left hand side of (\ref{kappaEquality}) is $n-1$. However the right hand side still contains $n-1$ linear factors of each $k_j$ outside of $g(\kappa)$, so $g(\kappa)$ must be constant. Hence for every $m>0$ $C_m = C_{m+2}$, proving the result.
\end{proof}
\begin{remark}\label{DeterminantOfBCs}
	Using the general formula for the sum of elementary symmetric polynomials on $n$ variables, $\prod_{j=1}^n(1+x_j)$, together with the above lemma, gives us that for a multilinear polynomial $f:\R^2\to \R$, there are constants $C_m$ and $D_m$ such that 
	\begin{align*}
		&\sum_{\sigma\in S_n} \sign(\sigma)f(k_{\sigma(1)},k_{\sigma(n)}) B(k_\sigma)\\
		 =& \prod_{\alpha<\beta} (k_\beta - k_\alpha) \Bigg(C_0 + \frac{1}{2}C_1\left(\prod_{j=1}^{n} (1+k_j) + \prod_{j=1}^n(1-k_j) -2 \right)\\
		 & \qquad \qquad \qquad+ \frac{1}{2}C_2\left(\prod_{j=1}^n(1+k_j) - \prod_{j=1}^n(1-k_j) \right) \Bigg) \\
		=& \prod_{\alpha<\beta} (k_\beta - k_\alpha) \left( D_0 + D_1\prod_{j=1}^n(1+k_j) +D_2\prod_{j=1}^n(1-k_j) \right)\\
		=&\det\left( k_i^{j-1} \right)\left( D_0+D_1\det\left((1+k_j)\delta_{ij}\right)+D_2\det\left((1-k_j)\delta_{ij}\right) \right).
	\end{align*} 
\end{remark}
Now we can return to our original expressions (\ref{BoundaryConditionDiff}) and (\ref{BoundaryConditionDrift}). These two lemmas imply that we have constants $C^{(n)}_0,\tilde{C}^{(n)}_0,C_1, \tilde{C}^{(n)}_1, C^{(n)}_2$ and $\tilde{C}^{(n)}_2$ such that
\begin{align}
	&\sum_{\sigma\in S_n} \sign(\sigma)(k_{\sigma(n)} - k_{\sigma(1)}) B(k_\sigma) = \label{bcDriftg}\\
	& \prod_{\alpha < \beta}(k_\beta - k_\alpha)\left(C^{(n)}_0 + \sum_{m=1}^{\lfloor n/2\rfloor} \left( C^{(n)}_1 \sum_{\alpha_1<...<\alpha_{2m}} k_{\alpha_1}...k_{\alpha_{2m}} +C^{(n)}_2\sum_{\alpha_1<...<\alpha_{2m+1}}k_{\alpha_1}... k_{\alpha_{2m+1}} \right)\right). \nonumber
\end{align}
\begin{align}
	&\sum_{\sigma\in S_n}\sign(\sigma) (n-1)k_{\sigma(n)}k_{\sigma(1)} B(k_\sigma)=  \label{bcDiffg}\\
	&\prod_{\alpha < \beta}(k_\beta - k_\alpha)\left(\tilde{C}^{(n)}_0 + \sum_{m=1}^{\lfloor n/2\rfloor} \left( \tilde{C}^{(n)}_1 \sum_{\alpha_1<...<\alpha_{2m}} k_{\alpha_1}...k_{\alpha_{2m}} +\tilde{C}^{(n)}_2\sum_{\alpha_1<...<\alpha_{2m+1}}k_{\alpha_1}... k_{\alpha_{2m+1}} \right)\right). \nonumber
\end{align}
The next lemma provides a link between these constants for different values of $n$, that will allow us to find their value inductively.
\begin{lemma}
 	For $m=0,1,2$ we have that $C_m^{(n)} = (n-1)C_m^{(n-1)}$ and $\tilde{C}_m^{(n)} = (n-1)\tilde{C}^{(n-1)}_m$.
\end{lemma}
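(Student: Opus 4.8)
The plan is to derive the recursion by substituting $k_n=0$ in the two polynomial identities (\ref{bcDriftg}) and (\ref{bcDiffg}) and checking that this collapses the level-$n$ statement onto the level-$(n-1)$ one, picking up exactly a factor $n-1$. On the right-hand sides the substitution is immediate: $e_j(k_1,\dots,k_{n-1},0)=e_j(k_1,\dots,k_{n-1})$ (and this vanishes once $j>n-1$), while $\prod_{\alpha<\beta}(k_\beta-k_\alpha)\big|_{k_n=0}=(-1)^{n-1}\big(\prod_{s=1}^{n-1}k_s\big)\prod_{\alpha<\beta\le n-1}(k_\beta-k_\alpha)$. Hence the right side of (\ref{bcDriftg}) at $k_n=0$ equals $(-1)^{n-1}\big(\prod_{s=1}^{n-1}k_s\big)$ times the Vandermonde in $k_1,\dots,k_{n-1}$ times a polynomial of exactly the shape in Lemma \ref{multilinearpolys}, but carrying the level-$n$ constants $C_0^{(n)},C_1^{(n)},C_2^{(n)}$ (the remaining symmetric sums vanishing in $n-1$ variables).

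The real content is the left-hand side, which I would organise by splitting $\sum_{\sigma\in S_n}$ according to the position $j:=\sigma^{-1}(n)$. From the product formula (\ref{Bsigma}) for $B$, putting $k_n=0$ turns each of the $n-1$ factors of $B(k_\sigma)$ involving position $j$ into $\pm k_{\sigma(r)}$, so that $B(k_\sigma)\big|_{k_n=0}=(-1)^{j-1}\big(\prod_{s=1}^{n-1}k_s\big)\,B(k_{\hat\sigma})$, where $\hat\sigma\in S_{n-1}$ is the permutation obtained by deleting position $j$ (together with the value $n$) and re-indexing, and $B$ on the right is the level-$(n-1)$ polynomial. A short inversion count gives $\sign(\sigma)=(-1)^{n-j}\sign(\hat\sigma)$, so that $\sign(\sigma)(-1)^{j-1}=(-1)^{n-1}\sign(\hat\sigma)$ does not depend on $j$, and the factor $(-1)^{n-1}\big(\prod_{s=1}^{n-1}k_s\big)$ can be pulled out of the whole sum. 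It remains to track the prefactor $k_{\sigma(n)}-k_{\sigma(1)}$ (respectively $k_{\sigma(n)}k_{\sigma(1)}$): for the interior positions $2\le j\le n-1$ one has $\sigma(1)=\hat\sigma(1)$ and $\sigma(n)=\hat\sigma(n-1)$, so each such term reproduces the corresponding level-$(n-1)$ summand and is $j$-independent, contributing $n-2$ copies of the level-$(n-1)$ sum; for the two boundary positions $j=1$ and $j=n$ the value $k_n=0$ enters the prefactor, and in the drift case these two terms combine into exactly one further copy of the level-$(n-1)$ sum, whereas in the diffusion case $k_{\sigma(n)}k_{\sigma(1)}$ then contains the factor $k_n=0$ and both boundary terms vanish.

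Putting the pieces together, the left side of (\ref{bcDriftg}) at $k_n=0$ is $(n-1)$ times $(-1)^{n-1}\big(\prod_{s=1}^{n-1}k_s\big)$ times the $(n-1)$-variable Vandermonde times the level-$(n-1)$ polynomial $g^{(n-1)}$ appearing on the right of (\ref{bcDriftg}); equating with the right side, cancelling the common factor $(-1)^{n-1}\big(\prod_{s=1}^{n-1}k_s\big)\prod_{\alpha<\beta\le n-1}(k_\beta-k_\alpha)$, and invoking linear independence of $1,e_2,e_3,\dots,e_{n-1}$ in $n-1$ variables gives $C_m^{(n)}=(n-1)C_m^{(n-1)}$ for $m=0,1,2$ (for the few small $n$ in which $e_2$ or $e_3$ is not yet present the corresponding identity is vacuous and can be checked directly). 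For (\ref{bcDiffg}) the identical computation applied to $\sum_\sigma\sign(\sigma)k_{\sigma(n)}k_{\sigma(1)}B(k_\sigma)$ produces a factor $n-2$ in place of $n-1$; combined with the explicit scalars $(n-1)$ and $(n-2)$ sitting in the definitions of $\tilde C_m^{(n)}$ and $\tilde C_m^{(n-1)}$, this again yields $\tilde C_m^{(n)}=(n-1)\tilde C_m^{(n-1)}$. The step I expect to be fiddliest is the sign bookkeeping — making the $(-1)^{j-1}$ coming from collapsing $B$ cancel the $(-1)^{n-j}$ coming from $\sign(\sigma)$, so that the reduced permutation sum genuinely reassembles $g^{(n-1)}$ — together with the slightly different behaviour of the boundary positions $j\in\{1,n\}$ for the drift and diffusion prefactors.
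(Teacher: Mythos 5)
Your proposal is correct and takes essentially the same approach as the paper's proof: set $k_n=0$ in the two polynomial identities, split $\sum_{\sigma\in S_n}$ according to the position of the value $n$ (interior positions each collapse onto a level-$(n-1)$ summand, contributing $n-2$ copies; the two boundary positions contribute one further copy in the drift case and vanish in the diffusion case), and compare coefficients. The only stylistic difference is that you track the signs explicitly through inversion counts $\sign(\sigma)=(-1)^{n-j}\sign(\hat\sigma)$ and the factor $(-1)^{j-1}$ coming from the collapsed $B$-factors, whereas the paper short-circuits this by first rewriting $\sign(\sigma)B(k_\sigma)$ in the reparametrised form $\prod_{\alpha<\beta<n}\bigl(k_\beta - k_\alpha - \sign(\sigma^{-1}(\beta)-\sigma^{-1}(\alpha))k_\beta k_\alpha\bigr)$, which makes the $j$-independence of the inner product manifest without explicit sign arithmetic. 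Your accounting in the diffusion case (the factor $n-2$ from the interior positions absorbing the explicit scalars $n-1$ and $n-2$ in the two levels of (\ref{bcDiffg})) matches what the paper's "follows the same lines" is pointing at, and the caveat about small $n$ is a reasonable, if routine, precaution.
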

\begin{proof}
	Take $k_n=0$ in (\ref{bcDriftg}) we get the equality
	\begin{align*}
		&\prod_{\alpha=1}^{n-1} (-k_{\alpha}) \prod_{\alpha<\beta<n} (k_\beta - k_\alpha) \\
		&\times  \left(C^{(n)}_0 + \sum_{m=1}^{\lfloor n/2\rfloor} \bigg( C^{(n)}_1 \sum_{\alpha_1<...<\alpha_{2m}<n} k_{\alpha_1}...k_{\alpha_{2m}} +C^{(n)}_2\sum_{\alpha_1<...<\alpha_{2m+1}<n}k_{\alpha_1}... k_{\alpha_{2m+1}} \bigg) \right) \\
		=&\sum_{\substack{\sigma\in S_n:\\ \sigma(1),\sigma(n)\neq n }} (k_{\sigma(n)} - k_{\sigma(1)}) \prod_{\alpha=1}^{n-1}(-k_{\alpha}) \prod_{\alpha<\beta<n} \left(k_\beta - k_\alpha - \sign\left( \sigma^{-1}(\beta) - \sigma^{-1}(\alpha) \right) k_\beta k_\alpha \right) \\
		&+ \sum_{\substack{\sigma\in S_n:\\ \sigma(1) = n }} k_{\sigma(n)} \prod_{\alpha=1}^{n-1}(-k_{\alpha}) \prod_{\alpha<\beta<n} \left(k_\beta - k_\alpha - \sign\left( \sigma^{-1}(\beta) - \sigma^{-1}(\alpha) \right) k_\beta k_\alpha \right) \\
		&- \sum_{\substack{\sigma\in S_n:\\ \sigma(n) = n }} k_{\sigma(1)} \prod_{\alpha=1}^{n-1}(-k_{\alpha}) \prod_{\alpha<\beta<n} \left(k_\beta - k_\alpha - \sign\left( \sigma^{-1}(\beta) - \sigma^{-1}(\alpha) \right) k_\beta k_\alpha \right).
 	\end{align*}
 	Here we have rewritten $\sign(\sigma)B(k_\sigma)$ as (\ref{Bsigma}) once again. Note that $\sigma^{-1}(n)$ plays no role in the terms of the first sum (on the second line). Thus we can relabel each permutation to one in $S_{n-1}$, with each one occurring $n-2$ times. For example, when $n=4$, we would replace the permutations $(\substack{1 \ 2 \ 3 \ 4 \\ 1 \ 4 \ 3 \ 2})$ and $(\substack{1 \ 2 \ 3 \ 4 \\ 1 \ 2 \ 4 \ 3})$ with $(\substack{1 \ 2 \ 3 \\ 1 \ 3 \ 2})$ and $(\substack{1 \ 2 \ 3 \\ 1 \ 2 \ 3})$ respectively. Note that this replacement does not change $\sign\left(\sigma^{-1}(\beta)-\sigma^{-1}(\alpha)\right)$, and thus does not change the summand. We can do the same with the third and fourth lines, these have no repeats as $\sigma^{-1}(n)$ must be $1$ or $n$ depending on the sum, this gives
	\begin{gather*}
		(n-2)\prod_{\alpha=1}^{n-1}(-k_{\alpha}) \sum_{\sigma\in S_{n-1}} (k_{\sigma(n-1)} - k_{\sigma(1)}) \prod_{\alpha<\beta<n} \left(k_\beta - k_\alpha - \sign\left( \sigma^{-1}(\beta) - \sigma^{-1}(\alpha) \right) k_\beta k_\alpha \right) \\
		+ \prod_{\alpha=1}^{n-1}(-k_{\alpha}) \sum_{\sigma\in S_{n-1}} k_{\sigma(n-1)} \prod_{\alpha<\beta<n} \left(k_\beta - k_\alpha - \sign\left( \sigma^{-1}(\beta) - \sigma^{-1}(\alpha) \right) k_\beta k_\alpha \right) \\
		- \prod_{\alpha=1}^{n-1}(-k_{\alpha}) \sum_{\sigma\in S_{n-1}} k_{\sigma(1)} \prod_{\alpha<\beta<n}\left(k_\beta - k_\alpha - \sign\left( \sigma^{-1}(\beta) - \sigma^{-1}(\alpha) \right) k_\beta k_\alpha \right).
	\end{gather*} 
	Which is exactly $(n-1)\prod_{\alpha=1}^{n-1}(-k_{\sigma(\alpha)})$ times the $n-1$ case, and thus is equal to
	\begin{align*}
		(n&-1)\prod_{\alpha=1}^{n-1}(-k_{\alpha}) \prod_{\alpha<\beta<n}(k_\beta - k_\alpha) \\
		\times&\left(C^{(n-1)}_0 + \sum_{m=1}^{\lfloor (n-1)/2\rfloor} \left( C^{(n-1)}_1 \sum_{\alpha_1<...<\alpha_{2m}} k_{\alpha_1}...k_{\alpha_{2m}} +C^{(n-1)}_2\sum_{\alpha_1<...<\alpha_{2m+1}}k_{\alpha_1}... k_{\alpha_{2m+1}} \right) \right).
	\end{align*}
	Comparing coefficients with what we started with, it's clear that $C^{(n)}_m = (n-1)C^{(n-1)}_m$ for $m=0,1,2$ as required. \par
	The proof for the $\tilde{C}^{(n)}_m$ follows the same lines as above.
\end{proof}
Finally we just need to establish the values $C^{(2)}_0,C^{(2)}_1, C^{(2)}_2, \tilde{C}^{(2)}_0, \tilde{C}^{(2)}_1$ and $\tilde{C}^{(2)}_2$ to find all the remaining values by induction. (\ref{BoundaryConditionDiff}) in the $n=2$ case is
\begin{gather*}
	k_1 k_2 (k_2 - k_1 - k_1k_2) + k_1k_2 (k_2 - k_1 + k_1 k_2) = 2(k_2 - k_1) k_1k_2.
\end{gather*}
Thus $C^{(2)}_0=0$, $C^{(2)}_1=0$ and $C^{(2)}_2=2$. Combining the two lemmas above this implies for $m=0,1$ $C^{(n)}_0 = 0$ for every $n$, and $C^{(n)}_2 = 2(n-1)!$ for every $n$. (\ref{BoundaryConditionDrift}) in the $n=2$ case is
\begin{gather*}
	(k_2 - k_1) (k_2 - k_1 - k_1k_2) + (k_1 - k_2) (k_2 - k_1 + k_1 k_2) = -2(k_2 - k_1) k_1k_2.
\end{gather*}
Thus $\tilde{C}^{(2)}_0=0$, $\tilde{C}^{(2)}_1=0$ and $\tilde{C}^{(2)}_2=-2$. Combining the two lemmas above this implies for $m=0,1$ $\tilde{C}^{(n)}_m = 0$ for every $n$, and $\tilde{C}^{(n)}_2 = -2(n-1)!$ for every $n$. In particular this shows that the sum of (\ref{BoundaryConditionDiff}) and (\ref{BoundaryConditionDrift}) is $0$, proving Proposition \ref{Prop: BoundaryConditionEquation1}. As a consequence we have proven Proposition \ref{Proposition: u satisfies boundary conditions}, concluding this subsection.
\subsection{Initial Condition}\label{Section: Initial Condition}
\begin{proposition}\label{Proposition: u satisfies initial condition}
For any bounded Lipschitz continuous function $f:\Weyl{n}\to \R$ we have
\begin{equation*}
	\int u_t(\cdot,y) f(y) m^{(n)}_\theta(dy) \to f \text{ uniformly, as } t\to 0.
\end{equation*}
Where the definitions of $m^{(n)}_\theta$ and $u_t$ are given in definition \ref{ReferenceMeasureOfTheTdensity} and Definition (\ref{BetheAnsatz}) respectively.	
\end{proposition}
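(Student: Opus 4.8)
The plan is to treat the operators $K_tf(x):=\int u_t(x,y)f(y)\,m^{(n)}_\theta(dy)$ as a (signed) approximate identity and to read off their $t\to0$ behaviour directly from the oscillatory integral in~(\ref{BetheAnsatz}). Decompose $m^{(n)}_\theta=\sum_{\pi\in\Pi_n}c_\pi\lambda^\pi$ with $c_\pi=\theta^{|\pi|-n}\prod_{\pi_\iota\in\pi}|\pi_\iota|^{-1}$ (so $c_\pi=1$ for $\pi=\{\{1\},\dots,\{n\}\}$), and split each $S$-factor in~(\ref{InversionFactor}) as $S_{\sigma(\beta),\sigma(\alpha)}(k)=-1+R_{\sigma(\beta),\sigma(\alpha)}(k)$, where $R$ is the rational remainder carrying the pole. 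Expanding $\prod S$ over subsets of the inversions of $\sigma$ writes $u_t=u_t^{0}+u_t^{1}$, where $u_t^{0}$ is the ``no-remainder'' piece in which every factor is replaced by $-1$. A short computation gives $u_t^{0}(x,y)=\det\big(p_t(x_i-y_j)\big)_{i,j=1}^n$ with $p_t$ the standard heat kernel: this is the Karlin--McGregor kernel of Brownian motion killed on leaving $\Weylo n$, so $u_t^{0}$ vanishes identically as soon as two coordinates of $x$, or two of $y$, coincide; hence $u_t^{0}$ contributes to $K_tf$ only through the top stratum, with $\int_{\Weylo n}u_t^{0}(x,y)f(y)\,dy\to f(x)$ for $x$ in the open chamber. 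The content of the Proposition is thus that the remainder $u_t^{1}$, paired against all the strata, vanishes uniformly on compacts of the open chamber and exactly repairs the missing boundary layer of width $\sqrt t$ near the walls, so that $K_tf\to f$ uniformly; here $f$ bounded and Lipschitz is used to control $f(y)-f(x)$ on the scale $\sqrt t$.

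To carry this out I would: first split the $y$-integral into the region $|y-x|\gtrsim 1$ and the region $|y-x|=O(\sqrt t)$; on the former, Gaussian-in-$x$ bounds of the form $|u_t(x,y)|\le Ct^{-N}e^{-c|x-y|^2/t}$ — obtained as in the proof of Lemma~\ref{uIsIntegrable}, by moving the $k$-contours by a bounded amount (since $e^{-t|k|^2/2}$ grows in imaginary directions one uses Lemma~\ref{Lemma: shifted k integral bound} rather than naive closing) — make the contribution $o(1)$ uniformly in $x$. On the region $|y-x|=O(\sqrt t)$, rescale $k=q/\sqrt t$, so that $S_{\sigma(\beta),\sigma(\alpha)}(q/\sqrt t)=\frac{i\theta\sqrt t\,(q_{\sigma(\alpha)}-q_{\sigma(\beta)})+q_{\sigma(\beta)}q_{\sigma(\alpha)}}{i\theta\sqrt t\,(q_{\sigma(\alpha)}-q_{\sigma(\beta)})-q_{\sigma(\beta)}q_{\sigma(\alpha)}}\to-1$ away from the loci $q_{\sigma(\alpha)}q_{\sigma(\beta)}=0$, and move each $q$-contour past the poles $q_{\sigma(\alpha)}q_{\sigma(\beta)}=i\theta\sqrt t\,(q_{\sigma(\alpha)}-q_{\sigma(\beta)})$. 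The key claim is that a residue taken at a consistent family of these poles collapses the corresponding block of $q$-variables onto a common value, and that, once the $y$-integral over $\Weylo n$ is performed, the resulting term is precisely an integral against $\lambda^{\pi}$ for the partition $\pi$ merging those coordinates, carried by a kernel that concentrates at $x$ as $t\to0$, with combinatorial factor matching $c_\pi$. Summing over $\sigma$ and over admissible residue patterns and matching against Definition~\ref{ReferenceMeasureOfTheTdensity}, one finds that the residue terms supply exactly the fraction of $f(x)$ that $u_t^{0}$ loses at each wall, which gives uniform convergence.

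The main obstacle is this residue bookkeeping: one must verify that the residues from the $n!$ permutation terms, under the $y$-integration whose phase degenerates on the lower strata (a regime not covered by the dominated-convergence estimates of Lemma~\ref{uIsIntegrable}), reorganise precisely into $\sum_{\pi:\,|\pi|<n}c_\pi\int_{\delWeyl n\pi}(\,\cdot\,)\,d\lambda^\pi$ with no leftover and no double-counting, and that the surviving kernels on each stratum really behave as approximate identities there. This is where the special form~(\ref{InversionFactor}) of the $S$-factor and the special values of $\theta(k,l)$ for the uniform characteristic measure — hence the exact constants in $m^{(n)}_\theta$ — enter essentially; structurally it parallels the corresponding step for the $\delta$-Bose gas in~\cite{DynamicsOfBoseGas}, but genuine multi-particle interaction makes it combinatorially much heavier. (A softer-looking route would first establish conservativity, $\int u_t(x,y)\,m^{(n)}_\theta(dy)\equiv1$, using $u_t(x,\cdot)\in\mathcal D_\theta$ with exponentially decaying gradient, the symmetry $u_t(x,y)=u_t(y,x)$ of Lemma~\ref{uisSymmetric}, and Corollary~\ref{StationarityOfm} to see that $t\mapsto\int u_t\,dm^{(n)}_\theta$ is constant, then reduce to the concentration estimate $\int_{|y-x|>\delta}|u_t(x,y)|\,m^{(n)}_\theta(dy)\to0$ uniformly; but since positivity of $u_t$ is not known a priori, identifying the constant and controlling $\|u_t(x,\cdot)\|_{L^1(m^{(n)}_\theta)}$ uniformly for small $t$ still demand estimates of the same strength, so the core difficulty is not bypassed.)
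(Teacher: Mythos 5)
Your main route — split $S=-1+R$, identify the $R$-free piece with a Karlin--McGregor determinant, and then argue that the remainder's residues "exactly repair the missing boundary layer" — contains a genuine gap, and you correctly identify it: the residue bookkeeping is not carried out. This is not a technical footnote but the entire substance of the proposition. You would need to prove that (i) contour shifts for the $u_t^1$ terms are even possible (the poles sit on surfaces $k_\alpha k_\beta = i\theta(k_\alpha-k_\beta)$ whose geometry depends jointly on several variables, not on single contours), (ii) the residues collected, after the $y$-integration over each stratum $\delWeyl n\pi$, reorganise \emph{exactly} into $\sum_{|\pi|<n} c_\pi\int_{\delWeyl n\pi}(\cdot)\,d\lambda^\pi$ with no leftover and no double count, and (iii) the surviving kernels act as approximate identities on each stratum. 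None of these are established, and (ii) alone would essentially amount to re-deriving, by hand, the conservativity identity $\int u_t\,dm^{(n)}_\theta\equiv 1$ through residue calculus — a substantially harder route than the one the paper takes.

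You propose the alternative "soft" route almost exactly as the paper runs it, but dismiss it as requiring estimates of the same strength; this is too hasty. The paper \emph{does not compute any residues}. It first proves $\int u_t(x,y)\,m^{(n)}_\theta(dy)=1$ structurally: Corollary \ref{StationarityOfm} (an integration-by-parts identity valid for $u_t(x,\cdot)\in\mathcal D_\theta$ with exponential gradient decay) shows the integral is constant in $t$, and a $t\to\infty$ scaling limit identifies the constant as $1$. This uses the generator structure, not delicate integral estimates. The remaining task is then \emph{only} concentration: bound $\int |u_t(x,y)|\,|f(y)-f(x)|\,m^{(n)}_\theta(dy)$, for which the Lipschitz hypothesis supplies a crucial extra factor $|y-x|$ that turns the marginally divergent $|\log t|^{|\pi|}$ bound on $\int|u_t|\,d\lambda^\pi$ into a vanishing $\sqrt{t}\,|\log t|^{|\pi|}$. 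The second structural move you miss is the rearrangement of the permutation sum by the coarser data $\sigma(\pi)=\tau(\pi)$, followed by the closed-form within-block sum of Lemma \ref{Lemma: sum over permutations}: this collapses the product of two-sided-pole factors $S_{\sigma(\beta),\sigma(\alpha)}$ into a single product $\prod\frac{i\theta(k_\beta-k_\alpha)}{i\theta(k_\beta-k_\alpha)-k_\alpha k_\beta}$ whose denominator is zero-free in a half-plane product (Lemma \ref{Lemma: poles of the factors in T}). That one-sidedness is what permits the contour shifts to be made \emph{without crossing any pole}, so the Gaussian factor $e^{-|y-\chi|^2/(12nt)}$ appears cleanly with no residue terms at all. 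Your $S=-1+R$ decomposition does not have this one-sided structure, which is why your plan is forced into residue territory.

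In short: your first route is incomplete at its crux; your second route is essentially correct in outline and matches the paper's strategy, but you have not seen that the conservativity identity can be obtained cheaply from the generator calculus (so the Gaussian estimates need only prove concentration), nor that the within-block permutation sum (Lemma \ref{Lemma: sum over permutations}) is the device that makes the contour shifts residue-free.
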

First we'll show
\begin{lemma}\label{Lemma: u integrates to one}
	\begin{equation*}
	\int u_t(x,y) m^{(n)}_\theta(dy) =1 \quad \text{for all } x\in \Weyl{n}, \ t>0.	
	\end{equation*}
\end{lemma}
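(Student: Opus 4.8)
The plan is to evaluate the integral directly, by Fourier analysis. The starting observation is that in (\ref{BetheAnsatz}) the point $y$ enters only through a single exponential: since $k_\sigma\cdot(x-y_\sigma)=k_\sigma\cdot x-k\cdot y$ for every $\sigma\in S_n$, one may pull $e^{-ik\cdot y}$ out of the permutation sum,
\begin{equation*}
	u_t(x,y)=\frac{1}{(2\pi)^n}\int_{\R^n}e^{-\frac12 t|k|^2}\,e^{-ik\cdot y}\Big(\sum_{\sigma\in S_n}e^{ik_\sigma\cdot x}\prod_{\substack{\alpha<\beta:\\\sigma(\beta)<\sigma(\alpha)}}S_{\sigma(\beta),\sigma(\alpha)}(k)\Big)dk .
\end{equation*}
The bracketed factor is bounded by $n!$ since $|S_{\alpha,\beta}(k)|=1$ on $\R^n$, so after decomposing $m^{(n)}_\theta$ over $\Pi_n$, inserting a Gaussian regulariser $e^{-\epsilon|z|^2}$ on each slice identified with $\Weylo{|\pi|}$, applying Fubini and letting $\epsilon\to 0$ (all justified with the bounds of Lemma \ref{uIsIntegrable}), the task reduces to computing the Fourier transform $\widehat m(k):=\int_{\Weyl{n}}e^{-ik\cdot y}\,m^{(n)}_\theta(dy)$ and then establishing the scalar identity
\begin{equation*}
	\frac{1}{(2\pi)^n}\int_{\R^n}e^{-\frac12 t|k|^2}\Big(\sum_{\sigma\in S_n}e^{ik_\sigma\cdot x}\prod_{\substack{\alpha<\beta:\\\sigma(\beta)<\sigma(\alpha)}}S_{\sigma(\beta),\sigma(\alpha)}(k)\Big)\widehat m(k)\,dk=1 .
\end{equation*}

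For $\widehat m(k)$ I would use Definition \ref{ReferenceMeasureOfTheTdensity}: pushing each $\lambda^\pi$ forward to Lebesgue measure on $\Weylo{|\pi|}$ via $I^\pi$ turns $k\cdot(I^\pi)^{-1}z$ into $\sum_i K^\pi_i(k)z_i$ with $K^\pi_i(k)=\sum_{m\in\pi_i}k_m$, and the elementary change of variables to successive differences on a Weyl chamber gives
\begin{equation*}
	\int_{\Weylo{|\pi|}}e^{-i\sum_i K^\pi_i(k)z_i}\,dz=2\pi\,\delta\Big(\sum_{j=1}^n k_j\Big)\prod_{l=1}^{|\pi|-1}\frac{1}{i\big(k_1+\cdots+k_{s_l(\pi)}-i0\big)},\qquad s_l(\pi):=|\pi_1|+\cdots+|\pi_l| ,
\end{equation*}
the $-i0$ encoding the $\epsilon\to 0$ limit. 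Thus $\widehat m(k)=2\pi\,\delta\big(\sum_j k_j\big)$ times an explicit rational function obtained by summing $\theta^{|\pi|-n}\prod_\iota|\pi_\iota|^{-1}\prod_l\big(i(k_1+\cdots+k_{s_l(\pi)}-i0)\big)^{-1}$ over all ordered partitions, equivalently over all choices of the breakpoint set $\{s_l(\pi)\}\subseteq\{1,\dots,n-1\}$.

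It then remains to prove the scalar identity above; this is the heart of the matter, and the main obstacle. I would attempt it by induction on $n$, peeling off one variable by a residue computation that activates the poles $1/(k_1+\cdots+k_{s_l}-i0)$ of $\widehat m$. The algebraic inputs should be: $S_{\alpha,\beta}(k)=1$ whenever $k_\alpha=0$ or $k_\beta=0$, so that sending a momentum to zero collapses the $S$-product to the $(n-1)$-particle one (the Fourier shadow of the consistency of the family); the recursive structure of the composition sum defining $\widehat m$; and the fact that the normalising constants $\theta^{|\pi|-n}\prod_\iota|\pi_\iota|^{-1}$ in $m^{(n)}_\theta$ are precisely those making the bookkeeping of block sizes close up — a cancellation in the same spirit as, but more delicate than, the one in Section \ref{Section:BoundaryConditions}. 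The base case $n=1$ is just $\int_\R(2\pi t)^{-1/2}e^{-(x-y)^2/(2t)}\,dy=1$; as a partial check, one sees immediately that $\big(\sum_j k_j\big)\widehat m(k)=0$, which already forces $\int u_t(x,y)\,m^{(n)}_\theta(dy)$ to be invariant under diagonal translations of $x$, though the full normalisation needs the combinatorics. The accompanying analytic points (the Fubini exchanges, and pairing the $-i0$ boundary-value distributions against the bounded, Gaussian-damped factor $e^{-\frac12 t|k|^2}\sum_\sigma e^{ik_\sigma\cdot x}\prod S$) are routine given Lemma \ref{uIsIntegrable}, but must be handled carefully so as not to create a circular dependence with the initial-condition argument (Proposition \ref{Proposition: u satisfies initial condition}) that follows.
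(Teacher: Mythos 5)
Your approach is genuinely different from the paper's, but it is incomplete in exactly the place where the real work would be. You reduce the lemma to a scalar identity --- the pairing of the Gaussian-damped, $S$-weighted exponential sum against the distributional Fourier transform $\widehat m(k)$ --- and then you write ``it then remains to prove the scalar identity above; this is the heart of the matter, and the main obstacle'' and sketch, without carrying it out, an induction via residues. That identity \emph{is} the lemma, up to bookkeeping; a plan for proving it is not a proof of it, and there is no a priori reason the residue bookkeeping over ordered partitions actually closes up. The observation that $S_{\alpha,\beta}(k)\to 1$ when one momentum vanishes is correct and relevant, but the interaction of the $n-1$ poles $1/(k_1+\cdots+k_{s_l}-i0)$ with the alternating $S$-product across all $\sigma$ is precisely the kind of cancellation the paper devotes Section \ref{Section:BoundaryConditions} to in a related but different situation, and here it is just asserted to ``close up.''

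There is also an analytic soft spot. The measure $m^{(n)}_\theta$ is infinite, so $\widehat m$ is only a tempered distribution (a $\delta$ on the hyperplane $\sum k_j=0$ times a product of $-i0$ boundary values), and the pairing you need is a distributional pairing against a factor that is bounded and Gaussian-damped, not a routine Lebesgue integral. The bounds in Lemma \ref{uIsIntegrable} control $u_t$ and its derivatives pointwise in $(t,x,y)$; they do not by themselves license the regulariser insertion, the Fubini exchange of $dk$ with $\lambda^\pi(dy)$, and the $\epsilon\to0$ passage to the distributional limit, all of which need separate, if standard, care. Calling these steps ``routine'' overstates what that lemma provides.

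The paper avoids all of this. It first differentiates $t\mapsto\int u_t(x,y)\,m^{(n)}_\theta(dy)$ under the integral using Lemma \ref{uIsIntegrable}, which gives $\int\tfrac12\Delta u_t(x,y)\,m^{(n)}_\theta(dy)$, and this vanishes by Corollary \ref{StationarityOfm} (the $v\equiv 1$ case of the integration-by-parts identity of Proposition \ref{Lemma:Stationary}, which rests on the boundary conditions already proved for $u_t(\cdot,y)\in\mathcal{D}_\theta$). The integral is therefore constant in $t$. Its value is then read off in the $t\to\infty$ limit by the substitution $k\mapsto k/\sqrt t$, $y\mapsto y\sqrt t$: the Jacobian produces a factor $t^{(|\pi|-n)/2}$, so only the all-singletons partition survives, and $S_{\alpha,\beta}(k/\sqrt t)\to 1$ pointwise, leaving $\tfrac{n!}{(2\pi)^n}\int_{\Weylo n}\int_{\R^n}e^{-|k|^2/2-ik\cdot y}\,dk\,dy=1$. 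This is both shorter and entirely avoids the Fourier transform of the infinite measure and the combinatorial identity. If you want to pursue your route you would need to actually prove the residue identity; as it stands the proposal has a gap precisely where it matters.
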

\begin{proof}
	Lemma \ref{uIsIntegrable} allows us to calculate the time derivative by passing it through the integral
	\begin{align*}
		\Partial{}{t}\int u_t(x,y) m^{(n)}_\theta(dy) &= \int \frac{1}{2} \Delta u_t(x,y) m^{(n)}_\theta(dy) \\
		&=0.
	\end{align*}
The first equality is clear from the definition of $u$. The second equality follows from Corollary \ref{StationarityOfm} and Lemma \ref{uIsIntegrable}. This shows the integral is constant, to finish we shall show convergence to $1$ as $t\to \infty$. Scaling $k$ by $t^{-\frac{1}{2}}$ and $y$ by $t^{\frac{1}{2}}$ we see the following
\begin{align*}
	\int u_t(x,y) m^{(n)}_\theta (dy) &= \int \frac{1}{(2\pi)^n} \int_{\R^n} e^{-\frac{1}{2}t|k|^2} \sum_{\sigma\in S_n} e^{ik_\sigma \cdot (x-y_\sigma)} \prod_{\substack{\alpha<\beta :\\ \sigma(\beta)< \sigma(\alpha)}} S_{\sigma(\beta),\sigma(\alpha)}(k) dk \ m^{(n)}_\theta(dy), \\
	&= \sum_{\pi\in \Pi_n} \theta^{|\pi|-n} \left(\prod_{\pi_\iota \in \pi} \frac{1}{|\pi_\iota|} \right) \frac{1}{(2\pi)^n t^{\frac{1}{2}(n-|\pi|)}} \int \int_{\R^n} e^{-\frac{1}{2}|k|^2} \sum_{\sigma\in S_n} e^{ik_\sigma \cdot (x/\sqrt{t}-y_\sigma)} \\
	&\qquad \qquad \prod_{\substack{\alpha<\beta :\\ \sigma(\beta)< \sigma(\alpha)}} S_{\sigma(\beta),\sigma(\alpha)}(k/\sqrt{t}) dk \ \lambda^\pi(dy).
\end{align*}
We can justify applying Dominated convergence to this by referring to lemma \ref{uIsIntegrable}, to take $t\to \infty$. It is clear that $S_{\sigma(\beta),\sigma(\alpha)}(\tfrac{k}{\sqrt{t}}) \to 1$ as $t\to \infty$ for almost every $k$. Notice that all terms with $|\pi|<n$ in the sum over partitions vanish in the limit, leaving only the partition consisting exclusively of singletons. For this partition $\lambda^\pi$ is just the Lebesgue measure on the Weyl chamber. Thus we have
\begin{align*}
	\int u_t(x,y) m^{(n)}_\theta (dy) &= \frac{n!}{(2\pi)^n} \int_{\Weylo{n}} \int_{\R^n} e^{-\frac{1}{2}|k|^2- ik \cdot y} dk dy, \\
	&=1.
\end{align*}
The $n!$ comes from the sum over permutations, the resulting integral in $k$ is just the Fourier transform of a Gaussian and hence the integral over the Weyl chamber is easily calculated.
\end{proof}
Now we can write
\begin{equation*}
	\int u_t(x,y) f(y) m^{(n)}_\theta(dy) -f(x) = \int u_t(x,y) \left( f(y) -f(x)	\right) m^{(n)}_\theta(dy).
\end{equation*}

It follows directly from the definition of $m^{(n)}_\theta$ that
\begin{align}
	&\left| \int u_t(x,y) \left( f(y) -f(x)	\right) m^{(n)}_\theta(dy)\right|\nonumber \\
	\leq& \sum_{\pi \in \Pi_n} \theta^{|\pi|-n} \prod_{\pi_i \in \pi} \tfrac{1}{|\pi_i|} \left| \int u_t(x,y) \left( f(y) -f(x)	\right) \lambda^\pi(dy)\right|.\label{EquationLambdaPi}
\end{align}
Thus we can restrict our attentions to the integral with respect to $\lambda^\pi$ for a fixed $\pi \in \Pi_n$. \par{}
Let us briefly outline the proof. We begin by rearranging $u_t(x,y)$ into a more convenient form, and then split the sum over permutations so that we first sum over permutations $\sigma$ for which the images $(\sigma(\pi_\iota))_{\iota=1}^{|\pi|}$ is fixed. We then bound $u_t(x,y)$ by making contour shifts, following the same idea used to calculate the Fourier transform of the Gaussian density. This step is complicated by the presence of poles in the integral defining $u_t(x,y)$, however our previous step gives us some control over where the poles appear and we can further use that $x$ and $y$ are both in the Weyl chamber to derive Gaussian bounds on $u_t(x,y)$. In the final step we combine these bounds with the Lipschitz property for $f$ to derive the desired uniform convergence. This requires bounding of the contribution from $\Weyl{\pi}$ to $\int |u_t(x,y)| m^{(n)}_\theta(dy)$ and some care in considering what happens when $x$ is near but not in $\Weyl{\pi}$ to ensure we get uniform convergence. \par{}
To begin our rearrangements we prove that $u_t(x,y)$ is symmetric under swaps of $x$ and $y$.
\begin{lemma}\label{uisSymmetric}
	For every $x,y \in \Weyl{n}$ and $t>0$
	\begin{equation*}
		u_t(x,y)=u_t(y,x).
	\end{equation*}
\end{lemma}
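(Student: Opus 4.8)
The plan is to prove the identity directly from the Fourier formula (\ref{BetheAnsatz}) by means of two changes of variables, the only inputs being two elementary properties of the factor $S_{\alpha,\beta}$ in (\ref{InversionFactor}). For real $k$ and $\theta>0$ one checks, by conjugating and by multiplying out $S_{\alpha,\beta}(k)S_{\beta,\alpha}(k)$, that
\begin{equation*}
	\overline{S_{\alpha,\beta}(k)}=S_{\beta,\alpha}(k)=S_{\alpha,\beta}(k)^{-1},\qquad S_{\alpha,\beta}(-k)=\overline{S_{\alpha,\beta}(k)},
\end{equation*}
so in particular $|S_{\alpha,\beta}(k)|=1$ away from the Lebesgue--null set where the denominator vanishes; hence the integrand of (\ref{BetheAnsatz}) is dominated by $n!\,e^{-t|k|^2/2}$ and every manipulation below is justified. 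It is convenient to abbreviate, for $\tau\in S_n$,
\begin{equation*}
	\Phi_\tau(k):=\prod_{\substack{\alpha<\beta:\\ \tau(\alpha)>\tau(\beta)}}S_{\alpha,\beta}(k),
\end{equation*}
the product over the inversions of $\tau$ indexed by positions. Relabelling $i=\sigma(\beta)$, $j=\sigma(\alpha)$ shows that the amplitude in (\ref{BetheAnsatz}) equals $\prod_{\alpha<\beta:\,\sigma(\beta)<\sigma(\alpha)}S_{\sigma(\beta),\sigma(\alpha)}(k)=\Phi_{\sigma^{-1}}(k)$.

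For the first rewriting I substitute $\tau=\sigma^{-1}$ in the permutation sum in (\ref{BetheAnsatz}) and use the elementary identity $k_{\tau^{-1}}\cdot(x-y_{\tau^{-1}})=k\cdot(x_\tau-y)$ (both sides equal $\sum_m k_m(x_{\tau(m)}-y_m)$), obtaining
\begin{equation*}
	u_t(x,y)=\frac{1}{(2\pi)^n}\int_{\R^n}e^{-\frac{1}{2}t|k|^2}\sum_{\tau\in S_n}e^{ik\cdot(x_\tau-y)}\,\Phi_\tau(k)\,dk.
\end{equation*}
Call this expression $G(x,y)$, so that $u_t(x,y)=G(x,y)$ for every $x,y\in\R^n$. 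For the second rewriting I go back to (\ref{BetheAnsatz}) and, in the $\sigma$-th summand only, substitute $l=k_\sigma$; this is a permutation of the coordinates, hence measure preserving. Since $S_{\sigma(\beta),\sigma(\alpha)}$ depends only on the coordinates indexed $\sigma(\beta)$ and $\sigma(\alpha)$, which under $k=l_{\sigma^{-1}}$ become $l_\beta$ and $l_\alpha$, the amplitude becomes $\prod_{\alpha<\beta:\,\sigma(\alpha)>\sigma(\beta)}S_{\beta,\alpha}(l)=\overline{\Phi_\sigma(l)}$ (using $S_{\beta,\alpha}=\overline{S_{\alpha,\beta}}$), while $k_\sigma\cdot(x-y_\sigma)$ becomes $l\cdot(x-y_\sigma)$. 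Following this with a global substitution $l\mapsto-l$, and using $\Phi_\sigma(-l)=\overline{\Phi_\sigma(l)}$, turns the formula into
\begin{equation*}
	u_t(x,y)=\frac{1}{(2\pi)^n}\int_{\R^n}e^{-\frac{1}{2}t|l|^2}\sum_{\sigma\in S_n}e^{il\cdot(y_\sigma-x)}\,\Phi_\sigma(l)\,dl=G(y,x).
\end{equation*}

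The two displayed formulas finish the proof: the first gives $u_t(y,x)=G(y,x)$ upon applying it to the pair $(y,x)$, and the second gives $u_t(x,y)=G(y,x)$, so $u_t(x,y)=u_t(y,x)$. I do not anticipate a genuine obstacle; the argument is pure bookkeeping and the only points needing care are tracking how the position-indexed product $\Phi_\sigma$ transforms under $l=k_\sigma$ (keeping the permutation acting on the coordinates of $k$ separate from the permutation acting on the labels of $S$) and checking the sign conventions behind $\overline{S_{\alpha,\beta}}=S_{\beta,\alpha}$ and $S_{\alpha,\beta}(-k)=\overline{S_{\alpha,\beta}(k)}$. One can verify both rewritings by hand for $n=1,2$ as a consistency check. (Alternatively, once one grants the eigenfunction representation quoted in the introductory remark, symmetry is immediate from $\overline{u_t(x,y)}=u_t(y,x)$ together with the fact — itself obtained from the $k\mapsto-k$ substitution — that $u_t(x,y)$ is real; but the self-contained argument above is preferable here.)
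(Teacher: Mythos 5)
Your proof is correct and takes essentially the same approach as the paper: both hinge on a permutation-dependent change of variables in the $k$-integral combined with the identity $S_{\beta,\alpha}(k)=\overline{S_{\alpha,\beta}(k)}$ and a relabelling of the product over inversions. The only stylistic difference is that the paper performs the single substitution $k\mapsto -k_{\sigma^{-1}}$ and relabels in one pass, whereas you factor the same map into $l=k_\sigma$ followed by $l\mapsto -l$ and record the intermediate representation $G(x,y)$ as a pivot; the bookkeeping with $\Phi_\tau$ is sound.
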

\begin{proof}
Recall that $u$ is defined in (\ref{BetheAnsatz}) as
\begin{equation*}
	u_t(x, y) =\tfrac{1}{(2\pi)^n} \int_{\R^n} e^{-\frac{1}{2}t|k|^2} \sum_{\sigma \in S_n} e^{ik_\sigma \cdot (x- y_\sigma)} \prod_{\substack{\alpha<\beta: \\ \sigma(\alpha) > \sigma(\beta)}} \tfrac{i\theta( k_{\sigma(\alpha)}-k_{\sigma(\beta)})+ k_{\sigma(\alpha)}k_{\sigma(\beta)}}{i\theta( k_{\sigma(\alpha)}-k_{\sigma(\beta)})- k_{\sigma(\alpha)}k_{\sigma(\beta)}} dk.
\end{equation*}
If we first take the sum outside the integral, then perform the change of variables in the $k$ integral, $k\to -k_{\sigma^{-1}}$, this becomes
\begin{equation*}
	\tfrac{1}{(2\pi)^n} \sum_{\sigma \in S_n} \int_{\R^n} e^{-\frac{1}{2}t|k|^2+ ik_{\sigma^{-1}} \cdot (x_{\sigma^{-1}}- y)} \prod_{\substack{\alpha<\beta: \\ \sigma(\alpha) > \sigma(\beta)}} \tfrac{i\theta( k_{\beta}-k_{\alpha})+ k_{\alpha}k_{\beta}}{i\theta( k_{\beta}-k_{\alpha})- k_{\alpha}k_{\beta}} dk.
\end{equation*}
Notice that we can relabel the product as follows
\begin{equation*}
	\prod_{\substack{\alpha<\beta: \\ \sigma(\alpha) > \sigma(\beta)}} \tfrac{i\theta( k_{\beta}-k_{\alpha})+ k_{\alpha}k_{\beta}}{i\theta( k_{\beta}-k_{\alpha})- k_{\alpha}k_{\beta}} = \prod_{\substack{\alpha<\beta: \\ {\sigma^{-1}}(\alpha) > {\sigma^{-1}}(\beta)}} \tfrac{i\theta( k_{{\sigma^{-1}}(\alpha)}-k_{{\sigma^{-1}}(\beta)})+ k_{{\sigma^{-1}}(\alpha)}k_{{\sigma^{-1}}(\beta)}}{i\theta( k_{{\sigma^{-1}}(\alpha)}-k_{{\sigma^{-1}}(\beta)})- k_{{\sigma^{-1}}(\alpha)}k_{{\sigma^{-1}}(\beta)}}.
\end{equation*}
Hence by relabelling the sum to be over $\sigma^{-1}\in S_n$ we see that we get $u_t(y,x)$ as desired.
\end{proof}
Now we proceed with the proof of the proposition, we can rewrite the summand of (\ref{EquationLambdaPi}) (ignoring the constants) as
\begin{align}
	&\left| \int u_t(y,x) \left( f(y) -f(x)	\right) \lambda^\pi(dy)\right| \nonumber\\
	=&\left| \int \tfrac{1}{(2\pi)^n} \int_{\R^n} e^{-\frac{1}{2}t|k|^2} \sum_{\sigma \in S_n} e^{ik_\sigma \cdot (y- x_\sigma)} \prod_{\substack{\alpha<\beta: \\ \sigma(\alpha) > \sigma(\beta)}} S_{\sigma(\beta),\sigma(\alpha)}(k) \ dk \left( f(y) -f(x)	\right) \lambda^\pi(dy)\right|. \label{IC:FixedPartition}
\end{align}
For a partition $\pi\in \Pi_n$ and permutation $\sigma\in S_n$ define the set of ordered pairs $\sigma(\pi): = \{(\pi_1, \sigma(\pi_1)),...,(\pi_{|\pi|}, \sigma(\pi_{|\pi|}))\}$ (Where $\sigma(A)$ denotes the image of $A$ under $\sigma$). We can rewrite the sum in the above integral as follows 
\begin{align*}
	\sum_{\substack{\tau\in S_n: \\ \tau|_{\pi_\iota} \text{ is increasing } \forall \iota}} \sum_{\substack{\sigma \in S_n: \\ \sigma(\pi) = \tau(\pi)}} e^{ik_\sigma \cdot (y-x_\sigma)} \prod_{\substack{\alpha<\beta: \\ \sigma(\alpha) > \sigma(\beta)}} S_{\sigma(\beta),\sigma(\alpha)}(k).
\end{align*}
Let's consider $e^{ik_\sigma \cdot (y - x_\sigma)}= e^{-ik\cdot x}\prod_{j=1}^n e^{ik_{\tau(j)}y_j}$. We'll use the notation $\overline{\pi_\iota}:=\sup\pi_\iota$, and $\underline{\pi_\iota}:=\sup\pi_\iota$. We know that for each $\pi_\iota \in \pi$, $\alpha,\beta \in \pi_\iota$ implies $y_\alpha = y_\beta$ $\lambda^\pi$-a.e. Hence $\prod_{j=1}^n e^{ik_{\sigma(j)}y_j} = \prod_{\pi_\iota \in \pi} \prod_{\alpha \in \pi_\iota} e^{i k_{\sigma(\alpha)}y_{\overline{\pi_\iota}}}$ $\lambda^\pi$-a.e. But since $\sigma(\pi)=\tau(\pi)$ this is just equal to  $\prod_{\pi_\iota \in \pi} \prod_{\alpha \in \pi_\iota} e^{i k_{\tau(\alpha)}y_{\overline{\pi_\iota}}}$ which equals $e^{ik_\tau \cdot y}$. Hence we can pull the exponential out of the second sum to make the previous expression equal $\lambda^\pi$-a.e. to
\begin{align*}
	\sum_{\substack{\tau \in S_n:\\ \tau|_{\pi_\iota} \text{ is increasing } \forall \iota}} e^{ik_\tau \cdot (y-x_\tau)}  \sum_{\substack{\sigma \in S_n: \\ \sigma(\pi) = \tau(\pi)}} \prod_{\substack{\alpha<\beta: \\ \sigma(\alpha) > \sigma(\beta)}} S_{\sigma(\beta),\sigma(\alpha)}(k).
\end{align*}
Now we can consider the product, in particular we can show that when $\alpha$ and $\beta$ are in different elements of $\pi$ then the appearance of $S_{\sigma(\beta),\sigma(\alpha)}(k)$ in the product depends only on $\tau$ and not the specific $\sigma$. Suppose $\alpha<\beta$ are in different elements of $\pi$ and that $\sigma(\beta)<\sigma(\alpha)$. Since $\pi$ is an ordered partition there exists $\iota<j$ such that $\alpha\in \pi_\iota$ and $\beta\in \pi_j$. Now since $\sigma(\pi)=\tau(\pi)$ there must exist $\gamma \in \pi_\iota$, and $\delta \in \pi_j$ (thus $\gamma<\delta$) such that $\tau(\gamma) = \sigma(\alpha) >\sigma(\beta)= \tau(\delta)$. Hence for each such $\alpha < \beta$ such that $\sigma(\beta) < \sigma(\alpha)$, where $\alpha$ and $\beta$ are in different elements of $\pi$, there are $\gamma< \delta$ in different elements of $\pi$ such that $\tau(\delta)< \tau(\gamma)$. Similarly we can go in the other direction, so that if $\alpha$ and $\beta$ are in different elements of $\pi$ then $(\sigma(\beta),\sigma(\alpha))$ is an inversion for $\sigma$ if and only if it is an inversion for $\tau$ (That is if $\alpha$ and $\beta$ are in different elements of $\pi$ then $\alpha< \beta$ with $\sigma(\beta)< \sigma(\alpha)$ occurs if and only if $\tau^{-1}(\sigma(\alpha))< \tau^{-1}(\sigma(\beta))$ with $\sigma(\beta)<\sigma(\alpha)$). Hence we can split off the part of the product where $\alpha$ and $\beta$ are in different elements of $\pi$ and rewrite entirely in terms of $\tau$. Hence the previous expression is equal to
\begin{align}\label{IC: split permutation sum}
	\sum_{\substack{\tau \in S_n:\\ \tau|_{\pi_\iota} \text{ is increasing } \forall \iota}} e^{ik_\tau \cdot (y-x_\tau)} \left( \prod_{\iota <j} \prod_{\substack{\alpha\in \pi_\iota, \ \beta \in \pi_j:\\ \tau(\beta)< \tau(\alpha)}} S_{\tau(\beta),\tau(\alpha)}(k) \right) \sum_{\substack{\sigma \in S_n: \\ \sigma(\pi) = \tau(\pi)}} \prod_{\pi_\iota \in\pi} \prod_{\substack{\alpha<\beta: \\ \sigma(\alpha) > \sigma(\beta), \\ \alpha,\beta \in \pi_\iota }} S_{\sigma(\beta),\sigma(\alpha)}(k).
\end{align}
We can calculate the second sum using the formula in the following lemma.
\begin{lemma}\label{Lemma: sum over permutations}
Suppose $m\in \N$ and $\theta>0$ then for all $k\in \R^m$ such that $k_\alpha \neq 0$ for all $\alpha\in \{1,\dots, m\}$
	\begin{align*}
		\sum_{\sigma \in S_m} \prod_{\substack{\alpha< \beta: \\ \sigma(\alpha)>\sigma(\beta)}} S_{\sigma(\beta),\sigma(\alpha)}(k) = m! \prod_{\alpha< \beta} \frac{i\theta(k_\beta - k_\alpha)}{ i\theta(k_\beta - k_\alpha) - k_\alpha k_\beta}.
	\end{align*}	
\end{lemma}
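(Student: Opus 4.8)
The plan is to clear the denominators in \eqref{InversionFactor} and reduce the claim to an antisymmetrisation identity for a single explicit polynomial, which is then settled by an elementary count of monomials. Write $A_{\alpha\beta}:=i\theta(k_\beta-k_\alpha)-k_\alpha k_\beta$ and $B_{\alpha\beta}:=i\theta(k_\beta-k_\alpha)+k_\alpha k_\beta$, and let $\mathrm{Inv}(\sigma)$ be the set of pairs $\{i,j\}$ with $i<j$ and $\sigma^{-1}(i)>\sigma^{-1}(j)$ (so $|\mathrm{Inv}(\sigma)|$ is the number of inversions of $\sigma$ and $\sign(\sigma)=(-1)^{|\mathrm{Inv}(\sigma)|}$). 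Then \eqref{InversionFactor} gives $\prod_{\alpha<\beta,\,\sigma(\alpha)>\sigma(\beta)}S_{\sigma(\beta),\sigma(\alpha)}(k)=\prod_{\{i,j\}\in\mathrm{Inv}(\sigma)}B_{ij}/A_{ij}$, so multiplying the asserted identity through by the $\sigma$-independent factor $\prod_{\alpha<\beta}A_{\alpha\beta}$ (legitimate since $k_\alpha\neq 0$ for all $\alpha$, hence each $A_{\alpha\beta}\neq0$) turns each summand into $\prod_{\{i,j\}\in\mathrm{Inv}(\sigma)}B_{ij}\prod_{\{i,j\}\notin\mathrm{Inv}(\sigma)}A_{ij}$. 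Reading this off pair by pair — for a pair $\{a,b\}$ with $a$ appearing before $b$ in the word $\sigma(1)\cdots\sigma(m)$, a direct case check shows the contributed factor is $\sign(b-a)\big(i\theta(k_b-k_a)-k_ak_b\big)$ in both the inversion and the non-inversion case — identifies this summand with $\sign(\sigma)\,g(k_\sigma)$, where $g(\ell):=\prod_{p<q}\big(i\theta(\ell_q-\ell_p)-\ell_p\ell_q\big)$ and $k_\sigma=(k_{\sigma(1)},\dots,k_{\sigma(m)})$; this is the same bookkeeping already used for the polynomial $B$ in \eqref{Bsigma}. Hence the lemma is equivalent to
\[
  \sum_{\sigma\in S_m}\sign(\sigma)\,g(k_\sigma)\;=\;m!\,(i\theta)^{\binom{m}{2}}\prod_{\alpha<\beta}(k_\beta-k_\alpha).
\]

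Next I would show the left-hand side is a scalar multiple of the Vandermonde. As noted around \eqref{VandermondeFactor}, the antisymmetrisation of any polynomial is alternating, hence divisible by $\prod_{\alpha<\beta}(k_\beta-k_\alpha)$; the extra point here is that the quotient is a \emph{constant}. Expand $g$ by choosing, from each factor $i\theta(\ell_q-\ell_p)-\ell_p\ell_q$, one of the three monomials $i\theta\ell_q$, $-i\theta\ell_p$, $-\ell_p\ell_q$ (an orientation, or a "doubling", of each edge of the complete graph on $\{1,\dots,m\}$). In any resulting monomial the exponent of $k_v$ is at most the degree $m-1$ of $v$ in that graph. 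Consequently, a monomial of $g$ whose exponents are pairwise distinct must have exponent multiset exactly $\{0,1,\dots,m-1\}$, so its antisymmetrisation is $\pm\prod_{\alpha<\beta}(k_\beta-k_\alpha)$; monomials with a repeated exponent antisymmetrise to zero. Summing over monomials of $g$ then gives $\sum_{\sigma}\sign(\sigma)g(k_\sigma)=c\prod_{\alpha<\beta}(k_\beta-k_\alpha)$ for some constant $c$.

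To pin down $c$, extract the coefficient of $\prod_{j}k_j^{\,j-1}$ on both sides. A degree count rules out the doubling term for the relevant monomials: using $b$ doublings produces total degree $\binom{m}{2}+b$, whereas $\{0,\dots,m-1\}$ sums to $\binom{m}{2}$, forcing $b=0$. The surviving monomials thus correspond to orientations (tournaments) of the complete graph, and a tournament with in-degree set $\{0,1,\dots,m-1\}$ is the unique transitive one; tracking signs shows the coefficient of $\prod_j k_j^{\,e_j}$ in $g$ is $(i\theta)^{\binom{m}{2}}\prod_{\alpha<\beta}\sign(e_\beta-e_\alpha)=(i\theta)^{\binom{m}{2}}\sign(\epsilon)$, where $\epsilon\in S_m$ is defined by $\epsilon(j)=e_j+1$. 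Therefore $c=\sum_{\epsilon\in S_m}(i\theta)^{\binom{m}{2}}\sign(\epsilon)^2=m!\,(i\theta)^{\binom{m}{2}}$, and dividing back through by $\prod_{\alpha<\beta}A_{\alpha\beta}$ yields the stated formula.

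The main obstacle I anticipate is the sign bookkeeping: first matching "word order" in $\sigma(1)\cdots\sigma(m)$ against the inversion structure when rewriting a summand as $\sign(\sigma)g(k_\sigma)$, and then correctly reading off the sign of the coefficient of a monomial of $g$ through the transitive-tournament picture. The conceptual core is comparatively soft — it is just the observation that every variable occurs with exponent at most $m-1$ in $g$, which is exactly what forces the symmetric quotient to collapse to a constant.
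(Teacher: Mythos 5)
Your proof is correct, and it takes a genuinely different route from the paper's. Both proofs begin by clearing the $\sigma$-independent denominator and reducing the lemma to a polynomial antisymmetrisation identity, and both then argue that the resulting alternating polynomial is a constant multiple of the Vandermonde and evaluate the constant. The paper, however, gets there by the substitution $\xi_j = i\theta/k_j$, under which each cleared factor becomes (up to a $\sigma$-independent prefactor) a degree-one expression $\xi_{\sigma(\alpha)}-\xi_{\sigma(\beta)}-1$; the antisymmetrised polynomial then has total degree exactly $\binom{m}{2}$, which matches the degree of the Vandermonde, so the quotient is constant by a one-line total-degree count, and the constant is read off from the leading coefficient. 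You stay in the original variables, where $g$ has total degree $2\binom{m}{2}$, and instead use the sharper per-variable bound that $k_v$ appears with exponent at most $m-1$ in $g$ (since $v$ lies in only $m-1$ edges of $K_m$); together with the standard fact that an antisymmetrised monomial with a repeated exponent vanishes, this forces the quotient to be constant. Your determination of the constant via the transitive-tournament/orientation count is likewise more combinatorial than the paper's leading-term extraction, though it computes the same thing. Both arguments are about equal in length; the paper's substitution is slicker, while yours avoids a coordinate change and makes the underlying graph structure explicit. One detail worth spelling out to settle the sign bookkeeping you flagged: for a monomial $M=\prod_j k_j^{e_j}$ one has $\sum_{\sigma}\sign(\sigma)M(k_\sigma)=\det\bigl(k_i^{e_j}\bigr)$ by the Leibniz formula, which is zero if two $e_j$ coincide and equals $\sign(\epsilon)\prod_{\alpha<\beta}(k_\beta-k_\alpha)$ when $(e_j)$ is a permutation of $(0,\dots,m-1)$ with $\epsilon(j)=e_j+1$; combined with your computation $c_M=(i\theta)^{\binom{m}{2}}\sign(\epsilon)$ for the transitive tournament of that in-degree sequence, each of the $m!$ contributing monomials yields $(i\theta)^{\binom{m}{2}}\sign(\epsilon)^2=(i\theta)^{\binom{m}{2}}$, giving $c=m!(i\theta)^{\binom{m}{2}}$ as required.
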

\begin{proof}
	First we prove the following equality holds for all $\xi \in \C^m$
	\begin{equation}\label{IC: Combinatorial formula}
		\sum_{\sigma \in S_m} \left( \prod_{\substack{\alpha< \beta: \\ \sigma(\beta)< \sigma(\alpha)}} (-1) \right) \left( \prod_{\alpha< \beta} \xi_{\sigma(\alpha)} - \xi_{\sigma(\beta)} - 1 \right) = m! \prod_{\alpha< \beta} (\xi_\alpha - \xi_\beta).
	\end{equation}
	It's clear that the left hand side is a degree $\binom{m}{2}$ polynomial, which we shall denote $P(\xi)$. Thus if we can prove that $P(\xi)$ is also alternating, it must be a constant multiple of the right hand side. We then just need to check the constant to finish the proof. \par{}
	To prove the left hand side is alternating it is enough to consider swaps of consecutive variables, e.g. $\xi_j$ and $\xi_{j+1}$ for some $j\in \{1,..., m-1\}$. Let $s_j = (j, j+1)\in S_n$, i.e. the permutation that swaps $j$ and $j+1$ leaving everything else fixed. Clearly for all $\sigma\in S_n$
	\begin{align}
		\prod_{\substack{\alpha< \beta : \\ \sigma(\beta)< \sigma(\alpha)}} (-1) = - \prod_{\substack{\alpha< \beta : \\ \sigma\circ s_j(\beta)< \sigma\circ s_j(\alpha)}} (-1).
	\end{align}
	It follows simply by relabelling the sum in its definition that $P(\xi_{s_j}) = - P(\xi)$. Hence $P$ is an alternating polynomial and there is a $c\in \R$ such that
	\begin{align*}
		\sum_{\sigma \in S_m} \left( \prod_{\substack{\alpha< \beta: \\ \sigma(\beta)< \sigma(\alpha)}} (-1) \right) \left( \prod_{\alpha< \beta} \xi_{\sigma(\alpha)} - \xi_{\sigma(\beta)} - 1 \right) = c \prod_{\alpha< \beta} (\xi_\alpha - \xi_\beta).
	\end{align*}
	To finish we just have to note that if we expand the bracket on the left hand side we get $m! \prod_{\alpha< \beta} (\xi_\alpha - \xi_\beta)$ plus additional terms of lower degree. But we know that the left hand side, $P$, is a constant multiple of $\prod_{\alpha< \beta} (\xi_\beta - \xi_\alpha)$, thus the lower degree terms must cancel. This proves (\ref{IC: Combinatorial formula}). To finish the proof we just need to divide both sides of (\ref{IC: Combinatorial formula}) by $\prod_{\substack{\alpha< \beta}} (\xi_{\beta}- \xi_{\alpha} -1)$ then set $\xi_j = i\theta/k_j$ for each $j$ to get the desired identity.
\end{proof}
Hence we get that (\ref{IC: split permutation sum}) is equal to
\begin{align*}
	&\sum_{\substack{\tau \in S_n:\\ \tau|_{\pi_\iota} \text{ is increasing } \forall \iota}} e^{ik_\tau \cdot (y-x_\tau)} T^{\tau,\pi}(k).
\end{align*}
Where $T^{\tau,\pi}:\C^n \to \C^n$ is defined (for a.e. $k\in \C^n$) as follows
\begin{align*}
	T^{\tau,\pi}(k):= \left(\prod_{\iota <j} \prod_{\substack{\alpha\in \pi_\iota, \ \beta \in \pi_j:\\ \tau(\beta)< \tau(\alpha)}}S_{\tau(\beta),\tau(\alpha)}(k)\right) \left( \prod_{\pi_\iota \in\pi} |\pi_\iota|!\prod_{\substack{\alpha<\beta: \\ \alpha,\beta \in \pi_\iota }} \tfrac{i\theta( k_{\tau(\beta)}-k_{\tau(\alpha)})}{i\theta( k_{\tau(\beta)}-k_{\tau(\alpha)})- k_{\tau(\alpha)}k_{\tau(\beta)}} \right).
\end{align*}
This rearrangement, together with the triangle inequality gives us that (\ref{IC:FixedPartition}) is bounded above by
\begin{align}
	\tfrac{\prod_{\iota=1}^{|\pi|}|\pi_\iota|! }{(2\pi)^n} \sum_{\substack{\tau \in S_n:\\ \tau|_{\pi_\iota} \text{ is increasing } \forall \iota}}& \int\bigg|\int_{\R^n} e^{-\frac{1}{2}t|k|^2+ik_\tau \cdot (y-x_\tau)} T^{\tau,\pi}(k) dk\bigg| | f(y)- f(x) |\lambda^\pi(dy) .\label{IC: PreContourShift}
\end{align}
Now we can move on to the next step, which we briefly motivate. We want to get control on the $k$ integral in the above expression, and we need the bound to be integrable in $y$ with respect to $\lambda^\pi$ and to be vanishing as $t\to 0$ whenever $y\neq x$. Note that we can rewrite the exponent appearing in the integrand as follows
\begin{equation*}
	-\frac{1}{2}t|k|^2+ik_\tau \cdot (y-x_\tau) = -\frac{1}{2}t\sum_{\alpha=1}^n (k_{\tau(\alpha)} - \frac{i}{t}(y_\alpha - x_{\tau(\alpha)}))^2 - \frac{(y_\alpha - x_{\tau(\alpha)})^2}{2t}.
\end{equation*}
Suggesting that we should use Cauchy's residue theorem to shift the $k_{\tau(\alpha)}$ contour from $\R$ to $C_\alpha:=\{z\in \C : \ z-\frac{i}{t}(y_\alpha - x_{\tau(\alpha)})\in \R\}$ for each $\alpha\in \{1,..., n\}$, and then parametrise the resulting contour integral as an integral over $\R$. Supposing we can do this without encountering any poles the exponent becomes
\begin{equation*}
	-\frac{1}{2}t\sum_{\alpha=1}^n \tilde{k}_{\tau(\alpha)}^2 - \frac{(y_\alpha - x_{\tau(\alpha)})^2}{2t}.
\end{equation*}
Where $\tilde{k}_{\tau(\alpha)}\in \R$ is our new integration variable. The second term of the summand gives us the necessary control in the $y$ variable, and the first term should allow us to control the resulting $k$ integral. However this approach is complicated by $T^{\tau,\pi}$, which contribute poles that hinder our contour shifting. We end up not being able to shift the integration contours for all of the $k$ variables without encountering poles, however we are still able to make some of the desired contour shifts. To see which shifts can be made we need check where these poles occur, so first note that by definition
\begin{align} \label{IC: Two products}
	T^{\tau,\pi}(k) =\left(\prod_{\iota <j} \prod_{\substack{\alpha<\beta:\\ \tau(\beta)< \tau(\alpha), \\ \alpha\in \pi_\iota, \ \beta \in \pi_j}} \tfrac{i\theta(k_{\tau(\alpha)}- k_{\tau(\beta)}) + k_{\tau(\alpha)} k_{\tau(\beta)}}{i\theta(k_{\tau(\alpha)}- k_{\tau(\beta)}) - k_{\tau(\alpha)} k_{\tau(\beta)}} \right) \left( \prod_{\pi_\iota \in \pi}\prod_{\substack{\alpha<\beta: \\ \alpha,\beta \in \pi_\iota }} \tfrac{i\theta( k_{\tau(\beta)}-k_{\tau(\alpha)})}{i\theta( k_{\tau(\beta)}-k_{\tau(\alpha)})- k_{\tau(\alpha)}k_{\tau(\beta)}} \right)
\end{align}
The following lemma provides us with the desired information.
\begin{lemma}\label{Lemma: poles of the factors in T}
	We'll use the notation $\mathbb{H}= \{x+iy \in \C| \ x\in \R, \ y\in\R_{>0}\}$ for the upper half complex plane. The function $(z, w) \mapsto i\theta(z-w)- zw$ has no zeroes in the set $\mathbb{H}\times -\mathbb{H}$.
\end{lemma}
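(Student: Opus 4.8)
The plan is to solve the equation $i\theta(z-w)-zw=0$ explicitly for $w$ and check that the solution can never lie in $-\mathbb{H}$. First I would observe that whenever $z\in\mathbb{H}$ we have $z+i\theta\neq 0$, since $\Im(z+i\theta)=\Im(z)+\theta>0$; in particular the denominator $a^2+(b+\theta)^2$ below never vanishes. Consequently, for $z\in\mathbb{H}$ the equation $i\theta z - i\theta w - zw = 0$ is equivalent to $i\theta z = w(z+i\theta)$, i.e. to $w=\dfrac{i\theta z}{z+i\theta}$. So it suffices to show that this Möbius image of $\mathbb{H}$ avoids $-\mathbb{H}$.

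The second step is the elementary computation of $\Im(w)$. Writing $z=a+ib$ with $b>0$, multiplying numerator and denominator of $\dfrac{i\theta z}{z+i\theta}$ by $a-i(b+\theta)$ and simplifying gives
\[
	w=\frac{\theta^2 a + i\theta\bigl(a^2+b^2+b\theta\bigr)}{a^2+(b+\theta)^2},
	\qquad\text{so}\qquad
	\Im(w)=\frac{\theta\bigl(a^2+b^2+b\theta\bigr)}{a^2+(b+\theta)^2}.
\]
Since $\theta>0$ and $b>0$, the numerator $a^2+b^2+b\theta$ is strictly positive, hence $\Im(w)>0$. Therefore any zero $(z,w)$ of $(z,w)\mapsto i\theta(z-w)-zw$ with $z\in\mathbb{H}$ automatically has $w\in\mathbb{H}$, and in particular $w\notin-\mathbb{H}$; equivalently, the function has no zero on $\mathbb{H}\times-\mathbb{H}$, which is the claim.

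There is essentially no obstacle here: the statement collapses to a one-line algebraic identity. The only points needing a moment's care are (i) checking that passing from $i\theta(z-w)-zw=0$ to $w=i\theta z/(z+i\theta)$ is a genuine equivalence, which follows from $z+i\theta\neq 0$ on $\mathbb{H}$, and (ii) keeping the signs straight in the final fraction. If a more conceptual argument is preferred, one can instead note that $z\mapsto i\theta z/(z+i\theta)$ is a Möbius transformation carrying $\R\cup\{\infty\}$ onto the circle $\lvert w-i\theta/2\rvert=\theta/2$, which is tangent to the real axis at the origin and lies in the closed upper half-plane, and carrying the interior point $i\theta\in\mathbb{H}$ to the centre $i\theta/2$; hence $\mathbb{H}$ is mapped into the open disc bounded by that circle, which is contained in $\mathbb{H}$, giving the same conclusion.
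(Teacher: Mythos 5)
Your proof is correct and takes essentially the same approach as the paper: you solve $i\theta(z-w)-zw=0$ for $w$ in terms of $z\in\mathbb{H}$ and check $\Im(w)>0$, while the paper solves for $z$ in terms of $w\in-\mathbb{H}$ and checks $\Im(z)<0$; the two computations are mirror images of each other. Your remark on the non-vanishing of the denominator, and the optional Möbius-transformation viewpoint, are nice touches but don't change the substance of the argument.
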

\begin{proof}
	For $w\in -\mathbb{H}$ there are $a\in \R$ and $b\in \R_{>0}$ such that $w= a- bi$. It's easily checked that $i\theta(z - w) - zw= 0$ if and only if we have
	\begin{align*}
		z = \frac{\theta^2 a - i \theta ((\theta+ b)b + a^2)}{(\theta + b)^2 + a^2} \in -\mathbb{H}.
    \end{align*}
    Thus there are no zeroes inside $\mathbb{H}\times -\mathbb{H}$ as claimed.
\end{proof}
Observing the structure of the products in (\ref{IC: Two products}) we define the set $E^{\tau, \pi} \subset \C^n$ defined as $\times_{k=1}^n E^{\tau, \pi}_k$ where $E^{\tau, \pi}_k$ is the upper half complex plane if there is a $\pi_\iota \in \pi$ such that $k = \sup \pi_\iota$ and $\tau(\alpha)< \tau(k)$ for all $\alpha< k$, the lower half complex plane if there is a $\pi_\iota$ such that $k= \inf \pi_\iota$ and $\tau(\beta)>\tau(k)$ for all $\beta>k$, the whole complex plane if both of these conditions are satisfied, and the real line if neither are satisfied. Lemma \ref{Lemma: poles of the factors in T} shows the denominator of $T^{\tau, \pi}$ as in (\ref{IC: Two products}) has no zeroes in the set $E^{\tau, \pi}$ (\ref{IC: Two products}), and thus we can perform our contour shifts as long as the contours remain within this set. To simplify our notation slightly we'll henceforth write $\overline{\pi_\iota}:= \sup\pi_\iota$ and $\underline{\pi_\iota}:= \inf \pi_\iota$ for each $\pi_\iota\in \pi$.

We'll now use these ideas to get a following bound on the $k$ integral in line (\ref{IC: PreContourShift}), first we need to find a family of indices for which contour shifts can be made, that is a collection of $\alpha$ such that $E^\tau_\alpha$ contains at least one complex half plane. \par{}

\begin{lemma}\label{IC: Lemma a b}
    Suppose $\pi \in \Pi_n$ and $\tau\in S_n$ such that $\tau|_{\pi\iota}$ is increasing for every $\pi_\iota \in \pi$. For each $\pi_\iota \in \pi$ there are $a_\iota \leq \iota \leq b_\iota$ such that $\tau(\underline{\pi_{b_\iota}})\leq \tau(\overline{\pi_{a_\iota}})$, and the following properties hold
    \begin{itemize}
        \item $\tau(\underline{\pi_{b_\iota}})< \tau(\beta)$ for every $\beta> \underline{\pi_{b_\iota}}$;
        \item  and $\tau(\overline{\pi_{a_\iota}})> \tau(\alpha)$ for all $\alpha< \overline{\pi_{\iota}}$.
    \end{itemize}
    Further, given such a $(a_\iota, b_\iota)$, we can define $m_\iota:= \sup\{\tau(\alpha)| \ \overline{\pi_{a_\iota}}\leq \alpha \leq \underline{\pi_{b_\iota}}\}$ and $l_\iota:= \inf\{\tau(\beta)| \ \overline{\pi_{a_\iota}}\leq \beta \leq \underline{\pi_{b_\iota}}\}$ if $\overline{\pi_{a_\iota}}< \underline{\pi_{b_\iota}}$; and $m_\iota:= \tau(\overline{\pi_{a_\iota}})$ and $l_\iota:= \tau(\underline{\pi_{b_\iota}})$ if $\overline{\pi_{a_\iota}}\geq \underline{\pi_{b_\iota}}$. The following properties hold for $m_\iota$ and $l_\iota$:
    \begin{itemize}
        \item there are $\pi_c, \pi_d \in \pi$ such that $\tau^{-1}(m_\iota)= \overline{\pi_d}$ and $\tau^{-1}(l_\iota)= \underline{\pi_c}$;
        \item for all $\alpha< \tau^{-1}(m_\iota)$ we have $\tau(\alpha)< m_\iota$;
        \item and for all $\beta> \tau^{-1}(l_\iota)$ we have $\tau(\beta)> l_\iota$.
    \end{itemize}
\end{lemma}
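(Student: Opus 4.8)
The plan is to construct the pair $(a_\iota,b_\iota)$ explicitly and then verify everything by elementary bookkeeping, using only the two structural facts available: the blocks $\pi_1,\dots,\pi_{|\pi|}$ are consecutive integer intervals listed in increasing order (so $\pi_1\cup\dots\cup\pi_\iota=\{1,\dots,\overline{\pi_\iota}\}$ and $\pi_\iota\cup\dots\cup\pi_{|\pi|}=\{\underline{\pi_\iota},\dots,n\}$), and $\tau$ is increasing on each block, so the restriction of $\tau$ to any union of consecutive blocks attains its maximum at a block top $\overline{\pi_j}$ and its minimum at a block bottom $\underline{\pi_j}$.

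First I would prove the existence statement. Fix $\iota$ and let $a_\iota$ be the index of the block on which $\tau$ attains $\max\{\tau(\gamma):\gamma\in\pi_1\cup\dots\cup\pi_\iota\}$; since $\tau$ is increasing on each block this maximum is attained at $\overline{\pi_{a_\iota}}$, and $a_\iota\le\iota$ since the maximiser lies in one of $\pi_1,\dots,\pi_\iota$. Dually let $b_\iota\ge\iota$ be the index of the block on which $\tau$ attains $\min\{\tau(\gamma):\gamma\in\pi_\iota\cup\dots\cup\pi_{|\pi|}\}$, attained at $\underline{\pi_{b_\iota}}$. The first displayed bullet holds because every $\beta>\underline{\pi_{b_\iota}}$ lies in $\pi_\iota\cup\dots\cup\pi_{|\pi|}$ (as $\underline{\pi_{b_\iota}}\ge\underline{\pi_\iota}$), so $\tau(\beta)>\tau(\underline{\pi_{b_\iota}})$ by minimality and injectivity of $\tau$; the second bullet holds identically, since every $\alpha<\overline{\pi_{a_\iota}}$ lies in $\pi_1\cup\dots\cup\pi_\iota$. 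Finally $\underline{\pi_\iota}$ belongs to both unions, so $\tau(\overline{\pi_{a_\iota}})\ge\tau(\underline{\pi_\iota})\ge\tau(\underline{\pi_{b_\iota}})$, which is the required inequality $\tau(\underline{\pi_{b_\iota}})\le\tau(\overline{\pi_{a_\iota}})$.

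For the $m_\iota,l_\iota$ statement I would use only the four properties of $(a_\iota,b_\iota)$ just listed (this is what ``given such a $(a_\iota,b_\iota)$'' asks for), splitting into two cases. If $\overline{\pi_{a_\iota}}\ge\underline{\pi_{b_\iota}}$, then since $a_\iota\le\iota\le b_\iota$ and the blocks are ordered intervals one must have $a_\iota=\iota=b_\iota$ (if $a_\iota<b_\iota$ then $\overline{\pi_{a_\iota}}<\underline{\pi_{b_\iota}}$); then $m_\iota=\tau(\overline{\pi_\iota})$ and $l_\iota=\tau(\underline{\pi_\iota})$, so $\tau^{-1}(m_\iota)=\overline{\pi_\iota}$ and $\tau^{-1}(l_\iota)=\underline{\pi_\iota}$ are a block top and a block bottom, and the remaining two bullets are literally the two bullets already established. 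If instead $\overline{\pi_{a_\iota}}<\underline{\pi_{b_\iota}}$, then $a_\iota<b_\iota$ and the interval $I:=\{\overline{\pi_{a_\iota}},\dots,\underline{\pi_{b_\iota}}\}$ decomposes as the single point $\overline{\pi_{a_\iota}}$, then the full blocks $\pi_{a_\iota+1},\dots,\pi_{b_\iota-1}$, then the single point $\underline{\pi_{b_\iota}}$. Because $\tau$ is increasing on each full block, $m_\iota=\max_{I}\tau$ equals one of $\tau(\overline{\pi_{a_\iota}})$, the values $\tau(\overline{\pi_j})$ with $a_\iota<j<b_\iota$, or $\tau(\underline{\pi_{b_\iota}})$; but $\tau(\underline{\pi_{b_\iota}})<\tau(\overline{\pi_{a_\iota}})\le m_\iota$, the inequality $\le$ from the first part and the strictness from $\overline{\pi_{a_\iota}}\neq\underline{\pi_{b_\iota}}$ together with injectivity of $\tau$, so $m_\iota$ is attained at a block top $\overline{\pi_d}$; symmetrically $l_\iota$ is attained at a block bottom $\underline{\pi_c}$. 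For the left-to-right maximality of $\overline{\pi_d}$: an $\alpha<\overline{\pi_d}$ either satisfies $\alpha\ge\overline{\pi_{a_\iota}}$, hence $\alpha\in I\setminus\{\overline{\pi_d}\}$ and $\tau(\alpha)<m_\iota$ by injectivity, or satisfies $\alpha<\overline{\pi_{a_\iota}}$, hence $\tau(\alpha)<\tau(\overline{\pi_{a_\iota}})\le m_\iota$ by the second bullet of the first part; the right-to-left minimality of $\underline{\pi_c}$ is the mirror image, using the first bullet for $\beta>\underline{\pi_{b_\iota}}$.

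The argument carries no analytic content; the only point that needs attention is keeping track of which of the two stray endpoints $\overline{\pi_{a_\iota}},\underline{\pi_{b_\iota}}$ of $I$ can host the maximum resp. minimum of $\tau|_I$, and it is exactly the inequality $\tau(\underline{\pi_{b_\iota}})\le\tau(\overline{\pi_{a_\iota}})$ (promoted to strict by injectivity when $\overline{\pi_{a_\iota}}\neq\underline{\pi_{b_\iota}}$) that excludes the bad configuration. So I expect the main — mild — obstacle to be simply organising the case split cleanly rather than any conceptual difficulty.
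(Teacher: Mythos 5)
Your proof is correct. You construct $(a_\iota,b_\iota)$ differently from the paper: you take $a_\iota$ to be the block containing the argmax of $\tau$ over the whole left union $\pi_1\cup\dots\cup\pi_\iota$, and $b_\iota$ the block containing the argmin of $\tau$ over the right union $\pi_\iota\cup\dots\cup\pi_{|\pi|}$; the paper instead defines $a_\iota:=\inf\{a\le\iota:\tau(\overline{\pi_a})\ge\tau(\underline{\pi_\iota})\}$ and then $b_\iota:=\sup\{b\ge\iota:\tau(\underline{\pi_b})\le\tau(\overline{\pi_{a_\iota}})\}$, an iterated threshold construction, and verifies the bullets by short contradiction arguments. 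Your choice makes the two displayed bullets nearly definitional (anything to the left of $\overline{\pi_{a_\iota}}$, resp.\ right of $\underline{\pi_{b_\iota}}$, is in the domain of the max, resp.\ min), and the comparison $\tau(\underline{\pi_{b_\iota}})\le\tau(\overline{\pi_{a_\iota}})$ follows in one line since $\underline{\pi_\iota}$ lies in both unions; the paper pays for its construction with a slightly longer case analysis. In particular you correctly isolate the one subtlety that the paper's write-up glosses over — ruling out that $m_\iota$ (resp.\ $l_\iota$) is attained at the stray endpoint $\underline{\pi_{b_\iota}}$ (resp.\ $\overline{\pi_{a_\iota}}$) of the interval $I$, which requires exactly the inequality $\tau(\underline{\pi_{b_\iota}})<\tau(\overline{\pi_{a_\iota}})$, strict by injectivity when $\overline{\pi_{a_\iota}}\ne\underline{\pi_{b_\iota}}$. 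One cosmetic remark: the statement's second bullet reads ``for all $\alpha<\overline{\pi_\iota}$'', which is evidently a misprint for $\alpha<\overline{\pi_{a_\iota}}$ (the paper's own construction does not satisfy the literal version, and only the weaker form is proved or used later); you implicitly adopted the corrected reading, which is fine, though your argmax construction happens to satisfy the stronger statement as a bonus. Since all downstream uses of the lemma invoke only the listed bullet properties and not the specific recipe for $(a_\iota,b_\iota)$, your alternative construction is a drop-in replacement.
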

\begin{proof}
    First we define $\mu_\iota:=\overline{\pi_{a_\iota}}$ where $a_\iota:= \inf\{a \leq \iota:\ \tau(\overline{\pi_a})\geq \tau(\underline{\pi_\iota})\}$, and then from it we define $\nu_\iota := \underline{\pi_{b_\iota}}$ where $b_\iota:= \sup\{b\geq \iota: \ \tau(\mu_\iota) \geq \tau(\underline{\pi_b})\}$. $\mu_\iota$ and $\nu_\iota$ are introduced for convenience and will be used throughout this section. It is easy to see that the $a_\iota$ and $b_\iota$ satisfy the first two properties we claimed for them, namely that $a_\iota \leq \iota \leq b_\iota$ and $\tau(\nu_\iota)= \tau(\underline{\pi_{b_\iota}})\leq \tau(\overline{\pi_{a_\iota}})= \tau(\mu_\iota)$. \par{}

    Now we show $\tau(\nu_\iota)< \tau(\beta)$ for all $\beta> \nu_\iota$, and $\tau(\mu_\iota)>\tau(\alpha)$ for all $\alpha< \mu_\iota$. Starting with $\mu_\iota$, if there is an $\alpha< \mu_\iota$ such that $\tau(\mu_\iota)< \tau(\alpha)$ then by definition of $\mu_\iota$ $\alpha$ must be in a different element of $\pi$ to $\mu_\iota$, say $\pi_c$, with $c< a_\iota$. Since $\tau$ is increasing on every element of $\pi$ this means we must have $\tau(\overline{\pi_c})> \tau(\alpha)> \tau(\mu_\iota) = \tau(\overline{\pi_{a_\iota}})$ which contradicts the definition of $a_\iota$, so no such $\alpha$ exists. By a similar argument there is no $\beta> \nu_\iota$ such that $\tau(\nu_\iota)< \tau(\beta)$. \par{}

    It just remains to prove the second set of statements, those about $m_\iota$ and $l_\iota$. Suppose we are given $(a_\iota, b_\iota)$ as in the first part of the lemma and once more define $\mu_\iota:=\overline{\pi_{a_\iota}}$ and $\nu_\iota := \underline{\pi_{b_\iota}}$. The first property for $m_\iota$ and $l_\iota$ follows immediately from the fact that $\tau|_{\pi_j}$ is increasing for all $\pi_j\in \pi$, the definitions of $m_\iota$ and $l_\iota$, and from $\pi\in \Pi_n$. For the second and third statements we consider two cases separately: $\mu_\iota < \nu_\iota$ and $\nu_\iota \leq \mu_\iota$. For the latter case we have $m_\iota= \tau(\nu_\iota)$ and $l_\iota= \tau(\mu_\iota)$, so the statements are the same as those we just proved. If instead we have $\mu_\iota< \nu_\iota$ we can argue the second statement as follows. Clearly for all $\alpha$ such that $\mu_\iota\leq \alpha \leq \nu_\iota$ we have $\tau(\alpha)< m_\iota$, thus we only need to check that $\alpha<\mu_\iota$ implies $\tau(\alpha)< m_\iota$. Suppose this is false, i.e. there is an $\alpha< \mu_\iota$ such that $\tau(\alpha)> m_\iota$. Since $m_\iota > \tau(\mu_\iota)$ this implies $\tau(\alpha) > \tau(\mu_\iota)$, since we also have $\alpha<\mu_\iota$ this is a contradiction as we know from previously that $\tau(\mu_\iota)> \tau(\alpha)$ whenever $\mu_\iota> \alpha$. A similar argument proves the third statement, thereby proving the lemma.
\end{proof}
In the following proposition we will assume we have a $\pi \in \Pi_n$ with a family $(a_\iota, b_\iota)_{\pi_\iota \in \pi}$ given by the above lemma, and adopt the notation of the above proof, namely $\mu_\iota:= \overline{\pi_{a_\iota}}$ and $\nu_\iota:= \underline{\pi_{b_\iota}}$. The above lemma ensures that whenever $\alpha= \mu_\iota, \tau^{-1}(m_\iota)$ the set $E^{\tau, \pi}_{\alpha}$ contains the upper half complex plane, and if $\beta= \nu_\iota, \tau^{-1}(l_\iota)$ then $E^{\tau, \pi}_\beta$ contains the lower half complex plane.
\begin{proposition}\label{Prop: k integral bound}
	Suppose $\pi \in \Pi_n$ and $\tau\in S_n$ such that $\tau|_{\pi\iota}$ is increasing for every $\pi_\iota \in \pi$, and for each $\pi_\iota \in \pi$ we have $a_\iota \leq \iota \leq b_\iota$ as in the above lemma. There is a constant $C>0$, depending only on $\pi$ and $n$, such that the following bound holds for all $x,y \in \Weyl{n}$ 
	\begin{align}
		&\left|\int_{\R^n} e^{-\frac{1}{2}t|k|^2+ik_\tau \cdot (y-x_\tau)} T^{\tau,\pi}(k) dk\right| \leq C t^{-\frac{1}{2}|\pi|} |\log(t)|^{|\pi|} e^{-\frac{|y - \chi|^2}{12nt}}\prod_{\pi_\iota \in \pi} e^{- \frac{1}{24nt}\left((x_{m_\iota} - \chi^\iota)^2 + (x_{l_\iota} - \chi^\iota)^2 \right)}. \label{Prop: k integral bound inequality}
    \end{align}
    Where $\chi=\chi(x)\in \R^n$ is defined by $\chi_\alpha:= \chi^\iota:= \frac{1}{2}(x_{\tau(\mu_\iota)} + x_{\tau(\nu_\iota)})$ for all $\alpha \in \pi_\iota $.
\end{proposition}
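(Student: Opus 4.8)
The plan is to estimate the $k$-integral by contour deformation, as in the standard computation of the Fourier transform of a Gaussian. First I would complete the square in the exponent as in the calculation preceding the proposition, so that pushing the contour of a variable $k_r$ vertically by an amount of order $1/t$ trades the oscillatory factor for Gaussian decay in the corresponding displacement $y_{\tau^{-1}(r)}-x_r$. Since $e^{-\frac12 t|k|^2}$ decays faster than any rational function in $\Re k$ grows, the horizontal sides of the rectangular contours involved contribute nothing in the limit, so Cauchy's theorem permits such a shift as long as no pole of $T^{\tau,\pi}$, written as in (\ref{IC: Two products}), is crossed.

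Next I would record which variables may be deformed and in which direction. Combining Lemma \ref{Lemma: poles of the factors in T} with the definition of $E^{\tau,\pi}$ and the combinatorial Lemma \ref{IC: Lemma a b}, for each block $\pi_\iota\in\pi$ the contours of $k_{\mu_\iota}$ and $k_{\tau^{-1}(m_\iota)}$ may be pushed into the upper half-plane and those of $k_{\nu_\iota}$ and $k_{\tau^{-1}(l_\iota)}$ into the lower half-plane, with no denominator of $T^{\tau,\pi}$ vanishing along the way. I would deform precisely these contours, choosing each vertical displacement so as to centre the associated Gaussian as far as the permitted direction allows, and replacing a displacement by its appropriately-signed part when it points the wrong way. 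The key point is that, since $x,y\in\Weyl{n}$, the Weyl ordering gives $x_{m_\iota}\le x_{\tau(\mu_\iota)}\le x_{\tau(\nu_\iota)}\le x_{l_\iota}$ while $y$ is constant on each block $\lambda^\pi$-a.e.; feeding the one-sided Gaussian bounds obtained from the up-shifts at $\mu_\iota,\tau^{-1}(m_\iota)$ and the down-shifts at $\nu_\iota,\tau^{-1}(l_\iota)$ into the elementary identity $(z-a)^2+(z-b)^2=2(z-\tfrac{a+b}{2})^2+\tfrac{1}{2}(a-b)^2$ produces the claimed factor $e^{-|y-\chi|^2/(12nt)}\prod_{\pi_\iota\in\pi}e^{-((x_{m_\iota}-\chi^\iota)^2+(x_{l_\iota}-\chi^\iota)^2)/(24nt)}$, with $\chi^\iota=\tfrac{1}{2}(x_{\tau(\mu_\iota)}+x_{\tau(\nu_\iota)})$ and the slack in the constants $12n,24n$ absorbing the cross terms and the combinatorial factors $|\pi_\iota|\le n$.

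After the deformation I would parametrise the remaining integral over $\R^n$. The real parts of all variables are then of size $\lesssim t^{-1/2}$ under $e^{-\frac12 t|k|^2}$, while the shifted variables carry a fixed (possibly large) imaginary part, and one estimates $T^{\tau,\pi}$ in modulus on this contour: the ``between-block'' factors $S_{\tau(\beta),\tau(\alpha)}$ stay bounded by Lemma \ref{Lemma: poles of the factors in T}, while each ``within-block'' factor $\tfrac{i\theta(k_{\tau(\beta)}-k_{\tau(\alpha)})}{i\theta(k_{\tau(\beta)}-k_{\tau(\alpha)})-k_{\tau(\alpha)}k_{\tau(\beta)}}$ with $\alpha,\beta\in\pi_\iota$ is small, of order $\sqrt t$, away from its pole set $\{k_{\tau(\alpha)}=0\}\cup\{k_{\tau(\beta)}=0\}$ and only $O(1)$ on a neighbourhood of it. Integrating over these near-pole neighbourhoods costs a logarithmic factor, essentially one per block, and performing the residual Gaussian integration then yields the prefactor $Ct^{-\frac12|\pi|}|\log t|^{|\pi|}$ with $C$ depending only on $\pi$ and $n$, rather than the $t^{-n/2}$ one would get from the trivial bound $|T^{\tau,\pi}|\le\prod_{\pi_\iota\in\pi}|\pi_\iota|!$ valid on the real contour.

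The main obstacle is exactly this last bookkeeping. One must verify that the deformations requested at the various blocks are mutually compatible --- no variable is asked to move both up and down, and the product region $E^{\tau,\pi}$ really contains all of the shifted contours simultaneously --- and one must control precisely how fast the within-block factors degenerate near their poles, so that the accumulated power of $t$ is $-\tfrac{1}{2}|\pi|$ and the logarithmic loss is no worse than $|\log t|^{|\pi|}$. The monotonicity of the extremal data $\mu_\iota,\nu_\iota,m_\iota,l_\iota$ supplied by Lemma \ref{IC: Lemma a b} is precisely what secures compatibility, and the Weyl-chamber ordering of both $x$ and $y$ is what allows the one-sided Gaussian estimates from the shifts to be upgraded to the two-sided Gaussians appearing in (\ref{Prop: k integral bound inequality}).
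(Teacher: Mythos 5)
Your proposal follows exactly the same route as the paper: shift the $k_{\tau(\alpha)}$ contours for $\alpha\in\{\mu_\iota,\nu_\iota,\tau^{-1}(m_\iota),\tau^{-1}(l_\iota)\}$ through the region $E^{\tau,\pi}$ certified pole-free by Lemma \ref{Lemma: poles of the factors in T} and Lemma \ref{IC: Lemma a b}, read off the Gaussian factors from the completed square using the Weyl-chamber ordering, and then bound the remaining contour integral by noting that the within-block factors of $T^{\tau,\pi}$ are $O(1)$ near $k=0$ and $O(t^{1/2})$ away from it, with an $\epsilon$-split (effectively $\epsilon=\sqrt t$) producing the $t^{-|\pi|/2}|\log t|^{|\pi|}$ prefactor. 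One small caveat: the within-block factor $\tfrac{i\theta(k_{\tau(\beta)}-k_{\tau(\alpha)})}{i\theta(k_{\tau(\beta)}-k_{\tau(\alpha)})-k_{\tau(\alpha)}k_{\tau(\beta)}}$ has no actual pole on the shifted contour (it is uniformly bounded by $1$), so the phrase ``near-pole neighbourhoods'' is loose --- what you are really doing is splitting at the scale where the sharper bound $2\theta(|k_a|^{-1}+|k_b|^{-1})$ ceases to beat the trivial bound $1$; and the paper's Lemma \ref{Lemma: intermediate k bound} requires a more careful case analysis than your one-line appeal to $(z-a)^2+(z-b)^2=2(z-\tfrac{a+b}{2})^2+\tfrac12(a-b)^2$, precisely to handle the cases where only one of the two shifts at $\mu_\iota,\nu_\iota$ is actually permitted by the signs of $y_{\mu_\iota}-x_{\tau(\mu_\iota)}$ and $y_{\nu_\iota}-x_{\tau(\nu_\iota)}$, using the Weyl ordering of $y$ (not only $x$) to show that the missing Gaussian can still be recovered.
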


We begin the proof with the following intermediate bound
\begin{lemma}\label{Lemma: intermediate k bound}
	Let $\Gamma_{\alpha, x, y}= C_\alpha$ if $x,y\in \Weyl{n}$ are such that the $C_{\alpha}$ contour lies in $E^{\tau, \pi}_\alpha$, and $\R$ otherwise.
	\begin{align}
		\left|\int_{\R^n} e^{-\frac{1}{2}t|k|^2+ik_\tau \cdot (y-x_\tau)} T^{\tau,\pi}(k) dk\right|	\leq&  e^{-\frac{|y - \chi|^2}{12nt}} \left(\prod_{\pi_\iota \in \pi} e^{- \frac{1}{24nt}\left((x_{m_\iota} - \chi^\iota)^2 + (x_{l_\iota} - \chi^\iota)^2 \right)}\right)\nonumber\\
		&\qquad \int_{\times_{\alpha=1}^n \Gamma_{\alpha, x, y}} e^{-\frac{1}{2}t\sum_{\alpha=1}^n \Re(k_{\tau(\alpha)})^2} \left| T^{\tau,\pi}(k) \right| dk.
	\end{align}
\end{lemma}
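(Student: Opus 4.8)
The proof has a routine half (a contour deformation) and a substantive half (a combinatorial inequality), and I would organise it that way. Write $d_\alpha := y_\alpha - x_{\tau(\alpha)}$ and complete the square in each of the $n$ variables,
\[
	-\tfrac12 t\lvert k\rvert^2 + i k_\tau\!\cdot\!(y-x_\tau)
	= -\tfrac t2\sum_{\alpha=1}^n\Bigl(k_{\tau(\alpha)} - \tfrac{i d_\alpha}{t}\Bigr)^2 - \sum_{\alpha=1}^n\frac{d_\alpha^2}{2t}.
\]
For every $\alpha$ with $\Gamma_{\alpha,x,y}=C_\alpha$ --- i.e.\ those $\alpha$ for which the horizontal line $C_\alpha$, sitting at height $d_\alpha/t$, lies in $E^{\tau,\pi}_\alpha$ --- I would push the $k_{\tau(\alpha)}$-contour from $\R$ to $C_\alpha$ by Cauchy's theorem, one variable at a time. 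This is legitimate: by Lemma \ref{Lemma: poles of the factors in T} (and the observation drawn from it just before this lemma) the denominator of $T^{\tau,\pi}$ in the form (\ref{IC: Two products}) has no zeros on $E^{\tau,\pi}=\times_\alpha E^{\tau,\pi}_\alpha$, so throughout the deformation the integrand stays holomorphic in the variable being moved (the already-moved variables sitting on their $C_{\alpha'}\subset E^{\tau,\pi}_{\alpha'}$, the rest still on $\R\subset E^{\tau,\pi}_{\alpha'}$); the factor $e^{-\frac t2\lvert k\rvert^2}$ kills the connecting segments as $\Re k_{\tau(\alpha)}\to\pm\infty$; and $T^{\tau,\pi}$, a ratio of polynomials with numerator degree below denominator degree, is bounded there, its measure-zero singular set on $\R^n$ causing no trouble exactly as in the paper's opening remark. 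Parametrising each shifted contour by its real part and taking moduli under the integral, the real part of the exponent collapses to $-\tfrac t2\sum_\alpha\Re(k_{\tau(\alpha)})^2 - \sum_{\alpha:\,\Gamma_\alpha=C_\alpha}\tfrac{d_\alpha^2}{2t}$ (on the unshifted variables the square completion returns the Gaussian and no $d_\alpha$-term survives; on the shifted ones $k_{\tau(\alpha)}-\tfrac i t d_\alpha$ is real), giving
\[
	\Bigl\lvert\int_{\R^n} e^{-\frac12 t\lvert k\rvert^2+ik_\tau\cdot(y-x_\tau)}T^{\tau,\pi}(k)\,dk\Bigr\rvert
	\le e^{-\frac1{2t}\sum_{\alpha:\,\Gamma_\alpha=C_\alpha}d_\alpha^2}\int_{\times_\alpha\Gamma_{\alpha,x,y}}e^{-\frac t2\sum_\alpha\Re(k_{\tau(\alpha)})^2}\lvert T^{\tau,\pi}(k)\rvert\,dk .
\]

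It then remains to prove the purely combinatorial bound $\sum_{\alpha:\,\Gamma_{\alpha,x,y}=C_\alpha}d_\alpha^2 \ge \tfrac1{6n}\lvert y-\chi\rvert^2 + \sum_{\pi_\iota\in\pi}\tfrac1{12n}((x_{m_\iota}-\chi^\iota)^2+(x_{l_\iota}-\chi^\iota)^2)$. The key is that membership in the shifting set is governed by $\operatorname{sign}(d_\alpha)$ together with which half-plane $E^{\tau,\pi}_\alpha$ contains, and Lemma \ref{IC: Lemma a b} provides, for each block $\pi_\iota$, four indices whose $E^{\tau,\pi}$ does contain a half-plane: $\mu_\iota=\overline{\pi_{a_\iota}}$ and $\tau^{-1}(m_\iota)$ have it containing $\mathbb{H}$ (so they shift whenever the relevant $d\ge0$), while $\nu_\iota=\underline{\pi_{b_\iota}}$ and $\tau^{-1}(l_\iota)$ have it containing $-\mathbb{H}$ (shifting when $d\le0$). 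From $x,y\in\Weyl{n}$, $a_\iota\le\iota\le b_\iota$, $\tau(\nu_\iota)\le\tau(\mu_\iota)$, monotonicity of $\tau$ on each block, and the containment statements of Lemma \ref{IC: Lemma a b}, one reads off the chain $x_{m_\iota}\le x_{\tau(\mu_\iota)}\le\chi^\iota\le x_{\tau(\nu_\iota)}\le x_{l_\iota}$, the sandwich $y_{\overline{\pi_\iota}}\le y_\alpha\le y_{\underline{\pi_\iota}}$ for $\alpha\in\pi_\iota$, and the placement of $y_{\mu_\iota},y_{\nu_\iota},y_{\tau^{-1}(m_\iota)},y_{\tau^{-1}(l_\iota)}$ relative to them. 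One then splits on signs: e.g.\ if $y_{\overline{\pi_\iota}}\ge\chi^\iota$ then $d_{\mu_\iota}=y_{\mu_\iota}-x_{\tau(\mu_\iota)}\ge y_{\overline{\pi_\iota}}-x_{\tau(\mu_\iota)}=(y_{\overline{\pi_\iota}}-\chi^\iota)+(\chi^\iota-x_{\tau(\mu_\iota)})\ge0$, so $\mu_\iota$ shifts and $d_{\mu_\iota}^2$ already majorises both $(y_{\overline{\pi_\iota}}-\chi^\iota)^2$ and $(\chi^\iota-x_{\tau(\mu_\iota)})^2$; symmetrically $\nu_\iota$ shifts down when $y_{\underline{\pi_\iota}}<\chi^\iota$, and a parallel split on the $x$-gaps $x_{\tau(\mu_\iota)}-x_{m_\iota}$, $x_{l_\iota}-x_{\tau(\nu_\iota)}$ is handled via $\tau^{-1}(m_\iota),\tau^{-1}(l_\iota)$. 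Feeding these into $\chi^\iota-x_{m_\iota}=(\chi^\iota-x_{\tau(\mu_\iota)})+(x_{\tau(\mu_\iota)}-x_{m_\iota})$ together with $2(a^2+b^2)\ge(a+b)^2$ produces, for each block, a fixed fraction of $(x_{m_\iota}-\chi^\iota)^2+(x_{l_\iota}-\chi^\iota)^2+\max_{\alpha\in\pi_\iota}(y_\alpha-\chi^\iota)^2$, expressed as a sum of squares of $d$'s of \emph{shifted} indices drawn from $\{\mu_\iota,\nu_\iota,\tau^{-1}(m_\iota),\tau^{-1}(l_\iota)\}$.

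Finally, using that each such index is attached to at most $\lvert\pi\rvert\le n$ blocks and that $\lvert y-\chi\rvert^2=\sum_{\pi_\iota}\sum_{\alpha\in\pi_\iota}(y_\alpha-\chi^\iota)^2\le n\sum_{\pi_\iota}\max_{\alpha\in\pi_\iota}(y_\alpha-\chi^\iota)^2$, I would sum the per-block estimates and absorb the sharing multiplicities into the constants (which carry exactly this slack), obtaining the displayed inequality and hence the lemma. The contour deformation is routine bookkeeping; \emph{the main obstacle is the combinatorial inequality} --- specifically, verifying that, whatever the signs of the $d_\alpha$, the four Lemma \ref{IC: Lemma a b} indices of each block always include enough shiftable ones to recover that block's full share of the target exponent, and then keeping honest track of the fact that one index may serve several blocks simultaneously.
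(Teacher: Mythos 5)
Your contour-deformation half is exactly the paper's: complete the square, shift $k_{\tau(\alpha)}$ from $\R$ to $C_\alpha$ whenever $C_\alpha\subset E^{\tau,\pi}_\alpha$, cite Lemma \ref{Lemma: poles of the factors in T} for pole-freedom on $E^{\tau,\pi}$, and take moduli; this arrives at the display labelled (\ref{IC: Contour shifted k integral}) in the paper and is correct. You also correctly identify the indices $\mu_\iota,\nu_\iota,\tau^{-1}(m_\iota),\tau^{-1}(l_\iota)$ from Lemma \ref{IC: Lemma a b} as the shiftable indices, and your observation that the Weyl-chamber ordering forces at least one of each relevant $\mu_\iota/\nu_\iota$ pair to actually shift is the key case-splitting mechanism the paper uses; the chain $x_{m_\iota}\le x_{\tau(\mu_\iota)}\le\chi^\iota\le x_{\tau(\nu_\iota)}\le x_{l_\iota}$ and the sandwich on the $y_\alpha$ are correct.

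The gap is in your per-block bound and the accompanying multiplicity bookkeeping. The paper's per-block estimate (\ref{IC: Exponent bound}) delivers a \emph{weighted sum of $(y_\alpha-\chi^\iota)^2$ over the whole interval} $[\underline{\pi_{a_\iota}},\overline{\pi_{b_\iota}}]$ (with weight $1/(\overline{\pi_{b_\iota}}-\underline{\pi_{a_\iota}})$), not merely $\max_{\alpha\in\pi_\iota}(y_\alpha-\chi^\iota)^2$ as in your sketch. This distinction is what actually absorbs the multiplicity: if $a_\iota=a_{\iota'}$ then one checks from the construction in Lemma \ref{IC: Lemma a b} that necessarily $b_\iota=b_{\iota'}$ as well, hence $\mu,\nu,\chi^\iota$ and the interval $[\underline{\pi_{a_\iota}},\overline{\pi_{b_\iota}}]$ all coincide for $\iota$ and $\iota'$. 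So precisely when blocks share shifted indices, the paper's single per-block interval already contains $\pi_\iota\cup\pi_{\iota'}$ and the per-block sum already covers both blocks' contributions to $\lvert y-\chi\rvert^2$, at the price only of the $1/n$ from the interval length — no further slack is spent on multiplicity in the $y$-part. Your version instead pays for $\max\to\lvert y-\chi\rvert^2$ with the full $1/n$ slack and then needs a \emph{second} factor (up to $\lvert\pi\rvert$, as you note) for the multiplicity; those two losses compound to $1/n^2$, not $1/n$, so the constant $1/(12n)$ in the statement is not recovered. Concretely: the only budget available in the exponent is a single factor of $n$ relative to the raw per-block estimates, and your plan spends it twice. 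To fix the argument you would need to replace $\max_{\alpha\in\pi_\iota}$ with the averaged sum over $[\underline{\pi_{a_\iota}},\overline{\pi_{b_\iota}}]$ (as the paper does) and then verify, using the $a_\iota=a_{\iota'}\Rightarrow b_\iota=b_{\iota'}$ rigidity, that overlaps in the shiftable-index sets force equalities of $\chi^\iota$ and of the intervals, so that a suitably weighted combination of the per-block bounds gives the $y$-part without the extra $n$.
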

\begin{proof}
We note that from the definition of the contours $\Gamma_{\alpha,x,y}$ and the set $E^{\tau, \pi}= \times_{\alpha=1}^n E^{\tau, \pi}_\alpha$ the following equality follows by Cauchy's residue theorem
\begin{align}
	&\int_{\R^n} e^{-\frac{1}{2}t|k|^2+ik_\tau \cdot (y-x_\tau)} T^{\tau,\pi}(k) dk \nonumber\\
	=& e^{-\frac{1}{2t}\sum_{\alpha: \Gamma_{\alpha,x,y}\neq \R} (y_\alpha - x_{\tau(\alpha)})^2} \int_{\times_{\alpha=1}^{n} \Gamma_{\alpha, x, y}} e^{-\frac{1}{2}t\sum_{\alpha=1}^n \Re(k_{\tau(\alpha)})^2 +i \sum_{\alpha: \Gamma_{\alpha, x, y}= \R} k_{\tau(\alpha)}(y_\alpha- x_{\tau(\alpha)})} T^{\tau, \pi}(k) dk.\label{IC: Contour shifted k integral}
\end{align}
From which we see that to prove lemma \ref{Lemma: intermediate k bound} we must bound the exponential appearing in front of the integral, which means we need to consider which contour shifts have been made. We want to check when the condition for $\Gamma_{\alpha, x, y} = C_\alpha$ is true, for $\alpha= \mu_\iota, \nu_\iota$, thus we want to check when $C_\alpha$ lies inside $E^{\tau, \pi}_\alpha$. We know from lemma \ref{IC: Lemma a b} that $E^{\tau, \pi}_{\mu_\iota}$ contains the upper half complex plane, and $E^{\tau, \pi}_{\nu_\iota}$ contains the lower half complex plane. Thus $C_{\mu_\iota}\subset E^{\tau}_{\mu_\iota}$ when $y_{\mu_\iota} \geq x_{\tau(\mu_\iota)}$, and $C_{\nu_\iota}\subset E^{\tau}_{\nu_\iota}$ when $y_{\nu_\iota} \leq x_{\tau(\nu_\iota)}$. Hence we have the following inequalities:
\begin{align*}
	&-\frac{1}{2t}\mathbbm{1}_{\{ \Gamma_{\mu_\iota ,x,y}\neq \R\}} (y_{\mu_\iota} - x_{\tau(\mu_\iota)})^2 \leq -\frac{1}{2t}\mathbbm{1}_{\{ (y_{\mu_\iota} \geq x_{\tau(\mu_\iota)}) \}} (y_{\mu_\iota} - x_{\tau(\mu_\iota)})^2;\\
	&-\frac{1}{2t}\mathbbm{1}_{\{ \Gamma_{\nu_\iota ,x,y}\neq \R\}} (y_{\nu_\iota} - x_{\tau(\nu_\iota)})^2 \leq -\frac{1}{2t}\mathbbm{1}_{\{ (y_{\nu_\iota} \leq x_{\tau(\nu_\iota)}) \}} (y_{\nu_\iota} - x_{\tau(\nu_\iota)})^2.
\end{align*}

There are two cases of interest, the first is when $\mu_\iota< \nu_\iota$ in this case the two indices are in different elements of $\pi$, the second is when $\mu_\iota\geq \nu_\iota$ for which the two indices are in the same element of $\pi$. Let us deal now with the first case. \par{}

By definition we have $\tau(\mu_\iota) \geq \tau(\nu_\iota)$, thus since $x, y \in \Weyl{n}$ it follows that we always have $y_{\nu_\iota}\leq y_{\mu_\iota}$. Hence if we have both $y_{\nu_\iota}> x_{\tau(\nu_\iota)}$ and $y_{\mu_\iota}< x_{\tau(\mu_\iota)}$ it follows that $x_{\tau(\nu_\iota)} < x_{\tau(\mu_\iota)}$ but since $x\in \Weyl{n}$ this is a contradiction. Hence for all $x,y \in \Weyl{n}$ at least one of $y_{\mu_\iota} \geq x_{\tau(\mu_\iota)}$ and $y_{\nu_\iota} \leq x_{\tau(\nu_\iota)}$ must be true. This means we have the following equality
\begin{align} \label{IC: The exponents}
	&-\frac{1}{2t}\mathbbm{1}_{\{ (y_{\mu_\iota} \geq x_{\tau(\mu_\iota)}) \}} (y_{\mu_\iota} - x_{\tau(\mu_\iota)})^2 -\frac{1}{2t}\mathbbm{1}_{\{ (y_{\nu_\iota} \leq x_{\tau(\nu_\iota)}) \}} (y_{\nu_\iota} - x_{\tau(\nu_\iota)})^2 \nonumber\\
	=&\begin{cases}
		-\frac{1}{2t} (y_{\mu_\iota}- x_{\tau(\mu_\iota)})^2, \ \text{if } y_{\mu_\iota} \geq x_{\tau(\mu_\iota)} \text{ and } y_{\nu_\iota}> x_{\tau(\nu_\iota)}; \\
		-\frac{1}{2t} (y_{\nu_\iota}- x_{\tau(\nu_\iota)})^2, \ \text{if } y_{\mu_\iota} < x_{\tau(\mu_\iota)} \text{ and } y_{\nu_\iota} \leq x_{\tau(\nu_\iota)}; \\
		 -\frac{1}{2t} (y_{\mu_\iota}- x_{\tau(\mu_\iota)})^2 -\frac{1}{2t} (y_{\nu_\iota}- x_{\tau(\nu_\iota)})^2, \ \text{if } y_{\nu_\iota} \leq x_{\tau(\nu_\iota)} \ \text{and } y_{\mu_\iota} \geq x_{\tau(\mu_\iota)}.
	\end{cases}
\end{align}
Let $\chi^\iota:= \frac{1}{2}(x_{\mu_\iota}+ x_{\nu_\iota})$ we can rewrite the first line as
\begin{equation*}
	-\frac{1}{2t}\left((y_{\mu_\iota} - \chi^\iota)^2 + \frac{1}{2}(x_{\tau(\mu_\iota)}- x_{\tau(\nu_\iota)})^2 \right) + \frac{1}{2t}(y_{\mu_\iota}- \chi^\iota)(x_{\tau(\mu_\iota)}- x_{\tau(\nu_\iota)}).
\end{equation*}
We have $\tau(\mu_\iota) > \tau(\nu_\iota)$ and $x\in \Weyl{n}$, so that $(x_{\tau(\mu_\iota)}- x_{\tau(\nu_\iota)})< 0$. From $y\in \Weyl{n}$ and $\mu_\iota<\nu_\iota$ it follows that $y_{\mu_\iota}\geq \frac{1}{2}(y_{\mu_\iota} + y_{\nu_\iota})$ which, under the conditions of the first line, is bounded below by $\chi^\iota = \frac{1}{2}(x_{\mu_\iota}+ x_{\nu_\iota})$. Thus $y_{\mu_\iota} - \chi^\iota > 0$, and the last term above is negative. We also have $y_{\nu_\iota} - \chi_\iota \geq y_{\nu_\iota} - x_{\tau(\nu_\iota)}>0$ in this case, thus using $y_{\nu_\iota} \leq y_{\mu_\iota}$ we get $-(y_{\mu_\iota} - \chi^\iota)^2 \leq -(y_{\nu_\iota} - \chi^\iota)^2$.  It follows that the above expression is bounded above by
\begin{equation*}
	-\frac{1}{4t}\left((y_{\mu_\iota} - \chi^\iota)^2 + (y_{\nu_\iota} - \chi^\iota)^2 + (x_{\tau(\mu_\iota)}- x_{\tau(\nu_\iota)})^2 \right).
\end{equation*}
The same ideas yield the same bound on the cases of the second and third lines, so that the above expression is a bound for (\ref{IC: The exponents}). \par{}

Now we need to look at the contour shifts for $m_\iota$ and $l_\iota$, recall $m_\iota= \sup\{\tau(\alpha)| \ \mu_\iota\leq \alpha \leq \nu_\iota\}$ and $l_\iota= \inf\{\tau(\beta)| \ \mu_\iota\leq \beta \leq \nu_\iota\}$. Note that it is quite possible for $m_\iota= \tau(\mu_\iota)$ or for $l_\iota= \tau(\nu_\iota)$. We need to check when $C_{\tau^{-1}(m_\iota)}\subset E^{\tau, \pi}_{\tau^{-1}(m_\iota)}$. From Lemma \ref{IC: Lemma a b} we know $ E^{\tau, \pi}_{\tau^{-1}(m_\iota)}$ contains the upper half complex plane. Therefore $\Gamma_{\tau^{-1}(m_\iota), x, y}= C_{\tau^{-1}(m_\iota)}$ if $y_{\tau^{-1}(m_\iota)}\geq x_{m_\iota}$. Therefore we have
\begin{align*}
	-\frac{1}{2t}\mathbbm{1}_{\{ \Gamma_{\tau^{-1}(m_\iota) ,x,y}\neq \R\}} (y_{\tau^{-1}(m_\iota)} - x_{m_\iota})^2 \leq -\frac{1}{2t}\mathbbm{1}_{\{ (y_{\tau^{-1}(m_\iota)} \geq x_{m_\iota}) \}} (y_{\tau^{-1}(m_\iota)} - x_{m_\iota})^2.
\end{align*}
We can combine this with our previous bound to get the following
\begin{align}
	&-\frac{1}{2t}\mathbbm{1}_{\{ \Gamma_{\mu_\iota ,x,y}\neq \R\}} (y_{\mu_\iota} - x_{\tau(\mu_\iota)})^2 -\frac{1}{2t}\mathbbm{1}_{\{ \Gamma_{\nu_\iota ,x,y}\neq \R\}} (y_{\nu_\iota} - x_{\tau(\nu_\iota)})^2\nonumber\\
	&\qquad -\frac{1}{2t}\mathbbm{1}_{\{ \Gamma_{\tau^{-1}(m_\iota) ,x,y}\neq \R, \ m_\iota\neq \tau(\mu_\iota)\}} (y_{\tau^{-1}(m_\iota)} - x_{m_\iota})^2  \nonumber\\
	\leq& -\frac{1}{4t}\left((y_{\mu_\iota} - \chi^\iota)^2 + (y_{\nu_\iota} - \chi^\iota)^2 + (x_{\tau(\mu_\iota)}- x_{\tau(\nu_\iota)})^2 \right)\nonumber\\
	&\qquad- \frac{1}{2t}\mathbbm{1}_{\{ y_{\tau^{-1}(m_\iota)} \geq x_{m_\iota}, \ m_\iota\neq \tau(\mu_\iota) \}}(y_{\tau^{-1}(m_\iota)} - x_{m_\iota})^2.\label{IC: Shifted exponent}
\end{align}
We aim to show this is bounded above, for some positive constants $C_1, C_2$, by
\begin{equation*}
	-\frac{C_1}{t}\left((y_{\mu_\iota} - \chi^\iota)^2 + (y_{\nu_\iota} - \chi^\iota)^2 + (x_{\tau(\mu_\iota)}- x_{\tau(\nu_\iota)})^2 \right)- \frac{C_2}{t}(x_{m_\iota} - \chi^\iota)^2.
\end{equation*}
Thus we consider the various cases for the indicator in (\ref{IC: Shifted exponent}). \par{}
If $m_\iota= \tau(\mu_\iota)$ then it follows from $x_{m_\iota}\leq \chi_\iota \leq x_{\tau(\nu_\iota)}$ that $(x_{\tau(\mu_\iota)}- x_{\tau(\nu_\iota)})^2 \leq (x_{\tau(m_\iota)}- \chi^\iota)^2$, so that our desired bound is easily seen. \par{}
In the case that $m_\iota\neq \tau(\mu_\iota)$ and $y_{\tau^{-1}(m_\iota)} \geq x_{m_\iota}$, if we further assume $y_{\tau^{-1}(m_\iota)} \geq \chi^\iota$ then it follows
\begin{align*}
	&-(y_{\mu_\iota} - \chi^\iota)^2 - (y_{\tau^{-1}(m_\iota)} - x_{m_\iota})^2\\
	=& -(y_{\mu_\iota} - y_{\tau^{-1}(m_\iota)})^2 - (\chi^\iota - x_{m_\iota})^2 + (y_{\mu_\iota} - x_{m_\iota})(\chi^\iota - y_{\mu_\iota})\\
	\leq& -(y_{\mu_\iota} - y_{\tau^{-1}(m_\iota)})^2 - (\chi^\iota - x_{m_\iota})^2 \leq  - (\chi^\iota - x_{m_\iota})^2.
\end{align*}
Where the last line is true because $y\in\Weyl{n}$ and therefore $y_{\mu_\iota}\geq y_{\tau^{-1}(m_\iota)}$, so that our assumptions imply the last term is negative. If instead we have $x_{m_\iota} \leq y_{\tau^{-1}(m_\iota)} < \chi^\iota$ then $y\in \Weyl{n}$ implies that $y_{\nu_\iota} \leq y_{\tau^{-1}(m_\iota)}$ and thus $0> y_{\tau^{-1}(m_\iota)} - \chi^\iota \geq y_{\nu_\iota} - \chi^\iota$. Hence
\begin{align*}
	&-(y_{\nu_\iota} - \chi^\iota)^2 - (y_{\tau^{-1}(m_\iota)} - x_{m_\iota})^2\\
	\leq& -(y_{\tau^{-1}(m_\iota)} - \chi^\iota)^2 - (y_{\tau^{-1}(m_\iota)} - x_{m_\iota})^2\\
	=& -2(y_{\tau^{-1}(m_\iota)} - \frac{1}{2}(x_{m_\iota} + \chi^\iota))^2 - \frac{1}{2}(x_{m_\iota} - \chi^\iota)^2 \leq  - \frac{1}{2}(x_{m_\iota} - \chi^\iota)^2.
\end{align*}
Hence when $y_{\tau^{-1}(m_\iota)} \geq x_{m_\iota}$ we have the bound on (\ref{IC: Shifted exponent})
\begin{align}
	- \frac{1}{8t}(x_{m_\iota} - \chi^\iota)^2. \label{IC: shifted exponent for m}
\end{align}
If instead we have $m_\iota\neq \tau(\mu_\iota)$ and $y_{\tau^{-1}(m_\iota)} < x_{m_\iota}$ then we have $x,y \in \Weyl{n}$ and therefore $y_{\nu_\iota} \leq y_{\tau^{-1}(m_\iota)} < x_{m_\iota} \leq \chi^\iota$. Thus $-(y_{\nu_\iota} - \chi^\iota)^2 \leq -(x_{m_\iota} - \chi^\iota)^2$ so that (\ref{IC: shifted exponent for m}) expression is a bound on (\ref{IC: Shifted exponent}) for any $x,y \in \Weyl{n}$, as desired. Following the same steps for $l_\iota$ we get the analogous bound
\begin{align}
	&-\frac{1}{2t}\mathbbm{1}_{\{ \Gamma_{\mu_\iota ,x,y}\neq \R\}} (y_{\mu_\iota} - x_{\tau(\mu_\iota)})^2 -\frac{1}{2t}\mathbbm{1}_{\{ \Gamma_{\nu_\iota ,x,y}\neq \R\}} (y_{\nu_\iota} - x_{\tau(\nu_\iota)})^2\nonumber\\
	&\qquad -\frac{1}{2t}\mathbbm{1}_{\{ \Gamma_{\tau^{-1}(l_\iota) ,x,y}\neq \R, \ l_\iota\neq \tau(\nu_\iota)\}} (y_{\tau^{-1}(l_\iota)} - x_{l_\iota})^2  \nonumber\\
	\leq& -\frac{1}{4t}\left((y_{\mu_\iota} - \chi^\iota)^2 + (y_{\nu_\iota} - \chi^\iota)^2 + (x_{\tau(\mu_\iota)}- x_{\tau(\nu_\iota)})^2 \right)\nonumber\\
	&\qquad- \frac{1}{2t}\mathbbm{1}_{\{ y_{\tau^{-1}(l_\iota)} \geq x_{l_\iota}, \ l_\iota\neq \tau(\nu_\iota) \}}(y_{\tau^{-1}(m_\iota)} - x_{m_\iota})^2 \nonumber\\
	\leq& -\frac{1}{8t}(x_{l_\iota}- \chi^\iota)^2. \label{IC: shifted exponent for l}
\end{align}
Combining the bounds in (\ref{IC: Shifted exponent}), (\ref{IC: shifted exponent for m}), and (\ref{IC: shifted exponent for l}) we get the following bound, when $\nu_\iota> \mu_\iota$,
\begin{align}
		&-\frac{1}{2t}\mathbbm{1}_{\{ \Gamma_{\mu_\iota ,x,y}\neq \R\}} (y_{\mu_\iota} - x_{\tau(\mu_\iota)})^2 -\frac{1}{2t}\mathbbm{1}_{\{ \Gamma_{\nu_\iota ,x,y}\neq \R\}} (y_{\nu_\iota} - x_{\tau(\nu_\iota)})^2\nonumber\\
		& -\frac{1}{2t}\mathbbm{1}_{\{ \Gamma_{\tau^{-1}(m_\iota) ,x,y}\neq \R, \ m_\iota \neq \tau(\mu_\iota)\}} (y_{\tau^{-1}(m_\iota)} - x_{m_\iota})^2 -\frac{1}{2t}\mathbbm{1}_{\{ \Gamma_{\tau^{-1}(l_\iota) ,x,y}\neq \R, \ l_\iota \neq \tau(\nu_\iota)\}} (y_{\tau^{-1}(l_\iota)} - x_{l_\iota})^2\nonumber \\
		\leq& -\frac{1}{12t}\left((y_{\mu_\iota} - \chi^\iota)^2 + (y_{\nu_\iota} - \chi^\iota)^2 \right)- \frac{1}{24t}\left((x_{m_\iota} - \chi^\iota)^2 + (x_{l_\iota} - \chi^\iota)^2 \right) \nonumber\\
		\leq& -\frac{1}{12(\overline{\pi_{b_\iota}}- \underline{\pi_{a_\iota}})t}\sum_{\alpha = \underline{\pi_{a_\iota}}}^{\overline{\pi_{b_\iota}}} (y_\alpha - \chi^\iota)^2 - \frac{1}{24t}\left((x_{m_\iota} - \chi^\iota)^2 + (x_{l_\iota} - \chi^\iota)^2 \right).\label{IC: Exponent bound}
\end{align}
Where for the last line we have used that from by definition $\mu_\iota= \overline{\pi_{a_\iota}}$ and $\nu_\iota= \underline{\pi_{b_\iota}}$ and that under $\lambda^\pi$ we have that for any $\pi_j\in \pi$ if $\alpha, \beta \in \pi_j$ then $y_{\alpha}= y_\beta$ a.e. as well as having that $y\in \Weyl{n}$ and therefore $(y_{\mu_\iota}- \chi^\iota) \geq (y_\alpha- \chi_\iota) \geq (y_{\nu_\iota} - \chi_\iota)$ for all $\underline{\pi_{a_\iota}}\leq \alpha \leq \overline{\pi_{b_\iota}}$, and thus either $-(y_{\alpha}- \chi^\iota)^2 \leq -(y_{\mu_\iota}- \chi^\iota)^2$ or $-(y_{\alpha}- \chi^\iota)^2 \leq -(y_{\nu_\iota}- \chi^\iota)^2$.

Before we use this to get the bound on (\ref{IC: PreContourShift}) we need to deal with the second case: $\nu_\iota \leq \mu_\iota$.  \par{}

In the second case $\mu_\iota$ and $\nu_\iota$ are both in $\pi_\iota$, and therefore under $\lambda^\pi$ we have $y_{\mu_\iota} = y_{\nu_\iota}$ almost everywhere. Further since $\tau$ is increasing on every element of $\pi$ it follows that $m_{\iota}:= \tau(\mu_\iota) = \sup\{\tau(\alpha)| \ \nu_\iota \leq \alpha \leq \mu_\iota\}$ and $l_\iota:= \tau(\nu_\iota) = \inf\{\tau(\beta)| \ \nu_\iota \leq \beta \leq \mu_\iota \}$. Following the same steps as before if we assume both $y_{\nu_\iota}> x_{\tau(\nu_\iota)}$ and $y_{\mu_\iota}< x_{\tau(\mu_\iota)}$ then since $y_{\mu_\iota} = y_{\nu_\iota}$ it follows that $x_{\tau(\nu_\iota)}< x_{\tau(\mu_\iota)}$, which is a contradiction since $\tau(\nu_\iota)< \tau(\mu_\iota)$ and $x\in \Weyl{n}$. Thus at least one of $y_{\nu_\iota}\leq x_{\tau(\nu_\iota)}$ and $y_{\mu_\iota}\geq x_{\tau(\mu_\iota)}$ must hold for all $x, y \in \Weyl{n}$. With similar ideas to those used above we find
\begin{align}
	&-\frac{1}{2t}\mathbbm{1}_{\{ \Gamma_{\mu_\iota ,x,y}\neq \R\}} (y_{\mu_\iota} - x_{\tau(\mu_\iota)})^2 -\frac{1}{2t}\mathbbm{1}_{\{ \Gamma_{\nu_\iota ,x,y}\neq \R\}} (y_{\nu_\iota} - x_{\tau(\nu_\iota)})^2\nonumber\\
	\leq&-\frac{1}{4t}\left((y_{\mu_\iota} - \chi^\iota)^2 + (y_{\nu_\iota} - \chi^\iota)^2 + (x_{m_\iota}- x_{l_\iota})^2 \right)\nonumber \\
	\leq& -\frac{1}{12t}\left((y_{\mu_\iota} - \chi^\iota)^2 + (y_{\nu_\iota} - \chi^\iota)^2 \right)- \frac{1}{24t}\left((x_{m_\iota} - \chi^\iota)^2 + (x_{l_\iota} - \chi^\iota)^2 \right)\\
	\leq& -\frac{1}{12(\mu_\iota - \nu_\iota)t}\sum_{\alpha = \nu_\iota}^{\mu_\iota} (y_\alpha - \chi^\iota)^2 - \frac{1}{24t}\left((x_{m_\iota} - \chi^\iota)^2 + (x_{l_\iota} - \chi^\iota)^2 \right). \label{IC: Exponent bound 2}
\end{align}
Where the idea behind the bounds is similar, but this time we use $y_{\mu_\iota} = y_{\nu_\iota}$, and we used that $x_{l_\iota}\geq \chi^\iota \geq x_{m_\iota}$ for the second inequality. The constants appearing in the denominator have been chosen to be consistent with (\ref{IC: Exponent bound}), and so are not optimal. \par{}
Applying the bounds (\ref{IC: Exponent bound}) and (\ref{IC: Exponent bound 2}) to (\ref{IC: Contour shifted k integral}) leads to the following inequality
\begin{align}
	\left|\int_{\R^n} e^{-\frac{1}{2}t|k|^2+ik_\tau \cdot (y-x_\tau)} T^{\tau,\pi}(k) dk\right|\leq& e^{-\frac{1}{12nt}|y- \chi|^2} \left(\prod_{\pi_\iota \in \pi} e^{- \frac{1}{24nt}\left((x_{m_\iota} - \chi^\iota)^2 + (x_{l_\iota} - \chi^\iota)^2 \right)}\right)\nonumber \\
	& \quad \int_{\times_{\alpha=1}^n \Gamma_{\alpha, x, y}} e^{-\frac{1}{2}t\sum_{\alpha=1}^n \Re(k_{\tau(\alpha)})^2} \left| T^{\tau,\pi}(k) \right| dk. \label{IC: first k integral bound}
\end{align}
Where we have used $\mu_\iota- \nu_\iota, \overline{\pi_{b_\iota}}- \underline{\pi_{a_\iota}}<n$ for all $\iota$ to get the form of the Gaussian bound given above.
\end{proof}

We complete the proof of Proposition \ref{Prop: k integral bound} with the following lemma.
\begin{lemma}\label{Lemma: shifted k integral bound}
	There is a constant $C>0$, depending only on $\pi$ and $n$, such that
	\begin{align}
		\int_{\times_{\alpha=1}^n \Gamma_{\alpha, x, y}} e^{-\frac{1}{2}t\sum_{\alpha=1}^n \Re(k_{\tau(\alpha)})^2} \left| T^{\tau,\pi}(k) \right| dk \leq  C t^{-\frac{1}{2}|\pi|} |\log(t)|^{|\pi|}.
	\end{align}
\end{lemma}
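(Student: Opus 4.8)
The plan is to parametrise each contour by its real part, reducing to a genuine $\R^{n}$ integral, then bound the inter-block factors of $T^{\tau,\pi}$ by a constant and estimate the remaining integral block by block using one elementary pointwise bound on the intra-block factors. On $\Gamma_{\alpha,x,y}$ I would write $k_{\tau(\alpha)}=\kappa_\alpha+ih_\alpha$ with $\kappa_\alpha\in\R$ and $h_\alpha\in\{0,\tfrac1t(y_\alpha-x_{\tau(\alpha)})\}$; since $\Re(k_{\tau(\alpha)})=\kappa_\alpha$ and $|dk_{\tau(\alpha)}|=d\kappa_\alpha$, the claim reduces to $\int_{\R^n}e^{-\frac t2\sum_\alpha\kappa_\alpha^2}\,|T^{\tau,\pi}(\kappa+ih)|\,d\kappa\le C t^{-|\pi|/2}|\log t|^{|\pi|}$. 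I would then use the factorisation (\ref{IC: Two products}) of $T^{\tau,\pi}$ into the inter-block factors $S_{\tau(\beta),\tau(\alpha)}$ (over pairs $\alpha\in\pi_\iota$, $\beta\in\pi_j$, $\iota<j$, $\tau(\beta)<\tau(\alpha)$) times the intra-block product $\prod_{\pi_\iota}|\pi_\iota|!\prod_{\alpha<\beta\in\pi_\iota}g(k_{\tau(\alpha)},k_{\tau(\beta)})$, where $g(z,w):=\tfrac{i\theta(w-z)}{i\theta(w-z)-zw}$.

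\emph{Uniform boundedness of the quotients.} For an inter-block pair one has $\beta>\alpha$ and $\tau(\beta)<\tau(\alpha)$, so reading off the definition of $E^{\tau,\pi}$, the contour of $k_{\tau(\alpha)}$ can never be pushed below $\R$ and that of $k_{\tau(\beta)}$ never above it; thus $k_{\tau(\alpha)}\in\overline{\mathbb{H}}$ and $k_{\tau(\beta)}\in\overline{-\mathbb{H}}$. By Lemma~\ref{Lemma: poles of the factors in T} the denominator $i\theta(z-w)-zw$ does not vanish on $\mathbb{H}\times(-\mathbb{H})$, and on the boundary it vanishes only at $z=w=0$, to the same order as the numerator; since both are quadratics with leading term $\pm zw$, the quotient extends to a function bounded by a constant $C_1(n,\theta)$ on $\overline{\mathbb{H}}\times\overline{-\mathbb{H}}$, uniformly in $t,x,y$. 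For an intra-block pair $\alpha<\beta\in\pi_\iota$ one has $\tau(\alpha)<\tau(\beta)$, so symmetrically $k_{\tau(\alpha)}\in\overline{-\mathbb{H}}$, $k_{\tau(\beta)}\in\overline{\mathbb{H}}$, and the same argument gives $|g|\le C_2(\theta)$ there. Hence $|T^{\tau,\pi}(\kappa+ih)|\le C_1^{\binom n2}\prod_{\pi_\iota}|\pi_\iota|!\prod_{\alpha<\beta\in\pi_\iota}|g(k_{\tau(\alpha)},k_{\tau(\beta)})|$, and it suffices to prove, for each block $\pi_\iota$ of size $m$,
\[
	\int_{\R^{m}}e^{-\frac t2\sum_{\alpha\in\pi_\iota}\kappa_\alpha^2}\prod_{\alpha<\beta\in\pi_\iota}|g(k_{\tau(\alpha)},k_{\tau(\beta)})|\prod_{\alpha\in\pi_\iota}d\kappa_\alpha\ \le\ C\,t^{-1/2}|\log t|,
\]
as then the product over the $|\pi|$ blocks yields the lemma.

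\emph{The per-block integral.} The pointwise bound is: from $|i\theta(w-z)-zw|\ge|zw|-\theta(|z|+|w|)\ge\tfrac12|zw|$ whenever $\min(|z|,|w|)\ge4\theta$, one gets $|g(z,w)|\le 4\theta/\min(|z|,|w|)$ in that range, and combined with the boundedness above this yields, on the relevant contours, $|g(z,w)|\le C'\,(1+\min(|\Re z|,|\Re w|))^{-1}$ (using $|z|\ge|\Re z|$). Fixing the block and relabelling its variables so that $|\kappa_{p_1}|\le\dots\le|\kappa_{p_m}|$, the product of the $\binom m2$ factors is $\le C'^{\binom m2}\prod_{i=1}^{m-1}(1+|\kappa_{p_i}|)^{-(m-i)}$, so the integral is at most $C'^{\binom m2}\prod_{i=1}^{m}\int_\R(1+|s|)^{-(m-i)}e^{-\frac t2 s^2}ds$. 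Since $\int_\R(1+|s|)^{-p}e^{-\frac t2 s^2}ds$ is $O(1)$ for $p\ge2$, $O(|\log t|)$ for $p=1$ and $O(t^{-1/2})$ for $p=0$, exactly one factor $t^{-1/2}$ (from $i=m$) and one factor $|\log t|$ (from $i=m-1$) survive, giving the claimed per-block bound, uniformly in $x,y$; note only $|\Re k_{\tau(\alpha)}|$ ever enters, so the bound does not see the shift heights. For $t$ bounded away from $0$ the whole integral is $O(t^{-n/2})$ and hence bounded, so that range is covered too.

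I expect the main work to be in the second paragraph: confirming, for every $\tau$, $\pi$ and $x,y\in\Weyl{n}$, that the contours really remain in the stated closed half-planes and that the quotients are uniformly bounded there — in particular uniformly as $t\to0$, when the displacements $\tfrac1t(y_\alpha-x_{\tau(\alpha)})$ run off to $\pm\infty$ — which is exactly what Lemma~\ref{Lemma: poles of the factors in T}, together with the observation that the denominators vanish on the boundary only, and to matching order, at the origin, is for. The third paragraph is then a routine sequence of one-dimensional Gaussian integrals, paralleling the reduction used for a reflected-Brownian-motion transition density, and the inter-block estimate of the second paragraph is what keeps the constant depending only on $n$, $\theta$ and $\pi$.
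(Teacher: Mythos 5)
Your overall architecture matches the paper's: parametrise the shifted contours by their real part, bound the inter-block factors $S_{\tau(\beta),\tau(\alpha)}$ by a constant so the integral factorises over blocks, and then show each block contributes at most $Ct^{-1/2}|\log t|$. The per-block estimate you use, however, is a genuinely different (and cleaner) calculation from the paper's. You bound $|g(z,w)|\le C'(1+\min(|\Re z|,|\Re w|))^{-1}$ and then, on the ordered region $|\kappa_{p_1}|\le\dots\le|\kappa_{p_m}|$, peel off the $\min$'s so the $\binom m2$ factors collapse to $\prod_{i=1}^{m-1}(1+|\kappa_{p_i}|)^{-(m-i)}$, and Tonelli plus three one-dimensional Gaussian estimates finish it. The paper instead uses the two-sided bound $|g|\le\min\{1,\,2\theta(1/|k_a|+1/|k_b|)\}$ of (\ref{InitialCondition:SecondProductBound}) and a dyadic split of each coordinate at $|k_\alpha|\lessgtr\epsilon/\sqrt t$ with $\epsilon=\sqrt t$, expanding the product $\prod_{\alpha<\beta}(1/|k_\alpha|+1/|k_\beta|)$ and tracking that at most one factor has exponent $-1$. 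Your ordering trick avoids the bookkeeping with $\epsilon$ and the binomial sum, at the cost of the $m!$ symmetrisation factor, which is harmless.

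There is one step you have sketched rather than proven, and it deserves an explicit computation: the uniform boundedness of the two quotients $S_{\tau(\beta),\tau(\alpha)}$ and $g$ on $\overline{\mathbb{H}}\times\overline{-\mathbb{H}}$, in particular near the common zero at the origin. Your stated reason — ``vanishes to the same order; both are quadratics with leading term $\pm zw$'' — is not correct as written (near the origin the leading term of $i\theta(z-w)\pm zw$ is the linear $i\theta(z-w)$, not $\pm zw$), and in any case equal order of vanishing of numerator and denominator at an interior point of a two-complex-variable domain does not by itself imply the ratio is bounded there. The clean way to fill this is what the paper does in (\ref{InitialCondition:FirstProductBound}) and (\ref{InitialCondition:SecondProductBound}): write $z=k_a+ih_a$, $w=k_b-ih_b$ with $h_a,h_b\ge0$ and compute $|i\theta(z-w)-zw|^2-|i\theta(z-w)+zw|^2$ and $|i\theta(z-w)-zw|^2-|i\theta(z-w)|^2$ explicitly; both differences are manifestly nonnegative sums, so in fact $|S|\le1$ and $|g|\le1$ on your contours, not merely bounded. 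Once that calculation is inserted (and it is a few lines), your $|g|\le 4\theta/\min(|z|,|w|)$ bound for $\min(|z|,|w|)\ge 4\theta$ combines with $|g|\le1$ exactly as you say, the $|z|\ge|\Re z|$ observation lets the shift heights drop out, and the remainder of your argument closes correctly.
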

\begin{proof}
Now we bound what the $k$ integral in the above expression, for which we need to collect some bounds on the factors appearing in the products (\ref{IC: Two products}). We need to make sure the bound covers the new contours, therefore it is sufficient to bound for $k\in E^{\tau, \pi}$. This can be done for the factors in the first product by bounding for all $h_a,h_b\geq 0$ and $k_a, k_b \in \R$
\begin{align}
	&\left| \frac{i\theta ( (k_a + ih_a) -  (k_b - i h_b)) + ( (k_a + ih_a))( (k_b -ih_b))}{i\theta ( (k_a + ih_a) -  (k_b - i h_b)) - ( (k_a + ih_a))( (k_b -ih_b))} \right| \nonumber\\
	=& \left| \frac{i \theta  (k_a - k_b) - \theta  (h_b + h_a) +i(k_b h_a - k_a h_b) + k_a k_b + h_a h_b }{i \theta  (k_a - k_b) - \theta  (h_b + h_a) -i(k_b h_a - k_a h_b) - k_a k_b - h_a h_b } \right| \nonumber\\
	=& \left( \tfrac{\theta^2 (k_a - k_b)^2 + (k_b h_a- k_a h_b )^2 -2\theta (k_b^2 h_a + k_a^2 h_b) + \theta^2 (h_b +h_a)^2 -2\theta h_a h_b(h_b + h_a) +(k_a k_b + h_a h_b)^2}{\theta^2 (k_a - k_b)^2 + (k_b h_a- k_a h_b )^2 +2\theta (k_b^2 h_a + k_a^2 h_b) + \theta^2 (h_b +h_a)^2 +2\theta h_a h_b (h_b +h_a) +(k_a k_b + h_a h_b)^2} \right)^{\frac{1}{2}}\nonumber \\
	\leq & 1, \ \text{ because } h_a,h_b\geq 0. \label{InitialCondition:FirstProductBound}
\end{align}
Here the $k$ variables are the real part of the integration variables, and the $h$ variables are the imaginary part. Hence we have that for all $k\in E^{\tau, \pi}$
\begin{align}
	&\int_{\times_{\alpha=1}^n \Gamma_{\alpha, x, y}} e^{-\frac{1}{2}t\sum_{\alpha=1}^n \Re(k_{\tau(\alpha)})^2} \left| T^{\tau,\pi}(k) \right| dk\nonumber\\
	\leq& \prod_{\pi_\iota \in \pi} \int_{\times_{\alpha \in \pi_\iota} \Gamma_{\alpha, x, y}} e^{-\frac{1}{2}t\sum_{\alpha\in \pi_\iota} \Re(k_{\tau(\alpha)})^2}\prod_{\substack{\alpha< \beta: \\ \alpha, \beta \in \pi_\iota}} \left| \tfrac{i\theta(k_{\tau(\beta)} -k_{\tau(\alpha)} )}{i\theta(k_{\tau(\beta)} - k_{\tau(\alpha)}) - k_{\tau(\alpha)} k_{\tau(\beta)}} \right|dk. \label{IC: Product of k integrals}
\end{align}
Similar to the previous argument it suffices to bound for $k_a, k_b\in \R$ and $h_a, h_b \geq 0$
\begin{align}
	&\left| \frac{i\theta ( (k_a + ih_a) -  (k_b - i h_b))}{i\theta ( (k_a + ih_a) -  (k_b - i h_b)) - ( (k_a + ih_a))( (k_b -ih_b))} \right| \nonumber\\
	=& \left( \tfrac{\theta^2 (k_a - k_b)^2 + \theta^2 (h_b + h_a)^2}{ \theta^2 (k_a - k_b)^2 + (k_b h_a- k_a h_b )^2 +2\theta(k_b^2 h_a + k_a^2 h_b) + \theta^2 (h_b +h_a)^2 +2\theta h_a h_b (h_b +h_a) +(k_a k_b + h_a h_b)^2 } \right)^{\frac{1}{2}} \nonumber \\
	\leq& \begin{cases}
		1, \\
		\theta \left( \frac{1}{|k_a|} + \frac{1}{|k_b|} \right) + \theta \left( \frac{|h_b + h_a|}{\left( (k_b h_a - k_ah_b)^2 + (k_a k_b + h_a h_b)^2 \right)^{1/2}} \right)
	\end{cases}\nonumber \\
	\leq&\begin{cases}
		1, \\
		2\theta \left( \frac{1}{|k_a|} + \frac{1}{|k_b|} \right).
	\end{cases} \label{InitialCondition:SecondProductBound}
\end{align}
Where the last line follows by expanding the brackets in the denominator, removing some non-negative terms, and then applying the triangle inequality. \par{}
Now we can parametrise the contour integrals in (\ref{IC: Product of k integrals}) as integrals over the real line (as the imaginary part of the contour is always constant) and, for convenience, relabel $k_{\tau(\alpha)}$ as $k_\alpha$. We then split each of these integral into regions where each $k_\alpha$ either satisfies $|k_\alpha|<\epsilon/\sqrt{t}$ or $|k_{\alpha}|\geq \epsilon/\sqrt{t}$, there are $2^n$ such regions. However we can simplify as follows, whenever we have $|k_\alpha|<\epsilon/\sqrt{t}$, all factors in the products depending on $k_\alpha$ can be bounded by $1$ using first line bound in (\ref{InitialCondition:SecondProductBound}), and so we can pull out the $k_\alpha$ integral and bound by $2\epsilon/\sqrt{t}$. If we relabel the remaining integration variables we find that what remains depends only on the number of $k_\alpha$ for which $|k_\alpha|\geq \epsilon/\sqrt{t}$. Bounding the product in these integrals using the second line bound from (\ref{InitialCondition:SecondProductBound}) gives us the following bound on (\ref{IC: Product of k integrals}).
\begin{align*}
	\prod_{\pi_\iota \in \pi}\sum_{j=1}^{|\pi_\iota|} \binom{|\pi_\iota|}{j} 2^{|\pi_\iota|-j + \binom{j}{2}}\theta^{\binom{j}{2}}(\epsilon/\sqrt{t})^{|\pi_\iota|-j} \int_{\substack{\R^j:\\ |k_\alpha|\geq \epsilon/\sqrt{t}, \ \forall \alpha}} e^{-\frac{1}{2}t|k|^2} \prod_{\alpha < \beta} \left( \frac{1}{|k_\alpha|} + \frac{1}{|k_\beta|} \right) dk.
\end{align*}
Rescaling the $k$ variables by $\frac{1}{\sqrt{t}}$ we see that this equals
\begin{align}
	\prod_{\pi_\iota \in \pi}\sum_{j=1}^{|\pi_\iota|} \binom{|\pi_\iota|}{j} 2^{|\pi_\iota|-j + \binom{j}{2}}\theta^{\binom{j}{2}}\epsilon^{|\pi_\iota|-j} t^{\frac{1}{2}\left(\binom{j}{2} - j\right)} \int_{\substack{\R^j:\\ |k_\alpha|\geq \epsilon, \ \forall \alpha}} e^{-\frac{1}{2}|k|^2} \prod_{\alpha < \beta} \left( \frac{1}{|k_\alpha|} + \frac{1}{|k_\beta|} \right) dk. \label{IC:kint}
\end{align}
Since the product runs through all pairs of $\alpha, \beta \in \{1,...,j\}$, upon expanding the brackets every term will involve at least $j-1$ of the $k_{\gamma}$, further at most one has exponent $-1$, with the rest having exponent at most $-2$. It is clear from repeated integration by parts,  that for each $y\neq 1$ there is some constant $C>0$
\begin{equation*}
	\int_{|x|\geq \epsilon} \frac{1}{|x|^y} e^{-\frac{1}{2}|x|^2} dx \leq C \epsilon^{1-y}, \quad \text{ when } \epsilon \in (0,1).
\end{equation*}
For $y=1$ we instead have that there is a constant $C>0$ such that
\begin{align*}
	\int_{|x|\geq \epsilon} \frac{1}{|x|} e^{-\frac{1}{2}|x|^2} dx \leq C |\log(\epsilon)|, \quad \text{ when } \epsilon \in (0,1).
\end{align*}
Hence, since the sum of all the powers of all the $k_\gamma$ in each term of the expanded brackets is $\binom{j}{2}$, and because the product runs through all pairs of indices so that in each term in the expansion there can be at most one $k_\gamma$ appearing with power $1$, there is some constant $C>0$, depending only on $n$ and $\pi$, such that for all $\epsilon \in (0,1)$ (\ref{IC:kint}) is bounded by
\begin{align*}
	\leq C\prod_{\pi_\iota \in \pi}\sum_{j=1}^{|\pi_\iota|} \epsilon^{|\pi_\iota|- j + j -1 - \binom{j}{2}} |\log(\epsilon)| t^{\frac{1}{2}\left(\binom{j}{2} - j\right)}.
\end{align*}
Which, if we set $\epsilon = \sqrt{t}$ is clearly bounded above by
\begin{align*}
	C t^{-\frac{1}{2}|\pi|} |\log(t)|^{|\pi|}.
\end{align*}
Which is the desired upper bound.
\end{proof}
\begin{proof}[Proof of Proposition \ref{Prop: k integral bound}]
	Combining the bounds from the above lemma and Lemma \ref{Lemma: intermediate k bound} proves the statement.
\end{proof}
We now apply this bound to complete the proof of the main proposition of the subsection.
\begin{proof}[Proof of Proposition \ref{Proposition: u satisfies initial condition}]
Proposition \ref{Prop: k integral bound} implies that (\ref{IC: PreContourShift}) is bounded above by
\begin{align}
	C t^{-\frac{1}{2}|\pi|}|\log(t)|^{|\pi|}& \sum_{\substack{\tau\in S_n: \\ \tau|_{\pi_\iota} \text{ is increasing } \forall \iota}}\nonumber\\
	& \int e^{\frac{1}{12nt}|y-\chi|^2}\prod_{\pi_\iota \in \pi} e^{-\frac{1}{24nt}\left((x_{m_\iota} - \chi^\iota)^2 + (x_{l_\iota} - \chi^\iota)^2 \right)} |f(y) - f(x)| \lambda^\pi(dy) \label{IC: Pre measure breakdown}
\end{align}
We can replace the function $f: \Weyl{n}\to \R$ with its symmetric extension $\overline{f}:\R^n\to \R$, that is the function $\overline{f}:\R^n\to \R$ such that for any $\sigma \in S_n$, $x\in \R^n$ we have $\overline{f}(x_\sigma) = \overline{f}(x)$ and $\overline{f}|_{\Weyl{n}} = f$, then (\ref{IC: Pre measure breakdown}) is bounded by
\begin{align*}	
	&C |\log(t)|^{|\pi|} \sum_{\substack{\tau\in S_n: \\ \tau|_{\pi_\iota} \text{ is increasing } \forall \iota}}\\
	&\quad \int_{\Weyl{|\pi|}} e^{-\frac{1}{12nt}|y|^2}\left(\prod_{\pi_\iota \in \pi} e^{-\frac{1}{24nt}\left((x_{m_\iota} - \chi^\iota)^2 + (x_{l_\iota} - \chi^\iota)^2 \right) } \right) |\overline{f}(\sqrt{t}\underline{y} + \chi) - \overline{f}(x_\tau)|dy.
\end{align*}
Where $\chi\in \R^n$ is defined by $\chi_\alpha := \chi^\iota$ when $\alpha\in \pi_\iota$, $\underline{y}$ is defined by $\underline{y}_\alpha=y_\iota$ for all $\alpha\in \pi_\iota$, and we have used $\overline{f}(x)=\overline{f}(x_\tau)$. We have also rewritten the integral with respect to $\lambda^\pi$ as an integral with respect to the Lebesgue measure. Since $f$ is a Lipschitz function, it is straightforward to show that $\overline{f}$ is also Lipschitz therefore the above expression is bounded by
\begin{align*}
	&C t^{-\frac{1}{2}|\pi|} |\log(t)|^{|\pi|} \sum_{\substack{\tau\in S_n: \\ \tau|_{\pi_\iota} \text{ is increasing } \forall \iota}}\\
	&\quad \int_{\Weyl{|\pi|}} e^{-\frac{1}{12nt}|y|^2} \left(\prod_{\pi_\iota \in \pi} e^{-\frac{1}{24nt}\left((x_{m_\iota} - \chi^\iota)^2 + (x_{l_\iota} - \chi^\iota)^2 \right)} \right) \left(\sqrt{t}|\underline{y}| + |\chi-x_\tau|\right) dy.
\end{align*}
The integrand is non negative, and $|\underline{y}| \leq |\pi||y|$, therefore this is bounded above (for a new constant $C$) by 
\begin{align}
	&C t^{-\frac{1}{2}|\pi|} |\log(t)|^{|\pi|} \sum_{\substack{\tau\in S_n: \\ \tau|_{\pi_\iota} \text{ is increasing } \forall \iota}}\nonumber\\
	&\quad \int_{\R^{|\pi|}} e^{-\frac{1}{12nt}|y|^2} \left(\prod_{\pi_\iota \in \pi} e^{-\frac{1}{24nt}\left((x_{m_\iota} - \chi^\iota)^2 + (x_{l_\iota} - \chi^\iota)^2 \right)} \right) \left(\sqrt{t}|\underline{y}| + |\chi-x_\tau|\right) dy\nonumber \\
	\leq& C|\log(t)|^{|\pi|} \sum_{\substack{\tau\in S_n: \\ \tau|_{\pi_\iota} \text{ is increasing } \forall \iota}} \\
	&\quad \int_{\R^{|\pi|}} (|y|+1)e^{-\frac{1}{12n}|y|^2}dy\left(\sqrt{t} + |\chi - x_\tau|e^{-\frac{1}{24nt}\sum_{\pi_\iota \in \pi}\left((x_{m_\iota} - \chi^\iota)^2 + (x_{l_\iota} - \chi^\iota)^2 \right)}\right)\label{IC: exp bound to do}.
\end{align}
Now we note that $|\chi - x_\tau| \leq \sum_{\alpha=1}^n |\chi_\alpha - x_{\tau(\alpha)}|$, but for all $\alpha\in [\mu_\iota, \nu_\iota]$ (or $[\nu_\iota, \mu_\iota]$) we have $x_{m_\iota}\leq x_{\tau(\alpha)}, \chi_\alpha \leq x_{l_\iota}$ (or $x_{l_\iota}\leq x_{\tau(\alpha)}, \chi_\alpha \leq x_{m_\iota}$). Hence either $|\chi_\alpha - x_{\tau(\alpha)}|\leq |\chi_\alpha - x_{m_\iota}|$ or $|\chi_\alpha - x_{\tau(\alpha)}|\leq |\chi_\alpha - x_{l_\iota}|$. Note that that for any $c>0$ and $x\in \R$ we have the inequality $|x|e^{-c|x|^2} \leq (2ec)^{-\frac{1}{2}}$. Hence (\ref{IC: exp bound to do}) is bounded by
\begin{align*}
	&C\sqrt{t}|\log(t)|^{|\pi|} \sum_{\substack{\tau\in S_n: \\ \tau|_{\pi_\iota} \text{ is increasing } \forall \iota}} \int_{\R^{|\pi|}} (|y|+1)e^{-\frac{1}{12n}|y|^2}dy \\
	\leq& C\sqrt{t}|\log(t)|^{|\pi|}.
\end{align*}
Where we have bounded the integral independently of $|\pi|$, and the constant $C$ has changed between lines. Summing over $\pi\in \Pi_n$, and using that since $\Pi_n$ is a finite set the constants $C$ in the above expression have a finite maximum, we get for a new constant $C>0$ depending only on $n$
\begin{align*}
	\sup_{x\in \Weyl{n}} |\int u_t(x,y) f(y) m^{(n)}_\theta(dy) - f(x)| \leq&  C\sqrt{t} \sum_{\pi \in\Pi_n} |\log(t)|^{|\pi|} \\
	\leq& C\sqrt{t} \log(t)^n \to 0, \ \text{ as } t \to 0.
\end{align*}
Where the last inequality is valid for $t< 1/e$. Hence we have the desired uniform convergence, and Proposition \ref{Proposition: u satisfies initial condition} is proven. 
\end{proof}

As a consequence of Proposition \ref{Proposition: u satisfies boundary conditions} and Proposition \ref{Proposition: u satisfies initial condition} we can apply Proposition \ref{Prop:TheBackwardsEquation} to our function $\int u_t(x,y) f(y) m^{(n)}_\theta (dy)$, to prove $\int u_{t-s}(Y_s, y) f(y) m^{(n)}_\theta (dy)$ is a local martingale. Suppose that $f\in C^\infty_c(\Weyl{n})$, i.e. $f$ has an extension to an open set $U$ containing $\Weyl{n}$ that is smooth and compactly supported. Then since $\int u_t(x,y) f(y) m^{(n)}_\theta (dy)$ converges uniformly to $f$ as $t\to 0$, and $f$ is bounded, there must be some $\epsilon>0$ such that $\int u_t(x,y) f(y) m^{(n)}_\theta (dy)$ is bounded for $t\in[0, \epsilon]$ and $x\in \Weyl{n}$. We also have $|\int u_t(x,y) f(y) m^{(n)}_\theta (dy)| \leq \frac{1}{(2\pi t)^{n/2}} \int |f(y)| m^{(n)}_\theta (dy)$ which is bounded for $t\in[\epsilon, \infty)$. Hence $\int u_t(x,y) f(y) m^{(n)}_\theta (dy)$ is bounded as a function of $(t, x) \in \R_{>0} \times \Weyl{n}$. It follows that $\int u_{t-s}(Y_s, y) f(y) m^{(n)}_\theta (dy)$ is a true martingale and thus $\E_x[f(Y_t)] = \int u_t(x,y) f(y) m^{(n)}_\theta (dy)$. In particular if $f(x)\geq 0$ for all $x\in \Weyl{n}$ then $\int u_t(x,y) f(y) m^{(n)}_\theta (dy) \geq 0$. Since this holds for every $f\in C^\infty_c(\Weyl{n})$ we have that for each $t>0, x\in \Weyl{n}$ $u_t(x,y) \geq 0$ $m^{(n)}_\theta$ almost everywhere. \par{}

Returning to the case where $f$ is merely bounded and Lipschitz,  we can use the non-negativity of $u_t(x,y)$, and Lemma \ref{Lemma: u integrates to one}, to get $|\int u_t(x,y) f(y) m^{(n)}_\theta(dy)| \leq \|f\|_\infty$. Hence the local martingale $\int u_{t-s}(Y_t, y) f(y) m^{(n)}_\theta (dy)$ is in fact a true martingale for $s\in [0,t]$, and so $\E_x[f(Y_t)] = \int u_t(x,y) f(y)m^{(n)}_\theta (dy)$. This proves Theorem \ref{transition probabilities}. \qed

As a consequence we can also prove the following.
\begin{theorem}
	$m^{(n)}_\theta$ is a stationary measure for $Y$, and $Y$ is reversible with respect to $m^{(n)}_\theta$.
\end{theorem}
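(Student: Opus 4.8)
The plan is to deduce both assertions from what is already in hand: by Theorem \ref{transition probabilities} and the discussion following its proof, $u_t$ is a genuine transition density for $Y$ with respect to $m^{(n)}_\theta$ (one has $u_t\ge 0$ and $\E_x[f(Y_t)]=\int u_t(x,y)f(y)\,m^{(n)}_\theta(dy)$), by Lemma \ref{Lemma: u integrates to one} it integrates to $1$, and by Lemma \ref{uisSymmetric} it is symmetric, $u_t(x,y)=u_t(y,x)$. First I would upgrade the identity $\E_x[f(Y_t)]=\int u_t(x,y)f(y)\,m^{(n)}_\theta(dy)$ from bounded Lipschitz $f$ to all bounded Borel $f$: both sides are probability measures in $f$ (the right-hand side because $u_t\ge0$ and it has total mass $1$) agreeing on the multiplicative, point-separating class of bounded Lipschitz functions, so a monotone class argument applies; in particular $\p_x(Y_t\in A)=\int_A u_t(x,y)\,m^{(n)}_\theta(dy)$ for every Borel $A$ and every $t>0$, $x\in\Weyl{n}$.

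Next I would prove invariance of $m^{(n)}_\theta$. For a Borel set $A$, write $\int \p_x(Y_t\in A)\,m^{(n)}_\theta(dx)=\int\int_A u_t(x,y)\,m^{(n)}_\theta(dy)\,m^{(n)}_\theta(dx)$, apply Tonelli (the integrand is nonnegative), and then use $u_t(x,y)=u_t(y,x)$ together with $\int u_t(y,x)\,m^{(n)}_\theta(dx)=1$ from Lemma \ref{Lemma: u integrates to one} to collapse this to $m^{(n)}_\theta(A)$. Thus $m^{(n)}_\theta$ is a stationary measure for $Y$.

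For reversibility I would observe that detailed balance holds trivially: the measure $m^{(n)}_\theta(dx)\,u_t(x,y)\,m^{(n)}_\theta(dy)$ on $\Weyl{n}\times\Weyl{n}$ is invariant under swapping $x$ and $y$, by Lemma \ref{uisSymmetric}. Together with the invariance of $m^{(n)}_\theta$ just proved, this is precisely the statement that the transition semigroup of $Y$ is self-adjoint on $L^2(m^{(n)}_\theta)$, so the stationary process is reversible. Concretely one checks that for $0\le t_1<\dots<t_k$ and bounded Borel $f_1,\dots,f_k$ the joint moment $\E_{m^{(n)}_\theta}[f_1(Y_{t_1})\cdots f_k(Y_{t_k})]$, written as an iterated integral of the transition densities against $m^{(n)}_\theta$, is unchanged when $t_1,\dots,t_k$ are reversed — this follows by relabelling the integration variables and applying $u_t(x,y)=u_t(y,x)$ to each factor — which says the time-reversed stationary process has the same finite-dimensional distributions as $Y$.

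The main obstacle is not conceptual but the bookkeeping forced by $m^{(n)}_\theta$ being infinite (though $\sigma$-finite and locally finite): one must restrict to indicator or compactly supported test functions and repeatedly invoke $u_t\ge0$ and $\int u_t(x,y)\,m^{(n)}_\theta(dy)=1$ to guarantee that every iterated integral is finite, so that Tonelli/Fubini and the monotone class extension are legitimate. With that care in place, the whole argument is a direct assembly of Theorem \ref{transition probabilities}, Lemma \ref{uisSymmetric} and Lemma \ref{Lemma: u integrates to one}.
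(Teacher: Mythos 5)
Your proof is correct, and for the stationarity part it takes a genuinely cleaner route than the paper. The paper establishes stationarity indirectly: it differentiates $t\mapsto \int\E_x[f(Y_t)]\,m^{(n)}_\theta(dx)$, invokes Corollary \ref{StationarityOfm} (the integration-by-parts identity $\int \Delta u\,m^{(n)}_\theta = 0$) to show the derivative vanishes, then lets $t\to 0$ to identify the constant, and finally extends from Lipschitz to $L^1$ test functions by density. You instead observe that the two facts already on the shelf --- $u_t(x,y)=u_t(y,x)$ (Lemma \ref{uisSymmetric}) and $\int u_t(x,\cdot)\,dm^{(n)}_\theta = 1$ (Lemma \ref{Lemma: u integrates to one}) --- give invariance in one line via Tonelli: $\int\int_A u_t(x,y)\,m^{(n)}_\theta(dy)\,m^{(n)}_\theta(dx) = \int_A\Bigl(\int u_t(y,x)\,m^{(n)}_\theta(dx)\Bigr)m^{(n)}_\theta(dy)= m^{(n)}_\theta(A)$. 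This is logically equivalent (the normalization lemma you cite was itself proved using Corollary \ref{StationarityOfm}), but as an assembly of the stated lemmas it avoids the redundant differentiation. Your monotone class upgrade from Lipschitz to bounded Borel test functions, justified by $u_t\ge 0$ and finite total mass, is a reasonable way to make the reduction to indicators rigorous. For reversibility you and the paper use the same symmetry $u_t(x,y)=u_t(y,x)$ with Fubini; you phrase it as detailed balance plus invariance, the paper directly computes $\int\E_x[f(Y_t)]g(x)\,m^{(n)}_\theta(dx)=\int\E_y[g(Y_t)]f(y)\,m^{(n)}_\theta(dy)$, but these are the same argument.
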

\begin{proof}
	For $f$ a bounded, integrable, Lipschitz continuous function, we have for all $t>0$
	\begin{align}
		\frac{d}{dt} \int \E_x[f(Y_t)] m^{(n)}_\theta (dx) &=\frac{d}{dt} \int \int u_t(x,y) f(y) m^{(n)}_\theta (dx) m^{(n)}_\theta (dx) \\
		&=0.
	\end{align}
	With the second equality a consequence of Corollary \ref{StationarityOfm}, and Fubini's theorem. The necessary bounds to pass the derivatives through the integrals and then apply Fubini follow in the same way as Lemma \ref{uIsIntegrable}. The same bounds, together with the uniform convergence we just proved gives
	\begin{align}
		\lim_{t\to 0} \int \E_x[f(Y_t)] m^{(n)}_\theta (dx) = \int f(x) m^{(n)}_\theta (dx).
	\end{align}
	We can extend this to any $L^1(m^{(n)}_\theta)$ function by a density argument, proving that $m^{(n)}_\theta$ is the stationary measure for $Y$. \par{}
If $f$ and $g$ are bounded, Lipschitz continuous, and integrable; Fubini's theorem gives
\begin{align*}
	\int \E_x[f(Y_t)] g(x) m^{(n)}_\theta(dx) &= \int \int u_t(x,y) f(y)m^{(n)}_\theta (dy) g(x)m^{(n)}_\theta (dx) \\
	&= \int \int u_t(x,y) g(x) m^{(n)}_\theta (dx)f(y) m^{(n)}_\theta (dy) \\
	&= \int \E_y[g(Y_t)]f(y) m^{(n)}_\theta(dy).
\end{align*}
Where we have used the symmetry $u_t(x,y)= u_t(y, x)$ in the last line. Hence $Y$ is reversible with respect to $m^{(n)}_\theta$.
\end{proof}

\section{Stochastic Flows of Kernels}\label{Stochastic flows of kernels}
\subsection{Random Walks in Random Environments}\label{Section:RWRE}
We'll begin by introducing the discrete counterparts of Howitt-Warren flows and sticky Brownian motions: Random walks in space-time i.i.d. random environments and their $n$-point motions. A random walk in a random environment is simply a random walk whose transition probabilities are themselves random variables. We define the \textit{Random Environment} as a family of i.i.d $[0,1]$ valued random variables $\boldsymbol\omega=(\omega_{t,x})_{t,x \in \Z}$, with law and expectation $\p$, and $\E$ respectively. We then define a random walk running through realisation of the environment with transition probabilities:
\begin{gather*}
	P^{\boldsymbol\omega}(X(t+1)= x+1| \ X(t)=x)= \omega_{x,t};\\
	P^{\boldsymbol\omega}(X(t+1)= x-1| \ X(t)=x)= 1-\omega_{x,t}.
\end{gather*}
Where $P^{\boldsymbol\omega}$ denotes the law of the RWRE, and $E^{\boldsymbol\omega}$ its expectation, both of which depend on the environment. By considering the random transition probabilities $P^{\omega}(X_t=y| \ X_0=x)$ we can also consider this model as a random flow of a fluid, where the quantities describe how a point mass at $x$ is spread through the fluid at time $t$.\par

An important idea for studying such models are the \textit{n-point motions}, we run $n$ random walks independently through a sampling of the environment, and then average out the environment, this will break the particles' independence. That is, if $X(t)=(X^1(t),...,X^n(t))$ is the $n$-point motion, then
\begin{gather*}
	\p(X(t+1)=y|X(t)=x)= \E\left[\prod_{i=1}^{n}P^{\boldsymbol\omega}(X^i(t+1)=y_i|X(t)=x_i)\right].
\end{gather*}
Alternatively we can view the $n$-point motions as describing the behaviour of $n$ particles thrown into the fluid. Notice now that since the environment is i.i.d, the coordinate processes of the $n$-point motion behave independently when they are apart. However when they meet, they interact, in particular it's a simple consequence of Jensen's inequality that they are more likely to move in the same direction when together than when apart:
\begin{equation*}
	\E[\omega^n] + \E[(1-\omega)^n] \geq \E[\omega]^n + \E[1-\omega]^n,
\end{equation*}
$\omega$ being a copy of an environment variable. A group of particles situated at the same site, $x$, at time $t$ can break into at most two groups. The probability of a group of $n$ particles breaking into two groups of size $k$ and $l$, with the $k$ moving to $x+1$ and the $l$ to $x-1$, is
\begin{gather*}
	\E[\omega_{x,t}^k (1-\omega_{x,t})^l].
\end{gather*}
Hence the distribution of $\omega$ can be viewed as controlling the rate at which groups of particles break up, and the size of the groups they tend to break into. \par{}

If we take the diffusive scaling limit of these $n$-point motions in an environment having a fixed distribution, then the contribution of the environment is overcome in the limit, and we simply end up with independent Brownian motions (assuming the environment variables are mean $1/2$ so there is no drift). \par{}

It was shown by Howitt and Warren \cite{HowittWarren}, see also Schertzer, Sun and Swart \cite{FlowsinWebandNet}, that by changing the distribution of the $\omega$ as we take the diffusive scaling limit, we can obtain Brownian motions which still interact, specifically are sticky when they meet.

\begin{theorem}
Suppose $X(t)$ is the $n$-point motion of a RWRE, where the environment variables have law $\mu^{(\epsilon)}$ satisfying the following:
\begin{gather*}
	\frac{1}{\epsilon} \int_0^1 (1-2q) \mu^{(\epsilon)}(dq) \to \beta, \quad \text{as } \epsilon \to 0;\\
	\frac{1}{\epsilon} q(1-q)\mu^{\epsilon}(dq) \implies \nu(dq), \quad \text{as } \epsilon \to 0.
\end{gather*}
Then the laws of the processes $(\epsilon X(\epsilon^2 t))_{t\geq 0}$ converge weakly to the law of a solution to the Howitt-Warren martingale problem with drift $\beta$ and characteristic measure $\nu$.
\end{theorem}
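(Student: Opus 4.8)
I would prove the convergence by the standard route for scaling limits characterised by a martingale problem: establish tightness of the rescaled laws, identify every subsequential limit as a solution of the Howitt--Warren martingale problem (Definition \ref{Def:HWMartProb}) with drift $\beta$ and characteristic measure $\nu$, and then invoke well-posedness of that martingale problem (\cite{HowittWarren}) to conclude that the limit is unique and hence that the full family converges. Write $Z^{(\epsilon)}(t):=\epsilon\,X(\lfloor \epsilon^{-2}t\rfloor)$ for the diffusively rescaled walk and let $\mathbb{Q}^{(\epsilon)}$ be its law on the Skorokhod space $D([0,\infty),\mathbb{R}^n)$. First I would prove tightness of $\{\mathbb{Q}^{(\epsilon)}\}_{\epsilon>0}$: each coordinate of $X$ increments by $\pm 1$ per step, so fourth-moment bounds on $Z^{i,(\epsilon)}(t)-Z^{i,(\epsilon)}(s)$ (or an Aldous criterion applied to the compensated coordinate martingales) give relative compactness, and since the jumps of $Z^{(\epsilon)}$ have size at most $\epsilon\sqrt n\to 0$, every subsequential limit is supported on $C([0,\infty),\mathbb{R}^n)$. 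Fix a subsequence with $\mathbb{Q}^{(\epsilon)}\Rightarrow\mathbb{Q}$ and let $X=(X^1,\dots,X^n)$ be the coordinate process under $\mathbb{Q}$.

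\textbf{Easy conditions.} The drift statement is immediate: $X^i(\lfloor\epsilon^{-2}t\rfloor)-\lfloor\epsilon^{-2}t\rfloor\,(2\mathbb{E}[\omega^{(\epsilon)}]-1)$ is a bounded-increment discrete martingale, and the first scaling hypothesis gives $\epsilon^{-1}(2\mathbb{E}[\omega^{(\epsilon)}]-1)\to\pm\beta$ (the sign being that fixed by $\theta(1,0)-\theta(0,1)=\beta$), so passing to the limit exhibits $X^i(t)-\beta t$ as a continuous martingale; in particular each $X^i$ is a Brownian motion with drift $\beta$ and $\langle X^i,X^i\rangle(t)=t$. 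For the covariation condition I would compute the compensator of the discrete covariation: with $\xi^i(s):=X^i(s+1)-X^i(s)$, and using that two particles sharing a site are conditionally independent given the single environment variable at that site, $\mathbb{E}[\xi^i(s)\xi^j(s)\mid\mathcal F_s]=(2\mathbb{E}[\omega^{(\epsilon)}]-1)^2$ when $X^i(s)\neq X^j(s)$ and $=1-4\mathbb{E}[\omega^{(\epsilon)}(1-\omega^{(\epsilon)})]$ when $X^i(s)=X^j(s)$. After rescaling, the off-diagonal part is $O(\epsilon^2)$ and vanishes, while the on-diagonal part equals $\bigl(1-4\mathbb{E}[\omega^{(\epsilon)}(1-\omega^{(\epsilon)})]\bigr)\,\epsilon^2\,\#\{s<\epsilon^{-2}t:X^i(s)=X^j(s)\}$, whose prefactor tends to $1$; so, given convergence of the scaled diagonal occupation times to $\int_0^t\mathbbm{1}_{\{X^i(s)=X^j(s)\}}\,ds$ and the vanishing of the martingale remainder, one obtains $\langle X^i,X^j\rangle(t)=\int_0^t\mathbbm{1}_{\{X^i(s)=X^j(s)\}}\,ds$.

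\textbf{The $D_n$-martingale condition.} For $F\in D_n$, the process $F(Z^{(\epsilon)}(t))-\sum_{s<\lfloor\epsilon^{-2}t\rfloor}\mathbb{E}\bigl[F(\epsilon X(s+1))-F(\epsilon X(s))\mid\mathcal F_s\bigr]$ is a discrete martingale, so I would compute the one-step conditional mean increment and group the steps by the partition $\pi(X(s))$ recording which coordinates coincide. Because $F$ is linear on each Weyl chamber, at a configuration with all coordinates distinct the rescaled increment contributes exactly $\beta\sum_i\partial_i F$, which is $\mathcal{A}^\theta_n F$ at a generic point; at a clustered configuration, a block of $\pi(X(s))$ splits into a right-part $I$ and a left-part $J$ with environment-averaged probability $\mathbb{E}[(\omega^{(\epsilon)})^{|I|}(1-\omega^{(\epsilon)})^{|J|}]$, the induced change of $F$ is the one-sided directional derivative $\nabla_{v_{I,J}}F$, and the second scaling hypothesis $\epsilon^{-1}q(1-q)\mu^{(\epsilon)}(dq)\Rightarrow\nu(dq)$ turns $\epsilon^{-1}\mathbb{E}[(\omega^{(\epsilon)})^{|I|}(1-\omega^{(\epsilon)})^{|J|}]$ into $\theta(|I|,|J|)=\int_0^1 x^{|I|-1}(1-x)^{|J|-1}\nu(dx)$ when $|I|,|J|\geq 1$, the boundary cases $|I|=0$ or $|J|=0$ being pinned down by the recursion $\theta(k,l)=\theta(k+1,l)+\theta(k,l+1)$ together with $\theta(0,0)=0$ and the first hypothesis. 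Summing over $I,J$ and over the blocks of $\pi(X(s))$ reassembles $\mathcal{A}^\theta_n F(X(s))$, so the rescaled compensator converges to $\int_0^t\mathcal{A}^\theta_n F(X(s))\,ds$ and the limiting compensated process is a martingale; combined with the previous paragraph, $X$ solves the Howitt--Warren martingale problem, and well-posedness (\cite{HowittWarren}) finishes the proof.

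\textbf{Main obstacle.} The delicate point is the passage to the limit in the compensators, both here and for the covariation: the integrand $\mathcal{A}^\theta_n F$ (respectively $\mathbbm{1}_{\{X^i=X^j\}}$) is supported on the union of the lower-dimensional diagonal strata $\delWeyl{n}{\pi}$, a Lebesgue-null subset of $\mathbb{R}^n$, so the functional $w\mapsto\int_0^t\mathcal{A}^\theta_n F(w(s))\,ds$ is \emph{not} continuous on path space and the continuous-mapping theorem cannot be applied directly. One must instead show that the discrete occupation times of these strata (the $\epsilon^2$-weighted counts of clustered steps) are tight and that each limit point equals the occupation time of the limiting diffusion at the corresponding stratum --- handling the steps that are near, but not on, a stratum by an estimate in which a width parameter is sent to zero, and using that the limiting process, while it does spend positive Lebesgue time on the diagonals (the sticky phenomenon), admits well-defined such occupation times thanks to the tightness bounds. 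This localisation analysis is the substantive part; everything else is either routine or quoted.
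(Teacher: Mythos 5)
The paper does not actually prove this theorem; it is stated and then attributed to Howitt and Warren \cite{HowittWarren} and Schertzer, Sun and Swart \cite{FlowsinWebandNet}, so there is no internal proof to compare against. That said, your proposed strategy (tightness, identification of subsequential limits via the Howitt--Warren martingale problem, uniqueness by well-posedness) is indeed the standard route these references take, and your moment computations for the drift and the predictable quadratic covariations are correct.

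Two points deserve more care. First, the passage from one-step conditional increments to $\mathcal{A}^\theta_n F$ is not quite the termwise limit you suggest. For a cluster of size $m$ the no-split events $(I,J)=(B,\emptyset)$ and $(\emptyset,B)$ have probabilities $\E[\omega^m]$ and $\E[(1-\omega)^m]$, which do \emph{not} tend to zero, so $\epsilon^{-1}\E[\omega^{|I|}(1-\omega)^{|J|}]$ diverges for those terms. The correct manoeuvre is to pair them: since $v_{B,\emptyset}=-v_{\emptyset,B}$ and $F\in D_n$ is linear on chambers, $\nabla_{v_{\emptyset,B}}F=-\nabla_{v_{B,\emptyset}}F$, and
\begin{equation*}
\epsilon^{-1}\left(\E[\omega^m]-\E[(1-\omega)^m]\right)
=\epsilon^{-1}\int_0^1(2q-1)\,\mu^{(\epsilon)}(dq)
+\int_0^1(2q-1)\,r(q)\,\epsilon^{-1}q(1-q)\,\mu^{(\epsilon)}(dq),
\end{equation*}
using the factorisation $q^m-(1-q)^m=(2q-1)\bigl(1+q(1-q)r(q)\bigr)$ for a polynomial $r$. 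The first term converges by the drift hypothesis, the second by the characteristic-measure hypothesis, and the recursion $\theta(k,l)=\theta(k+1,l)+\theta(k,l+1)$ with $\theta(1,0)-\theta(0,1)=\beta$ identifies the limit as $\theta(m,0)-\theta(0,m)$. Your phrase ``pinned down by the recursion'' gestures at this, but the proposal as written invites the reader to take a limit of an individually divergent quantity; the summation over $(I,J)$ must precede the limit. Incidentally, the sign convention in the statement, with $\int(1-2q)\,\mu^{(\epsilon)}(dq)$, actually yields a limiting drift of $-\beta$ under the paper's convention that the walk steps right with probability $\omega$; this is a discrepancy in the paper's statement, not in your argument, and you hedge it appropriately.

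Second, you correctly flag the convergence of the rescaled diagonal occupation times as the substantive obstruction, but you only describe it; you do not give the local-time/width-truncation estimates that make it work, nor the argument that the limiting occupation-time functional, though discontinuous on path space, is nonetheless the a.s.\ limit along the tight sequence. That is precisely where the bulk of the effort in \cite{HowittWarren} (and of \cite{FlowsinWebandNet}'s alternative treatment via the Brownian web) lies, so as it stands the proposal is a correct and well-aimed outline rather than a proof.
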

In the special case of $\nu(dx)= \theta/2 dx$, where $dx$ is the Lebesgue measure the above result shows the solution to the Howitt-Warren martingale problem is the scaling limit of the Beta random walk in a random environment. That is choose $\mu^{(\epsilon)}(dq)=\frac{\Gamma(2 \theta \epsilon)}{\Gamma(\theta \epsilon) \Gamma(\theta \epsilon)} q^{\theta\epsilon-1}(1-q)^{\theta\epsilon-1}dq$, then for any function $C_b([0,1])$ the Dominated Convergence Theorem implies
\begin{align*}
	\frac{1}{\epsilon}\int_0^1 f(q) q(1-q) \mu^{(\epsilon)}(dq) &= \frac{\Gamma(2 \theta \epsilon)}{\epsilon\Gamma(\theta \epsilon) \Gamma(\theta \epsilon)}\int_0^1 f(q) q^{\theta\epsilon}(1-q)^{\theta\epsilon}dq \\
	&\to \frac{\theta}{2} \int_0^1 f(q) dq,
\end{align*}
using $\Gamma(x)= \frac{\Gamma(x+1)}{x}~ \frac{1}{x}$ as $x\to 0$. Hence $\frac{1}{\epsilon} q(1-q) \mu^{(\epsilon)} \Rightarrow \frac{\theta}{2} dx$; since we also have $\int_0^1 (1-2q) \mu^{(\epsilon)}(dq)=0$ for all $\epsilon>0$ the theorem implies the convergence of the $n$-point motions of the Beta random walk in a random environment to solutions of the Howitt-Warren martingale problem with characteristic measure $\frac{\theta}{2}\mathbbm{1}_{[0,1]}dx$ and zero drift. This is the key motivator for looking for exact solutions in the sticky Brownian motion case and was used by Barraquand and Rychnovsky in \cite{barraquand2019large} to find Fredholm determinant expressions in the sticky Brownian motions case by taking limits of those found for the Beta random walk in a random environment in \cite{Barraquand2017}.

\subsection{The Howitt-Warren process}

We first briefly introduce stochastic flows of kernels, these are essentially random transition probabilities $(K_{s,t}(x, dy))_{s\leq t}$, with the following additional assumptions: independent increments in the sense that for any $t_0<...,t_n$ the random kernels $K_{t_0,t_1}, ..., K_{t_{n-1}, t_n}$ are independent; stationarity, that is the law of $K_{s,t}$ depends only on $t-s$. They can be thought of as the continuum version of the random environment that is i.i.d. in space and time we considered in the previous section. 

The $n$-point motions of a stochastic flow of kernels are the family of Markov processes $(X_n)_{n=1}^\infty$ with $X_n$ taking values in $\R^n$ with transition probabilities
\begin{equation*}
	\p(X_{n}(t) \in E| \ X_{n}(s) =x) = \E\Big[\int_E \prod_{i=1}^{n} K_{s,t}(x_i, dy_i) \Big], \quad \text{for } x \in \R^n, \ E\in \mathcal{B}(\R^n).
\end{equation*}
Notice that this is very similar to the definition of the $n$-point motions in the RWRE case, with $K$ taking the place of the random transition probabilities.

Le Jan and Raimond \cite{lejan2004} have shown that any consistent family of Feller processes, are the $n$-point motions of some stochastic flow of kernels. Where a family of Feller processes $(X_n)_{n=1}^\infty$, $X_n:\R_{>0}\to \R^n$ is consistent if for any $k\leq n$ and any choice of $k$ coordinates from $X_n$: $(X^{i_1}_n,...,X^{i_k}_n)$ is equal in law to $X_k$. For a more complete introduction to stochastic flows of kernels we refer to \cite{lejan2004}. When the family of $n$-point motions, $(X_n)_{n=1}^\infty$, are sticky Brownian motions characterised by a Howitt-Warren martingale problem the resulting flow of kernels is called a Howitt-Warren flow. These flows have been studied extensively by Schertzer, Sun, and Swart \cite{FlowsinWebandNet}. \par{}

\begin{definition}\label{Def:HWFlow}
	The stochastic flow of kernels whose $n$-point motions solve the Howitt-Warren martingale problem (\ref{Def:HWMartProb}) with characteristic measure $\nu$ and drift $\beta$ is called the \textit{Howitt-Warren flow} with characteristic measure $\nu$ and drift $\beta$.
\end{definition}
 
Rather than look at the flow directly we want to consider the \textit{Howitt-Warren process} a measure valued process that describes how an initial mass is carried by the flow. In our case we are interested in the the case where all mass starts at the origin, thus we consider the Howitt-Warren process with initial condition $\delta_0$. That is for the Howitt-Warren flow $(K_{s,t})_{s\leq t}$ with characteristic measure $\nu$ and drift $\beta$ we define the Howitt-Warren process started from $\delta_0$ with characteristic measure $\nu$ and drift $\beta$ to be the process
\begin{equation}\label{Howitt-Warren process}
	 \rho_t(A):= K_{0,t}(0, A), \quad \text{for every Borel set } A\subset \R. 
\end{equation}

We have the following corollary of our main result, Theorem \ref{transition probabilities}, that allows us to study the Howitt-Warren process. If $f:\R^n \to \R$ is a symmetric function then $\E_x[f(X(t))]=\E_x[f(Y(t))]$ for all $x\in \Weyl{n}$. Hence
\begin{corollary}\label{Corollary: moments of the flow}
	If $f:\R^n\to \R$ is a symmetric function, and its restriction to $\Weyl{n}$ is a bounded, Lipschitz continuous function, then for a Howitt-Warren flow $(K_{s,t})_{s\leq t}$ with characteristic measure $\frac{\theta}{2} dx$ and drift zero we have
	\begin{equation*}
		\E\left[\int f(y) \prod_{i=1}^n K_{s,t}(x_i, dy_i)\right] = \int u_{t-s}(x,y) f(y) m^{(n)}_\theta(dy) \quad \text{for all } x\in \Weyl{n}.
	\end{equation*}
	From which it clearly follows that we have for the Howitt-Warren process started from $\delta_0$ with characteristic measure $\frac{\theta}{2}\mathbbm{1}_{[0,1]}$ and drift $0$ that
	\begin{equation}
		\E\left[ \int f(y) \rho^{\otimes n}_t(dy) \right] = \int u_t(0, y) f(y) m^{(n)}_\theta(dy).
	\end{equation}
\end{corollary}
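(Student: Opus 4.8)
The plan is to reduce the statement to Theorem~\ref{transition probabilities} by means of two elementary observations: the $n$-point motion of the Howitt-Warren flow is exactly the process $X$ of the Howitt-Warren martingale problem, and a symmetric function cannot distinguish $X$ from its coordinatewise reordering $Y$.

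First I would invoke Definition~\ref{Def:HWFlow}: the $n$-point motions of the Howitt-Warren flow $(K_{s,t})_{s\le t}$ with characteristic measure $\tfrac{\theta}{2}\mathbbm{1}_{[0,1]}dx$ and drift $0$ are, by construction, solutions of the Howitt-Warren martingale problem of Definition~\ref{Def:HWMartProb} with the same characteristic measure and drift. Combining the defining relation of the $n$-point motions of a flow of kernels with stationarity of the flow (the law of $K_{s,t}$ depends only on $t-s$), one obtains, for every $x\in\Weyl{n}$ and $s<t$,
\[
	\E\left[\int f(y)\prod_{i=1}^n K_{s,t}(x_i,dy_i)\right]=\E_x\left[f(X(t-s))\right],
\]
where $(X(u))_{u\ge 0}$ denotes the solution to the martingale problem started from $x$.

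Next, since $f$ is symmetric and $Y(u)=F(X(u))$ is obtained by sorting the coordinates of $X(u)$, we have $f(X(u))=f(Y(u))$ almost surely, hence $\E_x[f(X(t-s))]=\E_x[f(Y(t-s))]$. The restriction of $f$ to $\Weyl{n}$ is bounded and Lipschitz by hypothesis, so Theorem~\ref{transition probabilities} applies and gives
\[
	\E_x\left[f(Y(t-s))\right]=\int u_{t-s}(x,y)f(y)\,m^{(n)}_\theta(dy).
\]
Chaining the three displayed equalities yields the first assertion of the corollary.

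For the Howitt-Warren process $\rho_t(A)=K_{0,t}(0,A)$ one simply notes that $0=(0,\dots,0)\in\Weyl{n}$ and that $\rho_t^{\otimes n}(dy)=\prod_{i=1}^n K_{0,t}(0,dy_i)$, so specialising the first assertion to $s=0$ and $x=0$ gives $\E[\int f(y)\,\rho_t^{\otimes n}(dy)]=\int u_t(0,y)f(y)\,m^{(n)}_\theta(dy)$. There is no genuine obstacle here; the only points deserving a line of justification are the appeal to stationarity of the flow (to reduce the dependence to $t-s$) and the identity $f\circ X=f\circ Y$ for symmetric $f$, both of which are immediate once unwound.
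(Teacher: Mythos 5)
Your proposal is correct and follows the same route the paper takes: the $n$-point motion of the Howitt-Warren flow is by Definition~\ref{Def:HWFlow} a solution of the martingale problem, symmetry of $f$ gives $f(X)=f(Y)$, and Theorem~\ref{transition probabilities} then supplies the density formula, with the second assertion obtained by specialising to $s=0$, $x=0$. The only content the paper adds beyond the corollary statement is the one-line remark that $\E_x[f(X(t))]=\E_x[f(Y(t))]$ for symmetric $f$, which your argument reproduces.
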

This allows us to study the process directly, via $u$, which we will pursue further in the next subsection.\par{}

\subsection{Atoms of the Howitt-Warren process}
Schertzer, Swart, and Sun  proved \cite[Theorem 2.8]{FlowsinWebandNet} that any Howitt-Warren process is almost surely purely atomic for fixed times $t$. Thus, almost surely, we can write the Howitt-Warren process at time $t$ as a linear combination of delta measures $\rho_t(dy) = \sum_{i} w_i \delta_{y_i}(dy)$, where the $w_i$ and $x_i$ are both random. We can think of the collection of pairs $(y_i, w_i)$ as a point process on $\R\times \R_{>0}$. This point process has an associated intensity measure $\gamma_t$ on $\R\times \R_{>0}$ defined by
\begin{align*}
    \gamma_t(A_1\times A_2) = \E\left[ \sum_{i} \mathbbm{1}_{y_i\in A_1,\ w_i \in A_2}\right].
\end{align*}
We will use this intensity to study the behaviour of the weight of a single atom at a given point in space. See \cite{Determinantalpointprocesses} for an introduction to point processes. For any $n\in \N$ and $f:\R\to \R$ that is bounded and Lipschitz continuous we have the equalities
\begin{align}
    \int_{\R\times \R_{>0}} f(y) w^n \ \gamma_t(dy, dw) =& \E\left[ \sum_{i} f(y_i) w_i^n \right]\nonumber\\
    =& \E\left[ \int_{\mathbb{D}^n} f^{\otimes n}(y) \rho_t^{\otimes n}(dy) \right]\nonumber\\
    =& \int_{\mathbb{D}^n}  f^{\otimes n}(y) u^{(n)}_t(0, y) m^{(n)}_\theta(dy)\nonumber\\
    =& n^{-1} \theta^{1-n} \int_{\R} f(y) u^{(n)}_t(0, (y,...,y)) dy \label{Moments of the HW process}.
\end{align}
Above $\mathbb{D}^n:= \{(y,...,y)\in \R^n: \ y\in \R\}$ and we have written $u^{(n)}_t$ for the transition density $u_t$ on $\R^n$, which we do for the rest of the section to indicate the dependency on dimension. The first equality can be seen by approximating by simple functions, the second is direct from the definitions, the third is a consequence of Corollary \ref{Corollary: moments of the flow}, and the fourth from Definition \ref{ReferenceMeasureOfTheTdensity}.

Equality (\ref{Moments of the HW process}) also shows that the measure $\gamma_t(dy, dw)$ can be written in the form $\gamma_t(y, dw) dy$, and that we have for each $n\in \N$ and almost every $y\in \R$ the equality
\begin{align}\label{atomic moments}
    \int_{\R_{>0}} w^n \gamma_t(y, dw) = n^{-1} \theta^{1-n} u^{(n)}_t(0, (y,...,y)).
\end{align}
We will study the asymptotic behaviour of the measure $\gamma_t(y, dw)$ for certain choices of $y$. We can interpret $\gamma_t(y, dw)$ as describing the distribution of the size of an atom at $y$. However $\gamma_t(y, dw)$ is not a probability distribution; the measure of any neighbourhood of $w=0$ is infinite. Introducing size biasing and instead considering the measure $w \gamma_t(y, dw)$ we do get a finite measure which describes the size of an atom, picked at random from $\rho_t$ using the sizes of the atoms as probabilities and conditioning the chosen atom to be at $y$.

\begin{proposition}
For each $x\in \R$ we have as $t\to\infty$
	\begin{equation*}
		\sqrt{2\pi} t^{-\frac{1}{2}} e^{\frac{x^2}{2}}w\gamma_t\left( \sqrt{t} x, \frac{dw}{\sqrt{t}}\right) \Rightarrow (\theta \sqrt{2\pi})^{-1}  e^{-\frac{x^2}{2}}  e^{- \theta \sqrt{2\pi} e^{\frac{x^2}{2}}w} dw.
	\end{equation*}
	Where the right hand side is the exponential distribution with rate $\theta \sqrt{2\pi} e^{\frac{x^2}{2}}$.
\end{proposition}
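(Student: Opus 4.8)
The plan is to prove the convergence by the method of moments, using the explicit formula (\ref{atomic moments}) for the moments of $\gamma_t(y,\cdot)$ together with the large-$t$ asymptotics of the transition density $u^{(m)}_t$ evaluated on the diagonal. The first step is a reduction: by (\ref{atomic moments}), the $n$-th moment of the size-biased measure $w\,\gamma_t(y,dw)$ equals the $(n+1)$-th moment of $\gamma_t(y,\cdot)$, namely $(n+1)^{-1}\theta^{-n}\,u^{(n+1)}_t\bigl(0,(y,\dots,y)\bigr)$. Putting $y=\sqrt t\,x$, and keeping track of the rescaling $w\mapsto w/\sqrt t$, of the prefactor $\sqrt{2\pi}\,t^{-1/2}e^{x^2/2}$, of the constant $\theta$ and of the weight $n^{-1}\theta^{1-n}$ from the definition of $m^{(n)}_\theta$, the whole statement reduces to computing, for every $m\in\N$, the limit of $t^{m/2}u^{(m)}_t\bigl(0,(\sqrt t\,x,\dots,\sqrt t\,x)\bigr)$ as $t\to\infty$, and then invoking the fact that the exponential distribution is determined by its moments.

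Next I would compute that limit from the Bethe-ansatz formula (\ref{BetheAnsatz}). The crucial simplification is that when the first argument is $0$ and the second is the diagonal point $(a,\dots,a)$, the phase $e^{ik_\sigma\cdot(0-y_\sigma)}=e^{-ia\sum_j k_j}$ does not depend on $\sigma$, so the sum over $S_m$ inside the integral is exactly the one evaluated in Lemma \ref{Lemma: sum over permutations}; hence
\[
u^{(m)}_t\bigl(0,(a,\dots,a)\bigr)=\frac{m!}{(2\pi)^m}\int_{\R^m}e^{-\frac12 t|k|^2-ia\sum_j k_j}\prod_{\alpha<\beta}\frac{i\theta(k_\beta-k_\alpha)}{i\theta(k_\beta-k_\alpha)-k_\alpha k_\beta}\,dk .
\]
Setting $a=\sqrt t\,x$ and substituting $k\mapsto k/\sqrt t$ rewrites this as $t^{-m/2}\tfrac{m!}{(2\pi)^m}$ times an integral whose integrand is $e^{-\frac12|k|^2-ix\sum_j k_j}$ multiplied by $\prod_{\alpha<\beta}\tfrac{i\theta(k_\beta-k_\alpha)}{i\theta(k_\beta-k_\alpha)-k_\alpha k_\beta/\sqrt t}$. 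Each factor of this product has modulus at most $1$ (its denominator has modulus at least $|\theta(k_\beta-k_\alpha)|$) and converges, for a.e.\ $k$, to $1$ as $t\to\infty$; so by dominated convergence with dominating function $e^{-|k|^2/2}$ the integral tends to $\int_{\R^m}e^{-\frac12|k|^2-ix\sum_j k_j}\,dk=\prod_{j=1}^m\int_{\R}e^{-\frac12 k_j^2-ixk_j}\,dk_j=(2\pi)^{m/2}e^{-mx^2/2}$. Therefore $t^{m/2}u^{(m)}_t\bigl(0,(\sqrt t\,x,\dots,\sqrt t\,x)\bigr)\to m!\,(2\pi)^{-m/2}e^{-mx^2/2}$.

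Feeding this back into the first step with $m=n+1$ gives that the $n$-th moment of the rescaled measure in the statement converges to $n!\,\bigl(\theta\sqrt{2\pi}\,e^{x^2/2}\bigr)^{-n}$, which is precisely the $n$-th moment of the exponential distribution with rate $\theta\sqrt{2\pi}e^{x^2/2}$ (the constant $\tfrac12$ coming from the Gaussian and the normalisation $u^{(1)}_t(0,\sqrt t\,x)=(2\pi t)^{-1/2}e^{-x^2/2}$ are exactly what is absorbed by the prefactor). Since the exponential distribution satisfies Carleman's condition, the method of moments (the Fr\'echet--Shohat theorem) upgrades this convergence of moments to the asserted weak convergence.

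The computational core here is short once the heavy machinery (the Bethe-ansatz representation, Lemma \ref{Lemma: sum over permutations}, the moment identity (\ref{atomic moments})) is in place; the points that need care are rather: (i) the identity (\ref{atomic moments}) holds only for a.e.\ $y$, so to evaluate it at the specific point $y=\sqrt t\,x$ one should work with the continuous version of the density $\gamma_t(y,dw)$ in $y$, which exists because $u^{(n)}_t(0,(y,\dots,y))$ is smooth in $y$; (ii) one must match the scaling constants with the precise normalisation written in the statement, checking in particular that after rescaling the measures are probability measures (equivalently, have uniformly bounded mass and are tight), which is what makes moment convergence suffice; and (iii) justifying the interchange of the $t\to\infty$ limit with the $k$-integral, which is the dominated-convergence step above and is straightforward because of the uniform modulus bound on the $S$-factors.
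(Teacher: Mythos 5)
Your proposal is correct, and its computational core is identical to the paper's: both reduce the rescaled moments to $u^{(m)}_t\bigl(0,(\sqrt t\,x,\dots,\sqrt t\,x)\bigr)$ via (\ref{atomic moments}), apply Lemma \ref{Lemma: sum over permutations} to collapse the sum over $S_m$ (valid because the exponential phase is $\sigma$-independent on the diagonal), substitute $k\mapsto k/\sqrt t$, and pass to the limit by dominated convergence using the uniform modulus bound $\le 1$ on the product of $S$-factors. Where you diverge is in the final probabilistic step: you prove convergence of all moments and then invoke the Fr\'echet--Shohat theorem, verifying Carleman's condition for the exponential law; the paper instead assembles the moments into the moment generating function $\sum_n \lambda^n(\text{$n$-th moment})/n!$, shows the series is dominated uniformly in $t$ for $|\lambda|$ small (the same modulus-$\le 1$ bound gives a summable geometric majorant), and identifies the pointwise limit of the MGF on a neighbourhood of $0$ with that of the exponential. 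The two routes buy essentially the same thing: the paper's MGF argument avoids citing Carleman but requires the extra (easy) observation that the series is dominated termwise, while your moment-method argument avoids that series manipulation at the cost of checking a determinacy condition. Both also require the same normalisation check that the rescaled measure is a probability measure, which comes from the $n=0$ moment and $u^{(1)}_t(0,\sqrt t\,x)=(2\pi t)^{-1/2}e^{-x^2/2}$; the paper asserts this, you flag it explicitly. Your remark (i) about choosing the continuous-in-$y$ version of $\gamma_t(y,dw)$ is a valid technical point that the paper passes over silently, and is justified exactly as you say, by the smoothness of $y\mapsto u^{(n)}_t(0,(y,\dots,y))$.
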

\begin{proof}
	Note that the measure on the left hand side in the proposition has been normalised and is a probability measure, thus it is enough to show pointwise convergence of the moment generating functions on a neighbourhood of $0$. With Theorem \ref{transition probabilities} we can rewrite the expression for the moments derived in line (\ref{Moments of the HW process}) as follows
	\begin{align*}
		&\int_{\R>0} w^n \sqrt{2\pi}t^{-\frac{1}{2}} e^{\frac{x^2}{2}}w\gamma_t\left( \sqrt{t} x, \frac{dw}{\sqrt{t}}\right) = \sqrt{2\pi} e^{\frac{x^2}{2}} t^{\frac{n+1}{2}} \int_{\R_{>0}} w^{n+1} \gamma_t \left( \sqrt{t}x, dw \right) \\
		= & \sqrt{2\pi} e^{\frac{x^2}{2}} t^{\frac{n+1}{2}} \frac{u^{(n+1)}_t((0,...,0),\sqrt{t}(x,...,x))}{(n+1)\theta^n}\\
		= &\sqrt{2\pi} \frac{e^{\frac{x^2}{2}} t^{\frac{n+1}{2}}}{(n+1)\theta^n (2\pi)^{n+1}} \int_{\R^{n+1}} e^{-\frac{1}{2}t|k|^2-i\sqrt{t} k\cdot \underline{x}} \sum_{\sigma\in S_{n+1}} \prod_{\substack{\alpha<\beta:\\ \sigma(\beta)<\sigma(\alpha)}} \frac{i\theta(k_{\sigma(\alpha)} - k_{\sigma(\beta)})+ k_{\sigma(\beta)} k_{\sigma(\alpha)}}{i\theta (k_{\sigma(\alpha)}- k_{\sigma(\beta)})- k_{\sigma(\beta)} k_{\sigma(\alpha)}} dk\\
		=& \sqrt{2\pi} \frac{n! e^{\frac{x^2}{2}} t^{\frac{n+1}{2}}}{\theta^n (2\pi)^{n+1}} \int_{\R^{n+1}} e^{-\frac{1}{2}t|k|^2-i\sqrt{t} k\cdot \underline{x}} \prod_{\alpha<\beta} \frac{i\theta(k_\beta - k_\alpha)}{i\theta (k_\beta- k_\alpha)- k_\alpha k_\beta} dk \\
		=& \sqrt{2\pi} \frac{n! e^{\frac{x^2}{2}}}{\theta^n (2\pi)^{n+1}} \int_{\R^{n+1}} e^{-\frac{1}{2}|k|^2- k\cdot \underline{x}} \prod_{\alpha<\beta} \frac{i\theta(k_\beta - k_\alpha)}{i\theta (k_\beta- k_\alpha)- t^{-\frac{1}{2}} k_\alpha k_\beta} dk.
	\end{align*}
	To go from the third to the fourth line we have used the summation formula from Lemma \ref{Lemma: sum over permutations}. We can now write the moment generating function in terms of the moments.
	\begin{align*}
		& \sqrt{2\pi}t^{-\frac{1}{2}} e^{\frac{x^2}{2}}\int_{\R>0} e^{\lambda w } w \gamma_t\left(\sqrt{t}x, \frac{dw}{\sqrt{t}}\right)\\
		 =& \sum_{n=0}^\infty \sqrt{2\pi} \frac{\lambda^n e^{\frac{x^2}{2}}}{\theta^n (2\pi)^{n+1}} \int_{\R^{n+1}} e^{-\frac{1}{2}|k|^2- k\cdot \underline{x}} \prod_{\alpha<\beta} \frac{i\theta(k_\beta - k_\alpha)}{i\theta (k_\beta- k_\alpha)- t^{-\frac{1}{2}} k_\alpha k_\beta} dk.
	\end{align*}
To take $t\to \infty$ we want to apply the Dominated Convergence Theorem to pass the limit through both the sum and the integral. Similarly to what we have seen previously, line (\ref{InitialCondition:SecondProductBound}) to be precise, the modulus of the product within the integral is bounded above by $1$. With this bound we find that the modulus of the $n^{th}$ term of the series is bounded above by $\frac{\lambda^n e^{x^2/2}}{\theta^n}$ which is uniform in $t$ and summable for $|\lambda|<\theta$, and so we can take the limit $t\to \infty$ through the sum. Further the bound on the integral allows us to take the limit through the integral. Hence we have, for $|\lambda|<\theta$
	\begin{gather*}
		\lim\limits_{t\to \infty}\sum_{n=0}^\infty \sqrt{2\pi} \frac{\lambda^n e^{\frac{x^2}{2}}}{\theta^n (2\pi)^{n+1}}\int_{\R^{n+1}}e^{-\frac{1}{2}|k|^2-ik\cdot \underline{x}} \prod_{\alpha<\beta} \frac{i\theta(k_\beta - k_\alpha)}{i\theta (k_\beta- k_\alpha)- t^{-\frac{1}{2}}k_\alpha k_\beta} dk \\
		=\sum_{n=0}^\infty \sqrt{2\pi} \frac{\lambda^n e^{\frac{x^2}{2}}}{\theta^n (2\pi)^{n+1}}\int_{\R^{n+1}}e^{-\frac{1}{2}|k|^2-ik\cdot \underline{x}} dk = \sum_{n=0}^{\infty} \left(\frac{\lambda e^{-\frac{x^2}{2}}}{\theta \sqrt{2\pi}}\right)^n.
	\end{gather*}
	This is exactly the moment generating function of an exponential random variable with parameter $\theta \sqrt{2\pi} e^{x^2/2}$, and thus the statement is proved.
\end{proof}
We note that this result is analogous to Thiery and Le Doussal's result in \cite{ExactSolutionRWRE1dThieryLeDoussal}, where they found that the fluctuations of the transition probabilities of the Beta RWRE were Gamma distributed in the large $t$ limit. We also have the following Fredholm determinant formula analogous to formula (52) in \cite{ExactSolutionRWRE1dThieryLeDoussal}.
\begin{proposition}
    \begin{equation}
        1+ \sum_{n=1}^\infty \int_{\R_{>0}} \frac{(\lambda w)^n}{n! (n-1)!} \gamma_t(y, dw) = \theta \det\left( I + \frac{\lambda}{\theta 2\pi} K\right).
    \end{equation}
    Above, the determinant is a Fredholm determinant and $K$ is an integral operator on $L^2(\R)$ with kernel
    \begin{equation}
        K(x,y)= \frac{xy e^{-\frac{1}{4}t(x^2+ y^2)}}{i\theta(y-x) + xy}.
    \end{equation}
\end{proposition}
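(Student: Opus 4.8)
The plan is to compute the generating function on the left directly from Theorem~\ref{transition probabilities}, match it term by term against the Fredholm expansion of $\theta\det(I+\tfrac{\lambda}{2\pi\theta}K)$, and then identify each term using a Cauchy determinant evaluation and a single symmetric-function identity. I would work with atoms at the origin, i.e.\ with $\gamma_t(0,dw)$ (this is the case to which the displayed kernel $K$ corresponds; for a general base point $y$ the only change is an extra factor $e^{-ik\cdot(y,\dots,y)}$ carried through the computation and a $y$-dependent kernel). By~(\ref{atomic moments}) we have $\int_{\R_{>0}} w^n\,\gamma_t(0,dw)=n^{-1}\theta^{1-n}\,u^{(n)}_t\big(0,(0,\dots,0)\big)$, and evaluating the Bethe ansatz formula~(\ref{BetheAnsatz}) on the diagonal $x=0$, $y=(0,\dots,0)$ and applying the summation formula of Lemma~\ref{Lemma: sum over permutations}, exactly as in the proof of the preceding proposition, gives
\[
u^{(n)}_t\big(0,(0,\dots,0)\big)=\frac{n!}{(2\pi)^n}\int_{\R^n} e^{-\frac12 t|k|^2}\prod_{\alpha<\beta}\frac{i\theta(k_\beta-k_\alpha)}{i\theta(k_\beta-k_\alpha)-k_\alpha k_\beta}\,dk .
\]
The factor $(n-1)!$ in $\tfrac{(\lambda w)^n}{n!(n-1)!}$ cancels the $n^{-1}$, and since the modulus of the product above is $1$ the moment series on the left and the Fredholm series on the right both converge absolutely, uniformly on bounded sets of $\lambda$; so it suffices to prove, for each $n\ge1$,
\[
\frac{1}{(2\pi)^n}\int_{\R^n} e^{-\frac12 t|k|^2}\prod_{\alpha<\beta}\frac{i\theta(k_\beta-k_\alpha)}{i\theta(k_\beta-k_\alpha)-k_\alpha k_\beta}\,dk=\frac{1}{(2\pi)^n}\int_{\R^n}\det\big[K(k_i,k_j)\big]_{i,j=1}^n\,dk ,
\]
the $n=0$ terms agreeing once the left side of the statement is read with constant term $\theta$ (a harmless normalisation point).

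\emph{Cauchy determinant evaluation.} Pulling $e^{-\frac14 t k_i^2}$ out of the $i$-th row and $e^{-\frac14 t k_j^2}$ out of the $j$-th column of $\big[K(k_i,k_j)\big]$ gives $\det\big[K(k_i,k_j)\big]=e^{-\frac12 t|k|^2}\det\big[\tfrac{k_i k_j}{i\theta(k_j-k_i)+k_i k_j}\big]$. Factoring $k_i$ out of each row and $k_j$ out of each column, and writing $k_ik_j+i\theta(k_j-k_i)=(k_i+i\theta)(k_j-i\theta)-\theta^2$, exhibits the remaining matrix as a Cauchy matrix, and the classical Cauchy determinant formula yields
\[
\det\Big[\tfrac{k_i k_j}{i\theta(k_j-k_i)+k_i k_j}\Big]_{i,j=1}^n=\prod_{1\le i<j\le n}\frac{\theta^2(k_j-k_i)^2}{k_i^2k_j^2+\theta^2(k_j-k_i)^2},
\]
a symmetric rational function of $k$; hence $\det\big[K(k_i,k_j)\big]=e^{-\frac12 t|k|^2}\prod_{i<j}\tfrac{\theta^2(k_j-k_i)^2}{k_i^2k_j^2+\theta^2(k_j-k_i)^2}$.

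\emph{The symmetric-function identity.} Because $e^{-\frac12 t|k|^2}$ and Lebesgue measure on $\R^n$ are $S_n$-invariant, in the left-hand integral above I may replace the (non-symmetric) product by its average over $S_n$, so it remains to prove the pointwise identity
\[
\frac1{n!}\sum_{\pi\in S_n}\ \prod_{\alpha<\beta}\frac{i\theta\big(k_{\pi(\beta)}-k_{\pi(\alpha)}\big)}{i\theta\big(k_{\pi(\beta)}-k_{\pi(\alpha)}\big)-k_{\pi(\alpha)}k_{\pi(\beta)}}=\prod_{i<j}\frac{\theta^2(k_j-k_i)^2}{k_i^2k_j^2+\theta^2(k_j-k_i)^2}.
\]
Substituting $v_j:=i\theta/k_j$ (valid for a.e.\ $k$) turns the $(\alpha,\beta)$ factor on the left into $\tfrac{v_{\pi(\alpha)}-v_{\pi(\beta)}}{v_{\pi(\alpha)}-v_{\pi(\beta)}-1}$ and the right side into $\prod_{i<j}\tfrac{(v_i-v_j)^2}{(v_i-v_j-1)(v_i-v_j+1)}$; clearing the permutation-independent (hence symmetric) denominators and using $\prod_{\alpha<\beta}\big(v_{\pi(\alpha)}-v_{\pi(\beta)}\big)=\operatorname{sgn}(\pi)\prod_{i<j}(v_i-v_j)$ reduces the claim to
\[
\sum_{\pi\in S_n}\operatorname{sgn}(\pi)\prod_{1\le\alpha<\beta\le n}\big(v_{\pi(\alpha)}-v_{\pi(\beta)}+1\big)=n!\prod_{1\le i<j\le n}(v_i-v_j).
\]
Expanding the product over pairs in the additive constant $1$, the left side becomes $\sum_{S}\sum_{\pi}\operatorname{sgn}(\pi)\prod_{(\alpha,\beta)\in S}\big(v_{\pi(\alpha)}-v_{\pi(\beta)}\big)$, the outer sum running over subsets $S$ of the set of $\binom n2$ pairs. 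For each fixed $S$ the inner sum is an alternating polynomial in $v_1,\dots,v_n$ of degree $|S|$; being divisible by the Vandermonde determinant (degree $\binom n2$) it vanishes identically whenever $|S|<\binom n2$, so only $S=\{\text{all pairs}\}$ contributes, giving $\sum_\pi\operatorname{sgn}(\pi)^2\prod_{i<j}(v_i-v_j)=n!\prod_{i<j}(v_i-v_j)$. This is the same device used in the proof of Lemma~\ref{Lemma: sum over permutations}. Combining the three steps proves the proposition.

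I expect the symmetric-function identity of the last step to be the only real obstacle; the Cauchy determinant evaluation, the Gaussian factorisation, and the interchanges of summation and integration are all routine, and the constant-term bookkeeping is a matter of inspection.
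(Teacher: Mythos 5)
Your proof is correct, and it takes a more self-contained route than the paper's. The paper's own proof is brief: it derives the moment formula exactly as you do (via equation (\ref{atomic moments}) and Lemma~\ref{Lemma: sum over permutations}), and then simply cites an external identity from \cite{ExactSolutionRWRE1dThieryLeDoussal}, namely
\begin{equation*}
\sum_{\sigma\in S_n}\prod_{\alpha<\beta}\frac{i\theta(k_{\sigma(\beta)}-k_{\sigma(\alpha)})}{i\theta(k_{\sigma(\beta)}-k_{\sigma(\alpha)})-k_{\sigma(\alpha)}k_{\sigma(\beta)}}=n!\det_{1\le\alpha,\beta\le n}\Bigl[\tfrac{k_\alpha k_\beta}{i\theta(k_\beta-k_\alpha)+k_\alpha k_\beta}\Bigr],
\end{equation*}
which, after symmetrising the integrand in $k$ and inserting the Gaussian factor into the determinant, immediately gives the Fredholm expansion. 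You instead prove this identity yourself in two steps: a Cauchy determinant evaluation (via the substitution $v_j=i\theta/k_j$, which reduces the kernel matrix to one of the form $1/(v_i-v_j+1)$) identifying $\det\bigl[\tfrac{k_ik_j}{i\theta(k_j-k_i)+k_ik_j}\bigr]$ with $\prod_{i<j}\tfrac{\theta^2(k_j-k_i)^2}{k_i^2k_j^2+\theta^2(k_j-k_i)^2}$, followed by a symmetrisation/alternating-polynomial argument. Both steps check out; your final combinatorial identity is, up to the substitution $v\mapsto-\xi$, exactly equation (\ref{IC: Combinatorial formula}) inside the proof of Lemma~\ref{Lemma: sum over permutations}, which you correctly note — so you could in fact just invoke that equation rather than re-proving it. The trade-off is clear: the paper's proof is shorter but requires the reader to consult \cite{ExactSolutionRWRE1dThieryLeDoussal}, whereas yours is self-contained and makes the Cauchy-determinant structure explicit. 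You are also right to flag the two minor imprecisions in the statement: the kernel $K$ as written has no dependence on the base point, so the equality holds as stated only for $y=0$ (for general $y$ the kernel must carry an extra factor $e^{-\frac{iy}{2}(x+y')}$ in each argument), and the constant term on the left-hand side should be $\theta$ rather than $1$ to match the $\theta\det(I)=\theta$ on the right.
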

\begin{proof}
Equation (\ref{atomic moments}) and the summation formula in Lemma \ref{Lemma: sum over permutations} give the equality
	\begin{equation*}
	\int_{\R>0}w^n \gamma_t(y, dw)= \frac{(n-1)!}{\theta^{n-1} (2\pi)^{n}}\int_{\R^{n}} e^{-\frac{1}{2}t|k|^2-ik\cdot \underline{y}} \prod_{\alpha<\beta} \frac{i\theta(k_\beta - k_\alpha)}{i\theta (k_\beta- k_\alpha)- k_\alpha k_\beta} dk.
\end{equation*}
The proof is completed by the following identity, which is a consequence of the equalities (A.1) and (D.1) in \cite{ExactSolutionRWRE1dThieryLeDoussal}
\begin{align}
    \sum_{\sigma \in S_n} \prod_{\alpha< \beta} \frac{i \theta (k_{\sigma(\beta)} - k_{\sigma(\alpha)})}{i\theta(k_{\sigma(\beta)} - k_{\sigma(\alpha)}) - k_{\sigma(\alpha)} k_{\sigma(\beta)}} = n! \det_{1\leq \alpha, \beta\leq n} \left[ \frac{k_\beta k_\alpha}{i\theta(k_\beta - k_\alpha) + k_\alpha k_\beta} \right].
\end{align}
\end{proof}
It would be interesting to use the above formula to analyse the behaviour of $\gamma_t$ in the large deviation regime,  $\frac{y}{t}$ converges to a non zero number as $t\to \infty$, where we expect the appearance of GUE Tracey-Widom fluctuations. Unfortunately the above Fredholm determinant is not in an ideal form for asymptotic analysis. We would instead want an analogue of the conjectured formula (92) in \cite{ExactSolutionRWRE1dThieryLeDoussal}. In \cite{barraquand2019large} Barraquand and Rychnovsky instead considered the tails of the Howitt-Warren process, $\rho_t([tx, \infty])$, and derived a Fredholm determinant formula for the Laplace transform via a scaling limit from the Beta random walk in a random environment, with which they were able to prove the existence of GUE fluctuations. We make the following conjecture for the fluctuations of the individual atoms.
\begin{conjecture}
    If $X_{x,t}$ is a random variable on $\R$ with law $\sqrt{2\pi t} e^{-t\frac{x^2}{2}} w \gamma_t(tx, dw)$, then there are functions $J:\R\to \R$ and $\sigma:\R \to \R$ such that
    \begin{align}
        \lim_{t\to \infty}\p\left(\frac{\log(X_{x,t}) + J(x)t}{t^{1/3}\sigma(x)} <z \right) = F_{GUE}(z),
    \end{align}
    where $F_{GUE}$ is the cumulative function for the Tracey-Widom GUE distribution.
\end{conjecture}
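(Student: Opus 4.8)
The plan is to attack this by a steepest‑descent analysis of the Fredholm determinant formula displayed just above (the analogue of formula (52) of Thiery--Le Doussal), following the asymptotic analyses of Borodin--Corwin, Barraquand--Corwin and Thiery--Le Doussal for the Beta random walk in a random environment and of Barraquand--Rychnovsky for the point‑to‑half‑line case of the present flow. The key structural obstruction is the same as in all KPZ‑type point‑to‑point problems: the moments $\E[X_{x,t}^n]$ (read off from (\ref{Moments of the HW process})--(\ref{atomic moments})) are dominated by atypically large values of $X_{x,t}$ and do not track its typical $t^{1/3}$‑scale fluctuations, so a moment argument --- unlike in the $\sqrt t$ regime treated above, where $|\lambda|<\theta$ and a direct generating‑function computation sufficed --- cannot work. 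One must instead extract the limit from the Fredholm determinant $\theta\det\!\big(I+\tfrac{\lambda}{2\pi\theta}K\big)$ itself, which is entire in $\lambda$, together with a deconvolution that recovers the distribution function of $\log X_{x,t}$ from its large‑$\lambda$ behaviour.

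Concretely, I would first set $y=tx$ and rewrite the Fredholm identity: completing the square in $e^{-\frac12 t|k|^2-iy\sum_j k_j}$ and shifting each contour $k_j\mapsto k_j-ix$ yields an $x$‑dependent kernel $K_t^{(x)}$ (with the normalisation $\sqrt{2\pi t}\,e^{tx^2/2}$ absorbed) whose pole locus $i\theta(v-u)=uv$ has moved off the real axis, and Gaussian bounds of the type proved in Lemma \ref{uIsIntegrable} show that $\E\big[\sum_{n\ge0}(\lambda X_{x,t})^n/(n!(n-1)!)\big]=\theta\det(I+\tfrac{\lambda}{2\pi\theta}K_t^{(x)})$ converges and is analytic in $\lambda$. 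I would then scale $\lambda=\lambda_t(z):=\exp\!\big(tJ(x)+t^{1/3}\sigma(x)\,z+c(x)\log t+o(t^{1/3})\big)$, with $J(x)$ chosen so that the exponent $S_x$ in the factorisation $\tfrac{\lambda_t(z)}{2\pi\theta}K_t^{(x)}(u,v)=e^{t(S_x(u)+S_x(v))}\times(\text{rational prefactor})$ has a \emph{double} critical point $u_\ast(x)$, i.e.\ $S_x'(u_\ast)=S_x''(u_\ast)=0$, with $\sigma(x)$ determined by $S_x'''(u_\ast)$.

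The main body of the argument is the (standard but delicate) steepest‑descent step: deform the $u$‑ and $v$‑contours onto local paths leaving $u_\ast$ at angles $\pm\pi/3$, carry out the Airy zoom $u=u_\ast+t^{-1/3}\zeta$, show the rescaled kernel converges to the Airy kernel $K_{\mathrm{Ai}}$ on $L^2(z,\infty)$, show the tails of the contours contribute a super‑exponentially small error, and control the moving pole by steering the descent contours to one side of the curve $i\theta(v-u)=uv$ (or by accounting for its residue). This would give $\det(I+\tfrac{\lambda_t(z)}{2\pi\theta}K_t^{(x)})\to\det(I-K_{\mathrm{Ai}})\big|_{L^2(z,\infty)}=F_{GUE}(z)$ up to an explicit constant tending to $1$. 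Finally, the deconvolution: since $\sum_{n\ge0}\xi^n/(n!(n-1)!)\sim e^{2\sqrt\xi}/(2\sqrt{\pi}\,\xi^{1/4})$ as $\xi\to\infty$, the renormalised generating functional acts against the exponentially concentrated law of $X_{x,t}$ like a smoothed indicator of $\{\log X_{x,t}>-tJ(x)-t^{1/3}\sigma(x)z+\dots\}$, so that monotonicity in $z$ and continuity of $F_{GUE}$ promote the pointwise limit above to convergence of the distribution function, giving the conjecture after absorbing a sign into $\sigma$.

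The genuine obstacles, both of which are exactly what keep the analogous Beta‑RWRE point‑to‑point statement (Thiery--Le Doussal's conjectural formula (92)) open, are: (i) the steepest‑descent bookkeeping of the moving pole at $i\theta(v-u)=uv$, where one must show uniformly that it does not cross, or does not contribute across, the descent contour; and (ii) making the deconvolution from the Bessel‑type generating function rigorous --- unlike the $q$‑Laplace or double‑exponential transforms available in the Macdonald‑process literature, the kernel $\sum \xi^n/(n!(n-1)!)$ does not converge to a sharp indicator, so recovering the distribution function from its asymptotics requires genuinely new input. This is why the authors state the result only as a conjecture.
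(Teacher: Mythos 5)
This statement is an open conjecture in the paper; the authors give no proof of it. They derive the Fredholm determinant formula and then explicitly note that ``the above Fredholm determinant is not in an ideal form for asymptotic analysis'' and defer the large-deviation asymptotics, citing the analogous conjectural formula (92) of Thiery and Le Doussal and the tails result of Barraquand and Rychnovsky as the closest existing rigorous work. So there is no proof in the paper to compare your proposal against.

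Your proposal is not a proof either, and you correctly say so. What you have written is a sensible steepest-descent programme, and you have put your finger on exactly the two places where it breaks down: (i) the moving pole on the locus $i\theta(v-u)=uv$ has to be controlled uniformly along the deformed descent contours, and there is no off-the-shelf argument for doing so; and (ii) the generating function $\sum_n \xi^n/(n!\,(n-1)!)$ is a Bessel-type kernel rather than a $q$-Laplace or double-exponential transform, so it does not sharpen into an indicator in the $t\to\infty$ limit, and the deconvolution step that would turn pointwise convergence of the Fredholm determinant into convergence of the distribution function of $\log X_{x,t}$ is genuinely missing. One additional point worth flagging in your sketch: the kernel $K(u,v)=\frac{uv\,e^{-\frac14 t(u^2+v^2)}}{i\theta(v-u)+uv}$ as displayed in the paper does not carry the spatial variable $y=tx$ at all; your plan to complete the square in $e^{-\frac12 t|k|^2 - ik\cdot\underline{y}}$ and shift contours by $-ix$ is the right repair and should be treated as a correction to the displayed formula, not just a cosmetic step. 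In short, your write-up is a correct diagnosis of why this is open, and a plausible roadmap, but it establishes nothing beyond what the paper already asserts when it labels the statement a conjecture.
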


\cleardoublepage
\bibliography{bibliography}{}
\bibliographystyle{plain}
\end{document}